\newtheorem{theorem}{Theorem}[section]
\newtheorem{proposition}[theorem]{Proposition}
\newtheorem{lemma}[theorem]{Lemma}
\newtheorem{corollary}[theorem]{Corollary}
\theoremstyle{remark}
\newtheorem{remark}[theorem]{Remark}
\newtheorem{definition}{Definition}
\newtheorem*{convention}{Convention}
\newtheorem*{notation}{Notation}
\numberwithin{equation}{section}
\newcommand{\Var}[2]{{\operatorname{Var}}\!\left.\!\left(#1\right)\right|_{#2}}
\newcommand{\ud}{\mathrm{d}}
\newcommand{\J}{I}
\newcommand{\bez}{\nopagebreak\hspace*{\fill}\nolinebreak$\Box$}
\newcommand{\vep}{\varepsilon}
\newcommand{\R}{{\mathbb{R}}}
\newcommand{\Z}{{\mathbb{Z}}}
\newcommand{\N}{{\mathbb{N}}}
\newcommand{\T}{{\mathbb{T}}}
\newcommand{\xbm}{(X,\mathcal{B},\mu)}
\newcommand{\ls}{\operatorname{LSSG}}
\newcommand{\sgn}{\operatorname{sgn}}
\newcommand{\var}{\operatorname{Var}}
\newcommand{\ac}{\operatorname{AC}}
\newcommand{\bv}{\operatorname{BV}}
\newcommand{\pl}{\operatorname{PL}}
\newcommand{\ol}{\operatorname{LG}}
\newcommand{\logs}{\operatorname{LSG}}
\newcommand{\mR}{Leb}
\newcommand{\Int}{\operatorname{Int}}
\newcommand{\SL}{\operatorname{SL}}
\newcommand{\lv}{\mathscr{LV}}
\newcommand{\bl}{\mathscr{L}}
\newcommand{\esssup}{\operatorname{ess\, sup}}
\newcommand{\Surf}{S}
\newenvironment{proofof}[2]{\begin{proof}[Proof of #1 \ref{#2}.]}{\end{proof}}
\begin{document}
\title[Ergodic properties of infinite extensions of area-preserving flows]
{Ergodic properties of infinite extensions of area-preserving
flows}
\author[K. Fr\k{a}czek \and C. Ulcigrai]{Krzysztof Fr\k{a}czek \and Corinna Ulcigrai}

\address{Faculty of Mathematics and Computer Science, Nicolaus
Copernicus University, ul. Chopina 12/18, 87-100 Toru\'n, Poland}
 \email{fraczek@mat.umk.pl}
\address{Department of Mathematics\\
University Walk, Clifton\\
Bristol BS8 1TW, United Kingdom}
\email{corinna.ulcigrai@bristol.ac.uk}
\date{}

\subjclass[2000]{ 37A40, 37C40}  \keywords{}
\thanks{Research partially supported by MNiSzW grant N N201
384834 and Marie Curie "Transfer of Knowledge" program, project
MTKD-CT-2005-030042 (TODEQ)}

\begin{abstract}
We consider volume-preserving flows $(\Phi^f_t)_{t\in\mathbb{R}}$
on $\Surf\times \mathbb{R}$, where $\Surf$ is a closed connected
surface of genus $g\geq 2$ and $(\Phi^f_t)_{t\in\mathbb{R}}$ has
the form $\Phi^f_t(x,y)=\left(\phi_tx,\
y+\int_0^{t}f(\phi_sx)\,ds\right)$ where
$(\phi_t)_{t\in\mathbb{R}}$ is a locally Hamiltonian flow of
hyperbolic periodic type  on $\Surf$  and $f$ is a  smooth real
valued function on $\Surf$.   We investigate ergodic properties of
these infinite measure-preserving flows and prove that if $f$
belongs to a space of finite codimension in
$\mathscr{C}^{2+\epsilon}(\Surf)$, then the following dynamical
dichotomy holds: if there is a fixed point of
$(\phi_t)_{t\in\mathbb{R}}$ on which $f$ does not vanish, then
$(\Phi^f_t)_{t\in\mathbb{R}}$ is ergodic, otherwise, if $f$
vanishes on all fixed points, it is reducible, i.e.~isomorphic to
the trivial extension $(\Phi^0_t)_{t\in\mathbb{R}}$.
The proof of this result exploits the reduction of
$(\Phi^f_t)_{t\in\mathbb{R}}$ to a skew product automorphism over
an interval exchange transformation of periodic type.  If there is
a fixed point of $(\phi_t)_{t\in\mathbb{R}}$ on which $f$ does not
vanish, the reduction  yields cocycles with symmetric logarithmic
singularities, for which we prove  ergodicity.
\end{abstract}

\maketitle
\section{Introduction}\label{introduction:sec}
In this paper we investigate ergodic properties  for a class of
infinite measure preserving extensions of area-preserving flows on
compact surfaces of higher genus. Let $(\Surf,\omega)$ be a
compact connected oriented symplectic smooth surface of genus
$g\geq 2$ and consider a symplectic flow
$(\phi_t)_{t\in\mathbb{R}}$ on $\Surf$ given by the vector field
$X$.
 Let $f:\Surf\to\R$ be a
$\mathscr{C}^{2+\epsilon}$-function. Following \cite{FL:ont} we
will consider a system of  coupled differential equations on
$\Surf\times\R$ of the form
\[\left\{\begin{array}{ccc}
\frac{dx}{dt}&=&X(x),\\
\frac{dy}{dt}&=&f(x),
\end{array}\right.\]
for $(x,y)\in \Surf\times\R$. The flow given by these equations is a
skew-product extension of $(\phi_t)_{t\in\mathbb{R}}$ which we
will denote by $(\Phi^f_t)_{t\in\mathbb{R}}$.

We  consider locally
Hamiltonian  flows $(\phi_t)_{t\in\mathbb{R}}$, which are
a natural class of symplectic flows (in dimension $2$ locally
Hamiltonian and  symplectic are both equivalent to area
preserving) introduced and studied by S.P. Novikov and his school
(see for example \cite{No, Zo:how} and also \cite{Arn} for the
toral case) and are also known as flows given by a multivalued
Hamiltonian. We now recall their definition.

Let $\eta$ be a  closed $1$-form on $\Surf$. Denote by
$\pi:\widehat{\Surf}\to \Surf$ the universal cover of $\Surf$ and by
$\widehat{\eta}$ the pullback of $\eta$ by $\pi:\widehat{\Surf}\to \Surf$.
Since $\widehat{\Surf}$ is simply connected and $\widehat{\eta}$ is
also a closed form, there exists a smooth function
$\widehat{H}:\widehat{\Surf}\to \R$, called a \emph{multivalued
Hamiltonian}, such that $d\widehat{H}=\widehat{\eta}$. We will
assume that  $\widehat{H}$ is a \emph{Morse function}. Denote by
$X:\Surf\to T\Surf$ the smooth vector field determined by
\[\eta=i_X\omega=\omega(X,\,\cdot\,).\]
Let $(\phi_t)_{t\in\R}$ stand for the smooth flow on $\Surf$
associated to the vector field $X$. Since $d\eta=0$, the flow
$(\phi_t)_{t\in\R}$ preserves the symplectic form $\omega$ and
hence it preserves the associated measure $\nu$ obtained by
integrating the form $\omega$. Moreover, it is by construction
locally Hamiltonian and it has finitely many fixed points,
which coincide with the image of the  critical points set of the
multivalued Hamiltonian $\widehat{H}$ by the map $\pi$. Denote by
$\Sigma$ the set of fixed points. Since we assume that $\widehat{H}$ is a Morse
function, the points in $\Sigma$ are either centers or
non-degenerate saddles. We will assume throughout that  the flow
has \emph{no saddle connections}, i.e. that there are no saddles which
belong to the closure of the same separatrix of the flow. This
assumption implies that the flow on $\Surf\backslash \Sigma$ is
\emph{minimal} (see \cite{Maier}) and that all points in $\Sigma$
are saddles.

Given a $\mathscr{C}^{2+\epsilon}$-function $f: \Surf\to\R$, the
extension $(\Phi^f_t)_{t\in\mathbb{R}}$ of the locally Hamiltonian
flow $(\phi_t)_{t\in\mathbb{R}}$ has the following form
\[\Phi^f_t(x,y)=\left(\phi_tx, \ y+\int_0^{t}f(\phi_sx)\,ds\right),\]
i.e. $(\Phi^f_t)_{t\in\R}$ is a \emph{skew product flow} over the
base flow $(\phi_t)_{t\in\R}$ on $\Surf$. In particular, it follows
that $(\Phi^f_t)_{t\in\R}$ preserves the \emph{infinite} product
measure $ \nu\times Leb$, where $\nu$ is the invariant measure
for $(\phi_t)_{t\in\R}$ and $Leb$ here is the Lebesgue measure on
${\R}$.

 A basic question in  ergodic theory is the description
of ergodic components.  Let us recall that a flow
$(\Phi_t)_{t\in\R}$ preserving a
 invariant measure $\mu$ (finite or infinite) is \emph{ergodic} if
for any measurable set $A$ which is invariant, i.e.~such that
$\mu(A)=\mu (\Phi_t A) $ for all $t\in \mathbb{R}$, either
$\mu(A)=0$ or $\mu(A^c)=0$ where $A^c$ denotes the complement. The
problem of ergodicity for locally Hamiltonian flows on compact
surfaces is well understood. A \emph{typical} locally Hamiltonian
flow $(\phi_t)_{t\in\R}$ on $\Surf$ with no saddle connection is
(uniquely) ergodic, by a celebrated theorem by  Masur and Veech
\cite{Masur, Ve1}. Moreover, mixing properties of locally
Hamiltonian flows have been investigated in \cite{Ko, Ko:mix,
Scheg, Ul:mix, Ul:wea, Ul:abs}. On the other hand, very little is
understood in the  case of non-compact extensions with the
exception of the special case of $g=1$ (see
\cite{FL:ont,Fr-Le:wm}) and the case where $f$ vanish on the set
of fixed points of the flow $(\phi_t)_{t\in\R}$ (see
\cite{Co-Fr,Fo1,Ma-Mo-Yo}).

In the setting of extensions, a property completely opposite to
ergodicity is \emph{reducibility}. Let us note that if $f=0$, the
phase space $\Surf\times \R$ for the corresponding trivial extension
given by $\Phi^0_t(x,y)=\left(\phi_t x,y\right)$ is foliated in
invariant sets of the form $\Surf\times \{y\}$, $y\in \R$. In this
sense, the dynamics is reduced to the dynamics of the surface flow
$(\phi_t)_{t\in\R}$. We say that $(\Phi^f_t)_{t\in\R}$ is
\emph{(topologically) reducible} if it is isomorphic to
$(\Phi^0_t)_{t\in\R}$ and the isomorphism $\mathbf{G}: \Surf\times \R
\rightarrow  \Surf\times \R$ is of the form $\mathbf{G}(x,y)= (x, y+
G(x))$, where $G: \Surf \rightarrow \R$  is continuous (and automatically its inverse $\mathbf{G}^{-1}(x,y)= (x, y-
G(x))$ is also continuous). In this case,
the phase space is again foliated into invariant sets for
$(\Phi^f_t)_{t\in\R}$ of the form $\{(x, y+ G(x)),\ x\in \Surf \}$, $\
y \in \R $. On each leaf the action of $(\Phi^f_t)_{t\in\R}$ is
conjugated to the one of $(\phi_t)_{t\in\R}$ on $\Surf$.

We will consider extensions of a special class of {ergodic} flows
$(\phi_t)_{t\in\R}$ on surfaces of genus $g\geq 2$.  For these
extensions, we will completely describe the ergodic behavior and
prove a dichotomy between ergodicity and reducibility.

Let us define the special class of locally Hamiltonian flows
$(\phi_t)_{t\in\R}$. Consider the foliation $\mathscr{F}$
determined by orbits of the locally Hamiltonian flow
$(\phi_t)_{t\in\R}$ on $\Surf$. The foliation $\mathscr{F}$ is a
singular foliation with simple saddles at the set $\Sigma$. It
comes equipped with a transverse measure $\nu_{\mathscr{F}}$, i.e.
a measure on arcs $\gamma$ transverse to the flow, given by
$\nu_{\mathscr{F}}(\gamma)= \int_\gamma \eta$. The pair
$(\mathscr{F}, \nu_{\mathscr{F}})$ is a \emph{measured foliation}
in the sense of Thurston (see \cite{Thurston, FLP}). We say that
$(\phi_t)_{t\in\R}$ is \emph{of periodic type} if there exists a
diffeomorphism $\Psi: \Surf \rightarrow \Surf$ which fixes the foliation
$\mathscr{F}$ and rescales the transverse measure, i.e.~there
exists $\rho<1$ such that
$\Psi(\nu_{\mathscr{F}})=\rho\,\nu_{\mathscr{F}}$ ($
\nu_{\mathscr{F}}( \Psi \circ \gamma) = \rho
\nu_{\mathscr{F}}(\gamma)$ for all transverse arcs $\gamma$).  For
example, $\Psi$ could be a pseudo-Anosov diffeomorphism such that
the stable foliation for $\Psi$ is the measured foliation
$(\mathscr{F}, \nu_{\mathscr{F}})$. Remark that flows of periodic
type have no saddle connections. The diffeomorphism $\Psi$ induces
a linear action $\Psi_*$ on the homology $H_1(\Surf, \mathbb{R})$. We
say that a locally  Hamiltonian flow $(\phi_t)_{t\in\R}$ is of
\emph{hyperbolic} periodic type if it is of periodic type and
additionally $\Psi_* : H_1(\Surf, \mathbb{R})\rightarrow  H_1(\Surf,
\mathbb{R})$ is hyperbolic, i.e.~all eigenvalues have  absolute
value different than one.

We can now state our main result. 
\begin{theorem}\label{mainHamiltonian}
Let $(\phi_t)_{t\in\R}$ be a locally Hamiltionian flow of
hyperbolic periodic type on a compact surface $\Surf$ of genus $g\geq
2$. There exists a closed $(\phi_t)_{t\in\R}$-invariant subspace
$K \subset \mathscr{C}^{2+\epsilon}(\Surf)$ with codimension $g$ in
$\mathscr{C}^{2+\epsilon}(\Surf)$, where $g$ is the genus of $\Surf$, such
that if $f \in K $ we have the following dichotomy:
\begin{itemize}
\item If $\sum_{z\in \Sigma} |f(z)|\neq 0$ then the extension $(\Phi^f_t)_{t\in\R}$ is
ergodic;
\item If $\sum_{z\in \Sigma} |f(z)| = 0$ then the extension $(\Phi^f_t)_{t\in\R}$ is
reducible.
\end{itemize}
Moreover, for every $f \in \mathscr{C}^{2+\epsilon}(\Surf)$ we can
write $f = f_K + f_\Sigma $ where $f_K \in K$ and $f_\Sigma$
vanishes on $\Sigma $ and belongs to a $g$ dimensional subspace of
$\mathscr{C}^{2+\epsilon}(\Surf, \Sigma)= \{ f\in
\mathscr{C}^{2+\epsilon}(\Surf), \sum_{z\in \Sigma} |f(z)|=0 \}$.
\end{theorem}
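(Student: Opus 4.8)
The plan is to reduce the flow $(\Phi^f_t)_{t\in\R}$ to a symbolic model via a transverse section and then analyze the resulting skew product over an interval exchange transformation (IET) of periodic type. The key reduction step is classical (going back to work on locally Hamiltonian flows): choosing a transverse interval $I$ to the flow $(\phi_t)_{t\in\R}$, the first return map is an IET $T$ on $I$, and because $(\phi_t)_{t\in\R}$ is of hyperbolic periodic type, $T$ is an IET of periodic type (i.e.\ its Rauzy--Veech renormalization is eventually periodic and the associated renormalization matrix is hyperbolic). The return time function $\tau$ has logarithmic singularities at the points of $I$ whose forward orbit hits a saddle, and the extension $(\Phi^f_t)_{t\in\R}$ becomes isomorphic to the skew product $T_\varphi(x,s) = (Tx, s + \varphi(x))$ on $I \times \R$, where $\varphi(x) = \int_0^{\tau(x)} f(\phi_s x)\,ds$ is the integral of $f$ along the orbit segment between consecutive returns. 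The crucial point is to identify the singularity structure of $\varphi$: if $f$ does not vanish at some saddle $z \in \Sigma$, then $\varphi$ inherits a logarithmic singularity from $\tau$ at the preimages of $z$, and one checks that the singularity is \emph{symmetric} (the coefficients of $\log$ on the left and right of the singularity agree up to sign in the right way) — this is because the two separatrices entering a simple saddle contribute symmetrically to the return time asymptotics, and $f$ is continuous so it takes the same value $f(z)$ on both sides. If $f$ vanishes on all of $\Sigma$, then $\varphi$ is (at worst) of bounded variation, or even absolutely continuous after subtracting a coboundary, depending on the regularity of $f$ near $\Sigma$.

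Next I would address the definition of the subspace $K$ and the decomposition $f = f_K + f_\Sigma$. The obstruction to $\varphi$ being a well-behaved cocycle with only symmetric logarithmic singularities (as opposed to having an antisymmetric or ``excessive'' part that would obstruct either ergodicity or reducibility) is measured by finitely many linear functionals on $\mathscr{C}^{2+\epsilon}(\Surf)$. The natural candidates are the integrals of $f$ against the $g$ harmonic-type ``invariant distributions'' associated to the self-similar IET $T$ — concretely, the asymptotic cycle and its companions coming from the expanding subspace of $\Psi_*$ acting on $H_1(\Surf,\R)$, which has dimension $g$ by hyperbolicity and the symplectic nature of $\Psi_*$. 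I would set $K$ to be the joint kernel of these $g$ functionals, intersected with (or rather, one must check it is automatically) a $(\phi_t)_{t\in\R}$-invariant closed subspace; since adding a coboundary $g \circ \phi_t - g$ along the flow does not change the isomorphism class of the extension, $(\phi_t)$-invariance of $K$ should come for free once $K$ is defined by cohomologically meaningful functionals. The complementary $g$-dimensional subspace of $f_\Sigma$'s is then spanned by explicit functions vanishing on $\Sigma$ whose associated cocycles realize the $g$ ``bad'' directions; one verifies these lie in $\mathscr{C}^{2+\epsilon}(\Surf,\Sigma)$ and that the map $f \mapsto (\text{the } g \text{ functionals})$ restricted to this subspace is an isomorphism onto $\R^g$, which gives the direct-sum decomposition. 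This codimension count uses that the stable/unstable subspaces of the symplectic hyperbolic matrix $\Psi_*$ on the $2g$-dimensional space $H_1(\Surf,\R)$ each have dimension $g$.

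For the dichotomy itself, in the reducible case ($f$ vanishes on $\Sigma$ and $f \in K$) I would show the cocycle $\varphi$ is a continuous coboundary for $T$: since $f$ vanishes on $\Sigma$ the singularities of $\tau$ are killed and $\varphi$ is continuous of bounded variation on each exchanged interval, and the condition $f \in K$ forces the relevant cohomological obstructions (the pairings with the $g$ expanding invariant distributions, which for periodic-type IETs are exactly the obstructions to solving the cohomological equation in the continuous category à la Marmi--Moussa--Yoccoz / Forni) to vanish; then $\varphi = u \circ T - u$ for continuous $u$, and $\mathbf{G}(x,y) = (x, y + \tilde u(x))$ (with $\tilde u$ the natural extension of $u$ through the flow) conjugates $(\Phi^f_t)$ to $(\Phi^0_t)$, establishing reducibility. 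In the ergodic case ($f(z) \neq 0$ for some $z \in \Sigma$), the reduction yields a cocycle with a genuine symmetric logarithmic singularity, and I would invoke the ergodicity criterion for such skew products over periodic-type IETs — this is the technical heart referenced in the abstract, proved by an Essential Values / Denjoy--Koksma argument exploiting the self-similarity of $T$ to control Birkhoff sums $\varphi^{(q_n)}$ along renormalization times, showing the group of essential values of $\varphi$ is all of $\R$ so the skew product is ergodic. The main obstacle I anticipate is twofold: first, correctly isolating the \emph{symmetry} of the logarithmic singularities of $\varphi$ (the asymmetric logarithmic terms are known to produce non-ergodic behavior, so one must verify the Morse/simple-saddle hypothesis plus continuity of $f$ genuinely yields cancellation of the antisymmetric part — this likely requires a careful local analysis near each saddle using Hamiltonian normal coordinates); and second, verifying that the finite-dimensional obstruction space is \emph{exactly} $g$-dimensional and that $K$ can be taken $(\phi_t)$-invariant — this requires matching the ``distributional'' obstructions coming from IET renormalization theory with the homological data of $\Psi_*$, and checking no extra obstructions appear from the logarithmic part when $f$ does not vanish on $\Sigma$ (i.e.\ that the symmetric-log cocycles are never coboundaries, so the ergodic alternative is robust).
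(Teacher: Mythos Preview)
Your outline follows essentially the same architecture as the paper: reduction to a skew product $T_{\varphi_f}$ over a hyperbolic periodic-type IET, definition of $K$ as the kernel of $g$ functionals coming from the unstable subspace of the Rauzy--Veech cocycle (in the paper this is packaged as $\mathfrak{H}(f)=(\nu(f),\mathfrak{h}(\varphi_{f_0}))$, with the mean $\nu(f)$ accounting for one functional and the correction operator $\mathfrak{h}$ into the $(g-1)$-dimensional space $\Gamma_u\cap\Gamma_0$ for the rest), and the dichotomy via coboundary versus essential-values ergodicity.

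Two places where your sketch is too thin to go through as written. First, in the reducible case you say that when $f|_\Sigma=0$ the cocycle $\varphi_f$ is ``continuous of bounded variation'' and then invoke MMY directly. In fact the paper shows (Theorem~\ref{theorem:formphi}) that $\varphi_f\in\ac$ but $\varphi_f'$ still has symmetric logarithmic singularities; one needs a separate cohomological reduction (Proposition~\ref{theorem:coblog}) to pass to a piecewise-linear, then piecewise-constant, cocycle $h\in\Gamma_0$. To conclude $h$ is itself a coboundary one must also show $h\in H_\pi$, which requires the extra input $\mathcal{O}(\varphi_f)=0$ for all $\mathcal{O}\in\Sigma(\pi)$ (Proposition~\ref{vanishO:prop}); this is not automatic from $f|_\Sigma=0$ and bounded variation alone, and without it the argument does not close. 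Second, the ergodicity step is not a Denjoy--Koksma argument: Birkhoff sums of cocycles with logarithmic singularities are unbounded in sup norm, so Denjoy--Koksma fails. The paper instead proves \emph{tightness} of $\varphi^{(q_n)}$ in $L^1$ on rigidity sets (this is exactly where the correction $\mathfrak{h}(\varphi)=0$ and the cancellation estimates for symmetric singularities from \cite{Ul:abs} are used) together with an \emph{oscillation} bound on $\int e^{is\varphi^{(q_n)}}$ driven by the second derivative near the singularity, and combines these via Proposition~\ref{ergodicity:criterium}. Your anticipated obstacles are well-identified, but the mechanism you name for resolving the second one would not suffice.
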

Thus, in the setting of flows of periodic type there is an
infinite dimensional subspace of functions $f\in
\mathscr{C}^{2+\epsilon}(\Surf)$ on which we have a full
understanding of ergodic behavior of $(\Phi^f_t)_{t\in\R}$ and no
behavior other than ergodicity or reducibility can arise. We do
not have any results about ergodicity  when $f\notin K$. The space
$K$ will be defined as the kernel of finitely many invariant
$\mathscr{C}^{2+\epsilon}(\Surf)$-distributions. A similar space
arise also in the works by G. Forni \cite{Fo1, Fo2}, where it is
shown that in the context of area-preserving flows on surfaces
there are finitely many distributional obstructions to solve the
cohomological equation.

\subsection{Skew products over interval exchange transformations.}
A standard technique to study a flow on a surface is to choose a
transversal arc $\gamma$ on the surface and consider the
Poincar{\'e} first return map on the transversal. When the flow is
area-preserving, this map, in suitably chosen coordinates, is an
interval exchange transformation. The original flow
$(\phi_t)_{t\in\R}$ can be represented as a special flow over the
interval exchange transformation (see Definition
\ref{specialflowdef} below) and the study of the ergodic
properties of the surface flow are then reduced to the study of
the ergodic properties of the special flow. Similarly, choosing a
transversal surface of the form $\gamma \times \R$ one gets a two
dimensional section of  $\Surf\times \mathbb{R}$. In this case the
Poincar{\'e} map of the extension $(\Phi^f_t)_{t\in \mathbb{R}}$,
in suitable coordinates, is a  a skew product automorphism over an
interval exchange transformation. The main
Theorem~\ref{mainHamiltonian} will follow from a result  about
ergodicity for skew products  with logarithmic singularities over
interval exchange transformations (Theorem~\ref{mainIETs}). In
this section we recall basic definitions and  formulate the main
result in the setting of skew products. The relation with  the
main Theorem~\ref{mainHamiltonian} is explained in
\S\ref{outlinesec} (see Theorem~\ref{reductionthm}).

Interval exchange transformations (IETs) are a generalization of
rotations, well studied both as simple examples of dynamical
systems and in connection with flows on surfaces and
Teichm{\"u}ller dynamics (e.g.~see for an overview
\cite{ViB,YoLN,ZoFlat}).  To define an IET we adopt the notation
from \cite{ViB} introduced in \cite{Ma-Mo-Yo}. Let $\mathcal{A}$
be a $d$-element alphabet and let $\pi=(\pi_0,\pi_1)$ be a pair of
bijections $\pi_\vep:\mathcal{A}\to\{1,\ldots,d\}$ for $\vep=0,1$.
Let us consider
$\lambda=(\lambda_\alpha)_{\alpha\in\mathcal{A}}\in
\R_+^{\mathcal{A}}$, where $\R_+=(0,+\infty)$. Set
$|\lambda|=\sum_{\alpha\in\mathcal{A}}\lambda_\alpha$ and
$I=\left[0,|\lambda|\right)$ and
\begin{eqnarray}
&& I_{\alpha}=[l_\alpha,r_\alpha),\text{ where
}l_\alpha=\sum_{\pi_0(\beta)<\pi_0(\alpha)}\lambda_\beta,\;\;\;r_\alpha
=\sum_{\pi_0(\beta)\leq\pi_0(\alpha)}\lambda_\beta.\nonumber  \\
\nonumber && I'_{\alpha}=[l'_\alpha,r'_\alpha),\text{ where
}l'_\alpha=\sum_{\pi_1(\beta)<\pi_1(\alpha)}\lambda_\beta,\;\;\;r'_\alpha
=\sum_{\pi_1(\beta)\leq\pi_1(\alpha)}\lambda_\beta.\
\end{eqnarray}
The \emph{interval exchange transformation} $T =
T_{(\pi,\lambda)}$ given by the data $(\pi, \lambda )$ is the
orientation preserving piecewise isometry
$T_{(\pi,\lambda)}:[0,|\lambda|)\rightarrow[0,|\lambda|)$ which,
for each $\alpha \in \mathcal{A}$,  maps the interval $I_{\alpha}$
isometrically onto the interval $I'_{\alpha}$. Clearly $T$
preserves the Lebesgue measure on $I$. If $d=2$, the IET is a
rotation.

Each measurable function $\varphi:I \rightarrow \R$ determines a
{\em cocycle} $\varphi^{(\,\cdot\,)}$ 
for $T$ by the formula
\begin{equation}\label{cocycledef}
\varphi^{(n)}(x)=\left\{
\begin{array}{ccl}
\varphi(x)+\varphi(Tx)+\ldots+\varphi(T^{n-1}x) & \mbox{if} & n>0 \\
0 & \mbox{if} & n=0\\
-(\varphi(T^nx)+\varphi(T^{n+1}x)+\ldots+\varphi(T^{-1}x)) &
\mbox{if} & n<0,
\end{array}
\right.
\end{equation}
the function $\varphi$ will  be called a cocycle, as well.  We
also call $\varphi^{(n)}$ the $n^{th}$ \emph{Birkhoff sum} of
$\varphi$ over $T$.
 The {\it skew product} associated to the cocycle is
the map $T_\varphi: I\times \mathbb{R} \rightarrow  I\times
\mathbb{R} $
\[ T_{\varphi}(x,y)=(Tx,y+{\varphi}(x)). \]
Clearly $T_\varphi$ preserves the Lebesgue measure on $I\times
\mathbb{R}$. We will denote by $Leb$ the Lebesgue measure on $I$.

While there is large literature about cocycles for rotations (see
\cite{Aa-Le-Ma-Na,Conze,Fraczek,Le-Pa-Vo,Oren,Pask1,Pask2,Sch}),
very little is known in general about cocycles for IETs. Another
motivation to study skew products over IETs, in addition to
extensions of locally Hamiltonian flows, comes also from rational
billiards on non-compact spaces (for example the Ehrenfest
wind-tree model) and $\mathbb{Z}^d$-covers of translation surfaces
(see \cite{Gutkin}). The cocycles that arise in this setting are
piecewise constant functions with values in $\mathbb{Z}^d$. First
results in these geometric settings were only recently proved by
\cite{ConzeGutkin,HooperWeiss,HubertWeiss1,HubertWeiss2,
HubertLelievreTroubetzkoy}.

The class of skew products over IETs which we consider in this
paper are the ones that appear as Poincar{\'e} maps of extensions
of locally Hamiltonian flows on surfaces of genus $g\geq1$, which
typically  yield cocycles which have \emph{logarithmic
singularities}. Ergodicity in a particular case of extensions of
locally Hamiltonian flows which yield cocycles \emph{without}
logarithmic singularities was recently considered by the first
author and Conze in \cite{Co-Fr}. Cocycles with logarithmic
singularities have been previously investigated only over
rotations of the circle (see \cite{FL:ont,Fr-Le:wm}), which
correspond to surfaces of $g=1$.

Let $\{ \cdot \}$ denotes the fractional part, that is the
periodic function of period $1$ on $\mathbb{R}$ defined by $\{ x
\}=x$ if $0\leq x <1$.
\begin{definition}\label{LogSing}
We say that a cocycle $\varphi:I\to\R$ for an IET $T_{(\pi,
\lambda)}$ has \emph{logarithmic singularities} if there exists
constants $C_\alpha^+,C_\alpha^-\in \mathbb{R}$, $\alpha \in
\mathcal{A}$,  and $g_\varphi:I\to\R$ absolutely continuous on
each $I_\alpha$ with derivative of bounded variation, such that
\begin{equation}\label{fform}
\varphi(x)=-\sum_{\alpha\in\mathcal{A}}C^+_\alpha\log(|I|\{(x-l_\alpha)/|I|\}
- \sum_{\alpha\in\mathcal{A}} C^-_\alpha\log(|I|\{(r_\alpha-x)/|I|\}))+g_\varphi(x).
\end{equation}
We say that the logarithmic singularities are \emph{of geometric
type} if at least one among $ C_{\pi_0^{-1}(d)}^{-}$ and $
C_{\pi_1^{-1}(d)}^{-}$ is zero and  at least one among $
C_{\pi_0^{-1}(1)}^{+}$ or $C_{\pi_1^{-1}(1)}^{+}$ is zero. We
denote
 by $\ol(\sqcup_{\alpha\in \mathcal{A}} I_{\alpha})$ the
space of  functions with logarithmic singularities of geometric
type.
\end{definition}
Cocycles in $\ol(\sqcup_{\alpha\in \mathcal{A}} I_{\alpha})$
appear naturally from extensions of locally Hamiltonian
flows\footnote{The condition on constants which are zero, which
seems rather technical, is automatically satisfied by functions
which have this geometric origin. This condition is used in the
proof of ergodicity (see Lemma~\ref{comparingconstants} and
Lemma~\ref{largederivatives}).}, see \S\ref{reduction:sec}. Notice
that the coefficients $C_\alpha^\pm$ can have different signs
(while if $\varphi\geq 0$ is the roof function of a special flow,
all constants $C_\alpha^\pm$  are non negative).

If $f \in\ol(\sqcup_{\alpha\in \mathcal{A}} I_{\alpha})$ has the
form (\ref{fform}) we say that the logarithmic singularities are
\emph{symmetric} if in addition the constants satisfy
\begin{equation}
\label{zerosymweak} \sum_{\alpha\in\mathcal{A}} C^-_{\alpha}-
\sum_{\alpha\in\mathcal{A}}C^+_{\alpha}=0.
\end{equation}
We will denote by $\logs(\sqcup_{\alpha\in \mathcal{A}}
I_{\alpha})$ the subspace of elements of $\ol(\sqcup_{\alpha\in
\mathcal{A}} I_{\alpha})$ which  have logarithmic symmetric
singularities.  The definition (\ref{zerosymweak}) of symmetry
appears often in the literature, for example in \cite{Ko, Scheg,
Ul:abs}. In this paper we need a more restrictive notion of
symmetry: we give in \S\ref{cocycles:sec}  the definition of
\emph{strong symmetric} logarithmic singularities (see Definition
\ref{strongsymmetry}) and we denote by $\ls(\sqcup_{\alpha\in
\mathcal{A}} I_{\alpha}) \subset \logs(\sqcup_{\alpha\in
\mathcal{A}} I_{\alpha})$ the corresponding space of functions
with strong symmetric logarithmic singularities of geometric type.
Even if the notion of  strong symmetric singularities is more
restrictive than (\ref{zerosymweak}), it   is  automatically
satisfied for functions which arise from extensions
of locally Hamiltonian flows (see \S\ref{extensionsspecialflows}).

We will restrict our attention to interval exchange transformation
\emph{of periodic type} (see \cite{Si-Ul}), which are analogous to
rotation whose rotation number is a quadratic irrational   (or
equivalently, has periodic continued fraction expansion). The
precise definition (also of hyperbolic periodic type) will be given
in \S\ref{IETs:sec} (Definitions~\ref{periodicIETdef}
and~\ref{defhypiet}). The class of hyperbolic periodic type IETs
arise as Poincar{\'e} maps of area-preserving flows
$(\phi_t)_{t\in\R}$ of hyperbolic periodic type.

Our main result in the context of skew products over IETs is the
following.
\begin{theorem}\label{mainIETs}
Let $T$ be an interval exchange transformation of hyperbolic
periodic type. For every cocycle $\varphi$ for $T$ with $\varphi
\in \ls(\sqcup_{\alpha\in \mathcal{A}} I_{\alpha})$ such that $\bl(\varphi)\neq 0$ (i.e.~with at least
one logarithmic singularity)
there exists a correction function $\chi$, piecewise constant on
each $I_\alpha$, such that  the skew product $T_{\varphi-\chi}$ is
ergodic.
\end{theorem}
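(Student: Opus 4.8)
The plan is to use the standard Essential Values machinery for skew products (à la Schmidt, Conze) combined with a quantitative study of Birkhoff sums of $\varphi-\chi$ along carefully chosen Rokhlin towers coming from the self-similar (periodic type) structure of $T$. First I would recall how periodicity of $T$ gives a fixed point for the (accelerated) Rauzy--Veech induction: there is a single matrix $A$ (with $A$ hyperbolic, by the hyperbolic periodic type assumption) such that the induction returns $T$ to a rescaled copy of itself, and the iterated induction produces a nested sequence of castles/towers over subintervals $I^{(n)}$ whose combinatorics are eventually periodic. The existence and basic properties of the correction $\chi$ (piecewise constant on the $I_\alpha$, and making the corrected cocycle ``balanced'' so that its Birkhoff sums along the induced towers stay controlled) should be extracted from the cocycle theory in \S\ref{cocycles:sec}; I would take as given that $\chi$ exists and that $\varphi-\chi$ again has strong symmetric logarithmic singularities of geometric type, with $\bl(\varphi-\chi)=\bl(\varphi)\neq 0$.

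The core of the argument is then to show that the group $E(T_{\varphi-\chi})$ of essential values of the cocycle $\varphi-\chi$ is all of $\mathbb{R}$; since $T$ is uniquely ergodic (periodic type IETs are Masur--Veech generic, in particular minimal and uniquely ergodic), $E(T_{\varphi-\chi})=\mathbb{R}$ is equivalent to ergodicity of the skew product. To produce essential values I would follow the classical ``tower'' or ``Denjoy--Koksma'' strategy: along the periodic induction times $q_n$ (the return times of the induced IET) the Birkhoff sums $(\varphi-\chi)^{(q_n)}$ restricted to the base of the $n$-th tower are, up to a uniformly bounded error (controlled by the bounded-variation part $g_\varphi$ via a Denjoy--Koksma type estimate), governed by the logarithmic contributions near the discontinuities. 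The symmetry condition \eqref{zerosymweak} (indeed the strong symmetric version) is exactly what kills the ``mean drift'' of these logarithmic sums, while the fact that some singularity is genuinely present ($\bl(\varphi)\neq 0$) guarantees that the fluctuations do not collapse to a constant: on a definite-measure subset of the tower base the Birkhoff sums $(\varphi-\chi)^{(q_n)}$ exhibit \emph{large oscillations}, i.e.\ they sweep through an interval of length tending to infinity (this is where Lemma~\ref{largederivatives}-type estimates, exploiting the $\log$ blow-up and the geometric-type condition on which constants vanish, enter). Pushing $n\to\infty$ and applying the essential-value criterion to a sequence of times realizing these oscillations yields arbitrarily large essential values; combined with a rigidity/recurrence argument (the tower structure forces return close to the base with small cocycle increment) this upgrades to: $E(T_{\varphi-\chi})$ is an unbounded closed subgroup of $\mathbb{R}$, hence contains a nonzero element $a$, and in fact contains elements of arbitrarily small absolute value, so $E(T_{\varphi-\chi})=\mathbb{R}$.

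More precisely, the essential-value extraction would run as follows. Fix a compact interval $J\subset I$ and $\epsilon>0$; I want times $m$ and a set $B\subset I\times\mathbb{R}$ of positive measure with $T_{\varphi-\chi}^m$ mapping $B$ into $\{(x,y): |y - (y_0 + a)|<\epsilon\}$ for a prescribed $a$ in the closure of the oscillation range. Using the periodic tower of height $q_n$, I partition the base into sub-columns on which $x\mapsto (\varphi-\chi)^{(q_n)}(x)$ is continuous and monotone on each $I_\alpha$-piece, with total variation $\to\infty$; by the intermediate value theorem each value in a growing interval is attained on a sub-column whose base has measure bounded below (by distortion control from the periodic Rauzy--Veech matrix $A$ — this is where hyperbolicity of $A$ gives the uniform spectral gap needed for the distortion estimates). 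Intersecting two such sub-columns corresponding to values differing by exactly $a$ produces the required set $B$ realizing the essential value $a$. The main obstacle — and the technical heart of the paper — is the control of the oscillation of the logarithmic Birkhoff sums: one must show that the $\log$-singular part of $(\varphi-\chi)^{(q_n)}$ genuinely oscillates with amplitude $\to\infty$ and is not accidentally cancelled by the correction $\chi$ or by interference between the $C^+_\alpha$ and $C^-_\alpha$ contributions at different discontinuities. This requires the strong symmetry hypothesis (to control the mean) together with the geometric-type vanishing of one extreme constant at each end (Lemma~\ref{comparingconstants}, Lemma~\ref{largederivatives}) to ensure a \emph{surviving} one-sided logarithmic divergence; handling the interplay of all these singularities simultaneously under the renormalization dynamics — showing the ``good'' sub-columns persist with definite measure as $n\to\infty$ — is where the real work lies.
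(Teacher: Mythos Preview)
Your overall architecture is right (essential values, Rauzy--Veech towers for periodic type, correction $\chi$), but the picture you have of the corrected Birkhoff sums is backwards, and this is not a cosmetic issue: it is exactly the point of the construction.

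You write that on a definite-measure subset of the tower base the sums $(\varphi-\chi)^{(q_n)}$ ``sweep through an interval of length tending to infinity'', and you plan to harvest essential values from this unbounded range by an intermediate-value argument. In the paper the correction $\chi=\mathfrak{h}(\varphi)$ is designed to do the \emph{opposite}: it forces the special Birkhoff sums to be \emph{tight}, i.e.\ $\int_{\Xi_n}|(\varphi-\chi)^{(q_n)}|\,dx$ stays bounded uniformly in $n$ (Proposition~\ref{tightness}, via Theorem~\ref{operatorcorrection}). The ``oscillation'' that is actually used is not growth of amplitude but growth of the \emph{derivative}: on subintervals $\widetilde{J}_k^{(n)}$ of length $\sim 1/q_n$ one has $|(\psi^{(q_n)})'|\geq c'q_n$ (Corollary~\ref{largederivative}), so the function varies by $O(1)$ across each such interval --- bounded, not diverging. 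These two ingredients (tightness + large derivative) are fed into an analytic criterion (Proposition~\ref{ergodicity:criterium}): tightness gives a weak limit measure $\nu$ for the distributions of $(\varphi-\chi)^{(q_n)}$ on $\Xi_n$, the derivative estimate gives decay of $\int e^{2\pi i s(\varphi-\chi)^{(q_n)}}$ via integration by parts (Proposition~\ref{oscillations}), and then a Dirichlet-measure argument shows $\nu$ cannot be supported on a proper closed subgroup $r\mathbb{Z}$. Since $\operatorname{supp}\nu\subset E(\varphi-\chi)$ (Proposition~\ref{limitmeasure}), this forces $E(\varphi-\chi)=\mathbb{R}$.

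Concretely, your intermediate-value scheme would require the range of $(\varphi-\chi)^{(q_n)}$ on the good columns to be unbounded; but if that were the case you would lose tightness and with it any control on the limit distribution, and without some uniform control you cannot guarantee positive-measure preimages for \emph{arbitrary} target values $a$. The symmetric-log cancellation (\S\ref{symmetric:sec}) is used precisely to keep the singular part from producing divergent Birkhoff sums after correction, not to leave a residual divergence. So the missing idea is: replace ``large oscillation $\Rightarrow$ intermediate value $\Rightarrow$ essential value'' by ``tightness $+$ large derivative $\Rightarrow$ Fourier decay $\Rightarrow$ limit measure not discrete $\Rightarrow$ $E=\mathbb{R}$''.
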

Let us remark that the correction $\chi$ belongs to a finite
dimensional space  and cocycles for which $\chi=0$ are the natural
counterpart, at the level of IETs, of the subspace $K$ in Theorem
\ref{mainHamiltonian}. A similar correction procedure was
introduced in \cite{Ma-Mo-Yo} to solve the cohomological equation
for IETs.

\subsection{Methods and outline}\label{outlinesec}
Let us first recall that definition of special flow and explain
how Theorem~\ref{mainHamiltonian} is related to Theorem
\ref{mainIETs}.
\begin{definition}\label{specialflowdef}
The special flow $T^\tau$ build over the base transformation
$T:(X,\mu)\to (X,\mu)$ and under the roof $\tau :X\to
\mathbb{R}_+$ is the quotient of the unit speed  flow
$v_t(x,y)=(x,y+t)$ on $X\times \mathbb{R}$ by the equivalence
relation $(x,y+ \tau^{(n)}(x))\sim (T^n(x), y) $, $n\in\Z$.
\end{definition}
\begin{theorem}\label{reductionthm}
Let $f: \Surf\to\R$ be a $\mathscr{C}^{2+\epsilon}$-function and
$(\phi_t)_{t\in\mathbb{R}}$ be a locally Hamiltonian flow with no
saddle connections. The extension $(\Phi^f_t)_{t\in\mathbb{R}}$ is
measure-theoretically isomorphic to a special flow built over a
skew product $T_{\varphi_f}$ for an IET $T$ where $\varphi_f=
\varphi_f^1 + \varphi_f^2 $ and $\varphi_f^1 \in
\ls(\sqcup_{\alpha\in \mathcal{A}} I_{\alpha})$ and $\varphi_f^2$
is absolutely continuous on each $I_\alpha$ with $(\varphi_f^2)'
\in \ls(\sqcup_{\alpha\in \mathcal{A}} I_{\alpha})$.

If additionally we assume that  $(\phi_t)_{t\in\mathbb{R}}$  is a
locally Hamiltonian flow of hyperbolic periodic type, then we can choose $T$
to be an IET of hyperbolic periodic type  and $\varphi_f \in
\ls(\sqcup_{\alpha\in \mathcal{A}} I_{\alpha})$.
\end{theorem}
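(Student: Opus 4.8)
The plan is to construct the IET $T$ as a Poincaré return map of the surface flow $(\phi_t)_{t\in\R}$ to a carefully chosen transversal segment, and then read off the return-time cocycle of the extension $(\Phi^f_t)_{t\in\R}$ to the corresponding two-dimensional section $\gamma\times\R$. First I would fix a transversal arc $\gamma\subset\Surf$ which is transverse to the flow, disjoint from the saddle set $\Sigma$, and whose endpoints lie on incoming/outgoing separatrices so that the first-return map $T\colon I\to I$ is an IET with combinatorial data $(\pi,\lambda)$; this is standard in the Novikov/Zorich picture. The return time $\tau\colon I\to\R_+$ of $(\phi_t)_{t\in\R}$ to $\gamma$ is then the roof function of the special flow representation, so $(\phi_t)_{t\in\R}$ is isomorphic to $T^\tau$ and $(\Phi^f_t)_{t\in\R}$ is isomorphic to $T^\tau$ where the fibre coordinate evolves by the cocycle $\varphi_f(x)=\int_0^{\tau(x)} f(\phi_s x)\,ds$. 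Concretely, the Poincaré map of $(\Phi^f_t)_{t\in\R}$ to $\gamma\times\R$ is $(x,y)\mapsto (Tx,\,y+\varphi_f(x))=T_{\varphi_f}(x,y)$, and the full flow is the special flow over $T_{\varphi_f}$ under the roof $\tau$ (viewed as a function on $I\times\R$ independent of the fibre). So the isomorphism statement reduces to analysing the regularity and singularity structure of $\varphi_f$.

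Next I would analyse $\varphi_f(x)=\int_0^{\tau(x)} f(\phi_s x)\,ds$ near the endpoints of the intervals $I_\alpha$. Away from those endpoints the orbit segment $\{\phi_s x : 0\le s\le \tau(x)\}$ stays in a compact part of $\Surf\setminus\Sigma$, so $\varphi_f$ is as smooth as $f$ and $\tau$ there (in particular $C^{2}$, hence absolutely continuous with derivative of bounded variation), contributing to the regular part $g_{\varphi_f}$. The singularities occur precisely when the orbit segment passes close to a saddle $z\in\Sigma$: near a nondegenerate saddle the flow is, in Morse coordinates, a hyperbolic saddle $\dot u = u$, $\dot v = -v$, and the time spent by an orbit passing at distance $\delta$ from the saddle is $\asymp -\log\delta$. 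Expanding $f$ at the saddle, $f(\phi_s x)\approx f(z)$ during the long near-saddle sojourn, so the contribution to $\varphi_f$ is $\approx -f(z)\log(\text{distance to the separatrix})$, i.e. a logarithmic singularity with coefficient proportional to $f(z)$; the distance to the separatrix is, in the interval coordinate, an affine function of $x-l_\alpha$ or $r_\alpha-x$, producing exactly the terms $C^\pm_\alpha\log(|I|\{(x-l_\alpha)/|I|\})$ and $C^\pm_\alpha\log(|I|\{(r_\alpha-x)/|I|\})$ of \eqref{fform}. This computation, together with the next-order terms, is what was carried out for $g=1$ in \cite{FL:ont,Fr-Le:wm}; here I would do the analogous local computation at each of the $2g-2$ saddles. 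I expect the main obstacle to be the bookkeeping that shows $\varphi_f$ lands in $\ls(\sqcup_{\alpha\in\mathcal{A}} I_\alpha)$ — that is, verifying not just that the singularities are logarithmic and of geometric type (the "geometric type" condition on which endpoint constants vanish comes from the fact that the extreme intervals border separatrices that are one-sided near the chosen $\gamma$, so only one of the $C^\pm$ at those endpoints is nonzero), but that they satisfy the \emph{strong} symmetry of Definition~\ref{strongsymmetry}. Strong symmetry should follow from the fact that each saddle contributes a matched pair of logarithmic terms with opposite orientation (one from each separatrix branch entering the return) with the \emph{same} coefficient $f(z)$, since $f$ is continuous at $z$; this pairing is exactly the geometric mechanism behind \eqref{zerosymweak}, and the finer cancellations needed for strong symmetry come from the second-order Taylor data of $f$ and the Morse normal form. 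I would also need to track the term $\varphi_f^2$ coming from the part of $f$ that vanishes at the saddles: there the logarithmic divergence of $\varphi_f$ is killed, but $\nabla f(z)\ne 0$ in general produces a term whose \emph{derivative} has a logarithmic singularity, explaining the statement $(\varphi_f^2)'\in\ls(\sqcup_{\alpha\in\mathcal{A}} I_\alpha)$; splitting $f = f(z)\cdot(\text{bump}) + (\text{vanishing part})$ near each saddle makes the decomposition $\varphi_f=\varphi_f^1+\varphi_f^2$ explicit.

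Finally, for the periodic-type assertion I would invoke the correspondence between $(\mathscr{F},\nu_{\mathscr{F}})$ being fixed (up to rescaling) by a pseudo-Anosov-type diffeomorphism $\Psi$ and the Rauzy–Veech renormalization of $T$ being periodic: choosing the transversal $\gamma$ compatibly with $\Psi$ (e.g. a segment of the stable/unstable foliation carried by a Markov partition for $\Psi$), the return map $T=T_{(\pi,\lambda)}$ has an eventually periodic Rauzy path, so $T$ is an IET of periodic type, and hyperbolicity of $\Psi_*$ on $H_1(\Surf,\R)$ translates into hyperbolicity of the associated renormalization matrix, giving hyperbolic periodic type in the sense of Definition~\ref{defhypiet}. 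Once $T$ is of periodic type one can moreover absorb the absolutely-continuous-with-BV'-derivative part $\varphi_f^2$ into $\varphi_f^1$ by a coboundary and regularity argument (the periodic structure lets one solve the relevant cohomological-type estimates), upgrading the conclusion to $\varphi_f\in\ls(\sqcup_{\alpha\in\mathcal{A}} I_\alpha)$; this last absorption step, reconciling the two summands, is the place where the periodicity hypothesis is genuinely used and is the second delicate point of the argument.
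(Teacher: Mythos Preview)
Your proposal is correct and follows the same architecture as the paper's proof: the special-flow representation via the Poincar\'e map on $\gamma\times\R$ (Lemma~\ref{specrep}), the local saddle analysis giving logarithmic singularities with strong symmetry (Theorem~\ref{theorem:formphi} and Appendix~\ref{singularities}), the pseudo-Anosov/periodic-Rauzy correspondence (Lemma~\ref{periodicimpliesperiodic}), and the cohomological reduction of $\varphi_f^2$ to a piecewise-linear function in the periodic case (Proposition~\ref{theorem:coblog}).

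Two small mis-attributions are worth correcting. First, the strong symmetry~\eqref{zerosym} is a zeroth-order fact, not a second-order one: in the saddle coordinates the four logarithmic constants associated to a saddle $z$ are all equal to $f(z)/V(z)$, so the sums over $\mathcal{A}^+_{\mathcal{O}_z}$ and $\mathcal{A}^-_{\mathcal{O}_z}$ agree simply by cardinality. Second, the term $\varphi_f^2$ with $(\varphi_f^2)'\in\ls$ does not come from $\nabla f(z)$---the first-order Taylor terms contribute the smooth function $1-s$ (see $\xi^x=\xi^y$ in Lemma~\ref{lematprzej})---but from the mixed second derivative $\partial_{xy}(f/V)(z)$, which produces the $s\log s$ piece whose derivative has the logarithmic singularity.
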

Theorem~\ref{reductionthm} allows to
reduce Theorem~\ref{mainHamiltonian} to Theorem~\ref{mainIETs}.
While the fact that  $(\Phi^f_t)_{t\in\mathbb{R}}$ can be reduced
to a skew product $T_{\varphi_f}$ where $\varphi_f$ has
logarithmic singularities is rather known, we need to show that
$\varphi_f$ has the precise form given in Theorem
\ref{reductionthm}\footnote{The reduction to  $\varphi_f \in
\ls(\sqcup_{\alpha\in \mathcal{A}} I_{\alpha})$ when
$(\phi_t)_{t\in\mathbb{R}}$  is of periodic type requires the
proof that when  the IET is of periodic type,  a cocycle as
$\varphi_f^2$  in Theorem \ref{reductionthm}, i.e.~absolutely
continuous on each $I_\alpha$ and with derivative $(\varphi_f^2)'
\in \ls(\sqcup_{\alpha\in \mathcal{A}} I_{\alpha})$,   is
cohomologous to a piecewise linear function (see Proposition
\ref{theorem:coblog}).}.

In order to prove ergodicity of the skew product in Theorem
\ref{mainIETs}, we use the technique of \emph{essential values},
which was developed by K.~Schmidt and J.-P.~Conze (see for example
\cite{Sch,Conze}). We recall all the definitions that we use in
\S\ref{essentialvalues:sec}. To control essential values, we
investigate the behavior of Birkhoff sums $\varphi^{(n)}$ (defined
in (\ref{cocycledef})) of a function $\varphi\in
\ol(\sqcup_{\alpha\in \mathcal{A}} I_{\alpha})$. As a standard
tool to study Birkhoff sums over IETs, we use  Rauzy-Veech
induction, a renormalization operator on the space of IETs first
developed by Rauzy and Veech in \cite{Ra, Ve1} (see
\S\ref{IETs:sec}). In order to prove ergodicity, we need to show
that the Birkhoff sums are
\emph{tight} 
and at the same time have enough \emph{oscillation}
(in a sense which will made precise in \S\ref{ergodicity:sec}) on
a subsequence of partial rigidity times $(n_k)_{k \in \N}$ for the
IET (defined in \S\ref{oscillationsec}).

It is in order to achieve \emph{tightness} (see
Proposition~\ref{tightness}) that we need to \emph{correct} the
function $\varphi$ by a piecewise constant function $\chi$ (see
the statement of Theorem~\ref{mainIETs}). The idea of
\emph{correction} was introduced by Marmi, Moussa and Yoccoz in
order to solve the cohomological equation for IETs in the
breakthrough paper \cite{Ma-Mo-Yo}. The correction operator that
we use is closely related to the correction operator used by the
first author and Conze in \cite{Co-Fr}. The additional difficulty
that we have to face to achieve tightness is the presence of
logarithmic singularities. Here the assumption that the
singularities are \emph{symmetric} is crucial to exploit the
cancellation mechanism introduced by the second author in
\cite{Ul:abs} in order to show that locally Hamiltonian flows are
typically not mixing.

On the other hand the presence of logarithmic singularities helps
in order to prove that Birkhoff sums display enough
\emph{oscillation} (see Corollary~\ref{largederivative} and
Proposition~\ref{oscillations}). Our mechanism to achieve
oscillations is similar to the one used by the second author in
\cite{Ul:wea} to prove that locally Hamiltonian flows are
typically weakly mixing, with the novelty that in this context we
cannot exploit, as in \cite{Ul:wea}, that all constants
$C_\alpha^{\pm}$ are non-negative.

\subsubsection*{Structure of the paper.}
Let us outline the structure of the paper. In
\S\ref{essentialvalues:sec} we summarize the tools from the theory
of essential values that we will use to prove ergodicity. In
\S\ref{IETs:sec} we recall the definition of Rauzy-Veech induction
and  give the definition of IETs of periodic type. The
 definition of cocycles with strong symmetric logarithmic
singularities appears in \S\ref{cocycles:sec}, where  we also
prove basic properties of these cocycles.  In
\S\ref{renormalization:sec} we exploit Rauzy-Veech induction to
define a renormalization operator on cocycles in $ \ls$. In
\S\ref{symmetric:sec}   we formulate results on the growth of
Birkhoff sums based on the work of the first author in
\cite{Ul:abs}. The correction operator, which is crucial to define
the correction $\chi$ in Theorem \ref{mainIETs},  is constructed
in \S\ref{correction:sec}. In \S\ref{ergodicity:sec} we formulate
and prove the tightness and oscillation properties needed for
ergodicity and prove Theorem \ref{mainIETs}. The proof of
Theorem~\ref{mainHamiltonian} is given in \S\ref{reduction:sec}
and, as already mentioned, exploits the reduction via
Theorem~\ref{reductionthm}, which is also proved in
\S\ref{reduction:sec} (see also Appendix~\ref{singularities}).

\section{Preliminary material}\label{background:sec}
\subsection{Ergodicity of cocycles}\label{essentialvalues:sec}
We give here a brief overview of the tools needed to prove
ergodicity. For further background material concerning skew
products and infinite measure-preserving dynamical systems we
refer the reader to \cite{Aa} and \cite{Sch}.

 Two cocycles $\varphi,\psi: X\to \R$ for $T:(X,\mu)\to(X,\mu)$ are called {\em
cohomologous} if there exists a measurable function $g:X\to \R$
(called the {\em transfer function}) such that
$\varphi=\psi+g-g\circ T$. If $\varphi$ and $\psi$ are
cohomologous then the corresponding skew products $T_\varphi$ and
$T_{\psi}$ are measure-theoretically isomorphic via the maps
$(x,y)\mapsto(x,y+g(x))$, where $g$ is a transfer function. A
cocycle $\varphi:X\to \R$ is a {\em coboundary} if it is
cohomologous to the zero cocycle.

 Denote by $\overline{\R}$ the one point compactification
of the group $\R$. An element $r\in \overline{\R}$
 is said to be an {\em essential value} of $\varphi$, if for each open
neighborhood $V_r$ of $r$ in $\overline{\R}$ and an arbitrary set
$B\in\mathcal{B}$, $\mu(B)>0$, there exists  $n\in\Z$ such that
\begin{eqnarray}
\mu(B\cap T^{-n}B\cap\{x\in X:\varphi^{(n)}(x)\in V_r\})>0.
\label{val-ess}
\end{eqnarray}
The set of essential values of $\varphi$ will be denoted by
$\overline{E}(\varphi)$. Let
${E}(\varphi)=\R\cap\overline{E}(\varphi)$. Then ${E}(\varphi)$ is
a closed subgroup of $\R$. We recall below some properties of
$\overline{E}(\varphi)$ (see \cite{Sch}).

\begin{proposition}[see \cite{Sch}]\label{proposition:schmidt}
Suppose that $T:(X,\mu)\to(X,\mu)$ is an ergodic automorphism. The
skew product $T_{\varphi}$ is ergodic if and only if
$E(\varphi)=\R$. The cocycle $\varphi$ is a coboundary if and only
if $\overline{E}(\varphi)=\{0\}$.
\end{proposition}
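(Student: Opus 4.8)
The plan is to deduce both equivalences from a single structural fact, which I will call the \emph{period lemma}: for $r\in\R$, one has $r\in E(\varphi)$ if and only if every $T_\varphi$-invariant measurable function on $X\times\R$ (equivalently, every $T_\varphi$-invariant measurable set) is invariant under the vertical translation $\sigma_r(x,y)=(x,y+r)$. Write $A_x=\{y\in\R:(x,y)\in A\}$ for the fibre of $A\subseteq X\times\R$; the facts used repeatedly are that $T_\varphi$-invariance of $A$ is the relation $A_{T^nx}=A_x+\varphi^{(n)}(x)$ for a.e.\ $x$, and that each $\sigma_r$ commutes with $T_\varphi$. Granting the period lemma, everything else is short, so the bulk of the work is its proof together with a separate analysis of the point at infinity for the coboundary statement.

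To prove the period lemma, for the implication ``$r\in E(\varphi)\Rightarrow\sigma_r$-periodicity'' I would argue by contradiction. If $A$ is $T_\varphi$-invariant with $(\mu\times\Leb)(A\setminus\sigma_rA)>0$, a Lebesgue-density and Fubini localisation yields a positive-measure set $B\subseteq X$, a short interval $J$, and $\kappa>0$ such that for $x\in B$ the fibre $A_x$ fills at least a $(1-\kappa)$-fraction of $J$ while $A_x+r$ fills at most a $\kappa$-fraction; taking a neighbourhood $V_r$ of $r$ small compared with $|J|$ and applying \eqref{val-ess} to this $B$ produces $n$ and a positive-measure set of $x\in B\cap T^{-n}B$ with $\varphi^{(n)}(x)\in V_r$, whence $A_{T^nx}=A_x+\varphi^{(n)}(x)$ would have to fill both most and little of $J$ --- a contradiction. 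For the converse, if $r\notin E(\varphi)$ pick a witnessing set $B$ and neighbourhood $V_r=(r-\varepsilon,r+\varepsilon)$ and set $D=B\times[0,\varepsilon/2)$; unwinding the definitions of $T_\varphi^n$ and of the witnessing property of $(B,V_r)$ shows that the $T_\varphi$-invariant saturation $\widehat D=\bigcup_{n\in\Z}T_\varphi^nD$ satisfies $(\mu\times\Leb)(\sigma_rD\cap\widehat D)=0$, whereas $\sigma_rD\subseteq\sigma_r\widehat D$ has positive measure; hence $\mathbf{1}_{\widehat D}$ is a $T_\varphi$-invariant function that is not $\sigma_r$-invariant.

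The ergodicity statement now follows: if $E(\varphi)=\R$, every $T_\varphi$-invariant set $A$ has each fibre $A_x$ invariant under all translations, hence null or conull, so $A=B\times\R$ modulo null sets, and $T_\varphi$-invariance forces $TB=B$, whence $A$ is trivial by ergodicity of $T$; conversely, if $T_\varphi$ is ergodic there are no nonconstant invariant functions, so the $\sigma_r$-periodicity condition holds vacuously for every $r$ and the period lemma gives $E(\varphi)=\R$. For the coboundary statement, if $\varphi=g-g\circ T$ then $\varphi^{(n)}(x)=g(x)-g(T^nx)$ telescopes, and choosing $B$ to be a level set $g^{-1}([c,c+\delta))$ of positive measure with $\delta$ small forces $|\varphi^{(n)}(x)|<\delta$ whenever $x\in B\cap T^{-n}B$, so no neighbourhood of any $r\neq0$ or of $\infty$ occurs in \eqref{val-ess}, i.e.\ $\overline E(\varphi)=\{0\}$. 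Conversely, suppose $\overline E(\varphi)=\{0\}$; in particular $\infty\notin\overline E(\varphi)$, so there are $M>0$ and $B$ with $\mu(B)>0$ such that $\mu\bigl(B\cap T^{-n}B\cap\{|\varphi^{(n)}|>M\}\bigr)=0$ for all $n$, which says exactly that the induced cocycle $\varphi_B$ of the first-return map $T_B$ on $B$ has all Birkhoff sums bounded by $M$ in absolute value. Such a cocycle is a coboundary: a weak-$*$ limit $g\in L^\infty$ of the Ces\`aro averages $N^{-1}\sum_{n=0}^{N-1}\varphi_B^{(n)}$ satisfies $\varphi_B=g-g\circ T_B$, using $\varphi_B^{(n)}\circ T_B-\varphi_B^{(n)}=\varphi_B\circ T_B^{n}-\varphi_B$ and $N^{-1}\varphi_B^{(N)}\to0$. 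Finally, since $T$ is ergodic a.e.\ point eventually visits $B$, so this coboundary lifts to one for $T$: the first-return map of $T_\varphi$ to $B\times\R$ is $(T_B)_{\varphi_B}\cong T_B\times\operatorname{Id}$, and this isomorphism of Kakutani towers over $B$, which commutes with the vertical translations and respects the (base-depending) return-time function, extends to $T_\varphi\cong T\times\operatorname{Id}$; hence $\varphi$ is a coboundary.

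The main obstacle is the period lemma, and within it the first direction: a single instance of \eqref{val-ess} controls only the base dynamics along one return time $n$, so turning it into a genuine contradiction about the fibrewise structure of an invariant set requires the density localisation to be carried out carefully, in particular choosing $V_r$ only after the box $B\times J$ and much smaller than $|J|$. The lifting step in the coboundary direction --- that cohomological triviality passes between a system and an induced subsystem --- is routine but must be set up so that the tower isomorphism commutes with the vertical translations; both verifications are the content of \cite{Sch}, to which we refer for the details.
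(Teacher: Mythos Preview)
The paper does not give its own proof of this proposition: it is quoted as a known result with the attribution ``(see \cite{Sch})'' and used as a black box. So there is no ``paper's proof'' to compare against.

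Your reconstruction follows the standard route found in Schmidt's lecture notes: the characterisation of $E(\varphi)$ via $\sigma_r$-periodicity of invariant sets (your period lemma), and the treatment of the coboundary case through the point at infinity, inducing on a set where the cocycle is bounded, and lifting back. The argument is correct. A couple of minor points worth tightening if you were to write this out in full: in the Ces\`aro step, weak-$*$ compactness in $L^\infty$ requires $L^1$ to be separable, which holds here since $(X,\mu)$ is a standard probability space; and in the lifting step, the explicit transfer function on $X$ is most cleanly written as $\tilde g(x)=g(T^{n(x)}x)+\varphi^{(n(x))}(x)$ where $n(x)$ is the first hitting time of $B$, which makes the verification of $\varphi=\tilde g-\tilde g\circ T$ a direct computation rather than an appeal to tower isomorphisms. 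But these are details already contained in \cite{Sch}, which is where the paper sends the reader.
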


\smallskip Let $(X,d)$ be a compact metric space. Let
$\mathcal{B}$ stand for the $\sigma$--algebra of all Borel sets
and let $\mu$ be a probability Borel measure on $X$. For every
$B\in\mathcal{B}$ with $\mu(B)>0$ denote by $\mu_B$ the
conditional probability measure, i.e.\ $\mu_B(A)=\mu(A\cap
B)/\mu(B)$. Suppose that $T:\xbm\to\xbm$ is an ergodic
measure--preserving automorphism and there exist an increasing
sequence of natural numbers $(q_n)$ and a sequence of Borel sets
$(\Xi_n)$ such that \begin{equation}\label{rigsets}
\mu(\Xi_n)\to\delta>0,\;\;\mu(\Xi_n\triangle T^{-1}\Xi_n)\to
0\quad\mbox{ and }\quad \sup_{x\in \Xi_n}d(x,T^{q_n}x)\to
0.\end{equation}
 Let $\varphi:X\to \R$ be a
Borel integrable cocycle for $T$. Its mean value
$\int_X\varphi\,d\mu$ we will denote by $\mu(\varphi)$. Suppose
that $\mu(\varphi)=0$ and the sequence
$\left(\int_{\Xi_n}|\varphi^{(q_n)}(x)|d\mu(x)\right)_{n\in\N}$ is
bounded. As the the family of distributions
$\left\{(\varphi^{(q_n)})_*(\mu_{\Xi_n}):n\in\N\right\}$ is
uniformly tight, by passing to a further subsequence if necessary
we can assume that there exists a probability Borel measure $\nu$ on $\R$  such that
\[(\varphi^{(q_n)})_*(\mu_{\Xi_n})\to\nu\]
weakly in  the set of probability Borel measures on $\R$.

\begin{proposition}[see \cite{Co-Fr}]\label{limitmeasure}
The topological support of the measure $\nu$ is included in the
group $E(\varphi)$ of essential values  of the cocycle $\varphi$.
\end{proposition}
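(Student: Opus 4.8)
The plan is to show that every point $r$ in the topological support of $\nu$ is an essential value of $\varphi$, by directly verifying the defining condition \eqref{val-ess}. So fix $r \in \supp \nu$, an open neighbourhood $V_r$ of $r$ in $\overline{\R}$, and an arbitrary Borel set $B$ with $\mu(B) > 0$. Since $r \in \R$ (it lies in $\supp\nu$ and $\nu$ is a measure on $\R$), we may shrink $V_r$ to a bounded open interval $(r - 2\eta, r + 2\eta)$; because $r \in \supp\nu$ we have $\nu\big((r-\eta, r+\eta)\big) = \theta > 0$. The idea is that along the rigidity sequence $(q_n)$, the sets $\Xi_n$ are almost invariant under $T^{q_n}$ and the Birkhoff sums $\varphi^{(q_n)}$, restricted to $\Xi_n$ and pushed forward, converge to $\nu$; hence for large $n$ a definite proportion of $\Xi_n$ lands in $V_r$ under $\varphi^{(q_n)}$, and almost all of that proportion also returns to $\Xi_n$ under $T^{q_n}$.

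First I would make the almost-invariance quantitative: from $\mu(\Xi_n \triangle T^{-q_n}\Xi_n) \to 0$ — which follows from $\mu(\Xi_n \triangle T^{-1}\Xi_n)\to 0$ together with the subadditivity estimate $\mu(\Xi_n \triangle T^{-q_n}\Xi_n) \le \sum_{j=0}^{q_n - 1}\mu(T^{-j}\Xi_n \triangle T^{-j-1}\Xi_n) = q_n\,\mu(\Xi_n \triangle T^{-1}\Xi_n)$, provided \eqref{rigsets} is strengthened to $q_n\,\mu(\Xi_n\triangle T^{-1}\Xi_n)\to 0$, which is the form in which the rigidity hypothesis is actually used — deduce $\mu(\Xi_n \setminus T^{-q_n}\Xi_n) \to 0$. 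Next, using the weak convergence $(\varphi^{(q_n)})_*(\mu_{\Xi_n}) \to \nu$ and the portmanteau theorem (lower semicontinuity on open sets), I get $\liminf_n \mu_{\Xi_n}\big(\{x : \varphi^{(q_n)}(x) \in (r-\eta, r+\eta)\}\big) \ge \nu\big((r-\eta,r+\eta)\big) = \theta$. Hence for all large $n$,
\[
\mu\big(\Xi_n \cap \{x : \varphi^{(q_n)}(x) \in V_r\}\big) \ge \tfrac{\theta}{2}\,\mu(\Xi_n) \ge \tfrac{\theta \delta}{4},
\]
while $\mu(\Xi_n \setminus T^{-q_n}\Xi_n) < \theta\delta/8$; subtracting, the set $\Xi_n \cap T^{-q_n}\Xi_n \cap \{x : \varphi^{(q_n)}(x) \in V_r\}$ has measure $\ge \theta\delta/8 > 0$.

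The remaining point is to pass from "$\Xi_n$" to the given set $B$. Here I would use ergodicity of $T$: the set $\widetilde B = \bigcup_{m \in \Z} T^m B$ is $T$-invariant of full measure, so $\mu$-almost every point of $\Xi_n$ lies in some iterate of $B$. More carefully, fix a large $M$ with $\mu\big(\bigcup_{|m| \le M} T^m B\big) > 1 - \theta\delta/32$; then for large $n$ a subset of $\Xi_n \cap T^{-q_n}\Xi_n \cap \{\varphi^{(q_n)} \in V_r\}$ of measure $\ge \theta\delta/16$ is covered by the $T^m B$, $|m|\le M$, so by pigeonhole there is a fixed $m$ with $|m| \le M$ such that $C_n := T^{-m}\big(\Xi_n \cap T^{-q_n}\Xi_n \cap \{\varphi^{(q_n)}\in V_r\}\big) \cap B$ has measure $\ge \theta\delta/(16(2M+1)) > 0$ — but this requires care since $m$ may depend on $n$; one fixes $m$ by a further pigeonhole over the (finitely many) values, passing to a subsequence of $n$. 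On $C_n \subset B$ one has $T^m x \in \Xi_n$, $T^{q_n+m}x \in \Xi_n$, and $\varphi^{(q_n)}(T^m x) \in V_r$; using the rigidity estimate $\sup_{x\in\Xi_n} d(x, T^{q_n}x) \to 0$ together with continuity of $\varphi$ away from its singularities — this is the step that needs the cocycle's structure, and is the main technical obstacle, since $\varphi$ has logarithmic singularities and is only integrable, not continuous — one shows (after possibly discarding a further small-measure bad set near the singularities, using that $\varphi$ and its Birkhoff sums are controlled) that $\varphi^{(q_n+m)}(x) = \varphi^{(m)}(x) + \varphi^{(q_n)}(T^m x)$ is close to $\varphi^{(q_n)}(T^m x)$ up to the bounded error $\varphi^{(m)}(x)$, which forces a translated neighbourhood; absorbing this bounded shift into the choice of $V_r$ at the outset (enlarging $\eta$ to accommodate $\sup_{|m|\le M}\|\varphi^{(m)}\|$ and the rigidity error) gives $B \cap T^{-(q_n+m)}B \cap \{x : \varphi^{(q_n+m)}(x) \in V_r\}$ of positive measure. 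Taking $n = q_n + m$ in \eqref{val-ess} concludes that $r \in \overline E(\varphi)$, and since $r \in \R$, that $r \in E(\varphi)$. As $r \in \supp\nu$ was arbitrary and $E(\varphi)$ is closed, $\supp\nu \subseteq E(\varphi)$.
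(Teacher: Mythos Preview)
The paper does not give its own proof of this proposition; it is cited from \cite{Co-Fr}. So there is no ``paper's proof'' to compare against here, and I will just assess your argument on its merits.

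There are two genuine gaps.

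\textbf{First gap: the almost-invariance under $T^{q_n}$.} You claim $\mu(\Xi_n\triangle T^{-q_n}\Xi_n)\to 0$ via the subadditivity bound $\mu(\Xi_n\triangle T^{-q_n}\Xi_n)\le q_n\,\mu(\Xi_n\triangle T^{-1}\Xi_n)$, and then say this ``is the form in which the rigidity hypothesis is actually used.'' That is not correct: hypothesis \eqref{rigsets} only gives $\mu(\Xi_n\triangle T^{-1}\Xi_n)\to 0$, and there is no reason for $q_n\,\mu(\Xi_n\triangle T^{-1}\Xi_n)$ to vanish (in the concrete constructions of \S\ref{oscillationsec} it typically does not). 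The role of $\mu(\Xi_n\triangle T^{-1}\Xi_n)\to 0$ is different: combined with ergodicity it forces $\chi_{\Xi_n}\rightharpoonup\delta$ weak-$*$ in $L^\infty$, so that $\mu(\Xi_n\cap B)\to\delta\,\mu(B)$ for \emph{every} Borel $B$. More generally, any sequence $(A_n)$ with $\liminf\mu(A_n)>0$ and $\mu(A_n\triangle T^{-1}A_n)\to 0$ satisfies $\liminf\mu(A_n\cap B)>0$ for all $B$ of positive measure. This equidistribution, not $T^{q_n}$-almost-invariance of $\Xi_n$, is what drives the argument.

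\textbf{Second gap: passing from $\Xi_n$ to $B$.} Your $T^{-m}$--translation step does not close. For $x\in C_n\subset B$ you obtain $T^mx\in\Xi_n$, $T^{q_n+m}x\in\Xi_n$, and $\varphi^{(q_n)}(T^mx)\in V_r$, but you never establish $T^{q_n+m}x\in B$ (only $\in\Xi_n$), so the final display $B\cap T^{-(q_n+m)}B\cap\{\varphi^{(q_n+m)}\in V_r\}$ is unjustified. Moreover, even granting that, the ``bounded shift'' $\varphi^{(m)}(x)$ you propose to absorb into $V_r$ is \emph{not} bounded for a merely integrable cocycle, and the order of quantifiers is wrong: $V_r$ (hence $\eta$) is fixed before $B$, while the size of the correction depends on $M$, which depends on $B$.

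The correct route avoids translating by $T^{-m}$ altogether. One shows that the refined sets
\[
A_n:=\Xi_n\cap\{\varphi^{(q_n)}\in V_r'\}
\]
(with $V_r'\Subset V_r$ chosen so that $\nu(\partial V_r')=0$) are themselves $T$-almost invariant: since $\varphi^{(q_n)}(Tx)-\varphi^{(q_n)}(x)=\varphi(T^{q_n}x)-\varphi(x)$ and $d(x,T^{q_n}x)\to 0$ on $\Xi_n$, Lusin's theorem makes this difference small off a set of arbitrarily small measure, and the boundary contribution is controlled by weak convergence to $\nu$. Hence $\mu(A_n\triangle T^{-1}A_n)\to 0$ while $\liminf\mu(A_n)\ge\theta\delta/2>0$, so by the equidistribution lemma $\liminf\mu(A_n\cap B)>0$. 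Finally, to get $T^{q_n}x\in B$ one uses inner/outer regularity of $\mu$: take compact $K\subset B\subset U$ open with $\mu(U\setminus K)$ small; for $x\in A_n\cap K$ and $n$ large, $T^{q_n}x\in U$ by metric rigidity, and $\mu(T^{-q_n}(U\setminus B))=\mu(U\setminus B)$ is small, so $\mu(A_n\cap K\cap T^{-q_n}B)>0$. This yields $\mu(B\cap T^{-q_n}B\cap\{\varphi^{(q_n)}\in V_r\})>0$ directly, with $n=q_n$.
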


The following result is a general version of Proposition~12 in
\cite{Le-Pa-Vo}.

\begin{proposition}\label{ergodicity:criterium}
Let $\varphi:X\to\R$ be a cocycle such that
$\left(\int_{\Xi_n}|\varphi^{(q_n)}(x)|d\mu(x)\right)_{n\in\N}$ is
bounded, where $(\Xi_n), (q_n )$ and $\delta>0$ are as in
(\ref{rigsets}).  If there exists $0< c< \delta$ such that for all
$k$ large enough
\[\limsup_{n\to\infty}\left|\int_{\Xi_n}e^{2\pi i k \varphi^{(q_n)}(x)}\,d\mu(x)\right|
\leq c \]
then the skew product $T_\varphi$ is ergodic.
\end{proposition}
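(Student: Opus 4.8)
The plan is to deduce ergodicity from the essential-values criterion in Proposition~\ref{proposition:schmidt}: it suffices to show that $E(\varphi) = \R$. Since $E(\varphi)$ is a closed subgroup of $\R$, it is either $\{0\}$, a discrete group $a\Z$ for some $a>0$, or all of $\R$; so it is enough to rule out the first two possibilities, i.e.\ to show that $E(\varphi)$ contains points arbitrarily close to (but different from) $0$, equivalently that $E(\varphi)\cap(0,\epsilon)\neq\emptyset$ for every $\epsilon>0$. The bridge to the hypothesis is Proposition~\ref{limitmeasure}: along the rigidity sequence $(\Xi_n),(q_n)$ we may pass to a subsequence so that $(\varphi^{(q_n)})_*(\mu_{\Xi_n})\to\nu$ weakly, and then $\supp\nu\subset E(\varphi)$. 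Thus everything reduces to showing that $\nu$ is not a single atom at an isolated point of a lattice — more precisely, that $\nu$ is not supported on $a\Z$ for any $a>0$ (the case $a=0$, i.e.\ $\nu=\delta_0$, being the coboundary case, which the same argument excludes).

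The mechanism for excluding lattice support is the Fourier/Weyl-type estimate in the hypothesis. First I would record the elementary fact that a probability measure $\nu$ on $\R$ is supported on the lattice $a\Z$ (for some $a>0$) if and only if $|\widehat{\nu}(k/a)|=1$ for the corresponding frequency, and more robustly: if $\nu$ is supported on a discrete subgroup of $\R$ then for every $k$ the value $|\widehat\nu(k)|$ is either strictly less than $1$ or equals $1$, and $E(\varphi)=\R$ precisely when there is no $k\neq 0$ with $|\widehat\nu(k)| = 1$. The cleanest route, however, avoids case analysis on $a$: I would argue by contradiction. Suppose $T_\varphi$ is not ergodic. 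Then $E(\varphi)\neq\R$, so $E(\varphi)\subset a\Z$ for some $a>0$ (including $a=\infty$, i.e.\ $E(\varphi)=\{0\}$, formally), hence $\supp\nu\subset a\Z$, hence $\nu$ is a (sub-)probability measure carried by a lattice and therefore $\big|\widehat\nu(1/a)\big|=1$, and consequently $\big|\widehat\nu(m/a)\big|=1$ for all integers $m$.

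Now I connect $\widehat\nu$ to the quantity appearing in the statement. By the weak convergence $(\varphi^{(q_n)})_*(\mu_{\Xi_n})\to\nu$, for each fixed $k$ we have
\[
\int_{\Xi_n} e^{2\pi i k\varphi^{(q_n)}(x)}\,d\mu(x) = \mu(\Xi_n)\int e^{2\pi i k t}\,d\big((\varphi^{(q_n)})_*\mu_{\Xi_n}\big)(t) \longrightarrow \delta\cdot\widehat\nu(k),
\]
using $\mu(\Xi_n)\to\delta$ and that $t\mapsto e^{2\pi i k t}$ is bounded continuous. Hence $\limsup_n\big|\int_{\Xi_n} e^{2\pi i k\varphi^{(q_n)}}\,d\mu\big| = \delta\,|\widehat\nu(k)|$ along the chosen subsequence. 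Choosing $k$ of the form $m/a$ with $m$ large enough to fall in the range "$k$ large enough" of the hypothesis — which is possible since $1/a>0$ is fixed and $m$ ranges over arbitrarily large integers — we get $\delta\cdot 1 = \delta|\widehat\nu(m/a)| \le c < \delta$, a contradiction. (If $a=\infty$ the same contradiction is even more immediate: $\nu=\delta_0$ forces $|\widehat\nu(k)|=1$ for every $k$.) Therefore $E(\varphi)=\R$ and $T_\varphi$ is ergodic.

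The only genuinely delicate point is the passage to the limit: one must first extract the subsequence along which $(\varphi^{(q_n)})_*(\mu_{\Xi_n})$ converges weakly (uniform tightness of this family is exactly where the boundedness of $\big(\int_{\Xi_n}|\varphi^{(q_n)}|\,d\mu\big)_n$ is used, as already noted in the text preceding Proposition~\ref{limitmeasure}), apply that proposition to get $\supp\nu\subset E(\varphi)$, and only then run the Fourier argument. Since the conclusion "$T_\varphi$ ergodic" is a property of $\varphi$ alone and does not reference the subsequence, working along a subsequence costs nothing. I would also remark that the hypothesis "$c<\delta$" (strict) is precisely what makes the contradiction $1 \le c/\delta < 1$ work, so the strictness is essential and worth flagging.
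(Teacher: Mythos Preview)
Your overall strategy matches the paper's: argue by contradiction, use Proposition~\ref{limitmeasure} to get a limit measure $\nu$ supported on $E(\varphi)$, and then derive a contradiction from the Fourier estimate. The weak-convergence computation and the role of the strict inequality $c<\delta$ are handled correctly.

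There is, however, a genuine gap. In the statement (and in the paper's proof) the parameter $k$ ranges over the \emph{integers}, not over all reals. Your key step is to evaluate the Fourier transform of $\nu$ at $k=m/a$, using that $\nu$ is supported on $a\Z$ so that $\widehat\nu(m/a)=1$. But if $a$ is irrational, $m/a$ is never an integer, so the hypothesis gives you no bound at these frequencies and the contradiction fails. (The case $E(\varphi)=\{0\}$ you handle is fine; the problem is precisely $E(\varphi)=a\Z$ with $a$ irrational.)

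The paper repairs this with one extra ingredient: if $\nu$ is supported on a set of the form $r\Z$ then its push-forward $e_*\nu$ under $e(x)=e^{2\pi i x}$ is a \emph{discrete} measure on $\T$, and every discrete measure on $\T$ is a Dirichlet measure, meaning
\[
\limsup_{k\to\infty,\;k\in\Z}\bigl|\widehat{e_*\nu}(k)\bigr|=1.
\]
This produces arbitrarily large \emph{integers} $k$ with $|\widehat\nu(k)|$ as close to $1$ as one wishes, and now the same weak-convergence computation you wrote gives $\delta\cdot|\widehat\nu(k)|\le c<\delta$ for all large integers $k$, i.e.\ $|\widehat\nu(k)|\le c/\delta<1$ for all large $k\in\Z$, contradicting the Dirichlet property. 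Inserting this one step (discrete $\Rightarrow$ Dirichlet) makes your argument complete and essentially identical to the paper's.
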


\begin{proof}
Let $e:\R\to\T$ stand for the character $e(x)=e^{2\pi ix}$.
Suppose that $\varphi$ is not ergodic, so by Proposition
\ref{proposition:schmidt}, $ E(\varphi)\neq \R$. Thus, since
$E(\varphi)$ is a closed subgroup, $E(\varphi)=r\Z$ for some
$r\in\R$. By Proposition~\ref{limitmeasure}, the limit measure
$\nu$ of the sequence
$\left((\varphi^{(q_n)})_*(\mu_{\Xi_n})\right)$ is concentrated on
$r\Z$, and hence $\nu$ is a discrete measure. It follows that the
measure $e_*\nu$ on $\T$ is as well a discrete measure and hence
it is a Dirichlet measure (see \cite{Ho-Me-Pa}). Therefore  one
has
\begin{equation}\label{dirichlet}
\limsup_{k\to\infty}\left|\int_{\R}e^{2\pi i
kt}\,d\nu(t)\right|=
\limsup_{k\to\infty}\left|\int_{\T}z^k\,d(e_*\nu)(z)\right|=\limsup_{k\to\infty}\left|\widehat{e_*\nu}(k)\right|=1.
\end{equation}
By assumption, there exists $k_0$ such that
\[
\limsup_{n\to\infty}\left|\int_{\Xi_n}e^{2\pi i k \varphi^{(q_n)}(x)}\,d\mu(x)\right|
\leq c\text{ for }k\geq k_0.
\]
It follows that for all $k\geq
k_0$, since $c< \delta$ and $\mu(\Xi_n) \to \delta$, we have
\begin{align*}
\left|\int_{\R}e^{2\pi i
kt}\,d\nu(t)\right|&=\lim_{n\to\infty}\left|\int_{\Xi_n}e^{2\pi i
k \varphi^{(q_n)}(x)}\,d\mu_{\Xi_n}(x)\right|\\
&=\lim_{n\to\infty}\frac{1}{\mu(\Xi_n)}\left|\int_{\Xi_n}e^{2\pi i
k \varphi^{(q_n)}(x)}\,d\mu(x)\right|\leq\frac{c}{\delta}<1,
\end{align*}
contrary to (\ref{dirichlet}).
\end{proof}


\subsection{IET of periodic type}\label{IETs:sec}
In this section we briefly summarize the Rauzy-Veech algorithm and
the properties that we need later and we give the definition of
IETs of hyperbolic periodic type. For further background material concerning
interval exchange transformations and Rauzy-Veech induction we
refer the reader to the excellent lecture notes \cite{ViB, Yo,
YoLN}.

 Let $T$ be the IET given by $(\pi,\lambda)$.
 Denote by
$\mathcal{S}^0_{\mathcal{A}}$ the subset of irreducible pairs,
i.e.\ such that
$\pi_1\circ\pi_0^{-1}\{1,\ldots,k\}\neq\{1,\ldots,k\}$ for $1\leq
k<d$.  We will always assume that $\pi \in
\mathcal{S}^0_{\mathcal{A}}$. The IET $T_{(\pi,\lambda)}$ is
explicitly given by $T(x)=x+w_\alpha$ for $x\in I_\alpha$, where
$w=\Omega_\pi\lambda$ and $\Omega_\pi$ is  the matrix
$[\Omega_{\alpha\,\beta}]_{\alpha,\beta\in\mathcal{A}}$ given by
\[\Omega_{\alpha\,\beta}=
\left\{\begin{array}{cl} +1 & \text{ if
}\pi_1(\alpha)>\pi_1(\beta)\text{ and
}\pi_0(\alpha)<\pi_0(\beta),\\
-1 & \text{ if }\pi_1(\alpha)<\pi_1(\beta)\text{ and
}\pi_0(\alpha)>\pi_0(\beta),\\
0& \text{ in all other cases.}
\end{array}\right.\]
Note that for every $\alpha\in\mathcal{A}$ with $\pi_0(\alpha)\neq
1$ there exists $\beta\in\mathcal{A}$ such that $\pi_0(\beta)\neq
d$ and $l_\alpha=r_\beta$. It follows that
\begin{equation}\label{zbzero}
\{l_\alpha:\alpha\in\mathcal{A},\;\pi_0(\alpha)\neq 1\}=
\{r_\alpha:\alpha\in\mathcal{A},\;\pi_0(\alpha)\neq d\}.
\end{equation}
Let  $\hat{I}=(0,|I|]$ and by $\widehat{T}_{(\pi,\lambda)}:\hat{I} \to \hat{I}$ denote the
exchange of the intervals
$\widehat{I}_\alpha:=(l_\alpha,r_\alpha]$, $\alpha\in\mathcal{A}$,
i.e.\ $T_{(\pi,\lambda)}x=x+w_\alpha$ for $x\in
(l_\alpha,r_\alpha]$.
Let $End(T) = \{ l_\alpha, r_\alpha, \alpha \in \mathcal{A}\}$
stand for the set of end points of the intervals
$I_\alpha:\alpha\in\mathcal{A}$.

 A pair ${(\pi,\lambda)}$
satisfies the {\em Keane condition} (see \cite{Keane}) if
$T_{(\pi,\lambda)}^m l_{\alpha}\neq l_{\beta}$ for all $m\geq 1$
and for all $\alpha,\beta\in\mathcal{A}$ with $\pi_0(\beta)\neq
1$.

\subsubsection*{Rauzy-Veech induction} Let $T=T_{(\pi,\lambda)}$,
$(\pi,\lambda)\in\mathcal{S}^0_{\mathcal{A}}\times\R_+^{\mathcal{A}}$
be an IET satisfying the Keane condition. Then
$\lambda_{\pi_0^{-1}(d)}\neq\lambda_{\pi_1^{-1}(d)}$. Let
\[\widetilde{I}=\left[0,\max\left({l}_{\pi_0^{-1}(d)},{l}_{\pi_1^{-1}(d)}\right)\right)\]
and denote by
$\mathcal{R}(T)=\widetilde{T}:\widetilde{I}\to\widetilde{I}$ the
first return map of $T$ to the interval $\widetilde{I}$. Set
\begin{equation}\label{epsilondef} \vep(\pi,\lambda)=\left\{
\begin{array}{ccl}
0&\text{ if }&\lambda_{\pi_0^{-1}(d)}>\lambda_{\pi_1^{-1}(d)},\\
1&\text{ if }&\lambda_{\pi_0^{-1}(d)}<\lambda_{\pi_1^{-1}(d)}.
\end{array}
\right.
\end{equation}
Let us consider a pair
$\widetilde{\pi}=(\widetilde{\pi}_0,\widetilde{\pi}_1)\in\mathcal{S}^0_{\mathcal{A}}$,
where
\begin{eqnarray*}\widetilde{\pi}_\vep(\alpha)&=&\pi_\vep(\alpha)
\text{ for all }\alpha\in\mathcal{A}\text{ and }\\
\widetilde{\pi}_{1-\vep}(\alpha)&=&\left\{
\begin{array}{cll}
\pi_{1-\vep}(\alpha)& \text{ if
}&\pi_{1-\vep}(\alpha)\leq\pi_{1-\vep}\circ\pi^{-1}_\vep(d),\\
\pi_{1-\vep}(\alpha)+1& \text{ if
}&\pi_{1-\vep}\circ\pi^{-1}_\vep(d)<\pi_{1-\vep}(\alpha)<d,\\
\pi_{1-\vep}\pi^{-1}_\vep(d)+1& \text{ if
}&\pi_{1-\vep}(\alpha)=d.\end{array} \right.
\end{eqnarray*}
As it was shown by Rauzy in \cite{Ra}, $\widetilde{T}$ is also an
IET on $d$-intervals
\[\widetilde{T}=T_{(\widetilde{\pi},\widetilde{\lambda})}\text{ with }
\widetilde{\lambda}=\Theta^{-1}(\pi,\lambda)\lambda,\] where
\[\Theta(T)=\Theta(\pi,\lambda)=I+E_{\pi_{\vep}^{-1}(d)\,\pi_{1-\vep}^{-1}(d)}\in\SL(\Z^{\mathcal{A}}).\]
Moreover,
\begin{equation}\label{omega}
\Theta^t(\pi,\lambda)\cdot\Omega_{\pi}\cdot\Theta(\pi,\lambda)=\Omega_{\widetilde{\pi}}.
\end{equation}
It follows that
$\ker\Omega_{{\pi}}=\Theta(\pi,\lambda)\ker\Omega_{\widetilde{\pi}}$.
Thus taking
$H_{\pi}=\Omega_{{\pi}}(\R^{\mathcal{A}})=\ker\Omega_{{\pi}}^\perp$
we get $H_{\widetilde{\pi}}=\Theta^t(\pi,\lambda)H_{{\pi}}$.
Moreover, $\dim H_{{\pi}}=2g$ and $\dim
\ker\Omega_{{\pi}}=\kappa-1$, where $g$ is the genus of the
translation surface  associated to $\pi$ and  $\kappa$ the number
of singularities (for more details we refer the reader to
\cite{ViB}).
\begin{figure}
\centering
\subfigure[Case $\lambda_{\alpha_0}> \lambda_{\alpha_1}$ or $\epsilon(\lambda, \pi) = 0$.]{\label{Rauzytop}
\includegraphics[width=0.46\textwidth]{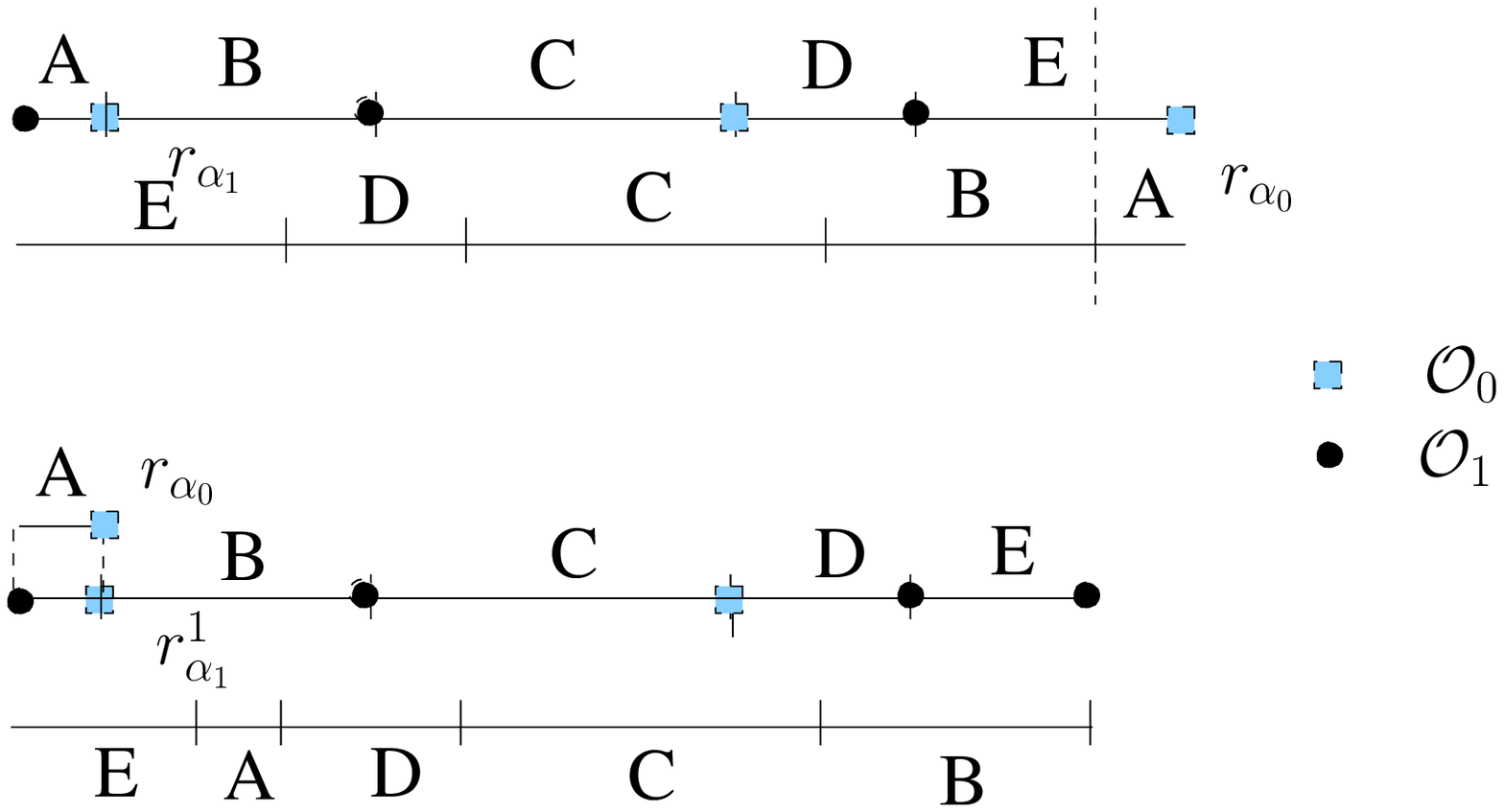}}
\hspace{2mm}
\subfigure[Case $\lambda_{\alpha_0}< \lambda_{\alpha_1}$ or $\epsilon(\lambda, \pi) = 1$.]{\label{Rauzybottom}
\includegraphics[width=0.48\textwidth]{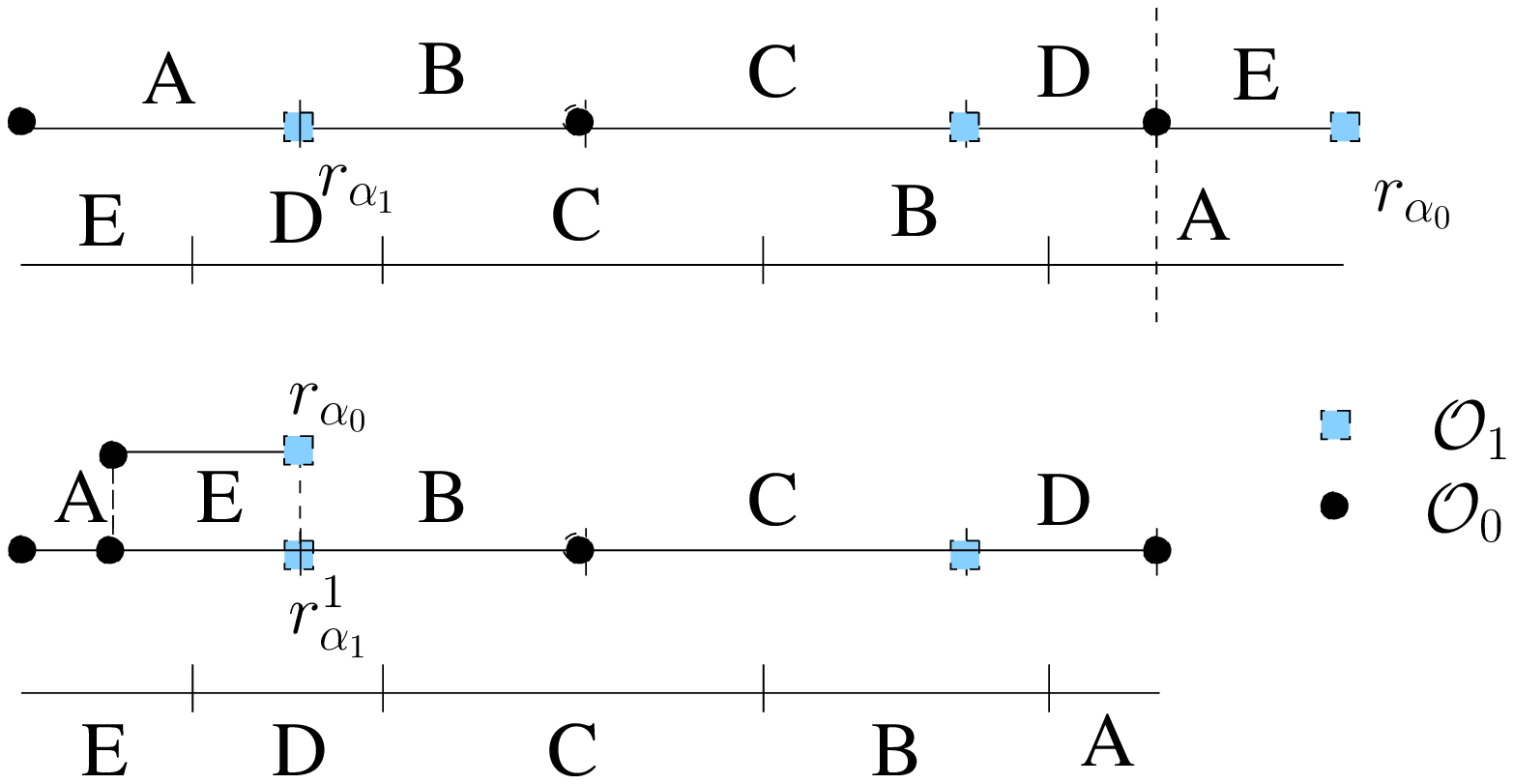}}
\caption{Rauzy Veech induction}\label{Rauzy}
\end{figure}

 The IET $\widetilde{T}$ fulfills the Keane condition as
well. Therefore we can iterate the renormalization procedure and
generate a sequence of IETs $(\mathcal{R}^n(T))_{n\geq 0}$. 
Denote by
$\pi^{n}=(\pi^{n}_0,\pi^{n}_1)\in\mathcal{S}^0_{\mathcal{A}}$ and $\lambda^{n}=(\lambda^{n}_\alpha)_{\alpha\in\mathcal{A}}$
respectively the pair and
 the
vector which determine $\mathcal{R}^n(T)$. Then $\mathcal{R}^n(T)$ is the first
return map of $T$ to the interval $I^{n}=[0,|\lambda^{n}|)$
and
\[\lambda=\Theta^{(n)}(T)\lambda^{n}\text{ with }\Theta^{(n)}(T)=
\Theta(T)\cdot\Theta(\mathcal{R}(T))\cdot\ldots\cdot\Theta(\mathcal{R}^{n-1}(T)).\]
We denote by $I^{n}_{\alpha}=[l^n_\alpha,r^n_{\alpha})$ the
intervals exchanged by $\mathcal{R}^n(T)$.

Let $T:I\to I$ be an arbitrary  IET satisfying the Keane
condition. Suppose that $(n_k)_{k\geq 0}$ is an increasing
sequence of natural numbers such $n_0=0$ and set
\begin{equation}\label{Z}
Z(k+1):=\Theta(\mathcal{R}^{n_k}(T))\cdot\Theta(\mathcal{R}^{n_k+1}(T))
\cdot\ldots\cdot\Theta(\mathcal{R}^{n_{k+1}-1}(T))
\end{equation}
Since $\lambda^{n_k}=Z(k+1)\lambda^{n_{k+1}}$, if for each $k<k'$
we let
\begin{equation} \label{Q}
Q(k,k')=Z(k+1)\cdot Z(k+2)\cdot\ldots\cdot Z(k')
\end{equation}
then we have $\lambda^{n_k}=Q(k,k')\lambda^{n_{k'}}$. We will
write $Q(k)$ for $Q(0,k)$. By definition,
$\mathcal{R}^{n_{k'}}(T):I^{n_{k'}}\to I^{n_{k'}}$ is the first
return map of $\mathcal{R}^{n_k}(T):I^{n_k}\to I^{n_k}$ to the
interval $I^{n_k}\subset I^{n_{k'}}$. Moreover,
$Q_{\alpha\beta}(k,k')$ is the time spent by any point of
$I^{n_{k'}}_{\beta}$ in $I^{n_k}_{\alpha}$ until it returns to
$I^{n_{k'}}$. It follows that
\[Q_{\beta}(k,k')=\sum_{\alpha\in\mathcal{A}}Q_{\alpha\beta}(k,k')\]
is the first return time of points of $I^{n_{k'}}_{\beta}$ to
$I^{n_{k'}}$.

In what follows, the norm of a vector is defined as the largest
absolute value of the coefficients and for any matrix
$B=[B_{\alpha\beta}]_{\alpha,\beta\in\mathcal{A}}$ we set
$\|B\|=\max_{\beta\in\mathcal{A}}\sum_{\alpha\in\mathcal{A}}|B_{\alpha\beta}|$.

\subsubsection*{IETs of periodic type}
\begin{definition}[see \cite{Si-Ul}]\label{periodicIETdef}
An IET $T$ is of {\em periodic type} if there exists
$p>0$ (called a {\em period of $T$}) such that
$\Theta(\mathcal{R}^{n+p}(T))=\Theta(\mathcal{R}^n(T))$ for every $n\geq 0$ and
$A=A(T):=\Theta^{(p)}(T)$ (called a {\em period matrix of $T$}) has
strictly positive entries.
\end{definition}
Since the set $\mathcal{S}^0_{\mathcal{A}}$ is finite, up to
taking a multiple of the period $p$ if necessary, we can assume
that $\pi^{p}=\pi$. We will always assume that the period $p$ is
chosen so that $\pi^{p}=\pi$. Explicit examples of IETs of
periodic type appear in \cite{Si-Ul}. The procedure  to construct
them is based on choosing closed paths on Rauzy class and using
the following Remark.

\begin{remark}
Suppose that $T=T_{(\pi,\lambda)}$ is of periodic  type with
period matrix $A=\Theta^{(p)}(T)$. It follows that
$\lambda=A^n\lambda^{pn}\in A^n \R^{\mathcal{A}}_+$ and hence
$\lambda$ belongs to $\bigcap_{n\geq 0} A^n\R^{\mathcal{A}}_+$
which is a one-dimensional convex cone (see \cite{Ve1}). Therefore
$\lambda$ is a positive right Perron-Frobenius eigenvector of the
matrix $\Theta^{(p)}(T)$. It follows that
$(\pi^{p},\lambda^{p}/|\lambda^{p}|)=(\pi,\lambda/|\lambda|)$ and
$|\lambda|/|\lambda^{p}|$ is the Perron-Frobenius eigenvector of
the matrix $A$.
\end{remark}

\begin{remark}\label{periodic are ue and minimal}
IETs of periodic type automatically satisfy the Keane condition.
Indeed, $T$ satisfies the Keane condition if and only if the orbit
of $T$ under $\mathcal{R}$ is infinite (see \cite{Ma-Mo-Yo}) and
IETs of periodic type by definition have an infinite (periodic)
orbit under $\mathcal{R}$. Moreover, using the methods in
\cite{Ve3} (see also \cite{ViB}) one can show that every IET of
periodic type is uniquely ergodic.
\end{remark}

 Suppose that $T=T_{(\pi,\lambda)}$ is of periodic type
and let $A=\Theta^{(p)}(T)$.  By (\ref{omega}),
\[A^t\Omega_{\pi}A=\Omega_{{\pi}}\text{ and hence }
\ker\Omega_{{\pi}}=A\ker\Omega_{{\pi}}\text{ and
}H_{{\pi}}=A^tH_{{\pi}}.\] Moreover, multiplying the period $p$ if
necessary, we can assume that $A|_{\ker\Omega_{\pi}}=Id$ (see
Remark~\ref{assumexiid} for details). Denote by $Sp(A)$ the set of
complex eigenvalues of $A$, including multiplicities. Let us
consider the set of Lyapunov exponents $\{\log|\rho|:\rho\in
Sp(A)\}$. It consists of the numbers
\[\theta_1>\theta_2\geq\theta_3\geq\ldots\geq\theta_g\geq0
=\ldots=0\geq-\theta_g\geq\ldots\geq-\theta_3\geq-\theta_2>-\theta_1,\]
where $2g=\dim H_{\pi}$ and $0$ occurs with the multiplicity
$\kappa-1=\dim\ker\Omega_{{\pi}}$ (see e.g.\ \cite{Zo}). Moreover,
$\rho_1:=\exp\theta_1$ is the Perron-Frobenius eigenvalue of $A$.

\begin{definition}\label{defhypiet}
An IET $T_{(\pi,\lambda)}$ is {\em of hyperbolic periodic type} if
it is of periodic type and $A^t:H_{\pi}\to H_{\pi}$ is a
hyperbolic linear map, or equivalently  $\theta_g>0$.
\end{definition}

\begin{convention}
When $T$ is of periodic type, we will always consider  iterates of
$\mathcal{R}$ corresponding to the sequence $(p k)_{k\geq 0}$,
where $p$ is a period of $T$ and $A$ the associated periodic
matrix,  chosen so that $\pi^p=\pi$ and
$A|_{\ker\Omega_{\pi}}=Id$.
\end{convention}
\begin{definition}
Suppose that $T=T_{(\pi,\lambda)}$ is of periodic type with period $p$ and period
matrix $A=\Theta^{(p)}(T)$ as above. In this case we will denote by
$T^{(k)}=(\pi^{(k)},\lambda^{(k)})$  the IET $\mathcal{R}^{p
k}(T)$, by $I^{(k)}=[0, |\lambda^{kp}|)$ the interval on which
$T^{(k)}$ is defined and by $I_{\alpha}^{(k)}=[l_\alpha^{(k)},
r_\alpha^{(k)})$ the intervals exchanged by $T^{(k)}$.
\end{definition}
\begin{convention}
In the rest of the paper,  when $T$ is of periodic type, the
matrices $Z(k)$ and $Q(k)$ will denote be the matrices  associated
to the sequence $(p k)_{k\geq 0}$ by (\ref{Z}) and (\ref{Q})
respectively. Clearly $Z(k)=A$ and $Q(k,k')=A^{k'-k} = Q(k'-k)$
for all $0\leq k\leq k'$.
\end{convention}

In the spirit of \cite{Ve2}, we set
$\nu_1(A)=\max\{{A_{\alpha\gamma}}/{A_{\beta\gamma}}:\alpha,\beta,\gamma\in\mathcal{A}\}$,
$\nu_2(A)=\nu_1(A^T)= \max\{{A_{\gamma \alpha}}/{A_{\gamma \beta}}:\alpha,\beta,\gamma\in\mathcal{A}\}$
and let  $\nu(A) = \max \{ \nu_1(A), \nu_2(A)\}$.  Since
$\lambda^{(k)}= A\lambda^{(k+1)}$ and for any $k\geq 1$ we have $Q(k)= Q(k-1)A$,  we have
\begin{equation} \label{balancedintervals}
\frac{|I^{(k)}_\beta|}{ \nu(A)} \leq |I^{(k)}_\alpha |  \leq
\nu(A) |I^{(k)}_\beta| , \quad  \frac{Q_\beta(k)}{ \nu(A)} \leq Q_\alpha(k)   \leq
\nu(A) Q_\beta(k) \quad \forall \alpha, \beta \in
\mathcal{A}.
\end{equation}
From the above relation, it also follows that Rohlin towers have
comparable areas, that is, since by Rohlin's Lemma and Pigeon
Hole principle there exists $\beta$ such that $Q_\beta (k )
|I^{(k)}_\beta | \geq |I|/d$,  one has
\begin{equation}\label{areas}
\frac{1}{d \nu(A)^2|I^{(0)}|} \leq Q_\alpha (k ) |I^{(k)}_\alpha |  \leq |I^{(0)}|,
\quad \text{ for all }\alpha \in \mathcal{A}.
\end{equation}
\subsubsection*{A bases for the kernel}
Let $p:\{0,1,\ldots,d,d+1\}\to\{0,1,\ldots,d,d+1\}$ stand for  the
permutation
\[p(j)=\left\{
\begin{matrix}
\pi_1\circ\pi^{-1}_0(j)&\text{ if }&1\leq j\leq d\\
j&\text{ if }&j=0,d+1.
\end{matrix}
\right.
\]
Following \cite{Ve1,Ve2}, denote by $\sigma=\sigma_\pi$ the
corresponding permutation on $\{0,1,\ldots,d\}$,
\[\sigma(j)=p^{-1}(p(j)+1)-1\text{ for }0\leq j\leq d.\]
Then
$\widehat{T}_{(\pi,\lambda)}r_{\pi_0^{-1}(j)}={T}_{(\pi,\lambda)}r_{\pi_0^{-1}(\sigma
j)}$ for all $j\neq 0,p^{-1}(d)$. Denote by $\Sigma(\pi)$ the set
of orbits for the permutation $\sigma$. Let $\Sigma_0(\pi)$ stand
for the subset of orbits that do not contain zero.
\begin{remark}
If $T$ is obtained from a minimal flow $(\phi_t)_{t\in \R}$ on a
surface $\Surf$ as Poincar{\'e} first return map to a transversal,
then the orbits $\mathcal{O}\in \Sigma(\pi)$ are in one to one
correspondence with saddle points of $(\phi_t)_{t\in \R}$. Hence
$\#\Sigma(\pi)=\kappa$, where $\kappa$ is the number of saddle
points of $(\phi_t)_{t\in \R}$.
\end{remark}
For every $\mathcal{O}\in\Sigma(\pi)$ denote by
$b(\mathcal{O})\in\R^{\mathcal{A}}$ the vector given by
\begin{equation} \label{bdef}
b(\mathcal{O})_{\alpha}=\chi_{\mathcal{O}}(\pi_0(\alpha))-\chi_{\mathcal{O}}(\pi_0(\alpha)-1)
\text{ for }\alpha\in\mathcal{A},
\end{equation}
where $\chi_{\mathcal{O}} (j)=1$ iff $j \in \mathscr{O}$ and $0$ otherwise.
Moreover, for every $\mathcal{O}\in \Sigma(\pi)$, we denote by
\begin{equation} \label{defAO}
\mathcal{A}^-_{\mathcal{O}}=\{
\alpha\in\mathcal{A}, \ \pi_0(\alpha)\in \mathcal{O}\}, \qquad
\mathcal{A}^+_{\mathcal{O}}=\{ \alpha\in\mathcal{A}, \
\pi_0(\alpha)-1\in \mathcal{O}\} .
\end{equation} If $\alpha\in
\mathcal{A}_{\mathcal{O}}^+$ (respectively $\alpha\in
\mathcal{A}_{\mathcal{O}}^-$ ) then the left (respectively right)
endpoint of $I_\alpha$ belongs to a separatrix of the saddle
represented by $\mathcal{O}$.
\begin{lemma}[see \cite{Ve2}]\label{bcharh}
For every irreducible pair $\pi$ we have
$\sum_{\mathcal{O}\in\Sigma(\pi)}b(\mathcal{O})=0$, the vectors
$b(\mathcal{O})$, $\mathcal{O}\in\Sigma_0(\pi)$ are linearly
independent and the linear subspace generated by them is equal to
$\ker\Omega_\pi$. Moreover, $h\in H_{\pi}$ if and only if $\langle
h,b(\mathcal{O)}\rangle=0$ for every $\mathcal{O}\in\Sigma(\pi)$.
  \bez
\end{lemma}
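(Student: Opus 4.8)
The plan is to prove the three assertions in sequence, using the combinatorial structure of the permutation $\sigma = \sigma_\pi$ on $\{0,1,\dots,d\}$ and the definition of $\Omega_\pi$. First I would establish the identity $\sum_{\mathcal{O}\in\Sigma(\pi)} b(\mathcal{O}) = 0$: summing $b(\mathcal{O})_\alpha$ over all orbits $\mathcal{O}\in\Sigma(\pi)$ just replaces $\chi_{\mathcal{O}}$ by the constant function $1$ on $\{0,\dots,d\}$ (every $j$ lies in exactly one orbit), so $\sum_{\mathcal{O}} b(\mathcal{O})_\alpha = 1 - 1 = 0$ for every $\alpha$. Next, to see that $b(\mathcal{O})\in\ker\Omega_\pi$ for each $\mathcal{O}$, I would compute $\Omega_\pi b(\mathcal{O})$ directly. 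Using $\Omega_{\alpha\beta} = \pm 1$ according to the relative order of $\pi_0$ and $\pi_1$ on $\alpha,\beta$, the quantity $(\Omega_\pi b(\mathcal{O}))_\alpha$ telescopes: writing $b(\mathcal{O})_\beta$ as a difference $\chi_{\mathcal{O}}(\pi_0(\beta)) - \chi_{\mathcal{O}}(\pi_0(\beta)-1)$ and reindexing by $j = \pi_0(\beta)$, the sum $\sum_\beta \Omega_{\alpha\beta} b(\mathcal{O})_\beta$ collapses to an expression involving $\chi_{\mathcal{O}}$ evaluated at the endpoints $0$ and $d$ and at the points linked by the permutation $\sigma$; the relation $\widehat{T} r_{\pi_0^{-1}(j)} = T r_{\pi_0^{-1}(\sigma j)}$ for $j\neq 0, p^{-1}(d)$ is exactly what forces these boundary terms to cancel in pairs along each $\sigma$-orbit. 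This is essentially the computation carried out by Veech in \cite{Ve2}, and I would either reproduce it or cite it.

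For the linear independence of $\{b(\mathcal{O}) : \mathcal{O}\in\Sigma_0(\pi)\}$ and the fact that they span $\ker\Omega_\pi$: independence follows because the orbits $\mathcal{O}\in\Sigma_0(\pi)$ are disjoint subsets of $\{1,\dots,d\}$, so for each such $\mathcal{O}$ there is an index $j_{\mathcal{O}} \in \mathcal{O}$ lying in no other orbit, and choosing $\alpha$ with $\pi_0(\alpha) = j_{\mathcal{O}}$ and $\pi_0(\alpha)-1 \notin \mathcal{O}'$ for any $\mathcal{O}'$ in play detects $b(\mathcal{O})$; a cleaner route is to pair against a dual basis. Then a dimension count closes the argument: $\#\Sigma_0(\pi) = \kappa - 1$ (one orbit contains $0$, and $\#\Sigma(\pi) = \kappa$), and we are told $\dim\ker\Omega_\pi = \kappa - 1$, so the $\kappa-1$ independent vectors $b(\mathcal{O})$ inside $\ker\Omega_\pi$ must form a basis.

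Finally, the characterization $h\in H_\pi \iff \langle h, b(\mathcal{O})\rangle = 0$ for all $\mathcal{O}\in\Sigma(\pi)$ follows formally from the previous parts together with $H_\pi = (\ker\Omega_\pi)^\perp$ (since $\Omega_\pi$ is skew-symmetric, its image is the orthogonal complement of its kernel). Indeed $\langle h, b(\mathcal{O})\rangle = 0$ for all $\mathcal{O}\in\Sigma_0(\pi)$ already says $h\perp\ker\Omega_\pi$, i.e. $h\in H_\pi$; and the remaining condition for the orbit $\mathcal{O}_0\ni 0$ is then automatic because $b(\mathcal{O}_0) = -\sum_{\mathcal{O}\in\Sigma_0(\pi)} b(\mathcal{O})$ by the first assertion. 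The main obstacle is the explicit verification that $\Omega_\pi b(\mathcal{O}) = 0$: it is the one genuinely combinatorial step, requiring careful bookkeeping of the signs in $\Omega_{\alpha\beta}$ against the telescoping structure of $b(\mathcal{O})$ and the action of $\sigma$, and it is where one must invoke the precise relation between $\sigma_\pi$, the endpoints $r_\alpha$, and the dynamics of $\widehat T$ and $T$. Everything else is linear algebra and counting.
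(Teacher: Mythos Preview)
The paper does not actually give a proof of this lemma: it is stated with a citation to Veech \cite{Ve2} and closed immediately with a $\Box$, so there is no argument in the paper to compare your proposal against. Your outline is a correct reconstruction of the standard proof.

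One small point: your linear-independence argument is slightly loose. The claim ``choosing $\alpha$ with $\pi_0(\alpha)=j_{\mathcal O}$ and $\pi_0(\alpha)-1\notin\mathcal O'$ for any $\mathcal O'$ in play'' cannot quite be arranged as stated, since every element of $\{0,\dots,d\}$ lies in \emph{some} orbit. The clean argument is the one you gesture at: if $\sum_{\mathcal O\in\Sigma_0(\pi)} c_{\mathcal O}\, b(\mathcal O)=0$, define $f:\{0,\dots,d\}\to\R$ by $f(j)=c_{\mathcal O}$ when $j\in\mathcal O\in\Sigma_0(\pi)$ and $f(j)=0$ on the orbit containing $0$; the vanishing of each coordinate says $f(j)=f(j-1)$ for $j=1,\dots,d$, so $f$ is constant, and $f(0)=0$ forces $f\equiv 0$. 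With that fix, your dimension count ($\#\Sigma_0(\pi)=\kappa-1=\dim\ker\Omega_\pi$) closes the span assertion, and the rest is exactly as you say.
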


\begin{remark}\label{remiso}
Let $\Lambda^\pi:\R^{\mathcal{A}}\to\R^{\Sigma_0(\pi)}$ stand for the
linear transformation given by $(\Lambda^\pi
h)_\mathcal{O}=\langle h,b(\mathcal{O)}\rangle$ for
$\mathcal{O}\in\Sigma_0(\pi)$. By Lemma~\ref{bcharh},
$H_{\pi}=\ker\Lambda^\pi$ and if $\R^{\mathcal{A}}=F\oplus H_\pi$
is a direct sum decomposition then
$\Lambda^\pi:F\to\R^{\Sigma_0(\pi)}$ establishes an isomorphism of
linear spaces. It follows that there exists $K_F>0$ such that
\[\|h\|\leq K_F\|\Lambda^\pi h\| \;\;\text{ for all }\;\;h\in F.\]
\end{remark}

\begin{lemma}[see \cite{Ve2}]\label{invario}
Suppose that
$T_{(\widetilde{\pi},\widetilde{\lambda})}=\mathcal{R}(T_{(\pi,\lambda)})$.
Then there exists a bijection
$\xi:\Sigma(\pi)\to\Sigma(\widetilde{\pi})$ that depends only on
$(\pi, \lambda)$ such that
$\Theta(\pi,\lambda)^{-1}b(\mathcal{O})=b(\xi\mathcal{O})$ for
$\mathcal{O}\in\Sigma(\pi)$.\bez
\end{lemma}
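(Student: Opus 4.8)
The plan is to reconstruct Veech's argument \cite{Ve2}: the identity is a purely combinatorial consequence of the explicit shape of the Rauzy--Veech matrix $\Theta(\pi,\lambda)$ and of the definition (\ref{bdef}) of the vectors $b(\mathcal{O})$, while the existence of \emph{some} correspondence between the two families is already forced by (\ref{omega}). First I would isolate the formal part. By (\ref{omega}) one has $\Theta(\pi,\lambda)^t\Omega_\pi\Theta(\pi,\lambda)=\Omega_{\widetilde\pi}$, hence $\Theta(\pi,\lambda)^{-1}\ker\Omega_\pi=\ker\Omega_{\widetilde\pi}$; combined with Lemma~\ref{bcharh}, which identifies $\ker\Omega_\pi$ with $\operatorname{span}\{b(\mathcal{O}):\mathcal{O}\in\Sigma(\pi)\}$ subject to the single relation $\sum_{\mathcal{O}}b(\mathcal{O})=0$, this shows that $\Theta(\pi,\lambda)^{-1}$ carries the distinguished spanning family for $\pi$ onto a family spanning $\ker\Omega_{\widetilde\pi}$. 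The content of the lemma is that this map sends each $b(\mathcal{O})$ exactly to some $b(\mathcal{O}')$, bijectively. Since $\Theta(\pi,\lambda)=I+E_{\pi_\vep^{-1}(d)\,\pi_{1-\vep}^{-1}(d)}$ with $\vep=\vep(\pi,\lambda)$ (and the two indices are distinct, by the Keane condition), its inverse is $I-E_{\pi_\vep^{-1}(d)\,\pi_{1-\vep}^{-1}(d)}$, which alters only the coordinate indexed by the winning symbol $w:=\pi_\vep^{-1}(d)$: for any $v$ one has $(\Theta(\pi,\lambda)^{-1}v)_\alpha=v_\alpha$ for $\alpha\neq w$ and $(\Theta(\pi,\lambda)^{-1}v)_w=v_w-v_\ell$, where $\ell:=\pi_{1-\vep}^{-1}(d)$ is the losing symbol. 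Thus everything reduces to a single coordinate.

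Next I would run the two cases of the Rauzy--Veech move. Assume first $\vep=0$ (``top'' move): then $\widetilde\pi_0=\pi_0$, so the position function entering (\ref{bdef}) is literally unchanged, while $\widetilde\pi_1$ is obtained from $\pi_1$ by removing $\ell$ from the last position and reinserting it immediately after $w$. Writing $\widetilde p=\widetilde\pi_1\circ\widetilde\pi_0^{-1}$ and computing the induced permutation $\sigma_{\widetilde\pi}$ from $\sigma(j)=p^{-1}(p(j)+1)-1$, one checks that $\sigma_{\widetilde\pi}$ is obtained from $\sigma_\pi$ by ``re-routing'' its orbits through the index $d$: the partition of $\{0,1,\dots,d\}$ into $\sigma$-orbits is preserved, and this preservation defines the bijection $\xi:\Sigma(\pi)\to\Sigma(\widetilde\pi)$ (it restricts to $\Sigma_0(\pi)\to\Sigma_0(\widetilde\pi)$, and manifestly depends only on $\pi$ and $\vep(\pi,\lambda)$, hence only on $(\pi,\lambda)$). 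One then verifies $\Theta(\pi,\lambda)^{-1}b(\mathcal{O})=b(\xi\mathcal{O})$ coordinate by coordinate: for $\alpha\neq w$ this is immediate because $\widetilde\pi_0=\pi_0$ and, by construction of $\xi$, $\xi\mathcal{O}$ and $\mathcal{O}$ agree on $\{0,\dots,d-1\}$; for the coordinate $w$ one must check $b(\mathcal{O})_w-b(\mathcal{O})_\ell=b(\xi\mathcal{O})_w$, and expanding both sides through (\ref{bdef}) with $\pi_0(w)=d$ turns this into precisely the relation between the indicator of $\mathcal{O}$ and that of $\xi\mathcal{O}$ produced by the re-routing. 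The ``bottom'' case $\vep=1$ is the mirror image with the roles of $\pi_0$ and $\pi_1$ interchanged: now $\widetilde\pi_1=\pi_1$ while $\widetilde\pi_0$ is obtained from $\pi_0$ by reinserting $w=\pi_1^{-1}(d)$ just after $\ell=\pi_0^{-1}(d)$, so besides re-routing the $\sigma$-orbits one must also account for the relabelling of the positions $\{1,\dots,d\}$ that the passage from $\pi_0$ to $\widetilde\pi_0$ induces in formula (\ref{bdef}); after this bookkeeping the coordinate-wise check goes through as before.

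Finally, that $\xi$ is a bijection can be read off either from the orbit-preservation just used or, a posteriori, from the facts that $\Theta(\pi,\lambda)^{-1}$ is invertible and $\#\Sigma(\pi)=\#\Sigma(\widetilde\pi)=\kappa$ (the latter being $\dim\ker\Omega_\pi+1$ by Lemma~\ref{bcharh}). I expect the main obstacle to be the bookkeeping in the bottom case: one has to trace carefully how $\sigma_\pi$ transforms under the move and, simultaneously, disentangle this from the reindexing of the positions $\{1,\dots,d\}$, while keeping due track of the orbit containing $0$ (the distinction $\Sigma$ versus $\Sigma_0$ and the relation $\sum_{\mathcal{O}}b(\mathcal{O})=0$ from Lemma~\ref{bcharh}). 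Apart from this combinatorial step, the argument is formal.
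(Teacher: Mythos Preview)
The paper does not supply its own proof of this lemma: it is quoted from Veech \cite{Ve2} and closed with a bare $\Box$. There is therefore nothing to compare your argument against in the present text. Your outline is the standard combinatorial verification and is on the right track: reducing to a single coordinate via $\Theta(\pi,\lambda)^{-1}=I-E_{\pi_\vep^{-1}(d)\,\pi_{1-\vep}^{-1}(d)}$ and then splitting into the top and bottom cases is exactly how one proves this. One caution: your claim in the top case that ``$\xi\mathcal{O}$ and $\mathcal{O}$ agree on $\{0,\dots,d-1\}$'' is slightly imprecise as stated, since the re-routing of $\sigma$ through the index $d$ can move $d$ itself between orbits (this is precisely the content of Lemma~\ref{orbitcorrespondence}, which the paper states immediately after and which records how $\mathcal{A}^\pm_{\mathcal{O}}$ change); you should phrase the orbit correspondence more carefully before invoking it coordinate-wise. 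With that adjustment the argument goes through.
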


Moreover, analyzing the explicit correspondence given by $\xi$ (we
refer the reader for example to the formulas in \cite{ViB}, \S
2.4) one can check that we have the following. For $\upsilon=0,1$,
let $\alpha_\upsilon\in \mathcal{A}$ be such that
$\pi_\upsilon(\alpha_\upsilon)=d$. Define the orbits
$\mathcal{O}_0, \mathcal{O}_1 \in \Sigma(\pi)$ (where possibly
$\mathcal{O}_0= \mathcal{O}_1$)   as follows. Let $\vep=\vep(\pi,
\lambda)$ is as in (\ref{epsilondef}) and Let $\mathcal{O}_\vep
\in \Sigma(\pi)$ such that $d \in \mathcal{O}_\vep$. Remark that
$\alpha_0,\alpha_1\in \mathcal{A}_{\mathcal{O}_{\vep}}^-$ since
$\pi_0(\alpha_0) = \pi_1(\alpha_1)=d\in \mathcal{O}_\vep$. Let
$\mathcal{O}_{1-\vep}$ be such that $\alpha_{1-\vep}\in
\mathcal{A}_{\mathcal{O}_{1-\vep}}^+$. Denote by
$\widetilde{\mathcal{A}}^{\pm}_{\mathcal{O}}$,
$\mathcal{O}\in\Sigma(\widetilde{\pi})$ the corresponding sets for
the pair $\widetilde{\pi}$.
\begin{lemma}\label{orbitcorrespondence}
For each  $\mathcal{O}\in\Sigma(\pi)$,
$\widetilde{\mathcal{A}}^+_{\xi\mathcal{O}_\vep} =
\mathcal{A}^+_{\mathcal{O}_\vep}$. For each  $\mathcal{O}\notin \{
\mathcal{O}_0,  \mathcal{O}_1\}$ or if  $\mathcal{O} =
\mathcal{O}_0= \mathcal{O}_1 $, then $\widetilde{\mathcal{A}}^-
_{\xi\mathcal{O}} = \mathcal{A}^-_{\mathcal{O}}$. If
$\mathcal{O}_0\neq \mathcal{O}_1 $, then
$\widetilde{\mathcal{A}}^-_{\xi\mathcal{O}_\vep} =
\mathcal{A}^-_{\mathcal{O}_\vep}\backslash \{\alpha_\vep\}$ and $
\widetilde{\mathcal{A}}^-_{\xi\mathcal{O}_{1-\vep}} =
\mathcal{A}^-_{\mathcal{O}_{1-\vep}}\cup \{\alpha_{\vep}\}$.
\end{lemma}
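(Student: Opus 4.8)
The plan is to derive Lemma~\ref{orbitcorrespondence} as a bookkeeping consequence of the explicit description of the Rauzy--Veech move and of the bijection $\xi$ from Lemma~\ref{invario}, tracking how the combinatorial data $\pi$ changes and how this affects the sets $\mathcal{A}^{\pm}_{\mathcal{O}}$ defined in~(\ref{defAO}). Recall that $\mathcal{A}^-_{\mathcal{O}}$ records those $\alpha$ with $\pi_0(\alpha)\in\mathcal{O}$, i.e.\ whose right endpoint $r_\alpha$ lies on the separatrix labelled by $\mathcal{O}$, while $\mathcal{A}^+_{\mathcal{O}}$ records those with $\pi_0(\alpha)-1\in\mathcal{O}$, i.e.\ whose left endpoint $l_\alpha$ lies on that separatrix. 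The first observation is that $\widetilde\pi_0=\pi_0$ in both the top ($\vep=0$) and bottom ($\vep=1$) cases: $\widetilde\pi_\vep=\pi_\vep$ always, so when $\vep=0$ the bottom row changes but the top row $\pi_0$ is untouched, hence $\mathcal{A}^{\pm}$ computed from $\pi_0$ are literally unchanged and the statement is immediate for the top case. So the content of the lemma is entirely in the bottom case $\vep=1$, where $\widetilde\pi_1=\pi_1$ but $\widetilde\pi_0$ differs from $\pi_0$ by inserting the letter $\alpha_1$ (the letter with $\pi_1(\alpha_1)=d$) just after the position of $\alpha_0$ (the letter with $\pi_0(\alpha_0)=d$), shifting everything in between.

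Next I would carry out the explicit computation in the bottom case. Write $k_0=\pi_0(\alpha_0)=d$; after the move, $\widetilde\pi_0(\alpha_0)=\pi_1\circ\pi_0^{-1}(d)$ gets demoted, $\alpha_1$ is inserted right after it, and the letters strictly between keep their relative order but shift by one. Consequently, for a letter $\alpha$, the ``interface'' between $\widetilde{I}_\alpha$ and its left/right neighbour in the new $\pi_0$-order is the same as the old one \emph{except} at the two places adjacent to where $\alpha_1$ was inserted: the old gap $(\ldots,\alpha_0)$ where $r_{\alpha_0}$ sat gets split, with $r_{\alpha_0}$ now being followed by (the left endpoint of) $\alpha_1$ rather than by what followed it before, and $\alpha_1$'s own right endpoint is inserted into what was the interior of a single separatrix-level. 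Using~(\ref{bdef}) one can make this precise: $b(\mathcal{O})_\alpha=\chi_{\mathcal{O}}(\pi_0(\alpha))-\chi_{\mathcal{O}}(\pi_0(\alpha)-1)$, so $b(\mathcal{O})$ is supported exactly on $\mathcal{A}^-_{\mathcal{O}}\cup\mathcal{A}^+_{\mathcal{O}}$ and the sign pattern recovers which set a given $\alpha$ belongs to. Since Lemma~\ref{invario} gives $\Theta(\pi,\lambda)^{-1}b(\mathcal{O})=b(\xi\mathcal{O})$, and $\Theta(\pi,\lambda)=I+E_{\pi_\vep^{-1}(d)\,\pi_{1-\vep}^{-1}(d)}=I+E_{\alpha_1\,\alpha_0}$ in the bottom case, one computes $\Theta^{-1}=I-E_{\alpha_1\,\alpha_0}$, so $b(\xi\mathcal{O})_\beta=b(\mathcal{O})_\beta$ for $\beta\neq\alpha_1$ and $b(\xi\mathcal{O})_{\alpha_1}=b(\mathcal{O})_{\alpha_1}-b(\mathcal{O})_{\alpha_0}$. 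From this one reads off membership in $\widetilde{\mathcal{A}}^{\pm}_{\xi\mathcal{O}}$: for every letter other than $\alpha_1$ the membership is unchanged, and one only needs to analyse what happens to $\alpha_1$ and $\alpha_0$ depending on whether $d\in\mathcal{O}$ (which by definition is the orbit $\mathcal{O}_\vep$) and whether $\mathcal{O}_0=\mathcal{O}_1$.

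Then I would case-split on the relation between $\mathcal{O}_0$ and $\mathcal{O}_1$. When $\mathcal{O}_0\neq\mathcal{O}_1$: $b(\mathcal{O}_\vep)_{\alpha_0}$ equals $\chi_{\mathcal{O}_\vep}(d)-\chi_{\mathcal{O}_\vep}(d-1)$; since $d\in\mathcal{O}_\vep$, this is $1-\chi_{\mathcal{O}_\vep}(d-1)$, and one checks $d-1\notin\mathcal{O}_\vep$ precisely because $\alpha_{1-\vep}$ (the letter with left endpoint at position $d$ originally, i.e.\ $\pi_0(\alpha_{1-\vep})-1=d-1$ after relabelling) lies in $\mathcal{A}^+_{\mathcal{O}_{1-\vep}}$ with $\mathcal{O}_{1-\vep}\neq\mathcal{O}_\vep$, so $b(\mathcal{O}_\vep)_{\alpha_0}=1$. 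Meanwhile $\alpha_1\in\mathcal{A}^-_{\mathcal{O}_\vep}$ since $\pi_0(\alpha_1)\in\mathcal{O}_\vep$ — wait, one must instead use that $\pi_1(\alpha_1)=d$, so $\alpha_1\in\mathcal{A}^-_{\mathcal{O}_\vep}$ via the first-row characterisation of separatrices; in any case $b(\mathcal{O}_\vep)_{\alpha_1}=1$. Then $b(\xi\mathcal{O}_\vep)_{\alpha_1}=1-1=0$, so $\alpha_1$ drops out of $\widetilde{\mathcal{A}}^-_{\xi\mathcal{O}_\vep}$, giving $\widetilde{\mathcal{A}}^-_{\xi\mathcal{O}_\vep}=\mathcal{A}^-_{\mathcal{O}_\vep}\setminus\{\alpha_\vep\}$ (here $\alpha_\vep=\alpha_1$). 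Conversely for $\mathcal{O}_{1-\vep}$: $b(\mathcal{O}_{1-\vep})_{\alpha_0}=0$ and $b(\mathcal{O}_{1-\vep})_{\alpha_1}=0$ originally, but $b(\xi\mathcal{O}_{1-\vep})_{\alpha_1}=0-b(\mathcal{O}_{1-\vep})_{\alpha_0}$, and one must recompute: actually $\mathcal{O}_{1-\vep}$ is defined so that $\alpha_{1-\vep}\in\mathcal{A}^+_{\mathcal{O}_{1-\vep}}$, i.e.\ $d-1\in\mathcal{O}_{1-\vep}$, hence $b(\mathcal{O}_{1-\vep})_{\alpha_0}=\chi_{\mathcal{O}_{1-\vep}}(d)-\chi_{\mathcal{O}_{1-\vep}}(d-1)=0-1=-1$, so $b(\xi\mathcal{O}_{1-\vep})_{\alpha_1}=0-(-1)=1$, placing $\alpha_1$ into $\widetilde{\mathcal{A}}^-_{\xi\mathcal{O}_{1-\vep}}$ and yielding $\widetilde{\mathcal{A}}^-_{\xi\mathcal{O}_{1-\vep}}=\mathcal{A}^-_{\mathcal{O}_{1-\vep}}\cup\{\alpha_\vep\}$. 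When $\mathcal{O}_0=\mathcal{O}_1$ the two contributions $b(\mathcal{O}_\vep)_{\alpha_0}$ and $b(\mathcal{O}_\vep)_{\alpha_1}$ cancel in $b(\xi\mathcal{O}_\vep)_{\alpha_1}$ (one is $+1$ from being in $\mathcal{A}^-$, the other $-1$ now that $d-1$ also lies in the same orbit), so $\alpha_1$'s membership is unchanged and $\widetilde{\mathcal{A}}^-_{\xi\mathcal{O}}=\mathcal{A}^-_{\mathcal{O}}$ for all $\mathcal{O}$. Finally, since $\widetilde\pi_1=\pi_1$ is unchanged, the $+$-sets are governed by the second row only at position $d$; a parallel but shorter computation (or the observation that the insertion of $\alpha_1$ happens to the \emph{right} of the position realising $\mathcal{A}^+_{\mathcal{O}_\vep}$) shows $\widetilde{\mathcal{A}}^+_{\xi\mathcal{O}_\vep}=\mathcal{A}^+_{\mathcal{O}_\vep}$, which is the first assertion.

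The main obstacle I anticipate is purely notational: keeping straight, through the relabelling induced by the insertion of $\alpha_1$, which position in $\{1,\dots,d\}$ a given endpoint occupies before and after the move, and correctly identifying $\mathcal{O}_{1-\vep}$ via the condition $d-1\in\mathcal{O}_{1-\vep}$. The cleanest route is to avoid chasing positions altogether and argue entirely through the vectors $b(\mathcal{O})$ and the single elementary matrix $\Theta^{\pm1}$, reading off $\mathcal{A}^{\pm}_{\mathcal{O}}$ from the support and sign pattern of $b(\mathcal{O})$ via~(\ref{bdef}); this reduces the whole lemma to the two-line identity $b(\xi\mathcal{O})=b(\mathcal{O})-b(\mathcal{O})_{\alpha_0}\,e_{\alpha_1}$ plus a finite case-check on the coordinate $\alpha_1$, which is robust and short. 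One should also double-check the degenerate sub-case $\mathcal{O}_0=\mathcal{O}_1$ separately to confirm it is consistent with both ``$\mathcal{O}\notin\{\mathcal{O}_0,\mathcal{O}_1\}$'' conclusions collapsing to the same statement.
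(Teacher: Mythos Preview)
The paper does not actually prove this lemma; it only asserts that the claim can be verified by inspecting the explicit formulas for $\sigma_{\widetilde\pi}$ and the bijection $\xi$ (referring to Viana's notes). Your overall plan --- a direct combinatorial bookkeeping of one Rauzy step --- is therefore exactly what the paper has in mind. But the execution you outline has genuine gaps.

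First, your handling of the top case ($\vep=0$) is incomplete. It is true that $\widetilde\pi_0=\pi_0$, but the sets $\widetilde{\mathcal A}^\pm_{\xi\mathcal O}$ also depend on the \emph{subset} $\xi\mathcal O\subset\{0,\dots,d\}$. Since $\sigma_{\widetilde\pi}$ depends on $\widetilde\pi_1\neq\pi_1$, the orbit $\xi\mathcal O$ is in general a different subset from $\mathcal O$ (they in fact agree on $\{0,\dots,d-1\}$ but may differ at $d$). So ``$\pi_0$ unchanged'' alone does not make the statement immediate; you still owe the identification of $\xi\mathcal O$.

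Second, and more seriously, the ``clean'' route you propose --- reading the pair $(\mathcal A^-_{\mathcal O},\mathcal A^+_{\mathcal O})$ off the sign pattern of $b(\mathcal O)$ --- cannot work as stated. From $b(\mathcal O)_\alpha=\chi_{\mathcal O}(\pi_0(\alpha))-\chi_{\mathcal O}(\pi_0(\alpha)-1)$ one recovers membership only up to the ambiguity ``$\alpha$ in both $\mathcal A^{\pm}_{\mathcal O}$'' versus ``$\alpha$ in neither'' when $b(\mathcal O)_\alpha=0$. This case does occur: for instance, in the bottom move with $\mathcal O_0\neq\mathcal O_1$ you compute $b(\xi\mathcal O_\vep)_{\alpha_\vep}=0$ and then assert $\alpha_\vep\notin\widetilde{\mathcal A}^-_{\xi\mathcal O_\vep}$, but the value $0$ is equally consistent with $\alpha_\vep$ lying in \emph{both} $\widetilde{\mathcal A}^+_{\xi\mathcal O_\vep}$ and $\widetilde{\mathcal A}^-_{\xi\mathcal O_\vep}$. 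Likewise your assertion that ``for every letter other than $\alpha_1$ the membership is unchanged'' does not follow from $b(\xi\mathcal O)_\beta=b(\mathcal O)_\beta$ alone. The identity $b(\xi\mathcal O)=b(\mathcal O)-b(\mathcal O)_{\alpha_{1-\vep}}\,e_{\alpha_\vep}$ coming from Lemma~\ref{invario} is correct and genuinely useful, but to resolve these ambiguities you must go back to the explicit description of $\widetilde\pi_0$ and of the orbits of $\sigma_{\widetilde\pi}$ --- which is precisely the direct verification the paper points to. (Once you have an independent proof of the $\mathcal A^+$ claim, the $b$-vector identity \emph{does} then pin down $\mathcal A^-$, so the shortcut can save half the work, but not all of it.)

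A minor slip: in the bottom case it is $\alpha_0$ (the letter at top position $d$) that is removed and reinserted just after $\alpha_1$ in the top row, not the other way around; check the formula for $\widetilde\pi_{1-\vep}$ with $\vep=1$.
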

An example of these correspondence of orbits is illustrated in Figure \ref{Rauzy}.
\begin{remark}\label{assumexiid}
If $T$ is of periodic type, let us remark that $\Sigma(\pi^{(k)})=
\Sigma(\pi^{(k')})= \Sigma(\pi)$ for every $k'\geq k\geq 0$.  Up
to replacing the period $p$ by a multiple, we can assume that $Q(k,
k' )b(\mathcal{O})=b(\mathcal{O})$ for each $\mathcal{O}\in
\Sigma(\pi^{(k)})$ and $0\leq k\leq k'$.
\end{remark}

\subsection{Cocycles with logarithmic singularities}\label{cocycles:sec}
  Denote by $\bv(\sqcup_{\alpha\in \mathcal{A}}
I^{(k)}_{\alpha})$ the space of functions $\varphi:I^{(k)}\to
\mathbb{R}$ such that the restriction $\varphi:I^{(k)}_{\alpha}\to
\mathbb{R}$ is of bounded variation for every $\alpha\in
\mathcal{A}$. Let us denote by $\Var{{f}}{J}$ the total variation
of ${f}$ on the interval $J\subset I$. Then set
\begin{equation}\label{variation}
\var \varphi=\sum_{\alpha\in
\mathcal{A}}\Var{\varphi}{I^{(k)}_{\alpha}}.
\end{equation}
The space $\bv(\sqcup_{\alpha\in \mathcal{A}} I^{(k)}_{\alpha})$
is equipped with the norm
$\|\varphi\|_{\bv}=\|\varphi\|_{\sup}+\var\varphi$. Denote by
$\bv_0(\sqcup_{\alpha\in \mathcal{A}} I^{(k)}_{\alpha})$ the
subspace of all functions in $\bv(\sqcup_{\alpha\in \mathcal{A}}
I^{(k)}_{\alpha})$ with zero mean.

For every function $\varphi\in\bv(\sqcup_{\alpha\in \mathcal{A}}
I_{\alpha})$ and $x\in I$ we will denote by $\varphi_+(x)$ and
$\varphi_-(x)$ the right-handed  and left-handed limit of
$\varphi$ at $x$ respectively.
 Denote by $\ac(\sqcup_{\alpha\in
\mathcal{A}} I_{\alpha})$ the space of functions $\varphi:I\to
\mathbb{R}$ which are absolutely continuous on the interior of
each $I_{\alpha}$, $\alpha\in \mathcal{A}$ and by
$\ac_0(\sqcup_{\alpha\in \mathcal{A}} I_{\alpha})$ its subspace of
zero mean functions. For any $\varphi\in\ac(\sqcup_{\alpha\in
\mathcal{A}} I_{\alpha})$ let
\[s(\varphi)=\int_I\varphi'(x)\,dx=\sum_{\alpha\in\mathcal{A}}(\varphi_{-}(r_\alpha)-\varphi_{+}(l_\alpha)).\]
Denote by $\bv^1(\sqcup_{\alpha\in \mathcal{A}} I_{\alpha})$ the
space of functions $\varphi\in\ac(\sqcup_{\alpha\in \mathcal{A}}
I_{\alpha})$ such that $\varphi'\in \bv(\sqcup_{\alpha\in
\mathcal{A}} I_{\alpha})$ and by $\bv_*^1(\sqcup_{\alpha\in
\mathcal{A}} I_{\alpha})$ its subspace of functions $\varphi$ for
which $s(\varphi)=0$.

\begin{theorem}[see \cite{Ma-Mo-Yo} and \cite{MMYlinearization}]
\label{cohcon} If $T:I\to I$ satisfies a Roth type condition then
each cocycle $\varphi\in \bv_*^1(\sqcup_{\alpha\in \mathcal{A}}
I_{\alpha})$ for $T$ is cohomologous (via a continuous transfer
function) to a cocycle which is constant on each interval
$I_{\alpha}$, $\alpha\in \mathcal{A}$. Moreover, the set of IETs
satisfying this Roth type condition has full measure and contains
all IETs of periodic type.
\end{theorem}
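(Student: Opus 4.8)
The plan is to recall the structure of the Marmi--Moussa--Yoccoz theory and to reduce the statement to their cohomological equation result for piecewise constant cocycles together with the linearization result of \cite{MMYlinearization}. First I would observe that a cocycle $\varphi\in\bv_*^1(\sqcup_{\alpha\in\mathcal{A}}I_\alpha)$ is, by definition, absolutely continuous on the interior of each $I_\alpha$ with $\varphi'\in\bv(\sqcup_{\alpha\in\mathcal{A}}I_\alpha)$ and $s(\varphi)=\int_I\varphi'\,dx=0$. The condition $s(\varphi)=0$ is exactly what is needed so that the primitive behaves well across the discontinuity points: writing $\psi=\varphi'$, the vanishing of $s(\varphi)=\sum_\alpha(\varphi_-(r_\alpha)-\varphi_+(l_\alpha))$ says that the total jump of the antiderivative, viewed cyclically on $I$, is zero, so that $\varphi$ can be compared with a genuine $\mathscr{C}^1$-type primitive of a function of bounded variation on the whole interval up to a piecewise constant correction.

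The key steps, in order, are as follows. (i) Reduce first to the case where $\varphi'$ is constant on each $I_\alpha$: by the Gottschalk--Hedlund / MMY machinery for $\bv$ cocycles under a Roth type condition, a function of bounded variation with zero mean is cohomologous, via a bounded (indeed continuous) transfer function, to a piecewise constant function; apply this to $\psi=\varphi'$ after subtracting its mean, noting that the mean of $\varphi'$ over $I$ is $s(\varphi)=0$. Thus $\varphi'=c+g-g\circ T$ with $c$ piecewise constant and $g\in\bv$. (ii) Integrate this relation: the primitive of a piecewise constant function is piecewise linear, and the primitive of $g-g\circ T$ is, by the linearization results of \cite{MMYlinearization}, cohomologous to a piecewise constant function under the Roth type condition — this is precisely where one needs the full strength of \cite{MMYlinearization} rather than just \cite{Ma-Mo-Yo}. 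Combining, $\varphi$ is cohomologous to a piecewise linear cocycle. (iii) Finally, apply the main cohomological equation theorem of \cite{Ma-Mo-Yo} for piecewise linear (equivalently, $\bv_*^1$ with piecewise constant derivative) cocycles under a Roth type condition to conclude that the piecewise linear cocycle is itself cohomologous, via a continuous transfer function, to a cocycle constant on each $I_\alpha$; transitivity of the cohomology relation and the fact that a composition of continuous transfer functions is continuous then give the claim. (iv) For the last sentence of the statement, recall from \cite{Ma-Mo-Yo} that Roth type is a full measure condition on the space of IETs (it is implied by the Diophantine condition obtained from the Oseledets theorem for the Kontsevich--Zorich cocycle), and that IETs of periodic type satisfy it because the Rauzy--Veech matrices $Q(k)=A^k$ grow with controlled distortion by \eqref{balancedintervals}, which yields the required summability in the Roth type estimates.

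The main obstacle is step (ii): one has to handle the primitive of the coboundary part $g-g\circ T$, where $g$ is only of bounded variation, and show that integrating a bounded coboundary produces a cocycle which is again cohomologous to a nice (piecewise constant) one. This is not formal — it requires the quantitative control over Birkhoff sums and the inductive renormalization argument of \cite{MMYlinearization}, where the Roth type Diophantine condition enters in an essential way to sum the geometric-type contributions along the Rauzy--Veech tower decomposition. Once this linearization step is granted, the rest is bookkeeping: keeping track that all transfer functions produced are continuous (so that the composite transfer function is continuous), and that the zero-mean normalizations are consistent at each stage. I would present (i) and (iii) as direct citations and devote the bulk of any detailed write-up to making (ii) precise, or simply cite \cite{MMYlinearization} for it since the statement attributes the result to that paper.
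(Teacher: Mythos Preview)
The paper does not prove this theorem; it is quoted as a result from \cite{Ma-Mo-Yo} and \cite{MMYlinearization}, with only the remark that the proof rests on the Gottschalk--Hedlund consequence stated as Proposition~\ref{ghmmy}. So the comparison is between your outline and the actual MMY argument.

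Your step (i) contains a genuine gap. You want to apply ``the MMY machinery for $\bv$ cocycles'' to $\psi=\varphi'$ and conclude that a zero-mean function of bounded variation is cohomologous to a piecewise constant one. No such result is available: the MMY cohomological theorem requires $\bv^1_*$ regularity, not merely $\bv$. The function $\varphi'$ lies only in $\bv(\sqcup_\alpha I_\alpha)$, and there is no reason for it to be cohomologous to anything piecewise constant. Invoking such a statement is either false or circular (it is essentially the theorem you are trying to prove, applied one derivative down where it does not hold). Step (ii) then inherits this problem, and the elaborate integration argument becomes moot.

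The actual MMY route is more direct and does not pass through $\varphi'$ as a cocycle at all. One constructs, via the renormalization/correction procedure (the prototype of the operator built in \S\ref{correction:sec} of this paper), a piecewise constant $h\in\Gamma$ such that the Birkhoff sums $(\varphi-h)^{(n)}$ are uniformly bounded in sup norm; the Roth type condition is exactly what makes the telescoping series in the correction converge. The hypothesis $\varphi\in\bv^1_*$ enters because $\var(S(k)\varphi)$ is controlled (via $\var\varphi'$ and $s(\varphi)=0$) along the induction. Then Proposition~\ref{ghmmy} (Gottschalk--Hedlund) turns uniform boundedness into a continuous coboundary. Your step (iv) on full measure and periodic type is fine, but the core reduction should go through bounded Birkhoff sums plus Proposition~\ref{ghmmy}, not through a cohomological reduction of $\varphi'$.
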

The prove of the above result is based on the following conclusion
from the Gottschalk-Hedlund theorem (see \S3.4 in
\cite{MMYlinearization}).

\begin{proposition}\label{ghmmy}
If $T$  satisfies  the Keane condition and $\varphi\in
\ac_0(\sqcup_{\alpha\in \mathcal{A}} I_{\alpha})$ is a function
such that the sequence $(\varphi^{(n)})_{n\geq 0}$ is uniformly
bounded then $\varphi$ is a coboundary with a continuous  transfer
function.
\end{proposition}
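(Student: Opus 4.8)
The plan is to deduce the statement from the classical Gottschalk--Hedlund theorem for minimal homeomorphisms of compact metric spaces. The point to overcome is that $T$ is neither continuous nor defined on a compact space, and $\varphi$ has jumps at the partition endpoints, so one must first manufacture a genuine compact minimal system carrying a continuous version of $\varphi$.

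First I would build a compact minimal model $(\widehat I,\widehat T)$. Let $\mathcal D\subset I$ be the (countable, $T$-invariant) grand orbit of the discontinuity points of $T$. Form $\widehat I$ by splitting each $x\in\mathcal D$ into a left copy $x^-$ and a right copy $x^+$, equipped with the order topology of the induced lexicographic order; then $\widehat I$ is compact metrizable and the collapsing map $\pi:\widehat I\to[0,|I|]$ is continuous, onto, injective off $\mathcal D$ and two-to-one over $\mathcal D$. The Keane condition says precisely that the forward orbits of the discontinuity points avoid all discontinuity points, and this is exactly what makes $T$ lift to a well-defined homeomorphism $\widehat T:\widehat I\to\widehat I$: it makes the splitting consistent, so that no two split points collide. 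Since $T$ is minimal on $I$ (Keane, via Maier) and $\pi$ is injective on the dense set of points whose full orbit misses $\mathcal D$, $\widehat T$ is an almost one-to-one extension of a minimal system, hence itself minimal.

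Next I would lift the cocycle and transfer the hypothesis. As $\varphi\in\ac_0(\sqcup_{\alpha\in\mathcal A}I_\alpha)$, its restriction to each $I_\alpha$ extends absolutely continuously to $\overline{I_\alpha}$, so $\varphi$ has one-sided limits $\varphi_\pm(x)$ everywhere and is continuous off the $l_\alpha$; setting $\widehat\varphi(x^\pm)=\varphi_\pm(x)$ for $x\in\mathcal D$ and $\widehat\varphi=\varphi\circ\pi$ otherwise gives $\widehat\varphi\in C(\widehat I)$. For every $\widehat x\in\widehat I$ lying over a point whose forward $T$-orbit avoids $\mathcal D$ --- a dense subset of $\widehat I$ --- one has $\widehat\varphi^{(n)}(\widehat x)=\varphi^{(n)}(\pi\widehat x)$, so $|\widehat\varphi^{(n)}(\widehat x)|\le M:=\sup_{n\ge0}\|\varphi^{(n)}\|_{\sup}<\infty$; by continuity of $\widehat\varphi^{(n)}$ and density this gives $\sup_{n\ge0}\|\widehat\varphi^{(n)}\|_{C(\widehat I)}\le M$. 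Gottschalk--Hedlund now produces $\widehat g\in C(\widehat I)$ with $\widehat\varphi=\widehat g-\widehat g\circ\widehat T$. Finally I would descend: using the cocycle identity together with the Keane condition, one checks that $\widehat g$ agrees on the two points of each fibre of $\pi$ over $\mathcal D$, hence $\widehat g=g\circ\pi$ for a function $g:I\to\R$ that is continuous (as $\pi$ is a topological quotient map), and $\varphi=g-g\circ T$ then holds off $\mathcal D$, hence everywhere it matters --- which is the assertion.

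The hard part will be the construction and bookkeeping around $(\widehat I,\widehat T)$: verifying carefully that the Keane condition makes $\widehat T$ a genuine homeomorphism with no collisions among split points, that $\widehat T$ is minimal, and that the transfer function really descends to an honestly continuous function on $I$ rather than a merely one-sidedly continuous one. Once the compact minimal model is correctly in place, lifting $\varphi$, transferring the uniform bound and invoking Gottschalk--Hedlund are routine.
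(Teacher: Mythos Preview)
Your proposal is correct and follows exactly the approach the paper itself invokes: the paper does not prove this proposition but cites \S3.4 of Marmi--Moussa--Yoccoz (\emph{Linearization of generalized interval exchange maps}), where precisely this construction---doubling the grand orbit of the discontinuities to obtain a compact minimal Cantor model $(\widehat I,\widehat T)$, lifting $\varphi$ to a continuous $\widehat\varphi$, applying Gottschalk--Hedlund, and descending---is carried out. Your identification of the delicate points (that Keane guarantees $\widehat T$ is a well-defined homeomorphism with no collisions, minimality of $\widehat T$, and the descent of $\widehat g$ to a genuinely continuous $g$ on $I$) matches the issues handled there.
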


Denote by $\pl(\sqcup_{\alpha\in \mathcal{A}} I_{\alpha})$ the set
of functions $\varphi:I\to\R$ such that $\varphi(x)=sx+c_{\alpha}$
for $x\in I_{\alpha}$. As a consequence of Theorem~\ref{cohcon} we
have the following.

\begin{corollary}\label{lempl}
If the IET $T:I\to I$ is of periodic type then each cocycle
$\varphi\in\bv^1(\sqcup_{\alpha\in \mathcal{A}} I_{\alpha})$ is
cohomologous (via a continuous transfer function) to a cocycle
$\varphi_{pl}\in\pl(\sqcup_{\alpha\in
\mathcal{A}} I_{\alpha})$ with $s({\varphi}_{pl})=s(\varphi)$.
\end{corollary}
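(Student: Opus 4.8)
The plan is to derive Corollary \ref{lempl} from Theorem \ref{cohcon} by reducing a general cocycle in $\bv^1(\sqcup_{\alpha\in \mathcal{A}} I_{\alpha})$ to one in $\bv^1_*(\sqcup_{\alpha\in \mathcal{A}} I_{\alpha})$ after subtracting an explicit piecewise linear correction. The point is that the only obstruction to applying Theorem \ref{cohcon} directly is that $s(\varphi)$ need not vanish, and that a single linear function accounts for exactly this quantity.

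First I would observe that the map $L(x)=sx$ on $I$, with $s=s(\varphi)$, lies in $\pl(\sqcup_{\alpha\in \mathcal{A}} I_{\alpha})$ (taking all constants $c_\alpha$ equal to $0$), and that $L'\equiv s$ is of bounded variation, so $L\in \bv^1(\sqcup_{\alpha\in \mathcal{A}} I_{\alpha})$. By the formula $s(\psi)=\int_I\psi'(x)\,dx$ we get $s(L)=\int_I s\,dx = s|I| $; so to match $s(\varphi)$ exactly I instead take $L(x)= (s(\varphi)/|I|)\, x$, which still lies in $\pl\cap\bv^1$ and satisfies $s(L)=s(\varphi)$. Then $\varphi - L \in \bv^1(\sqcup_{\alpha\in \mathcal{A}} I_{\alpha})$ and $s(\varphi-L)=s(\varphi)-s(L)=0$, i.e.\ $\varphi-L\in \bv^1_*(\sqcup_{\alpha\in \mathcal{A}} I_{\alpha})$.

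Next I would invoke Theorem \ref{cohcon}: since $T$ is of periodic type, it satisfies the Roth type condition, hence $\varphi - L$ is cohomologous, via a continuous transfer function $g$, to a cocycle $c$ which is constant on each $I_\alpha$, say $c(x)=c_\alpha$ on $I_\alpha$. Writing this out, $\varphi - L = c + g - g\circ T$, so $\varphi = (L + c) + g - g\circ T$. Setting $\varphi_{pl}:=L+c$, which has the form $\varphi_{pl}(x)= (s(\varphi)/|I|)\,x + c_\alpha$ on $I_\alpha$ and therefore belongs to $\pl(\sqcup_{\alpha\in \mathcal{A}} I_{\alpha})$, we see that $\varphi$ is cohomologous to $\varphi_{pl}$ via the continuous transfer function $g$. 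Finally, since $c_\alpha$ are constants, $c'\equiv 0$ and hence $s(\varphi_{pl})=s(L)=s(\varphi)$, which gives the asserted normalization. (Strictly one should take the slope of the linear part to be $s(\varphi)/|I|$ rather than $s(\varphi)$; if the paper's convention normalizes $|I|=1$ this is moot, and I would state it in whichever convention is in force.)

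There is essentially no serious obstacle here: the corollary is a routine bookkeeping consequence of Theorem \ref{cohcon}. The only mild subtlety to get right is the precise relation between the slope of the linear piece and the value $s(\varphi)$, which hinges on the definition $s(\psi)=\int_I \psi'$ together with $|I|$ — so the one step to be careful about is computing $s$ of the linear correction, and adjusting its slope so that the residual cocycle lands in $\bv^1_*$ while the final piecewise linear representative still carries the correct value of $s$.
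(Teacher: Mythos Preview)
Your proposal is correct and follows exactly the argument the paper implicitly intends: the corollary is stated without proof as ``a consequence of Theorem~\ref{cohcon}'', and the natural derivation is precisely to subtract the linear function with slope $s(\varphi)/|I|$ to land in $\bv^1_*$, apply Theorem~\ref{cohcon}, and add back the piecewise constant result. Your care about the slope versus $|I|$ is appropriate and matches the paper's definition of $\pl$.
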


In the Introduction \S\ref{introduction:sec} we defined the space
$\ol (\sqcup_{\alpha\in \mathcal{A}} I_{\alpha})$ of functions
with logarithmic singularities of geometric type (see
Definition~\ref{LogSing}) and the subspace
$\logs(\sqcup_{\alpha\in \mathcal{A}} I_{\alpha})$ of symmetric
logarithmic singularities of geometric type, which satisfy the
symmetry condition (\ref{zerosymweak}). We denote by
$\ol_0(\sqcup_{\alpha\in \mathcal{A}} I_{\alpha})$ and
$\logs_0(\sqcup_{\alpha\in \mathcal{A}} I_{\alpha})$   the
corresponding spaces of functions with zero mean.

\begin{definition}\label{strongsymmetry}
A function $\varphi\in  \ol (\sqcup_{\alpha\in \mathcal{A}}
I_{\alpha})$ of the form (\ref{fform}) has \emph{strong symmetric}
logarithmic singularities if for every $\mathcal{O}\in\Sigma(\pi)$
we have
\begin{equation}\label{zerosym}
\sum_{\alpha\in\mathcal{A}_\mathcal{O}^-}C^-_{\alpha}-
\sum_{\alpha\in\mathcal{A}_\mathcal{O}^+}C^+_{\alpha} =0,
\end{equation}
where $\mathcal{A}_\mathcal{O}^-, \mathcal{A}_\mathcal{O}^+$ are
the sets defined in (\ref{defAO}).
\end{definition}
Denote by $\ls(\sqcup_{\alpha\in \mathcal{A}} I_{\alpha})$ the
space of functions with  strong symmetric logarithmic
singularities of geometric type and let $\ls_0:=\ls\cap\ol_0$.
Clearly $\ls(\sqcup_{\alpha\in \mathcal{A}} I_{\alpha}) \subset
\logs(\sqcup_{\alpha\in \mathcal{A}} I_{\alpha})$ since the
condition (\ref{zerosym}) implies the weaker symmetry condition
(\ref{zerosymweak}) by summing over $\mathcal{O}\in \Sigma$.
Strong symmetric singularities of geometric type appear naturally
from extensions of locally Hamiltonian flows, see
\S\ref{reduction:sec}. This stronger condition of symmetry is
important in the proof of ergodicity.

We will also use the space $\overline{\ol}(\sqcup_{\alpha\in
\mathcal{A}} I_{\alpha})={\ol}(\sqcup_{\alpha\in \mathcal{A}}
I_{\alpha})+{\bv}(\sqcup_{\alpha\in \mathcal{A}} I_{\alpha})$
(respectively $\overline{\ls}(\sqcup_{\alpha\in \mathcal{A}}
I_{\alpha})={\ls}(\sqcup_{\alpha\in \mathcal{A}}
I_{\alpha})+{\bv}(\sqcup_{\alpha\in \mathcal{A}} I_{\alpha})$),
i.e.\ the space of all functions with logarithmic singularities
(respectively strong symmetric logarithmic singularities) of
geometric type and zero mean of the form (\ref{fform}) for which
we  require only that $g_\varphi\in\bv( \sqcup_{\alpha\in
\mathcal{A}} I_{\alpha})$. We will denote by $\overline{\ol}_0$
and $\overline{\ls}_0$ their subspaces of zero mean functions.

 Note that the space  $\bv$ $(\bv^1)$ coincides with the subspace of functions
$\varphi\in \overline{\ol}$ ($\ol$) as in (\ref{fform}) such that
$C^{\pm}_\alpha=0$ for all $\alpha\in\mathcal{A}$.

\begin{definition}\label{LVdef}
For every $\varphi\in\overline{\ol}(\sqcup_{\alpha\in \mathcal{A}}
I_{\alpha})$ of the form (\ref{fform}) set
\[ \bl(\varphi) = \sum_{\alpha\in\mathcal{A}}(|C^+_\alpha|+|C^-_\alpha|)\quad\text{
and }\quad\lv(\varphi):=\bl(\varphi)+\var g_\varphi.\]
\end{definition}
\noindent The quantity $\lv(\varphi)$ will play throughout the
paper an essential role to bound functions $\overline{\ol}$, since
it controls simultaneously the logarithmic singularities, through
the logarithmic constants $\bl(\varphi)$, and the part of bounded
variation.

 The spaces $\overline{\ls}(\sqcup_{\alpha\in
\mathcal{A}} I_{\alpha})$ and $\overline{\ls}_0(\sqcup_{\alpha\in
\mathcal{A}} I_{\alpha})$ equipped with the norm
\[\|\varphi\|_{\lv}=\bl(\varphi)+\|g_\varphi\|_{\bv} \] become
Banach spaces for which ${\ls}(\sqcup_{\alpha\in \mathcal{A}}
I_{\alpha})$  or respectively $\ls_0(\sqcup_{\alpha\in
\mathcal{A}} I_{\alpha})$   are dense subspaces.

For every integrable function $f:I\to\R$ and a subinterval
$J\subset I$ let $m(f,J)$ stand for the mean value of $f$ on $J$,
this is
\[m(f,J)=\frac{1}{|J|}\int_Jf(x)\,dx.\]

\begin{proposition}\label{lemlos}
If $\varphi\in\overline{\ol}(\sqcup_{\alpha\in \mathcal{A}}
I_{\alpha})$ and $J\subset I_\alpha$ for some
$\alpha\in\mathcal{A}$ then
\begin{equation}\label{meanv1}
|m(\varphi,J)-m(\varphi,I_\alpha)|\leq
\lv(\varphi)\left(4+\frac{|I_\alpha|}{|J|}\right)
\end{equation}
and
\begin{equation}\label{meanv2}
\frac{1}{|J|}\int_J|\varphi(x)-m(\varphi,J)|\,dx\leq
8\lv(\varphi).
\end{equation}
\end{proposition}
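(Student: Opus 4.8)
We want to prove Proposition~\ref{lemlos}, which gives two estimates comparing the mean of $\varphi\in\overline{\ol}(\sqcup_{\alpha\in \mathcal{A}} I_{\alpha})$ on a subinterval $J\subset I_\alpha$ with the mean over $I_\alpha$, both controlled by $\lv(\varphi)=\bl(\varphi)+\var g_\varphi$. The strategy is to decompose $\varphi = \varphi_{\log} + g_\varphi$ where $\varphi_{\log}(x) = -\sum_{\alpha} C_\alpha^+\log(|I|\{(x-l_\alpha)/|I|\}) - \sum_\alpha C_\alpha^- \log(|I|\{(r_\alpha - x)/|I|\})$ is the purely logarithmic part (as in (\ref{fform})), handle the two pieces separately, and add. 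The bounded-variation part $g_\varphi$ is the easy one: on any subinterval $J\subset I_\alpha$ one has $|g_\varphi(x) - m(g_\varphi, J)| \leq \Var{g_\varphi}{J} \leq \var g_\varphi$ for all $x\in J$, so both $|m(g_\varphi,J) - m(g_\varphi,I_\alpha)|$ and $\frac1{|J|}\int_J |g_\varphi - m(g_\varphi,J)|$ are bounded by $2\var g_\varphi$. That already gives the $g_\varphi$-contribution well within the claimed constants.

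The substance is the logarithmic part. Here I would use the following elementary calculus fact: if $\psi(x) = -c\log(x - a)$ on an interval $[a', b']$ with $a \leq a' < b'$, then $\psi$ is monotone, and for a subinterval $[u,v]\subseteq [a',b']$ one computes $m(\psi,[u,v])$ explicitly via $\int \log = x\log x - x$. The key point is that $\psi$, being monotone, has its mean on a subinterval squeezed between its endpoint values, and — crucially — the $L^1$-oscillation of $\log$ near its singularity is integrable with a \emph{universal} constant: for instance $\frac{1}{v-u}\int_u^v |{-}\log(x-a) - m(-\log(\cdot - a),[u,v])|\,dx$ is bounded by an absolute constant (one can check it is at most $2$, since $\int_0^1 |\log t - \int_0^1 \log|\,dt = \int_0^1|\log t + 1|\,dt \le 2$ after affine rescaling, using that the mean of $\log$ on $(0,1)$ is $-1$). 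Summing the contributions of each singularity $\log(|I|\{(x-l_\beta)/|I|\})$ and $\log(|I|\{(r_\beta - x)/|I|\})$ as $\beta$ ranges over $\mathcal{A}$, weighted by $|C_\beta^\pm|$, produces the factor $\bl(\varphi)$ and a universal numerical constant; this handles (\ref{meanv2}) for $\varphi_{\log}$. For (\ref{meanv1}) one needs instead the comparison of means over $J$ and over $I_\alpha$: here the factor $|I_\alpha|/|J|$ appears precisely because $J$ could be a tiny interval sitting next to the singularity at an endpoint of $I_\alpha$, where $\varphi_{\log}$ is large but still $L^1$; estimating $|m(\psi,J) - m(\psi,I_\alpha)| \le \frac{1}{|J|}\int_{I_\alpha}|\psi - m(\psi,I_\alpha)|\,dx \le \frac{|I_\alpha|}{|J|}\cdot\frac{1}{|I_\alpha|}\int_{I_\alpha}|\psi - m(\psi,I_\alpha)| \le \frac{|I_\alpha|}{|J|}\cdot(\text{const}\cdot \bl)$ gives the second term, while the ``$4$'' absorbs the additive universal constants from the $\log$ oscillations and from $g_\varphi$.

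I would organize the write-up as: (i) reduce to $\varphi = \varphi_{\log}$ and $\varphi = g_\varphi$ separately by the triangle inequality, noting $\lv$ and the two estimates are both additive/subadditive in the right way; (ii) dispose of $g_\varphi$ using $\|g_\varphi - m(g_\varphi,J)\|_{\sup,J}\le \Var{g_\varphi}{I_\alpha}$; (iii) for each elementary singularity $x\mapsto \log(|I|\{(x-l_\beta)/|I|\})$ restricted to $I_\alpha$ (which is either smooth away from the boundary of $I_\alpha$, or has a one-sided logarithmic blow-up at an endpoint of $I_\alpha$), prove the two estimates with explicit small constants by direct integration, then sum over $\beta\in\mathcal A$ and over the $C^+$/$C^-$ families; (iv) collect constants. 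The main obstacle — really the only non-routine point — is step (iii): one must be careful that a single interval $I_\alpha$ may contain singularities of several of the functions $\log(|I|\{(\cdot - l_\beta)/|I|\})$, $\beta\ne\alpha$, only at its \emph{endpoints} (since the fractional part makes $\log(|I|\{(x-l_\beta)/|I|\})$ blow up exactly at points $\equiv l_\beta \bmod |I|$, and inside $I_\alpha$ the only such points are $l_\alpha$ or $r_\alpha$ if they coincide with some $l_\beta$), so that on the \emph{interior} of $I_\alpha$ each such term is smooth and monotone, with total-variation-type control that feeds into the same argument. Getting the constants to come out as exactly $4$ and $8$ requires bookkeeping but no new idea; if a slightly larger universal constant is easier one could also simply verify $4$ and $8$ suffice by crude bounds on $\int_0^1|\log|$ and $\int_0^1|\log t + 1|\,dt$.
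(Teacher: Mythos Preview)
Your decomposition $\varphi=\varphi_{\log}+g_\varphi$ and the treatment of $g_\varphi$ are exactly what the paper does. The difference is in how the logarithmic part is handled. The paper does \emph{not} treat the individual terms $-C_\beta^\pm\log(\cdots)$ separately; instead it introduces the quantity
\[
los(\varphi)=\esssup_{x}\,\min_{\bar x\in End(T)}|\varphi'(x)(x-\bar x)|,
\]
observes that $los(\varphi_{\log})\le \bl(\varphi)$ (since each summand $|C_\beta^\pm|/|x-l_\beta|$ multiplied by the minimum distance to $End(T)$ is at most $|C_\beta^\pm|$), and proves a calculus lemma: if $|f'(x)(x-x_0)|\le C$ on $(x_0,x_1]$ then $|m(f,[a,b])-f(b)|\le 2C$ (via integration by parts). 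From this, after splitting $I_\alpha$ at its midpoint and comparing means to the value at the midpoint, the constants $4$ and $8$ fall out directly for $\varphi_{\log}$ as a whole.

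Your term-by-term route is also viable, but two points need tightening. First, ``after affine rescaling'' is not quite right: an affine map sending $J=[u,v]$ to $[0,1]$ sends $x-a$ to $(u-a)+s(v-u)$, which is $\log s+\text{const}$ only when $u=a$. What is true is that, after the scaling $t=(x-a)/(v-a)$, the mean absolute deviation of $\log(\cdot-a)$ on $[u,v]$ becomes the MAD of $\log$ on $[r,1]$ with $r=(u-a)/(v-a)\in[0,1)$, and one must then check this is bounded uniformly in $r$ (it is, e.g.\ by $2$, but this needs a line of argument). Second, your estimate for \eqref{meanv1},
\[
|m(\psi,J)-m(\psi,I_\alpha)|\le \frac{|I_\alpha|}{|J|}\cdot\operatorname{MAD}(\psi,I_\alpha),
\]
summed over terms gives a bound of the shape $C\,\bl(\varphi)\,|I_\alpha|/|J|$ with $C$ the uniform MAD bound. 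Unless you can push $C$ down to $1$, this does \emph{not} imply the stated $\lv(\varphi)(4+|I_\alpha|/|J|)$, since for large $|I_\alpha|/|J|$ your coefficient in front of the ratio is larger. The paper's $los$-and-midpoint argument is what produces the additive ``$4$'' and the coefficient $1$ on $|I_\alpha|/|J|$ simultaneously. If you only need the proposition with \emph{some} universal constants (which is all the later applications require), your approach is fine; if you want exactly $4$ and $8$, you will end up re-deriving something equivalent to the paper's Lemma~A.1.
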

\noindent The proof of Proposition~\ref{lemlos} is elementary, but
we include the proof for completeness in
Appendix~\ref{lemlosproof}.

\begin{definition}\label{Odef}
For every $\varphi\in\overline{\ls}(\sqcup_{\alpha\in \mathcal{A}}
I_{\alpha})$ and $\mathcal{O}\in\Sigma(\pi)$ set
\begin{align*}
\mathcal{O}(\varphi)=\lim_{x\to
0^+}\left(\sum_{\alpha\in\mathcal{A},\pi_0(\alpha)\in\mathcal{O}}
\varphi(r_\alpha-x)-
\sum_{\alpha\in\mathcal{A},\pi_0(\alpha)-1\in\mathcal{O}}\varphi(l_\alpha+x)\right).
\end{align*}
\end{definition}
\noindent In order to prove that $\mathcal{O}(\varphi)$ is finite,
we need  the strong symmetry condition (\ref{zerosym}).
\begin{lemma}\label{lemoszo}
For every $\varphi\in\overline{\ls}(\sqcup_{\alpha\in \mathcal{A}}
I_{\alpha})$ and $\mathcal{O}\in\Sigma(\pi)$,
$\mathcal{O}(\varphi)$ is  finite. Moreover, if
$\varphi\in\ls(\sqcup_{\alpha\in \mathcal{A}} I_{\alpha})$ then
\[|\mathcal{O}(\varphi)|\leq 2d\nu(A)\frac{1}{|I|}\int_I|\varphi(x)|\,dx+2d\lv(\varphi).\]
\end{lemma}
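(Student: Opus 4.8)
My plan has two parts, matching the two assertions of the lemma.

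\emph{Finiteness.} I would fix $\mathcal{O}\in\Sigma(\pi)$, write $\varphi$ in the form (\ref{fform}), and isolate the logarithmic singularities at the relevant endpoints. For $\alpha$ with $\pi_0(\alpha)\in\mathcal{O}$ and small $x>0$, among all the logarithmic terms of (\ref{fform}) and the (bounded variation) function $g_\varphi$, the only one without a finite left-hand limit at $r_\alpha$ is $-C^-_\alpha\log(|I|\{(r_\alpha-\cdot)/|I|\})$, and since $\{x/|I|\}=x/|I|$ for $0<x<|I|$ this term equals $-C^-_\alpha\log x$ at $r_\alpha-x$; hence $\varphi(r_\alpha-x)+C^-_\alpha\log x$ converges, say to $a^-_\alpha$, as $x\to0^+$. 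Symmetrically $\varphi(l_\alpha+x)+C^+_\alpha\log x\to a^+_\alpha$ when $\pi_0(\alpha)-1\in\mathcal{O}$. Substituting into Definition~\ref{Odef}, the only contribution that could diverge is $-\bigl(\sum_{\alpha\in\mathcal{A}^-_\mathcal{O}}C^-_\alpha-\sum_{\alpha\in\mathcal{A}^+_\mathcal{O}}C^+_\alpha\bigr)\log x$, whose coefficient is zero \emph{exactly} by the strong symmetry relation (\ref{zerosym}); this is precisely where strong symmetry is needed, since the weaker relation (\ref{zerosymweak}) only yields the cancellation after summing over all $\mathcal{O}\in\Sigma(\pi)$. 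Thus $\mathcal{O}(\varphi)$ is finite and equals $\sum_{\alpha\in\mathcal{A}^-_\mathcal{O}}a^-_\alpha-\sum_{\alpha\in\mathcal{A}^+_\mathcal{O}}a^+_\alpha$.

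\emph{The quantitative bound.} Assuming now $\varphi\in\ls(\sqcup_{\alpha\in\mathcal{A}}I_\alpha)$, I would introduce the desingularized functions $\psi^-_\alpha:=\varphi+C^-_\alpha\log(|I|\{(r_\alpha-\cdot)/|I|\})$ and $\psi^+_\alpha:=\varphi+C^+_\alpha\log(|I|\{(\cdot-l_\alpha)/|I|\})$. These lie in $\overline{\ol}(\sqcup_{\alpha\in\mathcal{A}}I_\alpha)$, have the same $g$-part as $\varphi$ and satisfy $\lv(\psi^\pm_\alpha)=\lv(\varphi)-|C^\pm_\alpha|\le\lv(\varphi)$, and $\psi^-_\alpha$ (resp.\ $\psi^+_\alpha$) is continuous from the left at $r_\alpha$ (resp.\ from the right at $l_\alpha$) with boundary value $a^-_\alpha$ (resp.\ $a^+_\alpha$). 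Writing $a^-_\alpha=\lim_{\delta\to0^+}m(\psi^-_\alpha,[r_\alpha-\delta,r_\alpha])$ and splitting off $m(\psi^-_\alpha,I_\alpha)$ gives $a^-_\alpha=m(\psi^-_\alpha,I_\alpha)+\gamma^-_\alpha$ with $\gamma^-_\alpha:=(\psi^-_\alpha)_-(r_\alpha)-m(\psi^-_\alpha,I_\alpha)$, and similarly for $a^+_\alpha$. Then I would estimate the pieces: $|m(\psi^-_\alpha,I_\alpha)|\le\frac1{|I_\alpha|}\int_{I_\alpha}|\varphi|+|C^-_\alpha|\cdot\frac1{|I_\alpha|}\int_{I_\alpha}|\log(|I|\{(r_\alpha-x)/|I|\})|\,dx$, the last integral being an explicit quantity of size $O(1+|\log(|I_\alpha|/|I|)|)$; and $|\gamma^-_\alpha|$ is controlled by Proposition~\ref{lemlos} (inequalities (\ref{meanv1})--(\ref{meanv2}) applied to $\psi^-_\alpha$ at a scale comparable to $|I_\alpha|$, using that on $\overline{I_\alpha}$ the only singularity of $\psi^-_\alpha$ is its $C^+_\alpha$ term at $l_\alpha$, and that its other logarithmic terms have singularities outside $\overline{I_\alpha}$). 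The point that makes all these estimates closeable purely in terms of $\lv(\varphi)$ and $\frac1{|I|}\int_I|\varphi|$ is that $T$ is of periodic type: by (\ref{balancedintervals}) the original intervals $I_\alpha=I^{(0)}_\alpha$ are balanced, so $|I|/(d\nu(A))\le|I_\alpha|$, whence $|\log(|I_\alpha|/|I|)|$ and the scale ratios $|I_\alpha|/|J|$ are bounded in terms of $\nu(A)$, and the outside singularities stay at distance $\gtrsim|I|/(d\nu(A))$ from $\overline{I_\alpha}$. Summing over $\alpha\in\mathcal{A}^-_\mathcal{O}$ and $\alpha\in\mathcal{A}^+_\mathcal{O}$ (each set of cardinality at most $d$, with the $I_\alpha$ pairwise disjoint) replaces $\sum_\alpha\frac1{|I_\alpha|}\int_{I_\alpha}|\varphi|$ by at most $d\nu(A)\frac1{|I|}\int_I|\varphi|$ and, after collecting constants, yields the stated bound.

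\emph{Where the difficulty lies.} The delicate step is the cancellation of divergences: a naive triangle inequality on Definition~\ref{Odef} produces $+\infty$, so one must first group terms orbit by orbit, desingularize, and invoke the per-orbit symmetry (\ref{zerosym}) to remove the $\log x$ (and, after Cesàro averaging, $\log\delta$) divergences before any $L^1$-type estimate applies. After that the argument is a bookkeeping of bounded quantities, in which the periodic-type hypothesis enters only through the balanced-interval estimate (\ref{balancedintervals}) and is exactly what is responsible for the factor $\nu(A)$ appearing in the conclusion.
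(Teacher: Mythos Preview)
Your finiteness argument is correct and matches the paper's: write $\varphi(r_\alpha-x)=-C_\alpha^-\log x+g_\alpha^-(x)$ and $\varphi(l_\alpha+x)=-C_\alpha^+\log x+g_\alpha^+(x)$ with $g_\alpha^\pm$ of bounded variation, observe that the coefficient of $\log x$ in the combination defining $\mathcal{O}(\varphi)$ is $\sum_{\alpha\in\mathcal{A}^-_\mathcal{O}}C_\alpha^--\sum_{\alpha\in\mathcal{A}^+_\mathcal{O}}C_\alpha^+$, and invoke the strong symmetry~(\ref{zerosym}).

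For the quantitative bound your route departs from the paper's and has a gap. You want to control $\gamma_\alpha^-=(\psi_\alpha^-)_-(r_\alpha)-m(\psi_\alpha^-,I_\alpha)$ via Proposition~\ref{lemlos}, but that proposition only compares \emph{means} on nested intervals (inequality~(\ref{meanv1})) or bounds $L^1$ deviation from the mean (inequality~(\ref{meanv2})); neither gives the pointwise--to--mean comparison you need. Letting $J\to\{r_\alpha\}$ in~(\ref{meanv1}) makes the right side blow up, and~(\ref{meanv2}) on $J=I_\alpha$ does not control a single boundary value. You could fill this by estimating $(\psi_\alpha^-)'$ directly on $I_\alpha$, but then you are doing by hand, term by term, what the paper does once and more cleanly; and the constants coming from your separate treatment of the added $\log$ integrals and the outside singularities will not close to the stated $2d\nu(A)$ and~$2d$.

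The paper avoids all of this by never splitting into the $2d$ pieces $a_\alpha^\pm$. It works with the full combination
\[
\Delta(x)=\sum_{\alpha\in\mathcal{A}^-_\mathcal{O}}\varphi(r_\alpha-x)-\sum_{\alpha\in\mathcal{A}^+_\mathcal{O}}\varphi(l_\alpha+x)
\]
on $[0,a]$ with $a=\tfrac12\min_\alpha|I_\alpha|$. After the symmetry cancellation, $\Delta=\sum g_\alpha^--\sum g_\alpha^+$, and one bounds $|\Delta'(x)|\le 2d\,\bl(\varphi)/a+\sum_\alpha\bigl(|g_\varphi'(l_\alpha+x)|+|g_\varphi'(r_\alpha-x)|\bigr)$ directly: the first term comes from the surviving logarithmic terms, whose singularities lie at distance at least $a$ from the evaluation points. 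Integrating over $[0,a]$ gives the oscillation bound $|\Delta(x)-\Delta(y)|\le 2d\bl(\varphi)+\var g_\varphi\le 2d\,\lv(\varphi)$. The mean satisfies $|m(\Delta,[0,a])|\le a^{-1}\int_I|\varphi|\le 2d\nu(A)\,|I|^{-1}\int_I|\varphi|$ by~(\ref{balancedintervals}). Combining oscillation and mean at $x=0$ yields exactly the stated inequality.
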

\begin{proof} Let $a:= \min\{|I_\alpha|:\alpha\in\mathcal{A}\}/2$. Then for
$x\in(0,a)$ we have
\[\varphi(r_\alpha-x)=-C_\alpha^-\log(x)+g^-_\alpha(x)\text{ and
}\varphi(l_\alpha+x)=-C_\alpha^+\log(x)+g^+_\alpha(x),\]  where
$g^{\pm}_\alpha:[0,a]\to\R$ is of bounded variation for
$\alpha\in\mathcal{A}$. Therefore, using  the symmetry condition
(\ref{zerosym})
\begin{equation*}
\begin{split}
 \Delta(x) := &\sum_{\alpha \in \mathcal{A}_{\mathcal{O}^-}} \varphi(r_\alpha-x)-
 \sum_{ \alpha\in\mathcal{A}_\mathcal{O}^+} \varphi(l_\alpha+x) =
 - \sum_{\alpha\in\mathcal{A}_\mathcal{O}^-} C^-_{\alpha} \log(x) +
 \sum_{\alpha \in \mathcal{A}_\mathcal{O}^-}g^-_\alpha(x) \\ &
+ \sum_{ \alpha\in\mathcal{A}_\mathcal{O}^+}C^+_{\alpha} \log(x)  -
\sum_{ \alpha\in\mathcal{A}_\mathcal{O}^+}g^+_\alpha(x)
 = \sum_{\alpha \in \mathcal{A}_\mathcal{O}^-} g^-_\alpha(x)-
\sum_{ \alpha\in\mathcal{A}_\mathcal{O}^+} g^+_\alpha(x).
\end{split}
\end{equation*}
It follows that $\mathcal{O}(\varphi)$ is finite and given by
\begin{equation}\label{Ousinggvalues}
\mathcal{O}(\varphi)=\Delta_+(0)=\sum_{\alpha \in \mathcal{A}_\mathcal{O}^-}
(g^-_\alpha)_+(0)-
\sum_{ \alpha\in\mathcal{A}_\mathcal{O}^+}(g^+_\alpha)_+(0).
\end{equation}
Suppose now  that $\varphi\in\ls(\sqcup_{\alpha\in \mathcal{A}}
I_{\alpha})$ is of the form (\ref{fform}).  Then $g^\pm_\alpha$
are absolutely continuous and  $|(g^+_\alpha)'(x)|\leq
\bl(\varphi)/a+|g_\varphi'(l_\alpha+x)|$ and
$|(g^-_\alpha)'(x)|\leq \bl(\varphi)/a+|g_\varphi'(r_\alpha-x)|$,
and hence
\[|\Delta'(x)|\leq
\frac{2d\bl(\varphi)}{a}+\sum_{\alpha\in\mathcal{A}}(|g_\varphi'(l_\alpha+x)|+|g_\varphi'(r_\alpha-x)|)\text{
for }x\in[0,a].\] Therefore, for $x,y\in[0,a]$,
\begin{align}\label{difdelta}
\begin{split}
|\Delta(x)-\Delta(y)|&\leq
2d\bl(\varphi)+\sum_{\alpha\in\mathcal{A}}(\int_x^y|g_\varphi'(l_\alpha+t)|dt+\int_x^y|g_\varphi'(r_\alpha-t)|dt)\\
&\leq
2d\bl(\varphi)+\sum_{\alpha\in\mathcal{A}}(\int_{l_\alpha}^{l_\alpha+a}|g_\varphi'(t)|dt+
\int_{r_\alpha-a}^{r_\alpha}|g_\varphi'(t)|dt)\\
&\leq 2d\bl(\varphi)+\int_{I}|g_\varphi'(t)|dt=2d\bl(\varphi)+\var g_\varphi.
\end{split}
\end{align} Moreover, using the definition of $a$ and (\ref{balancedintervals}), one has
\begin{align*}
|m(\Delta,[0,a])|&\leq\sum_{\alpha\in\mathcal{A},\pi_0(\alpha)\in\mathcal{O}}
\left|m(\varphi,[r_\alpha,r_\alpha-a])\right|+
\sum_{\alpha\in\mathcal{A},\pi_0(\alpha)-1\in\mathcal{O}}\left|m(\varphi,[l_\alpha,l_\alpha+a])\right|\\
&\leq \frac{1}{a}\int_I|\varphi(x)|\,dx\leq
2d\nu(A)\frac{1}{|I|}\int_I|\varphi(x)|\,dx.
\end{align*}
In view of the previous equation and (\ref{difdelta}), it follows
that for all $x\in[0,a]$,
\begin{equation*}
|\Delta(x)|\leq \sup_{y\in[0,a]} |\Delta(x)-\Delta(y)| + m(\Delta,
[0,a])  \leq
\frac{2d\nu(A)}{|I|}\int_I|\varphi(x)|\,dx+2d\bl(\varphi)+\var
g_\varphi ,
\end{equation*}
which completes the proof.
\end{proof}
Remark that if $\varphi\in\bv(\sqcup_{\alpha\in \mathcal{A}}
I_{\alpha})$ and $\mathcal{O}\in\Sigma(\pi)$
\begin{equation}\label{odlabv}
\mathcal{O}(\varphi)=
\sum_{\alpha\in\mathcal{A},\pi_0(\alpha)\in\mathcal{O}}\varphi_-(r_{\alpha})-
\sum_{\alpha\in\mathcal{A},\pi_0(\alpha)-1\in\mathcal{O}}\varphi_+(l_{\alpha}).
\end{equation}
Hence, Definition~\ref{Odef} extends the definition of the
operator $\mathcal{O}$ used by \cite{Co-Fr}  for
$\varphi\in\bv(\sqcup_{\alpha\in \mathcal{A}} I_{\alpha})$.
Moreover, if $\varphi\in\ac(\sqcup_{\alpha\in \mathcal{A}}
I_{\alpha})$ then
\begin{equation}
\label{sprzezo}
\sum_{\mathcal{O}\in\Sigma(\pi)}\mathcal{O}(\varphi)=\sum_{\alpha\in\mathcal{A}}\varphi_-(r_{\alpha})-
\sum_{\alpha\in\mathcal{A}}\varphi_+(l_{\alpha})=s(\varphi).
\end{equation}

\begin{remark}\label{Ohzeroreformulation}
If we identify the piecewise constant function $h(x) = \sum_\alpha
h_\alpha \chi_{I_\alpha}(x)$ (where $\chi_{I_\alpha}$ is the
characteristic function of $I_\alpha$) with the vector  $h =
(h_{\alpha})_{\alpha\in\mathcal{A}}$, note also that
\[\mathcal{O}(h)=\sum_{\pi_0(\alpha)\in\mathcal{O}}h_{\alpha}-
\sum_{\pi_0(\alpha)-1\in\mathcal{O}}h_{\alpha}=\sum_{\alpha\in\mathcal{A}}
(\chi_{\mathcal{O}}(\pi_0(\alpha))-\chi_{\mathcal{O}}(\pi_0(\alpha)-1))h_\alpha=\langle
h,b(\mathcal{O})\rangle,\] where $b(\mathcal{O})$, $\mathcal{O}\in
\Sigma$ are the vectors  defined in (\ref{bdef}).  In particular,
Lemma \ref{bcharh} can be restated saying that the vector $h\in
H_\pi$ if and only if for the corresponding function $h$ we have
$\mathcal{O}(h)=0$ for every $\mathcal{O}\in\Sigma(\pi)$.
\end{remark}

\section{Renormalization of cocycles}\label{renormalization:sec}
Assume that  $T$ is of periodic type and recall that we denote by
$T^{(k)} = \mathcal{R}^{kp}(T)$ the sequence or Rauzy iterates
corresponding  to multiples of the period $p>0$.
\begin{remark}\label{Keanecase}
The definitions and Lemmas in \S\ref{SpecialBS} hold more in
general for any IET satisfying the Keane condition and any
subsequence $( T^{(k)} )_{k\geq 0}$ which is of the form
$(\mathcal{R}^{n_k}T)_{k\geq0}$ for some subsequence $(n_k)_{k\geq
0}$ of iterates of Rauzy-Veech induction.
\end{remark}

\subsection{Special Birkhoff sums}\label{SpecialBS}
 For every measurable cocycle $\varphi:I^{(k)}\to\R$ for the IET
$T^{(k)}:I^{(k)}\to I^{(k)}$ and $k'>k$ denote by
$S(k,k')\varphi:I^{(k')}\to\R$ the renormalized cocycle for
$T^{(k')}$ given by
\[S(k,k')\varphi(x)=\sum_{0\leq i<Q_{\beta}(k,k')}\varphi((T^{(k)})^ix)\text{ for }x\in I^{(k')}_\beta.\]
We write $S(k)\varphi$ for $ S(0,k)\varphi$ and we  adhere to  the
convention that $S(k,k)\varphi=\varphi$. Sums of this form are
usually called \emph{ special Birkhoff sums}. If $\varphi$ is
integrable then
\begin{equation}\label{nase}
\| S(k,k')\varphi\|_{L^1(I^{(k')})}\leq
\|\varphi\|_{L^1(I^{(k)})}\;\;\text{ and }
\end{equation}
\begin{equation}\label{zachcal}
\int_{I^{(k')}}S(k,k')\varphi(x)\,dx=
\int_{I^{(k)}}\varphi(x)\,dx.
\end{equation}
 Note that the operator $S(k,k')$
maps $\overline{\ol}(\sqcup_{\alpha\in \mathcal{A}}
I^{(k)}_{\alpha})$ into $\overline{\ol}(\sqcup_{\alpha\in
\mathcal{A}} I^{(k')}_{\alpha})$. In view of (\ref{zachcal}),
$S(k,k')$ maps the space $\overline{\ol}_0(\sqcup_{\alpha\in
\mathcal{A}} I^{(k)}_{\alpha})$ into
$\overline{\ol}_0(\sqcup_{\alpha\in \mathcal{A}}
I^{(k')}_{\alpha})$. Moreover, we will show below
(Lemma~\ref{invariophi}) that it also maps $\ls(\sqcup_{\alpha\in
\mathcal{A}} I^{(k)}_{\alpha})$ into $\ls(\sqcup_{\alpha\in
\mathcal{A}} I^{(k')}_{\alpha})$.   If $ g\in\bv(\sqcup_{\alpha\in
\mathcal{A}}I^{(k)}_{\alpha})$ then
\begin{equation}\label{navar}
\var S(k,k') g\leq \var g.
\end{equation}

The following three Lemmas (Lemma~\ref{comparingconstants},
\ref{invariophi} and \ref{comparingsingularities}) allow us to
compare the singularities of $S(k,k') \varphi$ with the
singularities of $\varphi$. Here the assumption that $\varphi$ is
of geometric type plays a crucial role, since functions with symmetric
singularities not of geometric type are  not renormalized by the
operation of considering special Birkhoff sums.



\begin{lemma}\label{comparingconstants}
For each $k'\geq k\geq 0$ and for each
$\varphi\in\ol(\sqcup_{\alpha\in \mathcal{A}}I^{(k)}_{\alpha})$
 of the form
\[\varphi(x)=-\sum_{\alpha\in\mathcal{A}}\left(C^+_\alpha
\log
\left(|I^{(k)}|\left\{\frac{x-l^{(k)}_\alpha}{|I^{(k)}|}\right\}\right)+C^-_\alpha
\log
\left(|I^{(k)}|\left\{\frac{r^{(k)}_\alpha-x}{|I^{(k)}|}\right\}\right)\right)\]
there exists a permutation $\chi:\mathcal{A}\rightarrow
\mathcal{A}$ such that
\begin{align*}
S(k,k')\varphi(x)=&-\sum_{\alpha\in\mathcal{A}} C^+_\alpha
\log(|I^{(k')}|\{(x-l^{(k')}_\alpha)/|I^{(k')}|\}) \\&  -\sum_{\alpha\in\mathcal{A}}
C^-_{\chi(\alpha)}\log(|I^{(k')}|\{(r^{(k')}_\alpha-x)/|I^{(k')}|\}
+g(x),
\end{align*}
where $g\in\bv^1(\sqcup_{\alpha\in
\mathcal{A}}I^{(k')}_{\alpha})$. In particular,
$\bl(S(k,k')\varphi)=\bl(\varphi)$.\end{lemma}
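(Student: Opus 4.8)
The plan is to reduce immediately to a single Rauzy--Veech step and then iterate. Since $S(k,k')\varphi$ is obtained by composing the one-step operators $S(m,m+1)$ for $k\le m<k'$, and since the property to be proved (being of the displayed form, with the same collection of constants $C_\alpha^\pm$ up to a permutation acting on the $C^-$'s, plus a $\bv^1$-error) is stable under composition, it suffices to treat the case $k'=k+1$, i.e.\ a single application of $\mathcal{R}^p$; in fact it is cleanest to work one elementary Rauzy step at a time (Remark~\ref{Keanecase} lets us do this). So first I would fix $T=T_{(\pi,\lambda)}$ of periodic type, write $\widetilde T=\mathcal{R}(T)=T_{(\widetilde\pi,\widetilde\lambda)}$, let $\vep=\vep(\pi,\lambda)$, and analyse $S(0,1)\varphi$ for $\varphi$ a pure logarithmic cocycle with constants $(C_\alpha^\pm)$.

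The key computation is: for $x$ in a given subinterval $\widetilde I_\beta$ exchanged by $\widetilde T$, the return time $Q_\beta(0,1)$ equals $1$ except for the one letter (the ``winner'') where it is $2$, so $S(0,1)\varphi(x)=\varphi(x)$ or $\varphi(x)+\varphi(Tx)$. The point is to track what happens to each singular term $-C_\alpha^+\log(|I|\{(x-l_\alpha)/|I|\})$ and $-C_\alpha^-\log(|I|\{(r_\alpha-x)/|I|\})$ under this operation. The endpoints $l_\alpha$, $r_\alpha$ are permuted (cf.\ (\ref{zbzero}), (\ref{zbanull}) analogues) into the endpoints $l^{(1)}_\alpha$, $r^{(1)}_\alpha$ of the new IET, with two exceptions: the right endpoint $r_{\pi_0^{-1}(d)}$ or $r_{\pi_1^{-1}(d)}$ (depending on $\vep$) and the corresponding left endpoint disappear from $\widetilde I$ or get relocated. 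Here is exactly where the \emph{geometric type} hypothesis enters: by Definition~\ref{LogSing}, one of $C^-_{\pi_0^{-1}(d)}$, $C^-_{\pi_1^{-1}(d)}$ vanishes and one of $C^+_{\pi_0^{-1}(1)}$, $C^+_{\pi_1^{-1}(1)}$ vanishes, so the singular term sitting at the ``bad'' endpoint is actually absent, and the logarithmic singularity that would otherwise be destroyed (or split) by the induction is not there. Every other singular term either survives unchanged at a relabelled endpoint, or is evaluated at a point $Tx$ where it becomes a bounded-variation contribution: a term $\log\{(x-l_\alpha)/|I|\}$ restricted to a subinterval of $I$ on which $x$ stays away from $l_\alpha$ is $\mathscr C^\infty$ with all derivatives of bounded variation, hence lands in $\bv^1$. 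Collecting, $S(0,1)\varphi$ again has the displayed form: the $C^+_\alpha$ constants reappear at the new left endpoints $l^{(1)}_\alpha$ with the \emph{same} index $\alpha$ (left endpoints are permuted by the identity-like correspondence on the relevant letters, as in Lemma~\ref{orbitcorrespondence} where $\widetilde{\mathcal{A}}^+_{\xi\mathcal{O}}=\mathcal{A}^+_{\mathcal{O}}$ for the orbit $\mathcal O_\vep$), while the $C^-_\alpha$ constants get relabelled by a permutation $\chi$ of $\mathcal A$, plus a remainder $g\in\bv^1(\sqcup_\alpha I^{(1)}_\alpha)$. The identity $s(g)=0$ needed for $g\in\bv^1_*$ (if one wants that) is not claimed here, only $g\in\bv^1$, which follows from $\var g'<\infty$ and absolute continuity on each $I^{(1)}_\alpha$.

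Finally I would iterate over the $p$ elementary steps making up one period, and then over $k'-k$ periods, composing the permutations $\chi$ at each stage (the composition of permutations of $\mathcal A$ is again one), and absorbing the successive $\bv^1$-errors using (\ref{navar})-type bounds, namely that $S(m,m+1)$ maps $\bv^1$ into $\bv^1$ with controlled variation; this gives the permutation $\chi:\mathcal A\to\mathcal A$ in the statement and $g\in\bv^1(\sqcup_\alpha I^{(k')}_\alpha)$. The assertion $\bl(S(k,k')\varphi)=\bl(\varphi)$ is then immediate, since $\bl$ only sees $\sum_\alpha(|C^+_\alpha|+|C^-_\alpha|)$ and both operations above — keeping the $C^+$'s fixed and permuting the $C^-$'s — leave this sum unchanged. \textbf{The main obstacle} I anticipate is the careful bookkeeping of \emph{which} endpoint is the exceptional one at each Rauzy step and verifying that the geometric-type condition is exactly what guarantees no singular constant is ever attached to it — one has to match the combinatorics of the Rauzy move (whether $\vep=0$ or $1$, whether the loser's interval contains the relevant endpoint) against the two pairs of vanishing constants in Definition~\ref{LogSing}, and check that the condition is \emph{preserved} under $\mathcal R$ so the induction can continue; this is where the proof will be delicate rather than routine, and it is presumably the content of the two companion lemmas (Lemma~\ref{invariophi} and Lemma~\ref{comparingsingularities}) announced just before the statement.
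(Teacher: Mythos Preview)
Your approach is essentially the same as the paper's: reduce to a single Rauzy step, observe that the $C^+_\alpha$ stay attached to the left endpoints with the same label while the $C^-_\alpha$ are transformed by an operator that, thanks to the geometric-type vanishing condition on $C^-_{\pi_0^{-1}(d)}$ or $C^-_{\pi_1^{-1}(d)}$, acts as a permutation; then iterate, composing the permutations and absorbing the $\bv^1$ errors. One small slip: the letter with return time $2$ in a single Rauzy step is the \emph{loser} $\alpha_{1-\vep}$, not the winner; and only the $C^-$ half of the geometric-type condition is actually used here (the paper encodes this via the map $R$ on the spaces $G^\upsilon$ and checks $R(C^-)\in G^{\vep}_{\mathcal R(\pi,\lambda)}$, which is exactly the preservation you flag as the main obstacle).
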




\begin{proof}
We will prove the Lemma for special Birkhoff sums corresponding to
one single step of Rauzy induction. The proof then follows by
induction on Rauzy steps. Let $\alpha_0:= \pi_0^{-1}(d)$ and
$\alpha_1  := \pi_1^{-1}(d) $. Let write
$C^-=(C^-_\alpha)_{\alpha\in \mathcal{A}} $ for the vector in
$\mathbb{R}^\mathcal{A}$ whose components are the constants
$C_\alpha^-$.  For $\upsilon=0,1$ let
\[G_{(\pi,\lambda)}^\upsilon=\{C^-=(C^-_\alpha)_{\alpha\in\mathcal{A}}\in
\R^{\mathcal{A}}:C^-_{\alpha_\upsilon}=0\}.\] Let us consider
$R:G_{(\pi,\lambda)}^0\cup G_{(\pi,\lambda)}^1\to
G_{\mathcal{R}(\pi,\lambda)}^{\vep(\pi,\lambda)}$ be given by
\begin{equation} \label{Rdef}
R(C^-)_\alpha=\left\{\begin{array}{ccc}
C^-_\alpha&\text{ if }&\alpha\neq \alpha_0,\alpha_1, \\
C^-_{\alpha_0}+C^-_{\alpha_1}&\text{ if }&\alpha= \alpha_{1-\vep(\pi,\lambda)},\\
0&\text{ if }&\alpha= \alpha_{\vep(\pi,\lambda)}.
\end{array}\right.\end{equation}
Recall that for
$(\pi^{1},\lambda^{1})=\mathcal{R}(\pi,\lambda)$ we have
$\pi^{1}_{\vep(\pi,\lambda)}(\alpha_{\vep(\pi,\lambda)})=
\pi_{\vep(\pi,\lambda)}(\alpha_{\vep(\pi,\lambda)})=d$, so
$R(C^-)\in G_{\mathcal{R}(\pi,\lambda)}^{\vep(\pi,\lambda)}$.
If $\varphi\in\ol(\sqcup_{\alpha\in
\mathcal{A}}I_{\alpha})$ is of the form
\[\varphi(x)=-\sum_{\alpha\in\mathcal{A}}(C^+_\alpha
\log(|I|\{(x-l_\alpha)/|I|\})+C^-_\alpha\log(|I|\{(r_\alpha-x)/|I|\})),\]
then since the singularities are of geometric type, $C^-=(C^-_\alpha)_{\alpha\in\mathcal{A}}\in
G_{(\pi,\lambda)}^\upsilon$ for some $\upsilon=0,1$. Denote by
$S^1\varphi$ the special Birkhoff sum corresponding to one step of
Rauzy-Veech induction, given by \begin{equation}\label{S1def}
S^1\varphi(x)= \sum_{0\leq i<\Theta(T)_\beta} \varphi(T^i(x)),
\qquad \mathrm{for}\ x\in  I^1_{\beta}. \end{equation} Analyzing
the effect of one step of Rauzy induction, one can then verify
that
\begin{align}\label{onesteprenormalization}
 S^1\varphi(x) =&
-\sum_{\alpha\in\mathcal{A}} \left(C^+_\alpha
\log(|I^{1}|\{(x-l^{1}_\alpha)/|I^{1}|\})\right.\\&
+\nonumber
\left.R(C^-)_\alpha\log(|I^{1}|\{(r^{1}_\alpha-x)/|I^{1}|\})\right)
+g_1(x),
\end{align}
where $g_1\in\bv^1(\sqcup_{\alpha\in
\mathcal{A}}I^{1}_{\alpha})$. See Figure \ref{renormlog}.
For $\upsilon=0,1$, define the permutation
$\chi_{(\pi,\lambda)}^{\upsilon}:\mathcal{A}\to\mathcal{A}$ by
\begin{equation*}
\chi_{(\pi,\lambda)}^{\upsilon}(\alpha_{\vep(\pi,\alpha)})=\alpha_{\upsilon},\;
\chi_{(\pi,\lambda)}^{\upsilon}(\alpha_{1-\vep(\pi,\alpha)})=\alpha_{1-\upsilon},
\; \chi_{(\pi,\lambda)}^{\upsilon}(\alpha)=\alpha \text{ for }
\alpha\notin \{ \alpha_{0}, \alpha_1\} .
\end{equation*}
Remark then that since $\varphi \in G^\upsilon$, $\alpha_\upsilon \in \{ \alpha_0,\alpha_1\}$
is such that $C^-_{\alpha_\upsilon}=0$. Thus, one can verify that
$R(C^-)_\alpha=C^-_{\chi(\alpha)}$ for all $\alpha\in\mathcal{A}$.
For $0\leq k<k'$ and $\varphi \in G^\upsilon$, if we denote by
$\vep_{j}=\vep(\pi^{j},\lambda^{j})$, we can let
$\chi:=\chi^\upsilon(k,k'):\mathcal{A}\to\mathcal{A}$ stand for the
permutation
\[\chi:= \chi^\upsilon(k,k')=\chi_{(\pi^{pk},\lambda^{pk})}^{\upsilon}\circ
\chi_{(\pi^{pk+1},\lambda^{pk+1})}^{\vep_{pk}}\circ \cdots
\circ\chi_{(\pi^{pk'-1},\lambda^{pk'-1})}^{\vep_{pk'-2}}.\]  Then
one can prove by induction on Rauzy steps that
$R^{p(k'-k)}(C^-)_{\alpha} = C^-_{\chi(\alpha)} $. This together
with $p(k'-k)$ iterations of (\ref{onesteprenormalization})
concludes the proof.
\end{proof}



\begin{figure}
\centering
\subfigure[Case $\lambda_{\alpha_0}> \lambda_{\alpha_1}$ or $\epsilon(\lambda, \pi) = 0$.]{\label{logsingre1}
\includegraphics[width=0.42\textwidth]{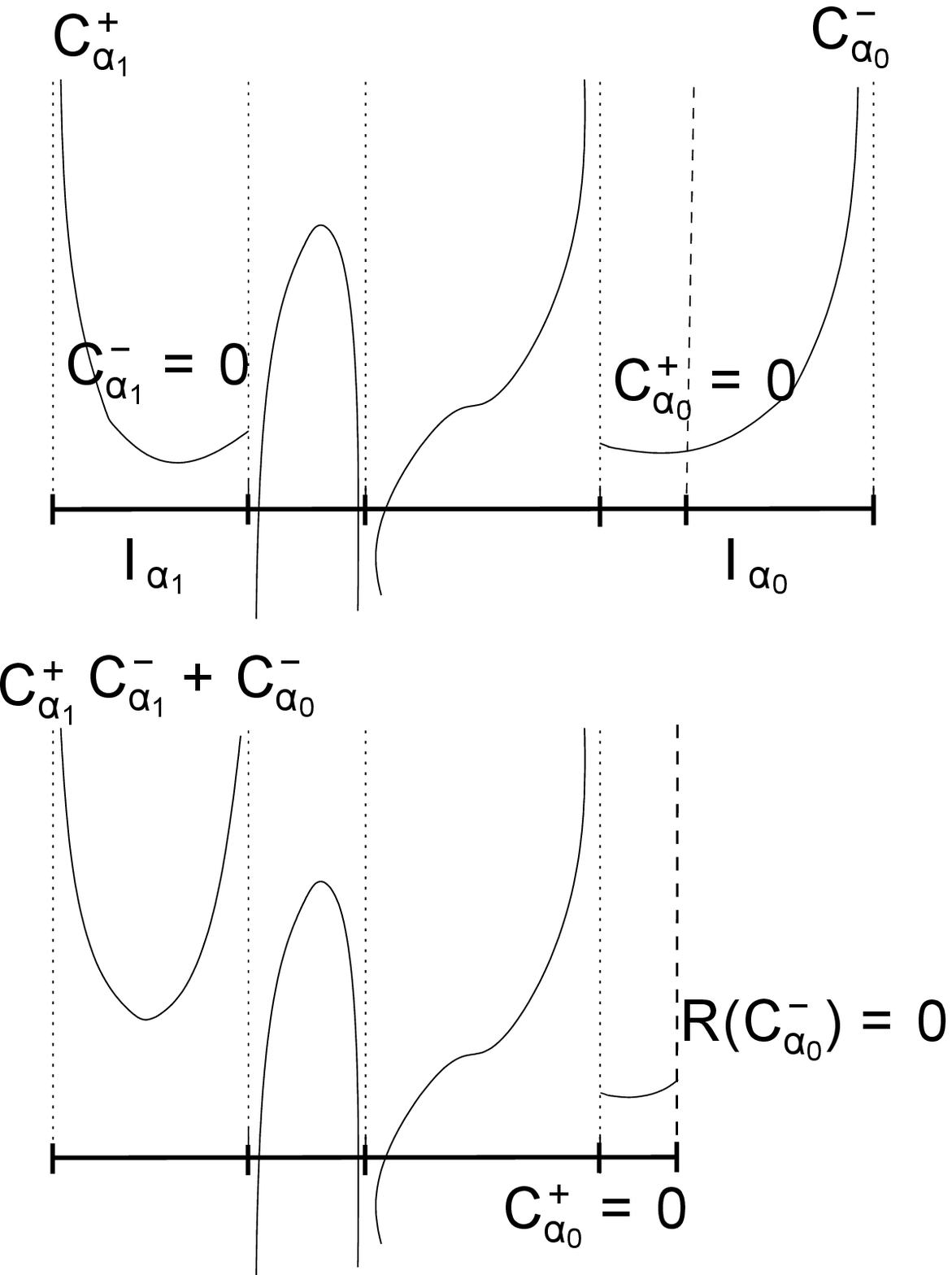}}
\hspace{6mm}
\subfigure[Case $\lambda_{\alpha_0}< \lambda_{\alpha_1}$ or $\epsilon(\lambda, \pi) = 1$.]{\label{logsingre2}
\includegraphics[width=0.45\textwidth]{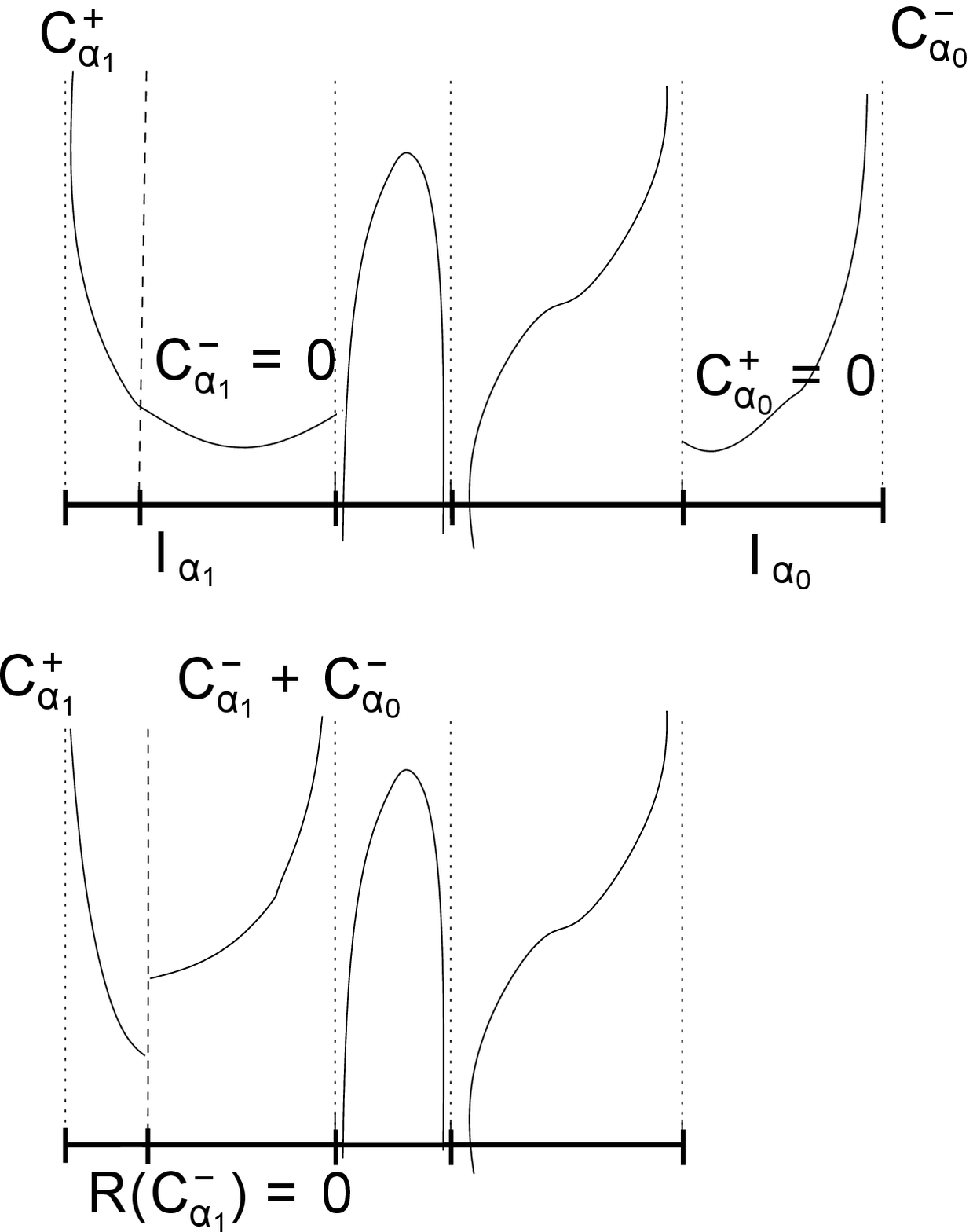}}
\caption{Renormalization of functions with logarithmic singularities of geometric type.}\label{renormlog}
\end{figure}

Consider the operator $\mathcal{O}(\varphi)$ defined in
Definition~\ref{Odef}.
\begin{lemma}\label{invariophi}
For each $k'\geq k\geq 0$ the operator $S(k,k')$ maps
$\overline{\ls}(\sqcup_{\alpha\in \mathcal{A}} I^{(k)}_{\alpha})$
 into
$\overline{\ls}(\sqcup_{\alpha\in \mathcal{A}} I^{(k')}_{\alpha})$ and
${\ls}(\sqcup_{\alpha\in \mathcal{A}} I^{(k)}_{\alpha})$ into
${\ls}(\sqcup_{\alpha\in \mathcal{A}} I^{(k')}_{\alpha})$.
Moreover, for every $\varphi \in \ls(\sqcup_{\alpha\in
\mathcal{A}} I^{(k)}_{\alpha})$ and $\mathcal{O}\in \Sigma(\pi)$,
we have $\mathcal{O}( S(k,k') \varphi  )= \mathcal{O}(\varphi)$.
\end{lemma}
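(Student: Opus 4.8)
The plan is to reduce everything, as in the proof of Lemma~\ref{comparingconstants}, to a single step of Rauzy--Veech induction and then induct on the number of steps. So I would first check the statement for $S^1$, the special Birkhoff sum over one Rauzy step, and then iterate $p(k'-k)$ times. For one step, write $\varphi = \varphi_{\log} + g_\varphi$ where $\varphi_{\log}$ is the pure logarithmic part with constants $C^\pm_\alpha$ and $g_\varphi \in \bv(\sqcup_\alpha I_\alpha)$. By Lemma~\ref{comparingconstants} (applied to $\varphi_{\log}$, which is of geometric type since $\varphi$ is) we already know that $S^1\varphi_{\log}$ again has logarithmic singularities of geometric type, with left constants $C^+_\alpha$ unchanged and right constants permuted according to the permutation $\chi^\upsilon_{(\pi,\lambda)}$, i.e.\ $C^-_{\chi(\alpha)}$ in position $\alpha$, plus a $\bv^1$-error; and by (\ref{navar}) the $\bv$-part $S^1 g_\varphi$ has variation bounded by $\var g_\varphi$. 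Hence $S^1\varphi \in \overline{\ol}$, and it lands in $\overline{\ls}$ (resp.\ $\ls$) provided the \emph{strong symmetry} condition (\ref{zerosym}) is preserved under the combined operation $C^+ \mapsto C^+$, $C^- \mapsto R(C^-)$ and the relabelling of the orbits $\mathcal{O} \mapsto \xi\mathcal{O}$, $\mathcal{A}^\pm_{\mathcal{O}} \mapsto \widetilde{\mathcal{A}}^\pm_{\xi\mathcal{O}}$.

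The heart of the argument is therefore the following bookkeeping identity: for every $\mathcal{O} \in \Sigma(\pi)$,
\begin{equation*}
\sum_{\alpha \in \widetilde{\mathcal{A}}^-_{\xi\mathcal{O}}} R(C^-)_\alpha \;-\; \sum_{\alpha \in \widetilde{\mathcal{A}}^+_{\xi\mathcal{O}}} C^+_\alpha \;=\; \sum_{\alpha \in \mathcal{A}^-_{\mathcal{O}}} C^-_\alpha \;-\; \sum_{\alpha \in \mathcal{A}^+_{\mathcal{O}}} C^+_\alpha .
\end{equation*}
This is where I expect the real work to be, and it is precisely the place where Lemma~\ref{orbitcorrespondence} is used. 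Set $\vep = \vep(\pi,\lambda)$ and recall $\alpha_\vep, \alpha_{1-\vep}$ with $\pi_\vep(\alpha_\vep) = \pi_{1-\vep}(\alpha_{1-\vep}) = d$ in the relabelling. The definition (\ref{Rdef}) of $R$ only changes the two entries $\alpha_0, \alpha_1$: it zeroes out $C^-_{\alpha_\vep}$ (which is already $0$ because $\varphi$ is of geometric type and $C^- \in G^\upsilon$ with $\alpha_\upsilon \in \{\alpha_0,\alpha_1\}$... more precisely, after the first step one is reduced to $\upsilon = \vep$, so $C^-_{\alpha_\vep} = 0$) and replaces $C^-_{\alpha_{1-\vep}}$ by $C^-_{\alpha_0} + C^-_{\alpha_1}$. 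On the other side, by Lemma~\ref{orbitcorrespondence} the sets $\widetilde{\mathcal{A}}^+_{\xi\mathcal{O}}$ agree with $\mathcal{A}^+_{\mathcal{O}}$ for all $\mathcal{O}$, so the $C^+$-sums on both sides are equal term by term; and the sets $\widetilde{\mathcal{A}}^-_{\xi\mathcal{O}}$ agree with $\mathcal{A}^-_{\mathcal{O}}$ except that, when $\mathcal{O}_0 \neq \mathcal{O}_1$, the index $\alpha_\vep$ migrates out of $\widetilde{\mathcal{A}}^-_{\xi\mathcal{O}_\vep}$ and into $\widetilde{\mathcal{A}}^-_{\xi\mathcal{O}_{1-\vep}}$. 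Matching this migration of indices in the sets against the transfer of mass $C^-_{\alpha_0} + C^-_{\alpha_1}$ from $\alpha_\vep, \alpha_{1-\vep}$ to $\alpha_{1-\vep}$ in the constants (and using $C^-_{\alpha_\vep} = 0$) gives exactly the identity above; the case $\mathcal{O}_0 = \mathcal{O}_1$ is easier since then the relevant sets are unchanged and the sum of the two transferred constants stays inside the same orbit. This verifies that $S^1$ preserves the strong symmetry relations $\mathcal{O}(\cdot) = 0$ in the sense of Remark~\ref{Ohzeroreformulation}, hence maps $\overline{\ls} \to \overline{\ls}$ and $\ls \to \ls$.

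For the invariance $\mathcal{O}(S(k,k')\varphi) = \mathcal{O}(\varphi)$ I would again argue one step at a time. Using the description (\ref{Ousinggvalues}) of $\mathcal{O}(\varphi)$ in terms of the boundary values $(g^\pm_\alpha)_+(0)$ of the bounded-variation remainders (valid once strong symmetry is known, by Lemma~\ref{lemoszo}), the claim reduces to: the combined effect on these one-sided limits of (i) the explicit $\bv^1$-correction $g_1$ produced in (\ref{onesteprenormalization}), (ii) the permutation $\chi^\upsilon_{(\pi,\lambda)}$ of the right singularities, and (iii) the reindexing $\mathcal{O} \mapsto \xi\mathcal{O}$ of the orbits, exactly cancels. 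Concretely, $\mathcal{O}(S^1\varphi)$ for the relabelled orbit $\xi\mathcal{O}$ is, by (\ref{odlabv}) applied to the $\bv$-part, a signed sum of one-sided limits of $S^1\varphi$ at endpoints of the $I^1_\alpha$ over $\alpha \in \widetilde{\mathcal{A}}^\pm_{\xi\mathcal{O}}$; but these one-sided limits are, up to the explicit $g_1$ term, the corresponding one-sided limits of $\varphi$ at the matching endpoints of the $I_\alpha$ (the endpoints in $End(T^1)$ form a subset of those in $End(T)$, cf.\ (\ref{zbzero})), and the contribution of $g_1$ telescopes to zero because $g_1$ is genuinely $\bv^1$ (its total jump $s(g_1) = 0$, or more precisely each orbit-sum of its jumps vanishes — this should follow from the same geometric-type cancellation that produced (\ref{onesteprenormalization})). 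Combined with the identification of index sets from Lemma~\ref{orbitcorrespondence} this gives $\mathcal{O}(S^1\varphi) = \mathcal{O}(\varphi)$ for each $\mathcal{O}$, and iterating over the $p(k'-k)$ Rauzy steps (recall the convention that after a full period $\pi^p = \pi$ and $\xi$ composes to a permutation fixing $\Sigma(\pi)$, by Remark~\ref{assumexiid}) finishes the proof. The main obstacle, as indicated, is carrying out the index-set/constant matching of step two cleanly in all the cases distinguished by Lemma~\ref{orbitcorrespondence}; once that combinatorial identity is in hand, everything else is routine.
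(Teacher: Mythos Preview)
Your overall strategy---reduce to one Rauzy step and induct, using Lemma~\ref{orbitcorrespondence} to track index sets---is exactly the paper's, and your argument for the preservation of the strong symmetry condition (the displayed bookkeeping identity) is correct and matches the paper's proof of (\ref{O+})--(\ref{O-}) essentially verbatim.

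The gap is in the second part, the invariance $\mathcal{O}(S^1\varphi)=\mathcal{O}(\varphi)$. Your proposed mechanism---that the $\bv^1$ correction $g_1$ from (\ref{onesteprenormalization}) ``telescopes to zero because $g_1$ is genuinely $\bv^1$''---is not right: membership in $\bv^1$ says nothing about orbit-sums of endpoint values, and decomposing $S^1\varphi=\overline{\varphi}+g_1$ is not helpful here since $\mathcal{O}(\overline{\varphi})$ by itself has no clean relation to $\mathcal{O}(\varphi)$. What the paper does instead is compute the remainders $\widetilde{g}^\pm_\alpha$ of $S^1\varphi$ directly from the definition $\widetilde{g}^+_\alpha(x)=S^1\varphi(l^1_\alpha+x)+C^+_\alpha\log x$, etc. For $\alpha\notin\{\alpha_0,\alpha_1\}$ one gets $\widetilde{g}^\pm_\alpha=g^\pm_\alpha$; the nontrivial entries are
\[
\widetilde{g}^-_{\alpha_\vep}(x)=\varphi\!\circ T^{-\vep}(|\lambda^1|-x),\qquad
\widetilde{g}^+_{\alpha_{1-\vep}}(x)=g^+_{\alpha_{1-\vep}}(x)+\varphi\!\circ T^{-\vep}(|\lambda^1|+x),\qquad
\widetilde{g}^-_{\alpha_{1-\vep}}=g^-_{\alpha_{1-\vep}}+g^-_{\alpha_\vep}.
\]
At $x=0$ the two extra terms both equal $\varphi(T^{-\vep}|\lambda^1|)$, a value at a \emph{regular} point. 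The cancellation that gives (\ref{equalitygs}) is then that, by Lemma~\ref{orbitcorrespondence}, $\alpha_\vep$ lands in $\widetilde{\mathcal{A}}^-_{\xi\mathcal{O}_{1-\vep}}$ while $\alpha_{1-\vep}\in\widetilde{\mathcal{A}}^+_{\xi\mathcal{O}_{1-\vep}}$, so these two copies of $\varphi(T^{-\vep}|\lambda^1|)$ enter the same orbit-sum with opposite signs; and on the $\mathcal{O}_\vep$ side, the sum $\widetilde{g}^-_{\alpha_{1-\vep}}(0)=g^-_{\alpha_{1-\vep}}(0)+g^-_{\alpha_\vep}(0)$ exactly compensates for the loss of the index $\alpha_\vep$ from $\widetilde{\mathcal{A}}^-_{\xi\mathcal{O}_\vep}$. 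So the ``main obstacle'' you flag is real, but the resolution is this explicit endpoint computation, not a telescoping property of $g_1$.
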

\begin{proof}
Let $T=T_{\pi,\lambda}$,
$\varphi\in\overline{\ls}(\sqcup_{\alpha\in \mathcal{A}}
I_{\alpha})$ and consider the special Birkhoff sum
$\widetilde{\varphi}=S^1\varphi$ given by one step of Rauzy-Veech
induction (see (\ref{S1def})). Let $\xi$ be the correspondence
between $\Sigma(\pi)$ and $\Sigma(\pi^1)$ given by
Lemma~\ref{invario} and let $\mathcal{A}_{\mathcal{O}}^{\pm}$,
$\mathcal{O} \in \Sigma(\pi)$, the sets defined in (\ref{defAO})
and $\widetilde{\mathcal{A}}_{\mathcal{O}}^{\pm}$, $\mathcal{O}
\in \Sigma(\pi^1)$, the corresponding sets for
$(\widetilde{\pi},\widetilde{\lambda})=(\pi^1,\lambda^1)$. We will
show that
\begin{eqnarray}
\sum_{\alpha\in\mathcal{A}^+_\mathcal{O}}C^+_{\alpha} &=&
\sum_{\alpha\in\widetilde{\mathcal{A}}^+_{\xi\mathcal{O}}} C^+_{\alpha}\label{O+} \\
 \sum_{\alpha\in\mathcal{A}^-_\mathcal{O}} C^-_{\alpha} &=&
 \sum_{\alpha\in\widetilde{\mathcal{A}}^-_{\xi\mathcal{O}}} R(C^-)_{\alpha} \label{O-},
\end{eqnarray}
where $R$ is the operator defined in (\ref{Rdef}) in the proof of
Lemma~\ref{comparingconstants}. Since by
(\ref{onesteprenormalization}) the logarithmic constants for
$S^1\varphi$ are the ones which appear in the right hand side,
these two equations show that if the symmetry condition
(\ref{zerosym}) holds for $\varphi$ for all $\mathcal{O}\in
\Sigma(\pi)$, since $\xi: \Sigma(\pi) \to \Sigma(\pi^1)$ is a
bijection, the symmetry condition  holds also for $S^1\varphi$ for
all $\mathcal{O}\in \Sigma(\pi^1)$. By induction on Rauzy steps,
this shows that $S(k,k')\varphi \in
\overline{\ls}(\sqcup_{\alpha\in \mathcal{A}} I^{(k')}_{\alpha})$
for each $k'\geq k$. Let us prove (\ref{O+}, \ref{O-}). Since  $
\widetilde{\mathcal{A}}^+_{\xi\mathcal{O}}={\mathcal{A}}^+_\mathcal{O}$
by Lemma \ref{orbitcorrespondence}, (\ref{O+}) holds trivially.
From the definition (\ref{Rdef})  of $R$, one immediately sees
that if $\mathcal{A}'\subset \mathcal{A}$ is a subset such that
either $\{\alpha_0,\alpha_1\}\subset \mathcal{A'}$ or
$\{\alpha_0,\alpha_1\} \cap \mathcal{A}' = \emptyset$, then
$\sum_{\alpha\in\mathcal{A}'}C^-_{\alpha} - \sum_{\alpha \in
\mathcal{A}'} R(C^-)_{\alpha}=0$. Since $\{
\alpha_0,\alpha_1\}\subset \mathcal{A}^-_{\mathcal{O}_\epsilon}$
(recall that $\pi_0(\alpha_0)=\pi_1(\alpha_1)=d\in
\mathcal{O}_\epsilon$ by definition of $\mathcal{O}_\epsilon$) and
thus $\{ \alpha_0,\alpha_1\}\cap
\mathcal{A}^-_\mathcal{O}=\emptyset$ for all $\mathcal{O}\neq
{\mathcal{O}_\epsilon}$, it follows that
\[\sum_{\alpha\in\mathcal{A}^-_\mathcal{O}}C^-_{\alpha} - \sum_{\alpha \in
\mathcal{A}^-_\mathcal{O}} R(C^-)_{\alpha}=0\quad\text{ for each
}\quad \mathcal{O}\in\Sigma(\pi).\]
 Thus, (\ref{O-}) holds also for $\mathcal{O}\in
\{\mathcal{O}_0, \mathcal{O}_1\} $ (where $\mathcal{O}_0,
\mathcal{O}_1$ were defined before Lemma
\ref{orbitcorrespondence}) or if $\mathcal{O} = \mathcal{O}_0=
\mathcal{O}_1 $, since in these cases by Lemma
\ref{orbitcorrespondence} we can have $
\mathcal{A}^-_\mathcal{O}=\widetilde{\mathcal{A}}^-_{\xi\mathcal{O}}$.
Thus, we are left to consider the case in which $\mathcal{O}
\notin \{ \mathcal{O}_0,  \mathcal{O}_1\} $ and at the same time
$\mathcal{O}_0  \neq \mathcal{O}_1$. In these cases, since by
Lemma~\ref{orbitcorrespondence} we have
$\widetilde{\mathcal{A}}^-_{\xi\mathcal{O}_\varepsilon} =
\mathcal{A}^-_{\mathcal{O}_\varepsilon}\backslash
\{\alpha_\varepsilon\}$ and $
\widetilde{\mathcal{A}}^-_{\xi\mathcal{O}_{1-\varepsilon}} =
\mathcal{A}^-_{\mathcal{O}_{1-\varepsilon}} \cup
\{\alpha_{\varepsilon}\}$, we can add or subtract
$R(C^-)_{\alpha_\varepsilon} $, which by (\ref{Rdef}) is equal to
zero, to get respectively
\begin{align*}
&\nonumber \sum_{\alpha\in\mathcal{A}^-_{\mathcal{O}_\varepsilon}}
C^-_{\alpha} -
\sum_{\alpha\in\widetilde{\mathcal{A}}^-_{\xi\mathcal{O}_\varepsilon}}
R (C^-)_{\alpha}  =
\sum_{\alpha\in\mathcal{A}^-_{\mathcal{O}_\varepsilon}}
(C^-_{\alpha} - R(C^-)_{\alpha})  + R(C^-)_{\alpha_\varepsilon}=0,
\\ & \nonumber \sum_{\alpha\in\mathcal{A}^-_{\mathcal{O}_{1-\varepsilon}}}
C^-_{\alpha} -
\sum_{\alpha\in\widetilde{\mathcal{A}}^-_{\xi\mathcal{O}_{1-\varepsilon}}}
R (C^-)_{\alpha} =
\sum_{\alpha\in\mathcal{A}^-_{\mathcal{O}_{1-\varepsilon}}}
(C^-_{\alpha} -  R (C^-)_{\alpha})  -
R(C^-)_{\alpha_{\varepsilon}}=0,
\end{align*}
which concludes the proof of (\ref{O-}). This, together with
Lemma~\ref{comparingconstants}, is enough to conclude that
$S(k,k')$ maps the space $\overline{\ls}(\sqcup_{\alpha\in
\mathcal{A}} I^{(k)}_{\alpha})$
 into
$\overline{\ls}(\sqcup_{\alpha\in \mathcal{A}} I^{(k')}_{\alpha})$
and ${\ls}(\sqcup_{\alpha\in \mathcal{A}} I^{(k)}_{\alpha})$ into
${\ls}(\sqcup_{\alpha\in \mathcal{A}} I^{(k')}_{\alpha})$.


\smallskip
Assume now that $\varphi\in\ls(\sqcup_{\alpha\in \mathcal{A}}
I^{(k)}_{\alpha})$. Let us now prove that for each
$\mathcal{O}\in\Sigma(\pi)$, we have
$(\xi\mathcal{O})(\widetilde{\varphi})=\mathcal{O}(\varphi)$,
where $\xi$ is the bijection  given by Lemma~\ref{invario}. Let
${g}_\alpha^\pm$, $\alpha\in\mathcal{A}$, be the absolutely
continuous functions defined as in the proof of
Lemma~\ref{lemoszo}. Similarly, define also for
$\widetilde{\varphi}=S^1\varphi$  the absolutely continuous
functions
\begin{equation*} \widetilde{g}^-_\alpha(x):= \widetilde{\varphi}(r^1_\alpha-x) +
R(C^-)_\alpha\log(x), \qquad \widetilde{g}^+_\alpha(x) :=
\widetilde{\varphi}(l^1_\alpha+x) +C_\alpha^+\log(x).
\end{equation*}
In virtue of (\ref{Ousinggvalues}) and the analogous equality for
$\widetilde{\varphi}$, to prove that
$(\xi\mathcal{O})(\widetilde{\varphi})=\mathcal{O}(\varphi)$ it is
enough to prove that
\begin{equation}\label{equalitygs}
\sum_{\alpha\in \mathcal{A}^-_\mathcal{O}}
g^-_\alpha(0)-
\sum_{\alpha\in \mathcal{A}^+_\mathcal{O}}
g^+_\alpha(0) = \sum_{\alpha\in \widetilde{\mathcal{A}}^-_{\xi \mathcal{O}}}
\widetilde{g}^-_\alpha(0)-
\sum_{\alpha\in \widetilde{\mathcal{A}}^+_{\xi\mathcal{O}}}
\widetilde{g}^+_\alpha(0),
\end{equation}
where ${\mathcal{A}}^\pm_{\mathcal{O}}$ are the sets defined in
(\ref{defAO}). The analysis of one step of Rauzy-Veech induction
shows that for all $\alpha\neq \alpha_0,\alpha_1$, we have
$\widetilde{g}^{\pm}_{\alpha}(x)=g^{\pm}_{\alpha}(x)$, while for
$\alpha \in \{\alpha_0, \alpha_{1}\}$, if
$\varepsilon=\varepsilon(\pi,\lambda)$ (see~(\ref{epsilondef})),
we have \begin{equation*}
\begin{array}{ll}
\widetilde{g}^{+}_{\alpha_\varepsilon}(x)=g^{+}_{\alpha_\varepsilon}(x),
&
\widetilde{g}^{-}_{\alpha_\varepsilon}(x)=\varphi\circ T^{-\vep}(|\lambda^1|-x); \\
\widetilde{g}^{+}_{\alpha_{1-\varepsilon}}(x)=g^{+}_{\alpha_{1-\varepsilon}}(x)
+ \varphi\circ T^{-\vep}(|\lambda^1|+x),
&\widetilde{g}^{-}_{\alpha_{1-\varepsilon}}(x)={g}^{-}_{\alpha_{1-\varepsilon}}(x)
+ {g}^{-}_{\alpha_{\varepsilon}}(x) .
\end{array}
\end{equation*}
Combining these expressions with the relations between
$\mathcal{A}^\pm_\mathcal{O}$ and
$\widetilde{\mathcal{A}}^\pm_{\xi \mathcal{O}}$ given by
Lemma~\ref{orbitcorrespondence} and recalling the definition of
$\mathcal{O}_1$ and  $\mathcal{O}_2$,
one can verify case by case that (\ref{equalitygs}) holds and thus
$(\xi\mathcal{O})(\widetilde{\varphi})=\mathcal{O}(\varphi)$. By
induction on Rauzy steps and in  view of Remark~\ref{assumexiid}
and one gets $\mathcal{O}( S(k,k') \varphi  )=
\mathcal{O}(\varphi)$.
\end{proof}

The last lemma allows us to keep track of how discontinuities of
$T^{(k')}$ are related to discontinuities of $T^{(k)}$. Let
$\alpha^{(k)}_0:= (\pi^{(k)}_0)^{-1}(d)$ and $\alpha^{(k)}_1  :=
(\pi^{(k)}_1)^{-1}(d) $.
\begin{lemma}\label{comparingsingularities}
For each $k'\geq k \geq0$, for each $\alpha \in \mathcal{A}$, we have
\begin{equation}\label{lcorrespondencekl}
l^{(k)}_{\alpha} \in\{ (T^{(k)})^j l^{(k')}_\alpha , \, 0\leq j <
Q_{\alpha} (k,k') \}.
\end{equation}
Moreover, if $\chi : \mathcal{A} \to \mathcal{A}$ is one of the
permutations\footnote{Let us point out that there are two
permutations $\chi= \chi^{0}(k,k')$, $\chi= \chi^{1}(k,k')$, given
by Lemma~\ref{comparingconstants}. In
Lemma~\ref{comparingconstants} we are given $\varphi
\in \overline{\ol}$ and if $C^- \in G^\upsilon$ (see
Lemma \ref{comparingconstants}) the function $\chi$ for which the
Lemma hold is $\chi^{\upsilon}$. On the other hand, both $\chi= \chi^{0}(k,k')$,
$\chi= \chi^{1}(k,k')$ satisfy the conclusion of Lemma \ref{comparingsingularities}.}
given by Lemma~\ref{comparingconstants},
\begin{equation}\label{rcorrespondencekl}
 r^{(k)}_{\chi(\alpha)}   \in\{  (\widehat{T^{(k)}})^j
r^{(k')}_\alpha  , \, 0\leq j < Q_{\alpha} (k,k') \} \text{ if }\
\alpha\neq \alpha^{(k')}_{\vep(\pi^{pk'-1},\lambda^{pk'-1})},
\end{equation}
while there exists ${\alpha}_\ast \in \mathcal{A} \setminus \{
\alpha^{(k')}_{\vep(\pi^{pk'-1},\lambda^{pk'-1})}\}$ such that
\begin{equation}
\label{rcorrespondencekldod} r^{(k)}_{\alpha^{(k)}_0},
r^{(k)}_{\alpha^{(k)}_1}  \in\{ (\widehat{T^{(k)}})^j
r^{(k')}_{{\alpha}_\ast}  , \, 0\leq j < Q_{\alpha} (k,k') \}.
\end{equation}
Moreover, if $C^-_{\chi(\alpha)}\neq 0$ then $\alpha\neq
\alpha^{(k')}_{\vep(\pi^{pk'-1},\lambda^{pk'-1})}$ and
(\ref{rcorrespondencekl})  holds.
\end{lemma}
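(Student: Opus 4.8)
The plan is to establish \eqref{lcorrespondencekl}, \eqref{rcorrespondencekl} and \eqref{rcorrespondencekldod} by induction on the number of steps of Rauzy--Veech induction separating $T^{(k)}$ from $T^{(k')}$; by Remark~\ref{Keanecase} one may work one Rauzy step at a time, so everything reduces to the case $T^{(k')}=\mathcal{R}(T^{(k)})$. Write $T=T_{(\pi,\lambda)}$, $\widetilde{T}=\mathcal{R}(T)$, $\alpha_0=\pi_0^{-1}(d)$, $\alpha_1=\pi_1^{-1}(d)$, $\vep=\vep(\pi,\lambda)$, and recall that $\widetilde{\pi}_\vep=\pi_\vep$, that $\widetilde{\lambda}$ differs from $\lambda$ only in the coordinate $\alpha_\vep$, and that $\Theta(T)_\beta=1$ for $\beta\neq\alpha_{1-\vep}$ while $\Theta(T)_{\alpha_{1-\vep}}=2$. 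I would split into the cases $\vep=0$ and $\vep=1$ and read off from the formula for $\widetilde{\pi}_{1-\vep}$ the endpoints of $\widetilde{T}$ in terms of those of $T$. For the left endpoints this gives $l^{1}_\alpha=l_\alpha$ for every $\alpha$ when $\vep=0$, and $l^{1}_\alpha=l_\alpha$ for $\alpha\neq\alpha_0$ together with $l^{1}_{\alpha_0}\in\operatorname{int}I_{\alpha_1}$ and $T\,l^{1}_{\alpha_0}=l_{\alpha_0}$ when $\vep=1$; hence \eqref{lcorrespondencekl} holds in the one-step case, with $j=0$ except when $\vep=1$ and $\alpha=\alpha_0$, where $j=1<2=\Theta(T)_{\alpha_0}$.

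For the right endpoints one finds, in both cases, that $r^{1}_\alpha=r_\alpha$ for $\alpha\notin\{\alpha_0,\alpha_1\}$, that $r^{1}_{\alpha_{1-\vep}}=r_{\alpha_1}$, and that $r^{1}_{\alpha_\vep}$ is not one of the right endpoints $\{r_\gamma:\gamma\in\mathcal{A}\}$ of $T$; this last point is what forces the exclusion of $\alpha_\vep=\alpha^{1}_\vep$ in \eqref{rcorrespondencekl}, and it matches the fact (cf.\ \eqref{Rdef}, \eqref{onesteprenormalization}) that the renormalised constant $R(C^-)_{\alpha_\vep}$ is zero. Since $\pi_1(\alpha_1)=d$, the map $\widehat{T}$ sends $r_{\alpha_1}$ to $|I|=r_{\alpha_0}$; therefore $\{(\widehat{T})^{j}r^{1}_\alpha:0\le j<\Theta(T)_\alpha\}$ equals $\{r_\alpha\}$ for $\alpha\notin\{\alpha_0,\alpha_1\}$ and equals $\{r_{\alpha_0},r_{\alpha_1}\}$ for $\alpha=\alpha_{1-\vep}$. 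As both permutations $\chi^{0}_{(\pi,\lambda)},\chi^{1}_{(\pi,\lambda)}$ of Lemma~\ref{comparingconstants} fix every $\alpha\notin\{\alpha_0,\alpha_1\}$ and carry $\alpha_{1-\vep}$ into $\{\alpha_0,\alpha_1\}$, this yields \eqref{rcorrespondencekl} (for either choice of $\chi$) and simultaneously \eqref{rcorrespondencekldod} with $\alpha_\ast=\alpha_{1-\vep}$. It is because the $C^-$-singularities are approached from the left that the left-continuous map $\widehat{T}$, and not $T$, occurs in these two statements.

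For the induction step I would peel off the outermost Rauzy step, $T^{(k+1)}=\mathcal{R}(T^{(k)})$, and chain the one-step inclusions for $T^{(k)}\to T^{(k+1)}$ with the inductive inclusions for $T^{(k+1)}\to T^{(k')}$. The bookkeeping uses that $T^{(k+1)}$ (and likewise $\widehat{T^{(k+1)}}$) is the first return map of $T^{(k)}$ (resp.\ $\widehat{T^{(k)}}$) to $I^{(k+1)}$, so a $T^{(k+1)}$-orbit segment of length $Q_\alpha(k+1,k')$ unfolds into a $T^{(k)}$-orbit segment of length $Q_\alpha(k,k')$ with $Q(k,k')=\Theta(T^{(k)})\,Q(k+1,k')$; and that the point produced at the inner stage --- $l^{(k+1)}_\alpha$, resp.\ $r^{(k+1)}_{\chi(\alpha)}$ --- is a boundary point of the interval of $T^{(k+1)}$ carrying the \emph{same} index, so the number of further $T^{(k)}$-steps needed is exactly $\Theta(T^{(k)})$ at that index. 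Together with the factorisation $\chi^{\upsilon}(k,k')=\chi^{\upsilon}_{(\pi^{(k)},\lambda^{(k)})}\circ\chi^{\bullet}(k+1,k')$ through the one-step permutation, and the fact that the exceptional index $\alpha^{(k')}_{\vep(\pi^{pk'-1},\lambda^{pk'-1})}$ depends only on the last step and is therefore the same for the transitions $(k,k')$ and $(k+1,k')$, the induction closes. The final assertion of the lemma does not use the induction: by Lemma~\ref{invariophi} the function $S(k,k')\varphi$ is again of geometric type, so its $C^-$-constant at the exceptional index vanishes; since by Lemma~\ref{comparingconstants} that constant is $C^-_{\chi(\alpha)}$ for $\alpha$ equal to the exceptional index, $C^-_{\chi(\alpha)}\neq0$ forces $\alpha$ to be different from the exceptional index, whence \eqref{rcorrespondencekl} applies.

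The delicate point, I expect, is the induction step for the right-endpoint statements. Because one step of $\mathcal{R}$ merges the two discontinuities $r^{(k)}_{\alpha^{(k)}_0}$ and $r^{(k)}_{\alpha^{(k)}_1}$ of the coarser IET, \eqref{rcorrespondencekl} cannot be carried alone: one must propagate \eqref{rcorrespondencekldod} alongside it and keep track of which of $\chi^{0},\chi^{1}$ is in force at each stage. When the index produced at the inner stage coincides with the exceptional index of the outermost step, one feeds \eqref{rcorrespondencekldod} for that step into the chain --- it supplies the partner index $\alpha_{1-\vep}$ whose $\widehat{T^{(k)}}$-orbit visits both $r^{(k)}_{\alpha^{(k)}_0}$ and $r^{(k)}_{\alpha^{(k)}_1}$ --- and the freedom to swap $\chi^{0}\leftrightarrow\chi^{1}$ absorbs the accompanying exchange $\alpha^{(k)}_0\leftrightarrow\alpha^{(k)}_1$. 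The relations between the sets $\mathcal{A}^{\pm}_{\mathcal{O}}$ under one Rauzy step recorded in Lemma~\ref{orbitcorrespondence} (already used in the proof of Lemma~\ref{invariophi}) are what organise this bookkeeping, so that everything ultimately rests on the explicit two-case one-step computation described above.
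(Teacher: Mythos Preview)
Your approach is essentially the same as the paper's: reduce to a single Rauzy step, carry out the explicit endpoint computation there (which you do correctly and in agreement with the paper's formulas $l_\alpha=l^1_\alpha$ for $\alpha\neq\alpha_{1-\vep}$, $l_{\alpha_{1-\vep}}=T^\vep l^1_{\alpha_{1-\vep}}$, $r_{\alpha_1}=r^1_{\alpha_{1-\vep}}$, $r_{\alpha_0}=\widehat{T}r^1_{\alpha_{1-\vep}}$), and then induct. You are in fact more explicit than the paper about the induction step, correctly flagging the edge case where the inner-stage output lands on the one-step exceptional index and using \eqref{rcorrespondencekldod} to resolve it.

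One small imprecision: for the final assertion you invoke Lemma~\ref{invariophi} to say $S(k,k')\varphi$ is ``of geometric type'' and hence its $C^-$-constant at the exceptional index vanishes. Geometric type alone only guarantees that \emph{one} of the two end constants $C^-_{\alpha^{(k')}_0},C^-_{\alpha^{(k')}_1}$ vanishes, not which. The sharper fact you need---and which you already stated correctly in your second paragraph---is that the operator $R$ of \eqref{Rdef} lands in $G^{\vep}_{\mathcal{R}(\pi,\lambda)}$, so specifically $R(C^-)_{\alpha_\vep}=0$; iterating, $R^{p(k'-k)}(C^-)$ vanishes precisely at $\alpha^{(k')}_{\vep(\pi^{pk'-1},\lambda^{pk'-1})}$. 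Combined with the identity $R^{p(k'-k)}(C^-)_\alpha=C^-_{\chi(\alpha)}$ from Lemma~\ref{comparingconstants}, this gives the conclusion. This is exactly the paper's argument; just tighten the citation from Lemma~\ref{invariophi} to the property of $R$ in the proof of Lemma~\ref{comparingconstants}.
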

\begin{proof}
Let us prove the Lemma for one step of Rauzy induction. We refer
the reader to Figure \ref{Rauzy}. Let
$\chi=\chi_{(\pi,\lambda),\upsilon}:\mathcal{A}\to\mathcal{A}$ by
the permutation for one step of Rauzy-Veech induction defined in
the proof of Lemma~\ref{comparingconstants}.
 Let $\vep=\vep(\pi,\lambda)$. Then $\chi(\alpha_{\vep})=\alpha_{\upsilon}$. By the
definition of Rauzy-Veech induction, if $l^1_\alpha$ and
$r^1_\alpha$ denote the endpoints of $T^1=\mathcal{R}(T)$, we have
$l_{\alpha}=l^1_{\alpha}$ for $\alpha\neq \alpha_{1-\vep}$ and
$l_{\alpha_{1-\vep}}=T^{\vep}l^1_{\alpha_{1-\vep}}$. Moreover,
$r_{\alpha}=r^1_{\alpha}$ for $\alpha\neq \alpha_0,\alpha_1$, and
$r_{\alpha_0}=\widehat{T}r^1_{\alpha_{1-\vep}}$,
$r_{\alpha_1}=r^1_{\alpha_{1-\vep}}$. Since $\Theta(T)_\alpha=1$
for $\alpha\neq \alpha_{1-\vep}$ and
$\Theta(T)_{\alpha_{1-\vep}}=2$, it follows that for every
$\alpha\in\mathcal{A}$ we have $l_{\alpha}=T^jl^1_\alpha$ for some
$0\leq j<\Theta(T)_\alpha$ and for every $\alpha\neq
\alpha_{\vep}$ (equivalently $\chi(\alpha)\neq \alpha_{\upsilon}$)
we have $r_{\chi(\alpha)}=\widehat{T}^jr^1_{\alpha}$ for some
$0\leq j<\Theta(T)_\alpha$. Moreover,
$r_{\alpha_\upsilon}=\widehat{T}^jr^1_{\alpha,}$ for some $0\leq
j<\Theta(T)_{\alpha'}$, where $\chi(\alpha')=\alpha_{1-\upsilon}$.
The proof of the formulas in the Lemma then follows by induction
on Rauzy steps. We are left to prove the last remark.

If $C^-_{\chi^\upsilon(k,k')(\alpha)}\neq 0$ then since
$R^{p(k'-k)}(C^-)_\alpha = C^-_{\chi^\upsilon(k,k')(\alpha)}$ (see
the end of the proof of Lemma \ref{comparingconstants}) also
$R^{p(k'-k)}(C^-)_\alpha  \neq 0$. Since $R^{p(k'-k)}$ maps the
space $G^0_{(\pi^{(k)},\lambda^{(k)})}\cup
G^1_{(\pi^{(k)},\lambda^{(k)})}$ to
$G^{\vep(\pi^{pk'-1},\lambda^{pk'-1})}_{(\pi^{(k')},\lambda^{(k')})}$,
which is the space of functions with
$R^{p(k'-k)}(C^-)_{\alpha^{(k')}_{\vep(\pi^{pk'-1},\lambda^{pk'-1})}}
= 0$,
this shows that  $\alpha\neq
\alpha^{(k')}_{\vep(\pi^{pk'-1},\lambda^{pk'-1})}$.
\end{proof}

\begin{remark}\label{notidentity}
Even if $T$ is of periodic type, we cannot, up to replacing $p$ by
a multiple, assume that $R: \mathbb{R}^\mathcal{A} \rightarrow
\mathbb{R}^\mathcal{A} $ and $\chi : \mathcal{A}\rightarrow
\mathcal{A}$ are the identity maps. This can be assumed, though,
if we replace $T$ by $\mathcal{R}(T)$.
\end{remark}


\subsection{Cancellations for symmetric singularities.}\label{symmetric:sec}
The following property of cocycles with symmetric  logarithmic
singularities was proved by the second author in \cite{Ul:abs}
(see Proposition 4.1) and will play a crucial role to renormalize
cocycles with symmetric logarithmic singularities and in the proof
of ergodicity.

\begin{proposition}[\cite{Ul:abs}] \label{cancellations_prop}
Let $\pi\in\mathcal{S}^0_{\mathcal{A}}$. For a.e.\
$\lambda\in\R^{\mathcal{A}}_+$, $|\lambda|=1$ there exist a
constant $M$ and sequence of induction times $(n_k)_{k\in
\mathbb{N}}$ for the corresponding IET $T_{(\pi,\lambda)}$ such
that for each $\varphi\in\logs(\sqcup_{\alpha\in \mathcal{A}}
I_{\alpha})$ with $g_\varphi'=0$, whenever $x \in
I^{(n_{k})}_{\beta}$ for some $k\geq 0$ and $0< r \leq Q_{\beta}(
n_k)$, we have\footnote{In the statement of Proposition 4.1
\cite{Ul:abs}, only $\varphi^{(r)} (x)$ appears in the absolute
value, while $ \sum_{\alpha \in \mathcal{A}}
\frac{C_\alpha^+}{x_\alpha^{l}} $ and $ \sum_{\alpha \in
\mathcal{A}} \frac{C_\alpha^-}{x_\alpha^{r}} $ appear as bounds.
In the proof, though, the contribution of the closest points is
subtracted first and the statement here given is proven. The
explicit dependence of the constant $M $ in  Proposition 4.1
\cite{Ul:abs} on $\varphi$ (via $\bl(\varphi)$) can also be easily
extrapolated from the proof.}
\begin{equation} \label{generalsum}
\left| (\varphi')^{(r)} (x) - \sum_{\alpha \in \mathcal{A}}
\frac{C_\alpha^+}{x_\alpha^{l}} + \sum_{\alpha \in \mathcal{A}}
\frac{C_\alpha^-}{x_\alpha^{r}} \right| \leq  M \bl(\varphi) r,
\end{equation}
where $x_\alpha^{l}$ and $x_\alpha^{r}$ are the closest points
respectively to $l_\alpha$ and $r_\alpha$, which, denoting by
$(x)^{+}$ the positive part of $x$ (i.e.~$(x)^{+}=x$ if $x\geq 0$
and $(x)^{+}=\infty$ if $x<0$, so that if $x<0$ then $1/(x)^+$ is zero) are given by
\begin{equation*}
x_\alpha^{l} = \min_{0\leq i < r} (T^i x - l_\alpha)^+, \qquad
x_\alpha^{r} = \min_{0\leq i < r} (r_\alpha -T^i x)^+  .
\end{equation*}
\end{proposition}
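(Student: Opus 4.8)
The plan is to obtain Proposition~\ref{cancellations_prop} from Proposition~4.1 of \cite{Ul:abs}, of which it is a reformulation, by making explicit the two points recorded in the footnote above. The first step would be to identify the object to which that proposition applies: if $\varphi\in\logs(\sqcup_{\alpha\in\mathcal{A}}I_\alpha)$ is of the form \eqref{fform} with $g_\varphi'=0$, then on the interior of each continuity interval
\[
\varphi'(x)=-\sum_{\alpha\in\mathcal{A}}\frac{C_\alpha^+}{|I|\{(x-l_\alpha)/|I|\}}+\sum_{\alpha\in\mathcal{A}}\frac{C_\alpha^-}{|I|\{(r_\alpha-x)/|I|\}},
\]
which is precisely a function with (possibly signed) power-like singularities of the kind treated in \cite{Ul:abs}; moreover the weak symmetry condition \eqref{zerosymweak}, $\sum_{\alpha}C_\alpha^--\sum_{\alpha}C_\alpha^+=0$, is exactly the hypothesis under which the estimate there is proved. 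The full-measure set of $\lambda$, the sequence $(n_k)_{k\in\mathbb{N}}$ and the constant $M$ would be taken to be the ones produced in \cite{Ul:abs} through Rauzy--Veech renormalization; in particular $(n_k)$ is chosen so that at each of these times the IET has balanced lengths and the associated Rohlin towers over $I^{(n_k)}$ have comparable areas (cf.\ \eqref{balancedintervals} and \eqref{areas} for the periodic-type analogue).

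The two refinements over the literal statement of Proposition~4.1 in \cite{Ul:abs} are both to be read off from its proof, and checking this is essentially all the work that remains. First, the \emph{isolation of the closest points}: the argument splits $(\varphi')^{(r)}(x)=\sum_{0\le i<r}\varphi'(T^ix)$ according to each of the $2d$ singular points, and the contribution of the orbit point at minimal positive distance $x_\alpha^{l}$ to the right of $l_\alpha$ (resp.\ $x_\alpha^{r}$ to the left of $r_\alpha$) is, up to sign, exactly the term $C_\alpha^+/x_\alpha^{l}$ (resp.\ $C_\alpha^-/x_\alpha^{r}$) subtracted in \eqref{generalsum}; the convention $1/(x)^+=0$ for $x<0$ handles the case in which no $T^ix$, $0\le i<r$, lies on the relevant side of the endpoint, so that nothing needs to be removed. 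What the proof of \cite{Ul:abs} actually bounds is then precisely the remainder appearing inside the absolute value in \eqref{generalsum}. Second, the \emph{dependence of $M$ on $\varphi$}: every estimate in that proof is linear in the vector $(C_\alpha^\pm)_{\alpha\in\mathcal{A}}$ of singularity coefficients, so the constant it yields has the form $M\cdot\bl(\varphi)$ with $M$ independent of $\varphi$; equivalently one runs the argument under the normalisation $\bl(\varphi)=1$ and rescales at the end.

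The substantive content, which I would invoke as a black box rather than reprove, is the cancellation mechanism itself, and this is the main obstacle underlying the statement. When $x\in I^{(n_k)}_{\beta}$ and $0<r\le Q_\beta(n_k)$, the orbit segment $\{T^ix:0\le i<r\}$ is distributed across $I^{(n_k)}$ in a controlled fashion: near any fixed endpoint only a bounded number of the points $T^ix$ fall into each interval of length $|I^{(n_k)}|$, and the relevant inter-point gaps are comparable to $|I^{(n_k)}|$. For a general $r$ in this range (not only return times) the two sums $\sum_{\alpha}C_\alpha^+\sum_{i}1/(T^ix-l_\alpha)$ and $\sum_{\alpha}C_\alpha^-\sum_{i}1/(r_\alpha-T^ix)$, once the closest point at each endpoint has been removed, can each be of order $r\log r$ and are therefore individually too large; it is here that \eqref{zerosymweak} enters, forcing these two sums to agree up to $O(\bl(\varphi)r)$ and hence to cancel, which is what leaves exactly the bound in \eqref{generalsum}. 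Carrying this out rigorously is the technical heart of \cite{Ul:abs} and rests on the Diophantine (Oseledets-genericity) properties of Rauzy--Veech renormalization; in the present paper it is needed only in combination with the bookkeeping of the previous paragraph.
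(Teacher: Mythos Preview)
Your proposal is correct and matches the paper's treatment: the paper does not prove Proposition~\ref{cancellations_prop} at all but simply cites it from \cite{Ul:abs}, with the footnote recording exactly the two bookkeeping points (isolation of the closest-point contributions and linear dependence of the constant on $\bl(\varphi)$) that you spell out. Your write-up is essentially an expanded version of that footnote, invoking the cancellation mechanism of \cite{Ul:abs} as a black box, which is precisely what the paper does.
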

\begin{remark}\label{cancellationsperiodic}
One can check that if $T$ is of periodic type,  the estimate in
Proposition~\ref{cancellations_prop} holds and furthermore one can
take as $(n_k)_{k\in \mathbb{N}}$ simply the multiples of a period
of Rauzy-Veech induction\footnote{The interested reader can
patiently go through the definitions of further accelerations of
Rauzy-induction in \cite{Ul:abs} which lead to the construction of
sequence $(n_k)_{k\in \mathbb{N}}$ in Proposition
\ref{cancellations_prop} and check that if $T$ is of periodic type
the period multiples satisfies all the assumptions without need of
extracting subsequences.}, i.e.~one can take $n_k=pk$ where $p$ is
the period. Moreover, the constant $M$ depends only on the period
matrix of Rauzy Veech induction.
\end{remark}
In virtue of the Remark, applying the estimate
(\ref{generalsum}) to each renormalized iterate of Rauzy-Veech
induction for a IET of periodic type, we get the following.

\begin{corollary}\label{cancellations_periodic}
If $T$ is of periodic type, there exist a constant $M$  such that
the following hold.   For all $0\leq k<k' $ for each
$\varphi\in\logs(\sqcup_{\alpha\in \mathcal{A}} I^{(k)}_{\alpha})$
with $g_\varphi'=0$, whenever $x \in I^{(k')}_{\beta}$,
$\beta\in\mathcal{A}$  and $0< r \leq Q_{\beta}(k,k')$, we have
\begin{equation}\label{generalsum1}
\left| \sum_{0\leq j<r}\varphi'((T^{(k)})^jx) - \sum_{\alpha \in
\mathcal{A}} \frac{C_\alpha^+}{({x}_\alpha^l)^{(k)}} +
\sum_{\alpha \in \mathcal{A}}
\frac{C_\alpha^-}{({x}_\alpha^r)^{(k)}}  \right| \leq
\frac{1}{|I^{(k)}|}M \bl(\varphi) r,
\end{equation}
where   $({x}_\alpha^l)^{(k)}$ and $({x}_\alpha^r)^{(k)}$ are
given by
\begin{equation*}
({x}_\alpha^l)^{(k)} = \min_{0\leq i < r} (({T}^{(k)})^i x -
{l}^{(k)}_\alpha)^+, \qquad ({x}_\alpha^r)^{(k)} = \min_{0\leq i <
r} ({r}^{(k)}_\alpha -({T}^{(k)})^i x)^+  .
\end{equation*}
\end{corollary}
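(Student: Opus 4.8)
The plan is to deduce the Corollary from Proposition~\ref{cancellations_prop} together with Remark~\ref{cancellationsperiodic}, applied not to $T$ itself but to each renormalized iterate $T^{(k)}=\mathcal{R}^{pk}(T)$. First I would record that, for every $k\geq 0$, the IET $T^{(k)}$ is again of periodic type with the \emph{same} period $p$ and period matrix $A$: indeed $\mathcal{R}^m(T^{(k)})=\mathcal{R}^{pk+m}(T)$, so $\Theta(\mathcal{R}^{n+p}(T^{(k)}))=\Theta(\mathcal{R}^n(T^{(k)}))$ by periodicity of $T$, and $\Theta^{(p)}(T^{(k)})=\Theta(\mathcal{R}^{pk}(T))\cdots\Theta(\mathcal{R}^{pk+p-1}(T))=A$. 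Moreover $\mathcal{R}^{pj}(T^{(k)})=T^{(k+j)}$, the intervals it exchanges are the $I^{(k+j)}_\beta$, and the first return time to $I^{(k)}$ of a point of $I^{(k+j)}_\beta$ under $T^{(k)}$ is exactly $Q_\beta(k,k+j)$. The only input on $\varphi$ needed is that $\varphi\in\logs(\sqcup_{\alpha}I^{(k)}_{\alpha})$ with $g_\varphi'=0$, which is precisely the hypothesis of Proposition~\ref{cancellations_prop} for the IET $T^{(k)}$ (whose permutation is $\pi^{(k)}$, against which ``geometric type'' is tested).

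Next I would apply Proposition~\ref{cancellations_prop} in its periodic-type form (Remark~\ref{cancellationsperiodic}) to $T^{(k)}$, with the caveat that the Proposition is stated for an IET on an interval of unit length while $T^{(k)}$ acts on $I^{(k)}=[0,|I^{(k)}|)$. Rescaling by the homothety $y\mapsto |I^{(k)}|y$ (conjugating $T^{(k)}$ and $\varphi$) leaves the logarithmic constants $C^\pm_\alpha$, hence $\bl(\varphi)$, unchanged --- the extra $\log|I^{(k)}|$ terms are absorbed into the piecewise constant part --- multiplies the cocycle $\varphi'$ and the sums $\sum_\alpha C^+_\alpha/x^l_\alpha$, $\sum_\alpha C^-_\alpha/x^r_\alpha$ by $|I^{(k)}|$, and divides the closest-point distances by $|I^{(k)}|$. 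Consequently the inequality (\ref{generalsum}) applied to the rescaled data, divided through by $|I^{(k)}|$, becomes an inequality of exactly the shape (\ref{generalsum1}) with right-hand side $\frac{1}{|I^{(k)}|}M\bl(\varphi)r$. By the last sentence of Remark~\ref{cancellationsperiodic} the constant $M$ here depends only on the period matrix of Rauzy--Veech induction, which is $A$ for every $T^{(k)}$; hence a single $M$ works uniformly in $k$ (and $k'$).

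Finally I would specialise the resulting estimate: given $0\leq k<k'$, set $j=k'-k$, so that $\mathcal{R}^{pj}(T^{(k)})=T^{(k')}$ and Remark~\ref{cancellationsperiodic} allows $pj$ as an induction time for $T^{(k)}$. For $x\in I^{(k')}_\beta$ and $0<r\leq Q_\beta(k,k')$, the bound from the previous paragraph reads precisely (\ref{generalsum1}), with $({x}_\alpha^l)^{(k)}=\min_{0\leq i<r}((T^{(k)})^i x - l^{(k)}_\alpha)^+$ and $({x}_\alpha^r)^{(k)}=\min_{0\leq i<r}(r^{(k)}_\alpha-(T^{(k)})^i x)^+$.

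The argument is conceptually immediate; the two points that need care are purely bookkeeping: (i) the uniformity of $M$ over all $k$, which is exactly what the final sentence of Remark~\ref{cancellationsperiodic} provides (since all $T^{(k)}$ share the period matrix $A$); and (ii) the rescaling from $[0,1)$ to $[0,|I^{(k)}|)$ and the attendant $1/|I^{(k)}|$ factor. Neither requires reopening the proof of Proposition~\ref{cancellations_prop}. If I had to name the ``hard part'', it is simply being disciplined about the two competing superscript-$(k)$ conventions --- Rauzy level of $T$ versus Rauzy level of $T^{(k)}$ --- so that $\mathcal{R}^{pj}(T^{(k)})=T^{(k+j)}$ and $Q_\beta(\,\cdot\,)$ for $T^{(k)}$ at level $pj$ matches $Q_\beta(k,k+j)$.
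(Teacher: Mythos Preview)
Your proposal is correct and follows essentially the same route as the paper: rescale $T^{(k)}$ to unit length, apply Proposition~\ref{cancellations_prop} in its periodic-type form (Remark~\ref{cancellationsperiodic}), and undo the scaling to pick up the factor $1/|I^{(k)}|$. The paper streamlines your argument slightly by observing directly that the normalized IET $\overline{T}^{(k)}(x)=|I^{(k)}|^{-1}T^{(k)}(|I^{(k)}|x)$ \emph{equals} $T$ itself (since $T$ is of periodic type), so one applies the Proposition once to $T$ with the rescaled cocycle $\varphi_k(x)=\varphi(|I^{(k)}|x)$ at level $k'-k$; this makes your verification that all the $T^{(k)}$ share the period matrix $A$ automatic rather than a separate step.
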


\begin{proof}
Let us denote by $\overline{T}^{(k)}:I^{(0)}\to I^{(0)}$
($I^{(0)}=[0,1)$) the normalized IET associated to $T^{(k)}$,
i.e.\ $\overline{T}^{(k)}x={|I^{(k)}|}^{-1} T^{(k)}(|I^{(k)}|x)$.
As $T$ is of periodic type, $\overline{T}^{(k)}=T$. Let us
consider $\varphi_k:I^{(0)}\to\R$ given by
$\varphi_k(x)=\varphi(|I^{(k)}|x)$. Then one can check that
$\varphi_k\in\logs(\sqcup_{\alpha\in \mathcal{A}}
I^{(0)}_{\alpha})$ with $\bl(\varphi_k)=\bl(\varphi)$ and
$g_{\varphi_k}'=0$. By Proposition~\ref{cancellations_prop}  and
Remark \ref{cancellationsperiodic}, whenever $y \in
I^{(k'-k)}_{\beta}$, $\beta\in\mathcal{A}$  and $0< r \leq
Q_{\beta}(k-k')$, we have
\begin{equation}\label{nierpos}
\left| (\varphi_k')^{(r)} (y) - \sum_{\alpha \in \mathcal{A}}
\frac{C_\alpha^+}{y_\alpha^{l}} + \sum_{\alpha \in \mathcal{A}}
\frac{C_\alpha^-}{y_\alpha^{r}} \right| \leq  M \bl(\varphi) r.
\end{equation}
Fix $x\in I^{(k')}_\beta$ and $0< r \leq
Q_{\beta}(k,k')=Q_{\beta}(k-k')$.  Since
$l_\alpha^{(j)}=|I^{(j)}|l_\alpha$,
$r_\alpha^{(j)}=|I^{(j)}|r_\alpha$ for all $\alpha\in\mathcal{A}$
and $j\geq 0$, we have $y:=x/|I^{(k)}|\in
I^{(k'-k)}_\beta$ and
\[({T}^{(k)})^i x -{l}^{(k)}_\alpha=|I^{(k)}|((\overline{T}^{(k)})^iy-l_{\alpha}),\;\;
{r}^{(k)}_\alpha-({T}^{(k)})^i
x=|I^{(k)}|(r_{\alpha}-(\overline{T}^{(k)})^iy).
\]
Therefore, $|I^{(k)}|y_\alpha^l=(x_\alpha^l)^{(k)}$ and
$|I^{(k)}|y_\alpha^r=(x_\alpha^r)^{(k)}$. As
$\varphi'_k(y)=|I^{(k)}|\varphi'(|I^{(k)}|y)=|I^{(k)}|\varphi'(x)$,
in view of (\ref{nierpos}), it follows that
\begin{multline*}
\left| \sum_{0\leq j<r}\varphi'((T^{(k)})^jx) - \sum_{\alpha \in
\mathcal{A}} \frac{C_\alpha^+}{({x}_\alpha^l)^{(k)}} +
\sum_{\alpha \in \mathcal{A}}
\frac{C_\alpha^-}{({x}_\alpha^r)^{(k)}}  \right|\\
\left| \sum_{0\leq
j<r}\frac{\varphi_k'((\overline{T}^{(k)})^jy)}{|I^{(k)}|} -
\sum_{\alpha \in \mathcal{A}}
\frac{C_\alpha^+}{|I^{(k)}|y_\alpha^l} + \sum_{\alpha \in
\mathcal{A}} \frac{C_\alpha^-}{|I^{(k)}|y_\alpha^r}
\right|\leq\frac{M\bl(\varphi)r}{|I^{(k)}|},
\end{multline*}
which completes the proof.
\end{proof}

Let us show that functions with logarithmic singularities of
geometric type behave well under the renormalization given by
taking special Birkhoff sums.

\begin{proposition}\label{lemlogreno}
If $T$ has periodic type 
 then there exists $c>0$
such that if $\varphi\in\logs(\sqcup_{\alpha\in \mathcal{A}}
I^{(k)}_{\alpha})$ and
\begin{equation*}\label{formvarphik}
\varphi(x)=-\sum_{\alpha\in\mathcal{A}}(C^+_\alpha\log(|I^{(k)}|\{(x-l^{(k)}_\alpha)/|I^{(k)}|\})+
C^-_\alpha\log(|I^{(k)}|\{(r^{(k)}_\alpha-x)/|I^{(k)}|\})),
\end{equation*}
then for every $k'\geq k$ we have
$S(k,k')\varphi=\overline{\varphi}+\widetilde{\varphi}$, where
\begin{align}\label{varphibar}
\begin{split}
\overline{\varphi}(x)=&-\sum_{\alpha\in\mathcal{A}}\left(C^+_\alpha\log(|I^{(k')}|\{(x-l^{(k')}_\alpha)/|I^{(k')}|\})\right.\\&+\left.
C^-_{\chi(\alpha)}\log(|I^{(k')}|\{(r^{(k')}_\alpha-x)/|I^{(k')}|\})\right),
\end{split}
\end{align}
$\chi:\mathcal{A}\to\mathcal{A}$ is a permutation and
$\widetilde{\varphi}\in\bv^1(\sqcup_{\alpha\in \mathcal{A}}
I^{(k')}_{\alpha})$ with $\|\widetilde{\varphi}'\|_{\sup}\leq
\frac{c\bl(\varphi)}{|I^{(k')}|}$.
\end{proposition}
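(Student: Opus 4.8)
The plan is to build directly on Lemma~\ref{comparingconstants}, which already yields the splitting $S(k,k')\varphi=\overline{\varphi}+\widetilde{\varphi}$ with $\overline{\varphi}$ exactly of the form (\ref{varphibar}) (for the same permutation $\chi$) and with $\widetilde{\varphi}=g\in\bv^1(\sqcup_{\alpha\in\mathcal{A}}I^{(k')}_{\alpha})$; the only new point is the quantitative bound $\|\widetilde{\varphi}'\|_{\sup}\le c\bl(\varphi)/|I^{(k')}|$, and this is precisely the order of magnitude produced by the cancellation estimate of Corollary~\ref{cancellations_periodic}. The case $k'=k$ being trivial, assume $k'>k$ and fix $x$ in the interior of $I^{(k')}_\beta$.

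First I would differentiate the special Birkhoff sum: since $T^{(k)}$ is piecewise a translation and the orbit segment $\{(T^{(k)})^jx:0\le j<Q_\beta(k,k')\}$ avoids the discontinuities, $(S(k,k')\varphi)'(x)=S(k,k')(\varphi')(x)=\sum_{0\le j<Q_\beta(k,k')}\varphi'((T^{(k)})^jx)$, and $\varphi'$ is a sum of terms $\pm C^{\pm}_\alpha$ over the cyclic distance of the argument to $l^{(k)}_\alpha$, resp.\ $r^{(k)}_\alpha$. Because $\varphi$ is purely logarithmic ($g_\varphi=0$) and $T$ has periodic type, Corollary~\ref{cancellations_periodic} applies with $r=Q_\beta(k,k')$ and gives
\[
\Big|(S(k,k')\varphi)'(x)-\sum_{\alpha\in\mathcal{A}}\frac{C^+_\alpha}{(x^l_\alpha)^{(k)}}+\sum_{\alpha\in\mathcal{A}}\frac{C^-_\alpha}{(x^r_\alpha)^{(k)}}\Big|\le\frac{M\bl(\varphi)\,Q_\beta(k,k')}{|I^{(k)}|}.
\]
From the Kakutani--Rohlin tower identity $\sum_{\beta}Q_\beta(k,k')|I^{(k')}_\beta|=|I^{(k)}|$ and the balance estimate (\ref{balancedintervals}) one gets $Q_\beta(k,k')\le d\nu(A)\,|I^{(k)}|/|I^{(k')}|$, so the right-hand side is $\le d\nu(A)M\bl(\varphi)/|I^{(k')}|$.

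Next I would identify the closest-point denominators using Lemma~\ref{comparingsingularities} and the structure of the tower of $T^{(k)}$ over $I^{(k')}$: for $x\in I^{(k')}_\beta$ the orbit segment of length $Q_\beta(k,k')$ visits exactly one point in each level of the column over $I^{(k')}_\beta$. By (\ref{lcorrespondencekl}) the point $l^{(k)}_\alpha$ is the left endpoint of a level of the column over $I^{(k')}_\alpha$, hence $(x^l_\beta)^{(k)}=|I^{(k')}|\{(x-l^{(k')}_\beta)/|I^{(k')}|\}$, while for $\alpha\ne\beta$ the orbit stays at distance $\ge\min_\gamma|I^{(k')}_\gamma|\ge|I^{(k')}|/(d\nu(A))$ to the right of $l^{(k)}_\alpha$ (or never reaches past it), so those terms are bounded by $d\nu(A)\bl(\varphi)/|I^{(k')}|$. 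Symmetrically, using (\ref{rcorrespondencekl}), (\ref{rcorrespondencekldod}) and the last remark of Lemma~\ref{comparingsingularities}: whenever $C^-_\gamma\ne0$ the point $r^{(k)}_\gamma$ is the right endpoint of a level of the column over $I^{(k')}_{\chi^{-1}(\gamma)}$, so $(x^r_{\chi(\beta)})^{(k)}=|I^{(k')}|\{(r^{(k')}_\beta-x)/|I^{(k')}|\}$ and all other $r$-terms have denominators $\ge|I^{(k')}|/(d\nu(A))$; in the exceptional case where the two top points $r^{(k)}_{\alpha^{(k)}_0},r^{(k)}_{\alpha^{(k)}_1}$ both lie in the column over a single interval $\alpha_*$, the geometric-type hypothesis together with the identity $C^-_{\chi(\alpha)}=R^{p(k'-k)}(C^-)_\alpha$ from the proof of Lemma~\ref{comparingconstants} shows their combined weight equals exactly $C^-_{\chi(\alpha_*)}$, matching the coefficient in $\overline{\varphi}$ at $r^{(k')}_{\alpha_*}$ (and the right endpoint $r^{(k')}_{\beta_*}$, $\beta_*=\alpha^{(k')}_{\vep(\pi^{pk'-1},\lambda^{pk'-1})}$, carries zero weight in both $\overline{\varphi}'$ and the main term). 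Differentiating (\ref{varphibar}) term by term, one sees that each summand of $\overline{\varphi}'(x)$ with a small denominator is cancelled by the matching closest-point summand of $(S(k,k')\varphi)'(x)$, and the at most $2d$ surviving summands each have denominator $\ge|I^{(k')}|/(d\nu(A))$. Combining with the error bound above gives $|\widetilde{\varphi}'(x)|\le c\bl(\varphi)/|I^{(k')}|$ for a constant $c$ depending only on $d$, $\nu(A)$ and the constant $M$ of Corollary~\ref{cancellations_periodic} (hence only on the period matrix $A$); since $\widetilde{\varphi}\in\bv^1$ by Lemma~\ref{comparingconstants} and the estimate holds on a dense set, it holds for $\|\widetilde{\varphi}'\|_{\sup}$.

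The main obstacle is the bookkeeping in this identification step, and in particular the exceptional top-interval case: one must verify that the interval $\alpha_*$ singled out by Lemma~\ref{comparingsingularities} receives precisely the renormalized constant $C^-_{\chi(\alpha_*)}$, which forces one to combine the tower geometry of Lemma~\ref{comparingsingularities} with the explicit action of the operator $R$ on the logarithmic constants described in the proof of Lemma~\ref{comparingconstants}. Everything else is a routine application of Corollary~\ref{cancellations_periodic} and the balance estimates (\ref{balancedintervals})--(\ref{areas}).
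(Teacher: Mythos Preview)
Your proposal is correct and follows essentially the same route as the paper: invoke Lemma~\ref{comparingconstants} for the splitting, differentiate, apply Corollary~\ref{cancellations_periodic} to control $S(k,k')\varphi'$ up to the closest-point terms, and then use Lemma~\ref{comparingsingularities} together with the balance estimates to match those terms against $\overline{\varphi}'$.

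The one place where your write-up is more laborious than necessary is the exceptional right-endpoint case. You argue by combining the two contributions $C^-_{\alpha_0^{(k)}}$ and $C^-_{\alpha_1^{(k)}}$ over the column of $\alpha_*$ and matching their sum to $C^-_{\chi(\alpha_*)}$ via the operator $R$. The paper sidesteps this entirely: it reindexes the $r$-sum by the permutation $\chi$ and compares, for each $\alpha$, the term $C^-_{\chi(\alpha)}/(x^r_{\chi(\alpha)})^{(k)}$ with the corresponding term of $\overline{\varphi}'$. If $C^-_{\chi(\alpha)}=0$ both sides vanish; if $C^-_{\chi(\alpha)}\neq 0$, the last sentence of Lemma~\ref{comparingsingularities} guarantees that $\alpha$ is not the exceptional index, so (\ref{rcorrespondencekl}) applies and the argument is identical to the $l$-case. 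This avoids all the bookkeeping about which column carries two endpoints and what their combined weight is.
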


\begin{proof}
Let $\chi:\mathcal{A}\to\mathcal{A}$ be the permutation given by
Lemma~\ref{comparingconstants}. If  $\overline{\varphi}$ is
defined by (\ref{varphibar}), Lemma~\ref{comparingconstants} gives
that $S(k,k')\varphi=\overline{\varphi}+\widetilde{\varphi}$ where
$\widetilde{\varphi}\in\bv^1(\sqcup_{\alpha\in \mathcal{A}}
I^{(k')}_{\alpha})$ (where $\widetilde{\varphi}$ is the $g$ in
Lemma~\ref{comparingconstants}). Thus, we need to estimate
$\|\widetilde{\varphi}'\|_{\sup}$. By differentiating
$\widetilde{\varphi} = S(k,k')\varphi -\overline{\varphi}$, we
have
\begin{equation}\label{firstdifference}
\widetilde{\varphi}'(x) =S(k,k')\varphi'(x) -
\sum_{\alpha\in\mathcal{A}} \frac{C^+_\alpha}{|I^{(k')}| \left\{
\frac{x-l^{(k')}_\alpha}{|I^{(k')}|}\right\}} +
\sum_{\alpha\in\mathcal{A}} \frac{C^-_{\chi(\alpha)}}{ |I^{(k')}|
\left\{ \frac{r^{(k')}_\alpha -x}{ |I^{(k')} |}\right\}}.
\end{equation}
From Corollary~\ref{cancellations_periodic}, if $x \in
I^{(k')}_\beta$ then
\begin{equation}\label{fromcancellations}
\left| S(k,k')\varphi'(x) - \sum_{\alpha \in \mathcal{A}}
\frac{C_\alpha^+}{( x_\alpha^{l})^{(k)}} +   \sum_{\alpha \in
\mathcal{A}} \frac{C_\alpha^-}{(x_\alpha^{r})^{(k)}}  \right|
\leq\frac{M\bl(\varphi)Q_\beta(k,k')}{|I^{(k)}|} ,
\end{equation}
where  \[( x_\alpha^{l})^{(k)} = \min_{0\leq i < Q_{\beta}(k,k')}
((T^{(k)})^i x - l^{(k)}_\alpha)^+,\;\;(x_\alpha^{r})^{(k)} =
\min_{0\leq i < Q_{\beta}(k,k')} (r^{(k)}_\alpha -(T^{(k)})^i
x)^+.\] Recall that, by \eqref{balancedintervals},
$|I_\beta^{(k')}|\geq |I^{(k')}|/d\nu(A)$ for any symbol $\beta
\in \mathcal{A}$ and from \eqref{areas}
\begin{equation}\label{szaqi}
|I^{(k')}|Q_\beta(k,k')\leq |I^{(k)}|.
\end{equation}
 Let us now show that for each $\alpha \in
\mathcal{A}$,
\begin{eqnarray}
&& \left|  \frac{C_\alpha^+}{(x_\alpha^{l})^{(k)}} -
\frac{C^+_\alpha}{|I^{(k')}| \left\{
\frac{x-l^{(k')}_\alpha}{|I^{(k')}|}\right\} }\right| \leq \frac{2
d\nu(A)\bl(\varphi)}{|I^{(k')}|} ,\qquad
\label{singularitiescomparisons1}
\\ && \left| \frac{C_{\chi(\alpha)}^-}{(x_{\chi(\alpha)}^{r})^{(k)}} - \frac{C^-_{\chi(\alpha)}}{
|I^{(k')}| \left\{ \frac{r^{(k')}_\alpha -x}{ |I^{(k')}
|}\right\}} \right| \leq \frac{2 d\nu(A)\bl(\varphi)}{|I^{(k')}|}.
\label{singularitiescomparisons2}
\end{eqnarray}
By (\ref{lcorrespondencekl}) in Lemma~\ref{comparingsingularities}, for every
$\alpha\in\mathcal{A}$ there exists $0\leq j_\alpha <
Q_{\alpha}(k,k')$ such that $(T^{(k)})^{j_\alpha} l_\alpha^{(k')}
= l_\alpha^{(k)}$.
Assume that $x \in I^{(k')}_\beta$. Since the iterates
$(T^{(k)})^j x$ for $0\leq j < Q_{\beta}(k,k')$ each belong to a
$T^j I^{(k')}_\beta$, which, for the $j$ considered are all
disjoint, we have that
\[(x_\beta^{l})^{(k)} = \min_{0\leq i < Q_\beta(k,k')} ((T^{(k)})^i
x-l_\beta^{(k)})^+ = (T^{(k)})^{j_\beta} x -l_\beta^{(k)} .\]
Moreover, since $(T^{(k)})^{j_\beta} $ is an isometry on
$I^{(k')}_\beta$
\[(x_\beta^{l})^{(k)} =  (T^{(k)})^{j_\beta} x -(T^{(k)})^{j_\beta}l_\beta^{(k')} = x -
l_\beta^{(k')}={|I^{(k')}|\{(x-l^{(k')}_\beta)/|I^{(k')}|\}},\]
which shows that in this case the left hand side of
(\ref{singularitiescomparisons1})  is zero and
(\ref{singularitiescomparisons1}) holds trivially  for
$\alpha=\beta$. Consider now $\alpha \in \mathcal{A}\setminus \{
\beta \}$. Since only $(T^{(k)})^{j_\alpha} I^{(k')}_\alpha$
contains $l_\alpha^{(k)}$ as left endpoint and it is disjoint from
$(T^{(k)})^j I^{(k')}_\beta$ for $0\leq j<Q_\beta(k,k')$, we have
that both $|I^{(k')}|\{(x-l^{(k')}_\alpha)/|I^{(k')}|\}$ and
$x_\alpha^{l} $ are greater than $|I^{(k')}_\alpha |\geq
|I^{(k')}|/d\nu(A) $. This concludes  the proof of the upper bound
in (\ref{singularitiescomparisons1}) for all $\alpha \in
\mathcal{A}$.

To prove (\ref{singularitiescomparisons2}), recall that
Lemma~\ref{comparingsingularities} also gives that whenever $ C^-_{\chi(\alpha)}\neq 0$
\begin{equation}\label{rcorrespondencekl1}
r^{(k)}_{\chi(\alpha)}   \in\{  (\widehat{T^{(k)}})^j
r^{(k')}_\alpha  , \, 0\leq j < Q_{\alpha} (k,k') \}.
\end{equation}
Thus, when $C_{\chi(\alpha)}^{-} \neq 0$,
(\ref{singularitiescomparisons2}) can be proved using
(\ref{rcorrespondencekl1}) in a completely analogous way. On the
other hand, if $C^-_{\chi(\alpha)}= 0$,  there is nothing to
prove, since the left hand side of
(\ref{singularitiescomparisons2}) is identically zero. We now get
$\|\widetilde{\varphi}'\|_{\sup}\leq
\frac{C\bl(\varphi)}{|I^{(k')}|}$ by combining
(\ref{firstdifference}), (\ref{fromcancellations})  and
(\ref{szaqi}) with the sum over $\alpha \in \mathcal{A}$ of
(\ref{singularitiescomparisons1},
\ref{singularitiescomparisons2}).
\end{proof}

\begin{proposition}\label{lemlogreno1}
If $T$ has periodic type then there exists $C>0$ such that, for
all $0\leq k\leq k'$,  if
$\varphi\in\overline{\logs}(\sqcup_{\alpha\in \mathcal{A}}
I^{(k)}_{\alpha})$ then
\begin{equation}\label{nave}
\lv(S(k,k')\varphi)\leq C\lv(\varphi).
\end{equation}
\end{proposition}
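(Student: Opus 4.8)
The plan is to split $\varphi$ into its purely logarithmic part and its bounded-variation remainder, apply Proposition~\ref{lemlogreno} to the former and the contraction estimate (\ref{navar}) to the latter, and then convert the uniform bound on the derivative of the error term produced by Proposition~\ref{lemlogreno} into a bound on its total variation.

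First I would write $\varphi=\varphi_{\log}+g_\varphi$, where $\varphi_{\log}$ is the pure-log function carrying the constants $C^\pm_\alpha$ of the representation (\ref{fform}) of $\varphi$ and $g_\varphi\in\bv(\sqcup_{\alpha\in\mathcal{A}}I^{(k)}_\alpha)$; this decomposition is canonical, since the $C^\pm_\alpha$ are determined by the logarithmic blow-up of $\varphi$ at the endpoints $l^{(k)}_\alpha$ and $r^{(k)}_\alpha$. Membership of $\varphi$ in $\overline{\logs}$ means that these constants satisfy the symmetry relation (\ref{zerosymweak}) and the geometric-type conditions, both of which involve only the $C^\pm_\alpha$; hence $\varphi_{\log}\in\logs(\sqcup_{\alpha\in\mathcal{A}}I^{(k)}_\alpha)$ and it has exactly the pure-log form required in Proposition~\ref{lemlogreno}. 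Since the special Birkhoff sum operator $S(k,k')$ is linear, $S(k,k')\varphi=S(k,k')\varphi_{\log}+S(k,k')g_\varphi$.

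Applying Proposition~\ref{lemlogreno} to $\varphi_{\log}$ gives $S(k,k')\varphi_{\log}=\overline{\varphi}+\widetilde{\varphi}$, where $\overline{\varphi}$ is the pure-log function (\ref{varphibar}) with constants $C^+_\alpha$ and $C^-_{\chi(\alpha)}$ for a permutation $\chi$ of $\mathcal{A}$, and $\widetilde{\varphi}\in\bv^1(\sqcup_{\alpha\in\mathcal{A}}I^{(k')}_\alpha)$ with $\|\widetilde{\varphi}'\|_{\sup}\le c\,\bl(\varphi)/|I^{(k')}|$. Hence $S(k,k')\varphi=\overline{\varphi}+g$ with $g:=\widetilde{\varphi}+S(k,k')g_\varphi\in\bv(\sqcup_{\alpha\in\mathcal{A}}I^{(k')}_\alpha)$, which exhibits $S(k,k')\varphi$ in the form (\ref{fform}) with logarithmic constants $C^+_\alpha$, $C^-_{\chi(\alpha)}$; as $\chi$ is a permutation, $\bl(S(k,k')\varphi)=\bl(\varphi)$ (this also follows directly from Lemma~\ref{comparingconstants}). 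The one slightly delicate step is to bound $\var\widetilde{\varphi}$: since $\widetilde{\varphi}$ is absolutely continuous on each $I^{(k')}_\alpha$, one has $\Var{\widetilde{\varphi}}{I^{(k')}_\alpha}=\int_{I^{(k')}_\alpha}|\widetilde{\varphi}'|\le|I^{(k')}_\alpha|\,\|\widetilde{\varphi}'\|_{\sup}$, and summing over $\alpha$ the lengths telescope to $|I^{(k')}|$, so that $\var\widetilde{\varphi}\le c\,\bl(\varphi)$. Combined with $\var S(k,k')g_\varphi\le\var g_\varphi$ from (\ref{navar}), this yields $\var g\le c\,\bl(\varphi)+\var g_\varphi$.

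Putting the two bounds together,
\[\lv(S(k,k')\varphi)=\bl(S(k,k')\varphi)+\var g\le\bl(\varphi)+c\,\bl(\varphi)+\var g_\varphi\le(1+c)\bigl(\bl(\varphi)+\var g_\varphi\bigr)=(1+c)\,\lv(\varphi),\]
so the statement holds with $C=1+c$. I do not expect a serious obstacle: the cancellation mechanism responsible for the renormalization being uniformly controlled has already been absorbed into Proposition~\ref{lemlogreno} (and ultimately into Proposition~\ref{cancellations_prop}), so what remains is the bookkeeping of the splitting and the elementary conversion of the uniform bound on $\widetilde{\varphi}'$ into a total-variation bound; the only points needing a line of justification are that $\varphi_{\log}$ stays in $\logs$ after discarding the $\bv$-remainder and that $S(k,k')$ maps $\overline{\ol}$ into $\overline{\ol}$, the latter being already noted in \S\ref{SpecialBS}.
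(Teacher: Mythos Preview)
Your proposal is correct and follows essentially the same argument as the paper: decompose $\varphi$ into its pure logarithmic part and its $\bv$-remainder, apply Proposition~\ref{lemlogreno} to the former, convert the sup-bound on $\widetilde{\varphi}'$ into $\var\widetilde{\varphi}\le c\,\bl(\varphi)$ by integrating over the $I^{(k')}_\alpha$, and combine with (\ref{navar}) to obtain the constant $C=c+1$.
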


\begin{proof}
Let $\varphi=\varphi_0+g$ be the decomposition with $
g\in\bv(\sqcup_{\alpha\in \mathcal{A}} I^{(k)}_{\alpha})$ and
\[\varphi_0(x)=-\sum_{\alpha\in\mathcal{A}}(C^+_\alpha\log(|I^{(k)}|\{(x-l^{(k)}_\alpha)/|I^{(k)}|\})+
C^-_\alpha\log(|I^{(k)}|\{(r^{(k)}_\alpha-x)/|I^{(k)}|\})).\] By
Proposition~\ref{lemlogreno},
$S(k,k')\varphi_0=\overline{\varphi}+\widetilde{\varphi}$, where
\[\overline{\varphi}(x)\!=-\!\sum_{\alpha\in\mathcal{A}}(C^+_\alpha\!\log(|I^{(k')}|\{(x-l^{(k')}_\alpha)/|I^{(k')}|\})+
C^-_{\chi(\alpha)}\!\log(|I^{(k')}|\{(r^{(k')}_\alpha-x)/|I^{(k')}|\}))\]
for a permutation $\chi:\mathcal{A}\to\mathcal{A}$, and a function
$\widetilde{\varphi}\in\bv^1(\sqcup_{\alpha\in \mathcal{A}}
I^{(k')}_{\alpha})$ with $\|\widetilde{\varphi}'\|_{\sup}\leq
c{\bl(\varphi)}/{|I^{(k')}|}$. Thus,
\[\var\widetilde{\varphi}=
\sum_{\alpha\in\mathcal{A}}\int_{I^{(k')}_\alpha}
|\widetilde{\varphi}'(x)|\,dx\leq c\, \bl(\varphi).\] Since
$\var(S(k,k') g)\leq\var g$ and
$\bl(\overline{\varphi})=\bl(\varphi)$, it follows that
\[\lv(S(k,k')\varphi)=
\bl(\overline{\varphi})+\var(\widetilde{\varphi}+ S(k,k')g)\leq
(c+1)\bl(\varphi)+\var g\leq (c+1)\lv(\varphi).\]
\end{proof}


\section{Correction operators}\label{correction:sec}
In this section we define the operator which allows us to
\emph{correct} a cocycle with logarithmic singularities by a
piecewise constant  function, so that the special Birkhoff sums of
the corrected cocycle have controlled growth in $L_1$ norm. A
similar operator appears in \cite{MMYlinearization}, based on the
correction procedure introduced in \cite{Ma-Mo-Yo}. In our
setting, we need to use of the $L_1$ norm, since the $L_\infty$
norm is unbounded due to the presence of singularities. We control
the contribution coming from the singularities through the results
in \S\ref{symmetric:sec}.

Recall that $\overline{\ls}_0(\sqcup_{\alpha\in \mathcal{A}}
I_{\alpha})={\ls}_0(\sqcup_{\alpha\in \mathcal{A}}
I_{\alpha})+{\bv}_0(\sqcup_{\alpha\in \mathcal{A}} I_{\alpha})$
(see \S\ref{cocycles:sec}).
\begin{theorem}\label{operatorcorrection}
Assume that  $T$ is of periodic type.  There exists a bounded
linear operator $\mathfrak{h} : \overline{\ls}_0(\sqcup_{\alpha\in
\mathcal{A}} I_{\alpha}^{(0)}) \to \Gamma$, where $\Gamma$  is the
space of functions which are constant on each $I_{\alpha}$, whose
image is a $g-1$ dimensional space and such that:
\begin{itemize}
\item[(1)] There exist $C_1,C_2>0$ such that, if $\varphi\in {\overline{\ls}_0(\sqcup_{\alpha\in
\mathcal{A}} I_{\alpha}^{(0)})}$ and $\mathfrak{h}(\varphi) = 0$,
then for each $k\geq 1$ we have
\[\frac{1}{|I^{(k)}|}\|S(k)({\varphi})\|_{L^1(I^{(k)})} \leq
C_1\lv(\varphi)k^{M}
+C_2\|{\varphi}\|_{L^1(I^{(0)})}/|I^{(0)}|k^{M-1},\]
where $M$ is the  maximal size of Jordan
blocks in the Jordan decomposition  of the period matrix of $T$.
\item[(2)] If additionally  $T$  is of hyperbolic periodic type
and $\varphi\in\ls_0(\sqcup_{\alpha\in \mathcal{A}}
I^{(0)}_{\alpha})$ satisfies  $\mathfrak{h}(\varphi)=0$, then for
each $k\geq 0$ we have
\[\frac{1}{|I^{(k)}|}\|S(k)({\varphi})\|_{L^1(I^{(k)})}\leq
C_1 \lv(\varphi)+C_2\frac{1}{|I^{(0)}|}\|{\varphi}\|_{L^1(I^{(0)})}.\]
\end{itemize}
\end{theorem}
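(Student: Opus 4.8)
The plan is to adapt the Marmi--Moussa--Yoccoz / Conze--Fr\k{a}czek correction scheme to the space $\overline{\ls}_0$, by tracking the \emph{piecewise constant drift} of the renormalized cocycle and killing its expanding component. Identify $\Gamma$ with $\mathbb R^{\mathcal A}$ via $h\mapsto(h_\alpha)_\alpha$ and recall from \S\ref{SpecialBS} that on piecewise constant functions $S(k,k+1)$ acts by the transpose $A^{t}$ of the period matrix $A=\Theta^{(p)}(T)$, since a point of $I^{(k+1)}_\beta$ visits $I^{(k)}_\alpha$ exactly $Q_{\alpha\beta}(k,k+1)=A_{\alpha\beta}$ times. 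The hyperplane $\lambda^{\perp}$ orthogonal to the Perron eigenvector $\lambda$ is $A^{t}$-invariant (because $\langle A^{t}v,\lambda\rangle=\rho_1\langle v,\lambda\rangle$) and contains every zero mean piecewise constant function as well as every drift vector produced below. Decompose $\lambda^{\perp}=\mathcal H^{u}\oplus\mathcal H^{cs}$ into the $A^{t}$-invariant sums of generalized eigenspaces for eigenvalues of modulus $>1$, respectively $\le 1$, and let $P^{u}$ be the projection onto $\mathcal H^{u}$; note $\mathcal H^{u}$ and the stable part of $\mathcal H^{cs}$ lie in $H_\pi$. Then $\|(A^{t})^{-k}|_{\mathcal H^{u}}\|\le C\gamma^{-k}$ for some $\gamma>1$; $\|(A^{t})^{k}|_{\mathcal H^{cs}}\|\le Ck^{M-1}$ in general; and $\|(A^{t})^{k}|_{\mathcal H^{cs}}\|\le C$ when $T$ is of hyperbolic periodic type, because then $A^{t}|_{H_\pi}$ is hyperbolic and the remaining eigenvalue-$1$ block is trivial (the convention $A|_{\ker\Omega_\pi}=\mathrm{Id}$ forces all Jordan blocks of $A$, hence of $A^{t}$, for the eigenvalue $1$ to be $1\times1$). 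In the hyperbolic case $\dim\mathcal H^{u}=g-1$, which will be the dimension of the image of $\mathfrak h$.

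For $\varphi\in\overline{\ls}_0(\sqcup_\alpha I^{(0)}_\alpha)$ put $(\mu_k)_\alpha=m(S(k)\varphi,I^{(k)}_\alpha)$; since $S(k)\varphi$ has zero mean, $\mu_k\in\lambda^{\perp}$. Write $S(k)\varphi=h_k+\psi_k$ with $h_k=\sum_\alpha(\mu_k)_\alpha\chi_{I^{(k)}_\alpha}$ and $\psi_k$ of zero mean on each $I^{(k)}_\alpha$. Then $S(k,k+1)h_k$ is constant on each $I^{(k+1)}_\beta$, equal to $\sum_\alpha A_{\alpha\beta}(\mu_k)_\alpha$, so its mean value vector is exactly $A^{t}\mu_k$ and $\mu_{k+1}=A^{t}\mu_k+e_k$, where $(e_k)_\beta$ is the average over $I^{(k+1)}_\beta$ of $S(k,k+1)\psi_k$. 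As $\psi_k$ has zero mean on each $I^{(k)}_\alpha$, $(e_k)_\beta$ is a sum of $Q_\beta(k,k+1)=Q_\beta(1)$ (a fixed number) terms $m(S(k)\varphi,J)-m(S(k)\varphi,I^{(k)}_\alpha)$ with $J$ a floor (of length $|I^{(k+1)}_\beta|$) of the Rohlin tower over $I^{(k+1)}_\beta$ inside $I^{(k)}_\alpha$; by (\ref{meanv1}), the balance estimates (\ref{balancedintervals})--(\ref{areas}) (which make the ratios $|I^{(k)}_\alpha|/|I^{(k+1)}_\beta|$ uniformly bounded) and the uniform bound $\lv(S(k)\varphi)\le C\lv(\varphi)$ of Proposition~\ref{lemlogreno1}, each such term is $\le C\lv(\varphi)$. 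Hence $\|e_k\|\le C\lv(\varphi)$ \emph{uniformly in $k$} (crucially this bound does not involve $\|\mu_k\|$), while $\|\mu_0\|\le C\|\varphi\|_{L^1(I^{(0)})}/|I^{(0)}|$.

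Iterating, $P^{u}\mu_k=(A^{t})^{k}\bigl(P^{u}\mu_0+\sum_{j=0}^{k-1}(A^{t})^{-1-j}P^{u}e_j\bigr)$, so, since $(A^{t})^{-1}$ contracts $\mathcal H^{u}$, the series $\mathfrak h(\varphi):=P^{u}\mu_0+\sum_{j\ge0}(A^{t})^{-1-j}P^{u}e_j\in\mathcal H^{u}\subset\Gamma$ converges, with $\|\mathfrak h(\varphi)\|\le C(\lv(\varphi)+\|\varphi\|_{L^1}/|I^{(0)}|)$; it is clearly linear and bounded, and its image is all of $\mathcal H^{u}$ (a piecewise constant $\varphi\equiv u\in\mathcal H^{u}$ has $e_j=0$, $\mathfrak h(\varphi)=u$), of dimension $g-1$ in the hyperbolic case. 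When $\mathfrak h(\varphi)=0$, the telescoping gives $P^{u}\mu_k=-\sum_{j\ge k}(A^{t})^{k-1-j}P^{u}e_j$, hence $\|P^{u}\mu_k\|\le C\lv(\varphi)$ for all $k$. The stable part $P^{s}\mu_k$ of $P^{cs}\mu_k$ obeys $P^{s}\mu_k=(A^{t})^{k}P^{s}\mu_0+\sum_{j<k}(A^{t})^{k-1-j}P^{s}e_j$ with exponentially small operator norms, so $\|P^{s}\mu_k\|\le C(\lv(\varphi)+\|\mu_0\|)$. For the remaining center piece of $\mu_k$ --- in the hyperbolic case precisely the $\ker\Omega_\pi$-direction, on which $A^{t}$ is the identity, so the recursion only yields linear growth --- we use that the operators $\mathcal O(\cdot)$ are renormalization invariant: $\mathcal O(S(k)\varphi)=\mathcal O(\varphi)$ by Lemma~\ref{invariophi} and $|\mathcal O(\varphi)|\le C(\|\varphi\|_{L^1}/|I^{(0)}|+\lv(\varphi))$ by Lemma~\ref{lemoszo}. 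By Lemma~\ref{bcharh} and Remark~\ref{Ohzeroreformulation} the functionals $\langle\cdot,b(\mathcal O)\rangle_{\mathcal O\in\Sigma_0}$ are coordinates transverse to $H_\pi$, and $\langle\mu_k,b(\mathcal O)\rangle=\mathcal O(h_k)$ differs from $\mathcal O(S(k)\varphi)$ only by bounded ($O(\lv(\varphi))$) contributions of the renormalized logarithmic part and of the zero mean bounded variation part, using Proposition~\ref{lemlogreno}, Lemma~\ref{comparingconstants} and the fact that the average over $I^{(k)}_\alpha$ of a logarithm with a fixed constant stays $O(\bl(\varphi))$; this bounds the center piece by $C(\lv(\varphi)+\|\varphi\|_{L^1}/|I^{(0)}|)$. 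In the general periodic case this refinement is not needed and one simply uses $\|P^{cs}\mu_k\|\le Ck^{M-1}\|\mu_0\|+Ck^{M}\lv(\varphi)$ straight from the recursion.

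Finally $\tfrac1{|I^{(k)}|}\|S(k)\varphi\|_{L^1}\le\tfrac1{|I^{(k)}|}\|\psi_k\|_{L^1}+\|\mu_k\|$, and by (\ref{meanv2}) the first term is $\le 8\lv(\psi_k)\le C\lv(\varphi)+C\|\mu_k\|$ (since $\lv(\psi_k)=\bl(\varphi)+\var(g_{S(k)\varphi}-h_k)\le\bl(\varphi)+\var g_{S(k)\varphi}+\var h_k$, with $\var g_{S(k)\varphi}\le C\lv(\varphi)$ by Proposition~\ref{lemlogreno1} and $\var h_k\le 2d\|\mu_k\|$). Applying everything above to $\varphi-\mathfrak h(\varphi)$ in place of $\varphi$ --- noting $\var\mathfrak h(\varphi)\le 2d\|\mathfrak h(\varphi)\|$, so $\lv(\varphi-\mathfrak h(\varphi))\le C(\lv(\varphi)+\|\varphi\|_{L^1}/|I^{(0)}|)$ --- yields, when $\mathfrak h(\varphi)=0$, statement (1) from the polynomial bound on $\|\mu_k\|$, and statement (2) in the hyperbolic case from the uniform bound on $\|\mu_k\|$. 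The crux of the argument is the $\ker\Omega_\pi$-component estimate in the penultimate step: without the exact renormalization invariance of the $\mathcal O$-operators the drift would grow linearly there, and one must separate this invariant from the genuinely large but mutually cancelling constants $-\log|I^{(k)}|\sum_\alpha(C^{+}_\alpha+C^{-}_\alpha)$ hidden in the renormalized logarithmic part and in its compensation inside the bounded variation part of $S(k)\varphi$; securing $\|e_k\|\le C\lv(\varphi)$ without circular dependence on $\|\mu_k\|$ is the other delicate point.
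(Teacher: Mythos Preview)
Your proof is correct and follows essentially the same approach as the paper: track the piecewise-constant drift $\mu_k=C^{(k)}(S(k)\varphi)$, establish the recursion $\mu_{k+1}=A^t\mu_k+e_k$ with $\|e_k\|\le C\lv(\varphi)$ via Propositions~\ref{lemlos} and~\ref{lemlogreno1}, define $\mathfrak h$ as the unstable tail series, and in the hyperbolic case bound the center component through the renormalization-invariant functionals $\mathcal O$ (Lemmas~\ref{invariophi} and~\ref{lemoszo}). The paper packages the same computation more abstractly via the quotient operators $P^{(k)}$ and $\Delta P^{(k)}$ and the commutation relation~(\ref{przem}), but the content is identical; two cosmetic points worth cleaning up are that $\var h_k=0$ by the definition~(\ref{variation}) (so your bound $\var h_k\le 2d\|\mu_k\|$ is unnecessary and the feared circularity never arises), and that $\Gamma_c$ is the $A^t$-fixed subspace rather than literally $\ker\Omega_\pi$, though the $\langle\,\cdot\,,b(\mathcal O)\rangle$ still give coordinates on it via Remark~\ref{remiso}.
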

Part (2) will be used to prove ergodicity of $T_\varphi$ in
\S\ref{ergodicity:sec}, while part (1) will be used in the
cohomological reduction in Appendix~\ref{cohreduction:sec}. We
prove them in parallel since the proofs have similar structure.

\smallskip
Let $\Gamma^{(k)}$ be the space of real valued functions on
$I^{(k)}$ which are constant on each $I^{(k)}_{\alpha}$,
$\alpha\in\mathcal{A}$ and $\Gamma^{(k)}_0$ is the subspace of
functions with zero mean.  Then
\[S(k,k')\Gamma^{(k)}=\Gamma^{(k')}\;\;\text{ and }\;\;S(k,k')\Gamma^{(k)}_0=\Gamma^{(k')}_0.\]
Let us identify every function $\sum_{\alpha\in \mathcal{A}}
h_{\alpha}\chi_{I^{(k)}_{\alpha}}$ in $\Gamma^{(k)}$ with the
vector $h=(h_{\alpha})_{\alpha\in
\mathcal{A}}\in\R^{\mathcal{A}}$. Clearly $\Gamma^{(k)}$ is
isomorphic to $\mathbb{R}^{\mathcal{A}}$ ($\simeq\mathbb{R}^d$).
Under the identification,
\[\Gamma^{(k)}_0=Ann(\lambda^{(k)}):=\{h=(h_{\alpha})_{\alpha\in \mathcal{A}}
\in\R^{\mathcal{A}}:\langle h, \lambda^{(k)}\rangle=0\}\] and the
operator $S(k,k')$ is the linear automorphism of
$\R^{\mathcal{A}}$ whose matrix in the canonical basis is
$Q(k,k')^t$ (see for example \cite{Ma-Mo-Yo}). Thus
$S(k,k')^{-1}:\Gamma^{(k')}\to \Gamma^{(k)}$ is well defined.

Suppose now that $T$ is of periodic type, with period matrix $A$.
Then the $L^1$-norm on $\Gamma^{(k)}$ is equivalent to the vector
norm and, by (\ref{balancedintervals}),
\begin{equation}\label{compnorm}
\frac{1}{d\nu(A)}|I^{(k)}|\|h\|\leq\min_{\alpha\in\mathcal{A}}
|I_\alpha^{(k)}|\|h\|\leq\|h\|_{L^1(I^{(k)})}\leq
|I^{(k)}|\|h\|.
\end{equation}

 Let us consider the linear subspaces
\begin{align*}
&
\Gamma^{(k)}_{cs}=\{h\in\Gamma^{(k)}:\limsup_{j\to+\infty}\frac{1}{j}
\log\|S(k,j)h\|=\limsup_{j\to+\infty}\frac{1}{j}\log\|(A^t)^{j-k}
h\|\leq 0\},\\
&\Gamma^{(k)}_{s}=\{h\in\Gamma^{(k)}:\limsup_{j\to+\infty}\frac{1}{j}
\log\|S(k,j)h\|=\limsup_{j\to+\infty}\frac{1}{j}\log\|(A^t)^{j-k}
h\|<
0\},\\
&\Gamma^{(k)}_{u}=\{h\in\Gamma^{(k)}:\limsup_{j\to+\infty}\frac{1}{j}
\log\|(A^t)^{k-j}h\|< 0\}.
\end{align*}
 Let $M$ be the  maximal size of Jordan
blocks in the Jordan decomposition  of the period matrix $A$.
Note that for every natural $k$ the subspace
$\Gamma^{(k)}_{cs}$ (respectively $\Gamma^{(k)}_{s},\Gamma^{(k)}_{u})\subset\R^{\mathcal{A}}$
is the direct sum of invariant subspaces associated to Jordan
blocks of $A^t$ with non-positive (respectively negative, positive)
Lyapunov exponents. It follows that there exist
$C,\theta_+,\theta_->0$ such that
\begin{eqnarray}\label{censtab}
\|(A^t)^{n}h\| &\leq& Cn^{M-1}\|h\|\text{ for all
}h\in\Gamma^{(k)}_{cs}\text{ and }n\geq 0.
\\
\label{stab}
\|(A^t)^{n}h\|&\leq& C\exp(-n\theta_-)\|h\|\text{ for all }
h\in\Gamma^{(k)}_{s}\text{ and }n\geq 0.
\\ \label{unstab}
\|(A^t)^{-n}h\|&\leq& C\exp(-n\theta_+)\|h\|\text{ for all }
h\in\Gamma^{(k)}_{u}\text{ and }n\geq 0.
\end{eqnarray}

It is easy to show that $\Gamma^{(k)}_{cs}\subset \Gamma^{(k)}_0$.
Denote by
\[U^{(k)}:\overline{\ls}(\sqcup_{\alpha\in \mathcal{A}} I^{(k)}_{\alpha})\to
\overline{\ls}(\sqcup_{\alpha\in \mathcal{A}}
I^{(k)}_{\alpha})/\Gamma^{(k)}_{cs}\] the projection on the
quotient space.
 Let us consider two linear operators
$C^{(k)}:\overline{\ls}_0(\sqcup_{\alpha\in \mathcal{A}}
I^{(k)}_{\alpha})\to\Gamma^{(k)}_0$ and
$P^{(k)}_0:\overline{\ls}_0(\sqcup_{\alpha\in \mathcal{A}}
I^{(k)}_{\alpha})\to\overline{\ls}_0(\sqcup_{\alpha\in
\mathcal{A}} I^{(k)}_{\alpha})$ given by
\[
C^{(k)}\varphi=\sum_{\alpha\in\mathcal{A}}m(\varphi,I^{(k)}_\alpha)\chi_{I^{(k)}_\alpha}\quad\text{ and
}\quad P^{(k)}_0\varphi=\varphi-C^{(k)}\varphi.
\]
Then $m(P^{(k)}_0\varphi,I^{(k)}_\alpha)=0$ for each $\alpha\in
\mathcal{A}$. Moreover,
\begin{equation}\label{nace}
\|C^{(k)}\varphi\|_{L^1(I^{(k)})}\leq\|\varphi\|_{L^1(I^{(k)})}
\end{equation}
and, by equation (\ref{meanv2}) in Proposition~\ref{lemlos},
\begin{equation}\label{nape}
\|P^{(k)}_0\varphi\|_{L^1(I^{(k)})}\leq 8 |I^{(k)}|\lv (\varphi).
\end{equation}

Since $S(k,k')\Gamma^{(k)}_{cs}=\Gamma^{(k')}_{cs}$ and $S(k,k'):
\Gamma^{(k)}\to\Gamma^{(k')}$ is invertible (see \cite{Ma-Mo-Yo}),
the quotient linear transformation
\[S_u(k,k'):\overline{\ls}(\sqcup_{\alpha\in \mathcal{A}} I^{(k)}_{\alpha})/\Gamma^{(k)}_{cs}
\to\overline{\ls}(\sqcup_{\alpha\in \mathcal{A}}
I^{(k')}_{\alpha})/\Gamma^{(k')}_{cs}\] is well defined and
$S_u(k,k'):\Gamma^{(k)}/\Gamma^{(k)}_{cs}\to\Gamma^{(k')}/\Gamma^{(k')}_{cs}$
is invertible. Moreover,
\begin{equation}\label{splatanies}
S_u(k,k')\circ U^{(k)}\varphi=U^{(k')}\circ S(k,k')\varphi\text{
for }\varphi\in\overline{\ls}(\sqcup_{\alpha\in \mathcal{A}}
I^{(k)}_{\alpha}).
\end{equation}

Since
$\R^{\mathcal{A}}=\Gamma^{(0)}=\Gamma^{(0)}_{cs}\oplus\Gamma^{(0)}_{u}$,
the linear operators $A^t:\Gamma^{(0)}_{u}\to\Gamma^{(0)}_{u}$ and
$A^t:\Gamma^{(0)}/\Gamma^{(0)}_{cs}\to\Gamma^{(0)}/\Gamma^{(0)}_{cs}$
are isomorphic. In view of (\ref{unstab}), it follows that there
exists $C'>0$ such that
\[\|(A^t)^{-n}(h+\Gamma^{(0)}_{cs})\|\leq C'\exp(-n\theta_+)\|h+\Gamma^{(0)}_{cs}\|\]
for all $h+\Gamma^{(0)}_{cs}\in\Gamma^{(0)}/\Gamma^{(0)}_{cs}$ and
$n\geq 0$. Consequently,
\begin{equation}\label{szacowanieniest}
\|(S_u(k,k'))^{-1}(h+\Gamma^{(k')}_{cs})\|\leq
C'\exp(-(k'-k)\theta_+)\|h+\Gamma^{(k)}_{cs}\|
\end{equation}
for  $h+\Gamma^{(k')}_{cs}\in\Gamma^{(k')}/\Gamma^{(k')}_{cs}$,
$0\leq k< k'$.

\begin{lemma}\label{thmcorre}
For every function $\varphi\in \overline{\ls}_0(\sqcup_{\alpha\in
\mathcal{A}} I^{(k)}_{\alpha})$, the following limit exists in
$\Gamma_0^{(k)}/\Gamma_{cs}^{(k)}$:
\begin{equation}\label{ciagdop}
\Delta P^{(k)}\varphi = \lim_{k'\rightarrow \infty} U^{(k)}\circ
S(k,k')^{-1}\circ \left(S(k,k')\circ P^{(k)}_0-P_0^{(k')}\circ
S(k,k')\right)\varphi. \end{equation} Moreover, there exists
$K>0$ such that
\begin{equation}\label{szacdelty}
\|\Delta P^{(k)}\varphi\|\leq K  \lv (\varphi)\text{ for every
}\varphi\in\overline{\ls}_0(\sqcup_{\alpha\in \mathcal{A}}
I^{(k)}_{\alpha})\text{ and }k\geq 0.
\end{equation}
\end{lemma}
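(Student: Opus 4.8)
The plan is to show that the sequence of operators defining $\Delta P^{(k)}\varphi$ in \eqref{ciagdop} is Cauchy in $\Gamma_0^{(k)}/\Gamma_{cs}^{(k)}$, with geometric rate, using the contraction estimate \eqref{szacowanieniest} for $S_u(k,k')^{-1}$ together with the uniform bounds \eqref{nape} on $P^{(k)}_0$ and \eqref{nave} on special Birkhoff sums. First I would rewrite the inner expression telescopically: for $k'<k''$, set $\Psi(k,k') := S(k,k')^{-1}\circ\left(S(k,k')\circ P^{(k)}_0 - P_0^{(k')}\circ S(k,k')\right)\varphi$, which lies in $\Gamma_0^{(k)}$ since $m(P^{(k')}_0 S(k,k')\varphi, I^{(k')}_\alpha)=0$ forces $P_0^{(k')}S(k,k')\varphi\in\Gamma^{(k')}_0$ after subtracting (more precisely, the difference $S(k,k')P^{(k)}_0\varphi - P^{(k')}_0 S(k,k')\varphi = S(k,k')\varphi - C^{(k')}S(k,k')\varphi - S(k,k')\varphi + S(k,k')C^{(k)}\varphi$ is piecewise constant on the $I^{(k')}_\alpha$, hence in $\Gamma^{(k')}$, and it has zero mean, so it is in $\Gamma^{(k')}_0$, and its $S(k,k')^{-1}$-preimage is in $\Gamma^{(k)}_0$). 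Then $U^{(k)}\Psi(k,k'')-U^{(k)}\Psi(k,k') = U^{(k)}\circ S(k,k')^{-1}\circ\left(P_0^{(k')}\circ S(k,k') - S(k',k'')^{-1}\circ P_0^{(k'')}\circ S(k',k'')\circ S(k,k')\right)\varphi$, which after applying $S(k,k')$ on the left (and using $S(k,k')^{-1}=S(k',k'')^{-1}S(k,k'')^{-1}\cdot\ldots$ appropriately) reduces to an expression of the form $U^{(k)}\circ S(k,k'')^{-1}\circ\big(P_0^{(k')}\circ S(k,k') \text{ pulled forward} - P_0^{(k'')}\circ S(k,k'')\big)\varphi$.

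The key estimate is then: the norm of the correction increment between levels $k'$ and $k'+1$, measured in the quotient, is bounded by $C'\exp(-(k'-k)\theta_+)$ times the $\Gamma^{(k')}/\Gamma^{(k')}_{cs}$-norm of $U^{(k')}\circ\big(S(k',k'+1)\circ P^{(k')}_0 - P^{(k'+1)}_0\circ S(k',k'+1)\big)S(k,k')\varphi$. Using \eqref{compnorm} to pass between vector norm and $L^1$ norm, \eqref{nape} to bound $\|P^{(k')}_0 S(k,k')\varphi\|_{L^1}\leq 8|I^{(k')}|\lv(S(k,k')\varphi)$ and $\|P^{(k'+1)}_0 S(k,k'+1)\varphi\|_{L^1}\leq 8|I^{(k'+1)}|\lv(S(k,k'+1)\varphi)$, together with \eqref{nase} (which gives $\|S(k',k'+1)\cdot\|_{L^1}$ non-expanding) and crucially \eqref{nave} from Proposition~\ref{lemlogreno1} (which gives $\lv(S(k,j)\varphi)\leq C\lv(\varphi)$ uniformly in $j$), this increment norm is bounded by $K_0 C'\exp(-(k'-k)\theta_+)\lv(\varphi)$ for a constant $K_0$ depending only on $A$. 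Summing the geometric series over $k'\geq k$ shows the sequence $U^{(k)}\Psi(k,k')$ is Cauchy, hence convergent to a limit $\Delta P^{(k)}\varphi$, and the sum of the series gives \eqref{szacdelty} with $K := K_0 C'/(1-e^{-\theta_+})$ (absorbing also the first term $k'=k$, for which $\Psi(k,k)=0$).

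The main obstacle I anticipate is bookkeeping: correctly checking that all the intermediate objects land in the zero-mean and center-stable-complement subspaces so that $S(k,k')^{-1}$, $S_u(k,k')^{-1}$ and the quotient maps are all defined where claimed, and that the telescoping identity lines up exactly with the recursion $P^{(k'+1)}_0\circ S(k',k'+1) = S(k',k'+1)\circ P^{(k')}_0 - \big(S(k',k'+1)\circ P^{(k')}_0 - P^{(k'+1)}_0\circ S(k',k'+1)\big)$. The analytic heart is genuinely short once one has \eqref{szacowanieniest} and \eqref{nave}: the contraction on the unstable quotient $S_u^{-1}$ beats the uniform (not growing) $\lv$-bound on $S(k,k')\varphi$, giving a convergent geometric series. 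One should also note that the argument uses hyperbolicity only through $\theta_+>0$, i.e. through the existence of a nontrivial unstable part and the strict contraction of $S_u^{-1}$; this is why the lemma is stated for general periodic type with the estimate valid whenever $\Gamma^{(0)}_u\neq\{0\}$, and the dichotomy in Theorem~\ref{operatorcorrection} between polynomial and bounded growth reflects whether one works modulo $\Gamma_{cs}$ or one also needs the central directions.
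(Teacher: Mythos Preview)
Your proposal is correct and follows essentially the same approach as the paper. The paper makes the telescoping explicit by writing
\[
S(k,k')\circ P_0^{(k)}-P_0^{(k')}\circ S(k,k')=\sum_{k\le r<k'} S(r+1,k')\circ C^{(r+1)}\circ S(r,r+1)\circ P_0^{(r)}\circ S(k,r),
\]
then applies $U^{(k)}\circ S(k,k')^{-1}$ and uses \eqref{splatanies} to obtain the series
\[
\sum_{r\ge k} S_u(k,r+1)^{-1}\circ U^{(r+1)}\circ C^{(r+1)}\circ S(r,r+1)\circ P_0^{(r)}\circ S(k,r)\varphi,
\]
whose $r$-th term is bounded via \eqref{nace}, \eqref{nase}, \eqref{nape}, \eqref{nave}, \eqref{compnorm} and then \eqref{szacowanieniest} by $8dC'C\nu(A)\|A\|e^{-(r-k)\theta_+}\lv(\varphi)$; your ``Cauchy increments'' are exactly these series terms, and your list of ingredients (\eqref{szacowanieniest}, \eqref{nape}, \eqref{nave}, \eqref{nase}, \eqref{compnorm}) matches. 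The identity $(S(k,k')\circ P_0^{(k)}-P_0^{(k')}\circ S(k,k'))\varphi=C^{(k')}\circ S(k,k')\circ P_0^{(k)}\varphi$ that the paper proves first is precisely the clean form of your observation that the difference is piecewise constant with zero mean, and it slightly simplifies the bookkeeping you flagged as the main obstacle.
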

\begin{proof}
Let us first show that given
$\varphi\in\overline{\ls}_0(\sqcup_{\alpha\in \mathcal{A}}
I^{(k)}_{\alpha})$, one has
\begin{equation}\label{prelobs}
(S(k,k')\circ P_0^{(k)}-P_0^{(k')}\circ
S(k,k'))\varphi=C^{(k')}\circ S(k,k')\circ
P_0^{(k)}\varphi\in\Gamma^{(k')}_0.
\end{equation} As
$\varphi=P_0^{(k)}\varphi+C^{(k)}\varphi$, we have
\[P_0^{(k')}\circ S(k,k')\varphi=P_0^{(k')}\circ S(k,k')\circ P_0^{(k)}\varphi+
P_0^{(k')}\circ S(k,k')\circ C^{(k)}\varphi.\]
Since $S(k,k')\circ C^{(k)}\varphi\in\Gamma^{(k')}_0$, we obtain
$P_0^{(k')}\circ S(k,k')\circ C^{(k)}\varphi=0$, and hence
\begin{align*}
S(k,k')\circ P_0^{(k)}\varphi-P_0^{(k')}\circ S(k,k')\varphi
&=S(k,k')\circ P_0^{(k)}\varphi-P_0^{(k')}\circ S(k,k')\circ P_0^{(k)}\varphi\\
&=C^{(k')}\circ S(k,k')\circ P_0^{(k)}\varphi\in\Gamma^{(k')}_0.
\end{align*}
In view of (\ref{prelobs}), for $0\leq k\leq k'$, using the
telescopic nature of the expression below, we have
\begin{align*}
S&(k,k')\circ P_0^{(k)}-P_0^{(k')}\circ S(k,k')
\\
&=\sum_{k\leq r<k'}\left(S(r,k')\circ P_0^{(r)}\circ
S(k,r)-S(r+1,k')\circ P_0^{(r+1)}\circ S(k,r+1)\right)
\\
&=\sum_{k\leq r<k'}\left(S(r+1,k')\circ \left(S(r,r+1)\circ  P_0^{(r)}-
P_0^{(r+1)}\circ S(r,r+1)\right) \circ S(k,r)\right)
\\
&=\sum_{k\leq r<k'}S(r+1,k')\circ C^{(r+1)}\circ S(r,r+1) \circ
P_0^{(r)}\circ S(k,r)
\end{align*}
and the operator takes values in the subspace $\Gamma^{(k')}_0$
which is included in the domain of the operator $S(k,k')^{-1}$.
Thus, in view of (\ref{splatanies}),
\begin{align*}
U^{(k)}&\circ S(k,k')^{-1}\circ( S(k,k')\circ P_0^{(k)}-
P_0^{(k')}\circ S(k,k'))
\\
&=\sum_{k\leq r<k'} U^{(k)} \circ S(k,r+1)^{-1}\circ
C^{(r+1)}\circ S(r,r+1)\circ P_0^{(r)}\circ S(k,r)
\\
&=\sum_{k\leq r<k'}S_u(k,r+1)^{-1} \circ U^{(r+1)}\circ
C^{(r+1)}\circ S(r,r+1)\circ P_0^{(r)}\circ S(k,r).
\end{align*}
Moreover, using (\ref{nace}), (\ref{nase}), (\ref{nape}) and
(\ref{nave})
 consecutively we obtain for $ k\leq r< k'$,
\begin{align*}
\|C^{(r+1)}&\circ S(r,r+1)\circ P_0^{(r)}\circ
S(k,r)\varphi\|_{L^1(I^{(r+1)})}
\\ & \leq\|S(r,r+1)\circ P_0^{(r)}\circ S(k,r)\varphi\|_{L^1(I^{(r+1)})}\leq
\|P_0^{(r)}\circ S(k,r)\varphi\|_{L^1(I^{(r)})}\\ & \leq
8|I^{(r)}|\cdot \lv (S(k,r)\varphi)\leq 8
C|I^{(r)}|\lv(\varphi).
\end{align*}
  By (\ref{compnorm}),
\begin{align*}
\|&C^{(r+1)}\circ S(r,r+1)\circ P_0^{(r)}\circ
S(k,r)\varphi\|\\&\leq 8d\nu(A)C\frac{|I^{(r)}|}{|I^{(r+1)}|}
\lv(\varphi)\leq 8d\nu(A)\|A\|
C\lv(\varphi).
\end{align*}
Next let consider the series in
$\Gamma^{(k)}_0/\Gamma^{(k)}_{cs}$ given by
\begin{equation}\label{defoperdel}
\sum_{r\geq k}(S_u(k,r+1))^{-1}\circ U^{(r+1)}\circ C^{(r+1)}\circ
S(r,r+1)\circ P_0^{(r)}\circ S(k,r)\varphi.
\end{equation}
Since $\|U^{(r+1)}\|=1$ and $U^{(r+1)}\circ C^{(r+1)}\circ
S(r,r+1)\circ P_0^{(r)}\circ S(k,r)\varphi\in
\Gamma_0^{(r+1)}/\Gamma^{(r+1)}_{cs}$, by (\ref{szacowanieniest}),
the norm of the $r$-th element of the series  (\ref{defoperdel})
is bounded from above by
$8dC'C\nu(A)\|A\|\exp(-(r-k)\theta_+)
 \lv(\varphi)$. As
\begin{equation*}K:= \sum_{r\geq
k}8dC'C\nu(A)\|A\|\exp(-(r-k)\theta_+)  <+\infty,
\end{equation*} the series
(\ref{defoperdel}) 
 converges in
$\Gamma_0^{(k)}/\Gamma^{(k)}_{cs}$. Since,  as shown above, the limit in
(\ref{ciagdop}) is the limit of the sequence of partial sums of
the series (\ref{defoperdel}), this gives that $\Delta
P^{(k)}\varphi$ is well defined. Moreover, since the constant $K$
 is independent on $k$, we get
(\ref{szacdelty}). The proof is complete.
\end{proof}

\begin{definition}\label{wzorp} Let $P^{(k)}:\overline{\ls}_0(\sqcup_{\alpha\in
\mathcal{A}}
I^{(k)}_{\alpha})\to\overline{\ls}_0(\sqcup_{\alpha\in
\mathcal{A}} I^{(k)}_{\alpha})/\Gamma^{(k)}_{cs}$ be the operator
given by $P^{(k)}=U^{(k)}\circ P_0^{(k)}-\Delta P^{(k)}$.
\end{definition}
\begin{remark}\label{Pzero}
Note that if $\varphi\in\Gamma^{(k)}_0$ then
$P_0^{(k')}(S(k,k')\varphi)=0$ for all $k'\geq k$, hence $\Delta
P^{(k)}\varphi=0$ and $P^{(k)}\varphi=0$.
\end{remark}
\noindent The correction $\Delta P^{(k)}$ is defined so that
$P^{(k)}$ has the crucial property of commuting with the special
Birkhoff sums operators, as shown by the next Lemma.
\begin{lemma}\label{commutes}
For all $0\leq k\leq k'$ and
$\varphi\in\overline{\ls}_0(\sqcup_{\alpha\in \mathcal{A}}
I^{(k)}_{\alpha})$ we have
\begin{equation}
\label{przem}S_u(k,k')\circ P^{(k)}\varphi=P^{(k')}\circ
S(k,k')\varphi.
\end{equation}
Moreover,
\begin{equation}\label{szak}
\|P^{(k)}\varphi\|_{L^1(I^{(k)})/\Gamma^{(k)}_{cs}}\leq(8+K)|I^{(k)}|
 \lv (\varphi).
\end{equation}
\end{lemma}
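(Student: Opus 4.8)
The plan is to derive both assertions from a ``cocycle identity'' for the auxiliary operators $D(m,n):=S(m,n)\circ P_0^{(m)}-P_0^{(n)}\circ S(m,n)$, which is exactly the expression sitting inside the limit (\ref{ciagdop}) defining $\Delta P^{(k)}$. By (\ref{prelobs}), $D(m,n)$ takes values in $\Gamma_0^{(n)}\subset\Gamma^{(n)}$, so $S(m,n)^{-1}$ may legitimately be applied to it; we will use this together with $S(k,k'')^{-1}\circ S(k',k'')=S(k,k')^{-1}$ on $\Gamma^{(k'')}$ (for $k\le k'\le k''$) and the intertwining identities coming from (\ref{splatanies}), namely $S_u(k,k')\circ U^{(k)}=U^{(k')}\circ S(k,k')$ and, as consequences on $\Gamma^{(k')}$, $U^{(k)}\circ S(k,k')^{-1}=S_u(k,k')^{-1}\circ U^{(k')}$ and $S_u(k,k')\circ U^{(k)}\circ S(k,k')^{-1}=U^{(k')}$. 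The cocycle identity itself,
\[
D(k,k'')=S(k',k'')\circ D(k,k')+D(k',k'')\circ S(k,k'),
\]
is immediate after inserting and removing $S(k',k'')\circ P_0^{(k')}\circ S(k,k')$ and using $S(k,k'')=S(k',k'')\circ S(k,k')$.

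Next I would apply $U^{(k)}\circ S(k,k'')^{-1}$ to this identity, evaluate at $\varphi\in\overline{\ls}_0(\sqcup_{\alpha\in\mathcal{A}}I^{(k)}_\alpha)$, and simplify using the relations above (in particular $S(k,k'')^{-1}=S(k,k')^{-1}\circ S(k',k'')^{-1}$) to obtain
\begin{multline*}
U^{(k)}\circ S(k,k'')^{-1}\circ D(k,k'')\varphi=U^{(k)}\circ S(k,k')^{-1}\circ D(k,k')\varphi\\
+S_u(k,k')^{-1}\circ U^{(k')}\circ S(k',k'')^{-1}\circ D(k',k'')\big(S(k,k')\varphi\big).
\end{multline*}
Now let $k''\to\infty$: the left-hand side converges to $\Delta P^{(k)}\varphi$ by (\ref{ciagdop}); the first term on the right is independent of $k''$; and, since $S(k,k')\varphi\in\overline{\ls}_0(\sqcup_{\alpha\in\mathcal{A}}I^{(k')}_\alpha)$ by Lemma~\ref{invariophi} and (\ref{zachcal}), and $S_u(k,k')^{-1}$ is a fixed continuous linear map, the last term converges to $S_u(k,k')^{-1}\circ\Delta P^{(k')}\big(S(k,k')\varphi\big)$, again by (\ref{ciagdop}). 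Hence
\[
\Delta P^{(k)}\varphi=U^{(k)}\circ S(k,k')^{-1}\circ D(k,k')\varphi+S_u(k,k')^{-1}\circ\Delta P^{(k')}\big(S(k,k')\varphi\big).
\]

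Applying $S_u(k,k')$ to this relation and using $S_u(k,k')\circ U^{(k)}\circ S(k,k')^{-1}=U^{(k')}$ on $\Gamma^{(k')}\ni D(k,k')\varphi$ gives $S_u(k,k')\circ\Delta P^{(k)}\varphi=U^{(k')}\circ D(k,k')\varphi+\Delta P^{(k')}\big(S(k,k')\varphi\big)$. Then, expanding $P^{(k)}=U^{(k)}\circ P_0^{(k)}-\Delta P^{(k)}$ and using $S_u(k,k')\circ U^{(k)}=U^{(k')}\circ S(k,k')$ once more,
\begin{multline*}
S_u(k,k')\circ P^{(k)}\varphi=U^{(k')}\circ\big(S(k,k')\circ P_0^{(k)}-D(k,k')\big)\varphi-\Delta P^{(k')}\big(S(k,k')\varphi\big)\\
=U^{(k')}\circ P_0^{(k')}\circ S(k,k')\varphi-\Delta P^{(k')}\big(S(k,k')\varphi\big),
\end{multline*}
where the last equality is just $S(k,k')\circ P_0^{(k)}-D(k,k')=P_0^{(k')}\circ S(k,k')$ by the definition of $D(k,k')$. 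The right-hand side is precisely $P^{(k')}\big(S(k,k')\varphi\big)$, which proves (\ref{przem}).

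Finally, for (\ref{szak}) I would use the triangle inequality for the quotient $L^1$-norm on $\overline{\ls}(\sqcup_{\alpha\in\mathcal{A}}I^{(k)}_\alpha)/\Gamma^{(k)}_{cs}$: from $P^{(k)}\varphi=U^{(k)}\circ P_0^{(k)}\varphi-\Delta P^{(k)}\varphi$ one gets $\|P^{(k)}\varphi\|_{L^1(I^{(k)})/\Gamma^{(k)}_{cs}}\le\|P_0^{(k)}\varphi\|_{L^1(I^{(k)})}+\|\Delta P^{(k)}\varphi\|_{L^1(I^{(k)})/\Gamma^{(k)}_{cs}}$. The first term is at most $8|I^{(k)}|\lv(\varphi)$ by (\ref{nape}); for the second, since $\Delta P^{(k)}\varphi\in\Gamma_0^{(k)}/\Gamma^{(k)}_{cs}$, the upper inequality in (\ref{compnorm}) bounds its $L^1$ quotient norm by $|I^{(k)}|$ times its vector quotient norm, which is at most $K\lv(\varphi)$ by (\ref{szacdelty}); adding the two yields $(8+K)|I^{(k)}|\lv(\varphi)$. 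The only genuinely delicate point in the whole argument is the bookkeeping of the operators $S(k,k'),S_u(k,k'),S(k,k')^{-1},S_u(k,k')^{-1}$ and the projections $U^{(k)}$, together with their domains: one has to keep checking that every argument of $S(\cdot,\cdot)^{-1}$ actually lies in the relevant $\Gamma_0$ (guaranteed by (\ref{prelobs})) and that the passage to the limit $k''\to\infty$ is justified (which is the content of Lemma~\ref{thmcorre}).
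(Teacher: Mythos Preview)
Your proposal is correct and follows essentially the same route as the paper. The paper verifies directly, for $k\le k'\le r$, the identity
\[
S(k,k')\circ\big(P_0^{(k)}-S(k,r)^{-1}\circ D(k,r)\big)=\big(P_0^{(k')}-S(k',r)^{-1}\circ D(k',r)\big)\circ S(k,k'),
\]
which, after applying $S(k',r)$ and rearranging, is exactly your cocycle identity $D(k,r)=S(k',r)\circ D(k,k')+D(k',r)\circ S(k,k')$; it then composes with $U^{(k)}$, invokes (\ref{splatanies}), and lets $r\to\infty$ on both sides simultaneously, while you instead first pass to the limit to get a relation for $\Delta P^{(k)}$ and then recombine with $U^{(k)}\circ P_0^{(k)}$. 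The estimate (\ref{szak}) is obtained in both cases by the same triangle-inequality argument from (\ref{nape}), (\ref{szacdelty}) and (\ref{compnorm}).
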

\begin{proof}
For $k\leq k'\leq r$,  one can verify that
\begin{align*}
S&(k,k')\circ \left(P^{(k)}_0-S(k,r)^{-1}\circ \left(S(k,r)\circ
P^{(k)}_0-P^{(r)}_0\circ S(k,r)\right)\right)\\&= \left(
P^{(k')}_0-S(k',r)^{-1}\circ\left(S(k',r)\circ
P^{(k')}_0-P^{(r)}_0\circ S(k',r)\right)\right)\circ S(k,k').
\end{align*}
 In view of (\ref{splatanies}), it follows that
\begin{align*}
S_u&(k,k')\circ U^{(k)}\circ \left(P^{(k)}_0-S(k,r)^{-1} \circ
\left(S(k,r)\circ P^{(k)}_0-P_0^{(r)}\circ
S(k,r)\right)\right)\\
&=U^{(k')}\circ S(k,k')\circ \left(P^{(k)}_0-S(k,r)^{-1} \circ
\left(S(k,r)\circ P^{(k)}_0-P_0^{(r)}\circ
S(k,r)\right)\right)\\
&=U^{(k')}\circ\left(
P^{(k')}_0-S(k',r)^{-1}\circ\left(S(k',r)\circ
P^{(k')}_0-P^{(r)}_0\circ S(k',r)\right)\right)\circ S(k,k').
\end{align*}
Taking the limit as $r\to \infty$, since for $j=k$ and $j=k'$ one has
\[
\lim_{r\to\infty}U^{(j)}\circ \left(P^{(j)}_0-S(j,r)^{-1} \circ
\left(S(j,r)\circ P^{(j)}_0-P_0^{(r)}\circ
S(j,r)\right)\right)\varphi= P^{(j)} \varphi\]
we get
$S_u(k,k')\circ P^{(k)}\varphi=P^{(k')}\circ S(k,k')\varphi$,
i.e.~(\ref{przem}).

Moreover, from the Definition~\ref{wzorp}, $\|{U^{(k)}}\|= 1$,
(\ref{nape}) and (\ref{szacdelty}), we get
\[\|P^{(k)}\varphi\|_{L^1(I^{(k)})/\Gamma^{(k)}_{cs}}
\leq\|P_0^{(k)}\varphi\|_{L^1(I^{(k)})}+|I^{(k)}|\|\Delta
P^{(k)}\varphi\| \leq (8+K)|I^{(k)}| \lv (\varphi),\]
which proves (\ref{szak}) and completes the proof.
\end{proof}


Assume additionally that $T$ is of hyperbolic periodic type, i.e.\
$\theta_g>0$. By Lemma~\ref{invario}, there exists a bijection
$\xi:\Sigma(\pi)\to\Sigma(\pi)$ such that
$A^{-1}b(\mathcal{O})=b(\xi\mathcal{O})$ for
$\mathcal{O}\in\Sigma(\pi)$.
Moreover, by Remark~\ref{assumexiid}, we can assume that
$Ab(\mathcal{O})=b(\mathcal{O})$ for each
$\mathcal{O}\in\Sigma(\pi)$,  and hence
$A|_{\ker\Omega_{\pi}}=Id$. It follows that the Jordan canonical
form of $A^t$ has $\kappa-1$ simple eigenvalues $1$ as $A$, hence
the dimension of $\Gamma^{(0)}_c=\{h\in\R^{\mathcal{A}}:A^th=h\}$
is greater or equal than $\kappa-1$.  Since $\theta_g>0$ and
$2g+\kappa-1=d$, it follows that
$\dim\Gamma^{(0)}_s=\dim\Gamma^{(0)}_u=g$,
$\dim\Gamma^{(0)}_c=\kappa-1$ and
\[\R^{\mathcal{A}}=\Gamma^{(0)}=\Gamma^{(0)}_s\oplus\Gamma^{(0)}_c\oplus\Gamma^{(0)}_u\]
is an $A^t$--invariant decompositions. As
$\Gamma^{(0)}_s\oplus\Gamma^{(0)}_c=\Gamma^{(0)}_{cs}\subset\Gamma^{(0)}_0$,
we also have
\[\Gamma^{(0)}_0=\Gamma^{(0)}_s\oplus\Gamma^{(0)}_c\oplus(\Gamma^{(0)}_u\cap\Gamma^{(0)}_0).\]
Recall that $\Gamma^{(0)}_s\oplus\Gamma^{(0)}_u\subset H_{\pi}$.
Thus, when $T$ is of hyperbolic periodic type these subspace have
the same dimension, so they are equal. It follows that
\begin{equation}\label{trzydeco}
\Gamma^{(k)}=\Gamma^{(k)}_s\oplus\Gamma^{(k)}_c\oplus\Gamma^{(k)}_u,\;\;
H_{\pi}=\Gamma^{(k)}_s\oplus\Gamma^{(k)}_u,
\;\;\Gamma^{(k)}_0=\Gamma^{(k)}_s\oplus\Gamma^{(k)}_c\oplus(\Gamma^{(k)}_u\cap\Gamma^{(k)}_0)
\end{equation}
for $k\geq 0$ is a family of decomposition invariant with respect
to the renormalization operators $S(k,k')$ for $0\leq k<k'$.

\begin{proposition}\label{thmcorrecgener}
Assume that  $T$ is of periodic type. There exist $C_1,C_2>0$ such
that for every $\varphi\in\overline{\ls}_0(\sqcup_{\alpha\in
\mathcal{A}} I^{(0)}_{\alpha})$ if
$\widehat{\varphi}+\Gamma^{(0)}_{cs}= P^{(0)}\varphi$ then
$\widehat{\varphi}-\varphi\in\Gamma_0^{(0)}$ and for any  $k\geq
1$ we have
\[\frac{1}{|I^{(k)}|}\|S(k)(\widehat{\varphi})\|_{L^1(I^{(k)})}\leq
C_1\lv(\varphi)k^{M} +C_2\|\widehat{\varphi}\|_{L^1(I^{(0)})}/|I^{(0)}|k^{M-1}
.\]
If additionally  $T$  is of hyperbolic periodic
type and $\varphi\in\ls_0(\sqcup_{\alpha\in \mathcal{A}}
I^{(0)}_{\alpha})$ then for any $k\geq 0$
\[\frac{1}{|I^{(k)}|}\|S(k)(\widehat{\varphi})\|_{L^1(I^{(k)})}\leq
C_1 \lv
(\varphi)+C_2\frac{1}{|I^{(0)}|}\|\widehat{\varphi}\|_{L^1(I^{(0)})}.\]
\end{proposition}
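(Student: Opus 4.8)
The plan is to split, for each $k\ge 0$, the renormalized sum $S(k)\widehat\varphi$ into a ``singular'' part with small $L^1$ norm and a piecewise constant part whose growth is governed by the renormalization matrix $A^t$ restricted to $\Gamma^{(0)}_{cs}$. For periodic type $\lambda^{(k)}$ is a positive multiple of $\lambda$, so $\Gamma^{(k)}_0=\Gamma^{(0)}_0$, and (as in \S\ref{correction:sec}) $\Gamma^{(k)}_{cs}=\Gamma^{(0)}_{cs}$; fix once and for all a bounded linear section $\mathfrak s$ of the quotient map $\Gamma^{(0)}_0\to\Gamma^{(0)}_0/\Gamma^{(0)}_{cs}$. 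For $k\ge 0$ set $\gamma_k:=\mathfrak s\bigl(\Delta P^{(k)}(S(k)\varphi)\bigr)\in\Gamma^{(k)}_0$, $u_k:=P_0^{(k)}(S(k)\varphi)-\gamma_k$ and $v_k:=S(k)\widehat\varphi-u_k$. By Definition~\ref{wzorp}, $U^{(k)}u_k=P^{(k)}(S(k)\varphi)$, so $u_k$ represents the coset $P^{(k)}(S(k)\varphi)$; since $\widehat\varphi+\Gamma^{(0)}_{cs}=P^{(0)}\varphi=U^{(0)}u_0$ we get $v_0=\widehat\varphi-u_0\in\Gamma^{(0)}_{cs}\subset\Gamma^{(0)}_0$, and since $u_0-\varphi=-C^{(0)}\varphi-\gamma_0\in\Gamma^{(0)}_0$ it follows that $\widehat\varphi-\varphi=v_0+(u_0-\varphi)\in\Gamma^{(0)}_0$, which is the first assertion. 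The estimate \eqref{nape}, Proposition~\ref{lemlogreno1} (giving $\lv(S(k)\varphi)\le C\lv(\varphi)$), \eqref{szacdelty} and the norm comparison \eqref{compnorm} yield $\|u_k\|_{L^1(I^{(k)})}\le C'|I^{(k)}|\lv(\varphi)$, and hence $\|v_0\|\le d\nu(A)\bigl(\|\widehat\varphi\|_{L^1(I^{(0)})}/|I^{(0)}|+C'\lv(\varphi)\bigr)$.

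Next I would establish the recursion $v_{k+1}=A^tv_k+e_k$, where $e_k:=S(k,k+1)u_k-u_{k+1}$: indeed $S(k,k+1)$ acts on the piecewise constant functions $\Gamma^{(k)}$ as $A^t$, so applying it to $S(k)\widehat\varphi=u_k+v_k$ gives the formula. By the commutation relation \eqref{przem} applied to $S(k)\varphi$, both $S(k,k+1)u_k$ and $u_{k+1}$ represent $P^{(k+1)}(S(k+1)\varphi)$, so $e_k\in\Gamma^{(k+1)}_{cs}=\Gamma^{(0)}_{cs}$ and all $v_k$ lie in $\Gamma^{(0)}_{cs}$; moreover \eqref{nase}, the bound on $\|u_k\|_{L^1}$ and \eqref{compnorm} give $\|e_k\|\le E\lv(\varphi)$ with $E$ depending only on $A$. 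For part (1), iterating gives $v_k=(A^t)^kv_0+\sum_{0\le j<k}(A^t)^{k-1-j}e_j$; the polynomial bound \eqref{censtab} on $\Gamma^{(0)}_{cs}$ (with $M$ the maximal Jordan block size of $A$) yields $\|v_k\|\le Ck^{M-1}\|v_0\|+CEk^M\lv(\varphi)$, and since $\|S(k)\widehat\varphi\|_{L^1(I^{(k)})}\le\|u_k\|_{L^1(I^{(k)})}+|I^{(k)}|\,\|v_k\|$, dividing by $|I^{(k)}|$ and inserting the bound on $\|v_0\|$ proves the first inequality for all $k\ge 1$.

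For part (2), $T$ is of hyperbolic periodic type, so $\Gamma^{(0)}_{cs}=\Gamma^{(0)}_s\oplus\Gamma^{(0)}_c$ with $A^t$ contracting on $\Gamma^{(0)}_s$ and the identity on $\Gamma^{(0)}_c$ (see \eqref{trzydeco}). Writing $v_k=v_k^s+v_k^c$, the recursion restricts to $v_{k+1}^s=A^tv_k^s+e_k^s$, so \eqref{stab} bounds $\|v_k^s\|$ uniformly in $k$ by a constant times $\|v_0\|+\lv(\varphi)$. The hard part is the central component $v_k^c$, for which iteration only gives linear growth; instead I would bound it via the operators $\mathcal O$, $\mathcal O\in\Sigma(\pi)$. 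Since $\Gamma^{(0)}_c$ is a complement of $H_\pi=\Gamma^{(0)}_s\oplus\Gamma^{(0)}_u$ and, by Lemma~\ref{bcharh} and Remark~\ref{Ohzeroreformulation}, $h\in H_\pi$ iff $\mathcal O(h)=0$ for all $\mathcal O$, there is $K_c$ with $\|v_k^c\|\le K_c\max_{\mathcal O}|\mathcal O(v_k)|$ (using $\mathcal O(v_k^s)=0$). Unwinding definitions, $v_k=C^{(k)}(S(k)\varphi)+(A^t)^kg_0+\gamma_k$ with $g_0=\widehat\varphi-\varphi$; applying $\mathcal O$ and using Remark~\ref{assumexiid} ($\mathcal O((A^t)^kg_0)=\langle g_0,A^kb(\mathcal O)\rangle=\mathcal O(g_0)=\mathcal O(\widehat\varphi)-\mathcal O(\varphi)$), linearity of $\mathcal O$ and Lemma~\ref{invariophi} ($\mathcal O(C^{(k)}S(k)\varphi)=\mathcal O(S(k)\varphi)-\mathcal O(P_0^{(k)}S(k)\varphi)=\mathcal O(\varphi)-\mathcal O(P_0^{(k)}S(k)\varphi)$), the $\mathcal O(\varphi)$ terms cancel and $\mathcal O(v_k)=\mathcal O(\widehat\varphi)-\mathcal O(P_0^{(k)}S(k)\varphi)+\mathcal O(\gamma_k)$. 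Finally, Lemma~\ref{lemoszo} applied to $\widehat\varphi$ (with $\lv(\widehat\varphi)\le\lv(\varphi)$) and to $P_0^{(k)}S(k)\varphi\in\ls_0$ (whose mean on each subinterval vanishes, so $\|P_0^{(k)}S(k)\varphi\|_{L^1(I^{(k)})}\le 8|I^{(k)}|\lv(S(k)\varphi)$ by \eqref{nape}), together with Proposition~\ref{lemlogreno1}, bounds $|\mathcal O(\widehat\varphi)|$ and $|\mathcal O(P_0^{(k)}S(k)\varphi)|$ by a constant times $\lv(\varphi)+\|\widehat\varphi\|_{L^1(I^{(0)})}/|I^{(0)}|$, uniformly in $k$, while $|\mathcal O(\gamma_k)|=|\langle\gamma_k,b(\mathcal O)\rangle|\le C''\lv(\varphi)$ by \eqref{szacdelty}. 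Thus $\|v_k\|\le\|v_k^s\|+\|v_k^c\|$ is bounded uniformly in $k$ by $C_1\lv(\varphi)+C_2\|\widehat\varphi\|_{L^1(I^{(0)})}/|I^{(0)}|$, and $\tfrac1{|I^{(k)}|}\|S(k)\widehat\varphi\|_{L^1(I^{(k)})}\le C'\lv(\varphi)+\|v_k\|$ gives the second inequality for all $k\ge 0$. The one genuinely delicate step is this central direction: it is controlled only through the $\mathcal O$--invariants, hence relies on the strong symmetry hypothesis (via Lemmas~\ref{lemoszo} and \ref{invariophi}) and on the normalization $Ab(\mathcal O)=b(\mathcal O)$.
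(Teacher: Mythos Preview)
Your proof is correct and follows essentially the same strategy as the paper's: split $S(k)\widehat\varphi$ into a piece with small $L^1$ norm and a piecewise constant piece in $\Gamma^{(k)}_{cs}$, set up a recursion with bounded increments, then use the polynomial bound \eqref{censtab} for part (1) and, for part (2), control the central component via the operators $\mathcal O$ (through Lemmas~\ref{lemoszo} and~\ref{invariophi}) and the stable component via exponential decay \eqref{stab}. The only presentational differences are that you fix an explicit section $\mathfrak s$ and take $u_k=P_0^{(k)}(S(k)\varphi)-\gamma_k$ as your representative (the paper instead picks $\varphi_k$ as a near-minimizer of the quotient norm), and for the central bound you expand $v_k$ as $C^{(k)}(S(k)\varphi)+(A^t)^k g_0+\gamma_k$ and compute $\mathcal O(v_k)$ term by term, whereas the paper writes $\mathcal O(h_k^c)=\mathcal O(\widehat\varphi)-\mathcal O(\varphi_k)$ directly from $S(k)\widehat\varphi=\varphi_k+h_k$ and $\mathcal O(h_k^s)=0$; these are equivalent bookkeeping choices.
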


\begin{proof}
\emph{Non-hyperbolic case.} Let us first show that
$\widehat{\varphi}-\varphi\in \Gamma_0^{(0)}$. Since
$U^{(0)}\widehat{\varphi}=
\widehat{\varphi}+\Gamma^{(0)}_{cs}=P^{(0)}\varphi$,
\[U^{(0)}\widehat{\varphi}= U^{(0)}\circ P_0^{(0)}\varphi-\Delta P^{(0)}\varphi=
U^{(0)}\varphi-U^{(0)}\circ C^{(0)}\varphi-\Delta P^{(0)}\varphi,\]
we have $\varphi-\widehat{\varphi}\in U^{(0)}\circ C^{(0)}\varphi+\Delta P^{(0)}\varphi \subset \Gamma_0^{(0)}$.
In view of (\ref{splatanies}) and (\ref{przem}),
\[U^{(k)}\circ
S(k)\widehat{\varphi}=S_u(k)\circ U^{(0)}\widehat{\varphi}
=S_u(k)\circ P^{(0)}\varphi=P^{(k)}\circ S(k)\varphi.\] Therefore,
from (\ref{szak}) and (\ref{nave}), we have
\begin{align*}
\|U^{(k)}\circ
S(k)\widehat{\varphi}\|_{L^1(I^{(k)})/\Gamma^{(k)}_{cs}}
&=\|P^{(k)}(S(k)\varphi)\|_{L^1(I^{(k)})/\Gamma^{(k)}_{cs}}\leq(8+K)C
|I^{(k)}| \lv (\varphi).
\end{align*}
It follows from the definition of $\| \cdot
\|_{L^1(I^{(k)})/\Gamma^{(k)}_{cs}}$ on the quotient space that
for every $k\geq 0$ there exists
$\varphi_k\in\overline{\ls}_0(\sqcup_{\alpha\in \mathcal{A}}
I^{(k)}_{\alpha})$ and $h_k\in\Gamma^{(k)}_{cs}$ such that
\begin{equation}\label{szcsk1}
S(k)\widehat{\varphi}=\varphi_k+h_k\text{ and
}\|\varphi_k\|_{L^1(I^{(k)})}\leq(8+K)C |I^{(k)}|\lv (\varphi).
\end{equation}
Next note that
\begin{equation}\label{roznren}
\varphi_{k+1}+h_{k+1}=S(k+1)\widehat{\varphi}=S(k,k+1)S(k)\widehat{\varphi}=S(k,k+1)\varphi_k+A^th_k,
\end{equation}
so setting $\Delta h_{k+1}=h_{k+1}-A^th_k$ ($\Delta h_0=h_0$) we
have $\Delta h_{k+1}=-\varphi_{k+1}+S(k,k+1)\varphi_k$. Moreover,
by (\ref{nase}) and (\ref{szcsk1}), for $k\geq 1$,
\begin{align*}
\|&\Delta
 h_{k}   \|_{L^1(I^{(k)})}=\|\varphi_{k} + S(k-1,k)\varphi_{k-1}\|_{L^1(I^{(k)})}\\
&\leq
\|\varphi_{k}\|_{L^1(I^{(k)})}+\|S(k-1,k)\varphi_{k-1}\|_{L^1(I^{(k)})}
\leq\|\varphi_{k}\|_{L^1(I^{(k)})}+\|\varphi_{k-1}\|_{L^1(I^{(k-1)})}\\
&\leq
\left(1+\frac{|I^{(k-1)}|}{|I^{(k)}|}\right)(8+K)C|I^{(k)}|\lv
(\varphi) \leq (1+\|A\|)(8+K)C|I^{(k)}|\lv (\varphi).
\end{align*}
It follows from (\ref{compnorm}) that $ \|\Delta h_{k}\|\leq d\nu(A)(1+\|A\|)(8+K)C\lv
(\varphi)$ for $k\geq 1$ and
\begin{align*}
\|\Delta h_{0}\|&\leq \frac{d\nu(A)}{|I^{(0)}|}\|
h_{0}\|_{L^1(I^{(0)})}
=\frac{d\nu(A)}{|I^{(0)}|}\|\widehat{\varphi}-\varphi_0\|_{L^1(I^{(0)})}\\
&\leq
d\nu(A)\left(\|\widehat{\varphi}\|_{L^1(I^{(0)})}/{|I^{(0)}|}
+(8+K)C\lv(\varphi)\right).
\end{align*}
Since $h_k=\sum_{0\leq l\leq k}(A^t)^l\Delta h_{k-l}$ and $\Delta
h_l\in\Gamma^{(k')}_{cs}$, by (\ref{censtab}),
\begin{eqnarray*}
\|h_k\|&\leq&\sum_{0\leq l\leq k}\|(A^t)^l\Delta h_{k-l}\|\leq\sum_{0\leq l\leq k}Cl^{M-1}\|\Delta h_{k-l}\|\\
&\leq& C'_1\lv(\varphi)k^{M}
+C_2\|\widehat{\varphi}\|_{L^1(I^{(0)})}/|I^{(0)}|k^{M-1}
\end{eqnarray*}
for some $C'_1, C_2>0$.  Setting $C_1:=C'_1+(8+k)C$, in view of
(\ref{szcsk1}), it follows that for $k\geq 1$,
\begin{align*}\|S(k)\widehat{\varphi}\|_{L^1(I^{(k)})}&
\leq\|{\varphi}_k\|_{L^1(I^{(k)})}+|I^{(k)}|\|h_k\|\\
&\leq|I^{(k)}|\left(
C_1\lv(\varphi)k^{M} +C_2\|\widehat{\varphi}\|_{L^1(I^{(0)})}/|I^{(0)}|k^{M-1}\right).
\end{align*}

\subsubsection*{Hyperbolic case} Let us now prove the second part,
assuming that $T$ is of hyperbolic periodic type and
$\varphi\in\ls_0(\sqcup_{\alpha\in \mathcal{A}}
I^{(0)}_{\alpha})$. Then, as shown just before
Proposition~\ref{thmcorrecgener},
$\Gamma^{(k)}_{cs}=\Gamma^{(k)}_{c}\oplus\Gamma^{(k)}_{s}$ and
$H_\pi=\Gamma^{(k)}_s\oplus\Gamma^{(k)}_u$ are invariant direct
sum decompositions. Let $h_k=h^s_k+h^c_k$, where
$h^c_k\in\Gamma^{(k)}_{c}$ and $h^s_k\in \Gamma^{(k)}_{s}\subset
H_\pi$. By Remark~\ref{remiso}, $\Lambda^\pi(h^s_k)=0$. In view of
Lemma~\ref{invariophi}, (\ref{szcsk1}) and Remark
\ref{Ohzeroreformulation}, it follows that $\mathcal{O}(h^s_k)=0$
and
\[\mathcal{O}(\widehat{\varphi})=\mathcal{O}(S(k)\widehat{\varphi})=
\mathcal{O}(\varphi_k)+\mathcal{O}(h^c_k)
\text{ for every }\mathcal{O}\in\Sigma(\pi).\]
Suppose that
\begin{equation*}
\varphi(x)=-\sum_{\alpha\in\mathcal{A}}(C^+_\alpha\log(|I|\{(x-l_\alpha)/|I|\})+
C^-_\alpha\log(|I|\{(r_\alpha-x)/|I|\}))+g(x),
\end{equation*}
where $g\in\bv_*^1(\sqcup_{\alpha\in \mathcal{A}} I_{\alpha})$.
Then $\widehat{\varphi} = \varphi + h$ for some
$h\in\Gamma^{(0)}_0$. Thus $\bl(\widehat{\varphi})= \bl(\varphi)$
and since $\var(g+h) = \var(g)$ we have $\lv(\widehat{\varphi})=
\lv (\varphi)$.
 Thus, by Proposition~\ref{lemlogreno1},
$\lv(S(k)\widehat{\varphi})\leq
C\lv(\widehat{\varphi})=C\lv(\varphi)$. Similarly, since
$\varphi_k=S(k)\widehat{\varphi}-h_k$, it follows that
$\lv(\varphi_k)=\lv(S(k)\widehat{\varphi})\leq C\lv(\varphi)$.
Thus, by Lemma~\ref{lemoszo}, for every
$\mathcal{O}\in\Sigma(\pi)$ we can estimate
$\mathcal{O}(\varphi_k)$ and $\mathcal{O}(\widehat{\varphi})$
respectively by
\begin{align*}
|\mathcal{O}(\varphi_k)| &\leq
2d\nu(A)\frac{1}{|I^{(k)}|}\int_{I^{(k)}}|\varphi_k(x)|\,dx+2d\lv(\varphi_k)
\\ & \leq
2d\nu(A)\frac{1}{|I^{(k)}|}\|\varphi_k\|_{L^1(I^{(k)})}+2dC\lv(\varphi),\\
|\mathcal{O}(\widehat{\varphi})| & \leq
2d\nu(A)\frac{1}{|I^{(0)}|}\|\widehat{\varphi}\|_{L^1(I^{(0)})}+2d\lv(\varphi).
\end{align*}
Hence, by (\ref{szcsk1}), $|\mathcal{O}(\varphi_k)|\leq
2dC(\nu(A)(8+K)+1)\lv (\varphi)$. It follows that there exist
$K_1,K_2>0$ such that, for every $\mathcal{O}\in\Sigma(\pi)$,
\begin{align*}
|\mathcal{O}(h^c_k)|&\leq|\mathcal{O}(\varphi_k)|+
|\mathcal{O}(\widehat{\varphi})|= K_1\lv
(\varphi)+K_2\frac{1}{|I^{(0)}|}\|\widehat{\varphi}\|_{L^1(I^{(0)})},
\end{align*}
so, by Remark \ref{Ohzeroreformulation},
\[\|\Lambda^\pi(h^c_k)\|= \max_{\mathcal{O}\in \Sigma(\pi)}
|\mathcal{O}(h^c_k)| \leq K_1\lv
(\varphi)+K_2\|\widehat{\varphi}\|_{L^1(I^{(0)})}/{|I^{(0)}|}.\]
Since, by Remark~\ref{remiso}, $\Lambda^\pi:\Gamma^{(k)}_{c}\to
\R^{\Sigma_0(\pi)}$ is an isomorphism of linear spaces, there
exists $K'\geq 1$ such that $\|h\|\leq K'\|\Lambda^\pi h\|$ for
every $h\in\Gamma^{(k)}_{c}$. It follows that
\begin{equation}\label{estcentral}
\|h^c_k\|\leq K'\left(K_1 \lv
(\varphi)+K_2\frac{1}{|I^{(0)}|}\|\widehat{\varphi}\|_{L^1(I^{(0)})}\right).
\end{equation}
Let $\Delta h^s_{k+1}=h^s_{k+1}-A^th^s_k$  for $k\geq 0$ and
$\Delta h^s_0=h^s_0$. Then from (\ref{roznren}), we have \[\Delta
h^s_{k+1}=-\varphi_{k+1}+S(k,k+1)\varphi_k-h^c_{k+1}+A^th^c_k=-\varphi_{k+1}+S(k,k+1)\varphi_k-h^c_{k+1}+h^c_k.\]
Therefore, by (\ref{nase}), (\ref{compnorm}), (\ref{szcsk1}) and
(\ref{estcentral}), for all $k\geq 1$,
\begin{align*}
\|&\Delta
h^s_{k}\|_{L^1(I^{(k)})}\leq\|\varphi_{k}+ h^c_{k} \|_{L^1(I^{(k)})}+
\|S(k-1,k)(\varphi_{k-1}+h^c_{k-1}\|_{L^1(I^{(k)})}
\\ & \leq\|\varphi_{k}\|_{L^1(I^{(k)})}++\|h^c_{k}\|_{L^1(I^{(k)})} +
\|\varphi_{k-1}\|_{L^1(I^{(k-1)})}+\|h^c_{k-1}\|_{L^1(I^{(k-1)})}\\
&\leq|I^{(k)}|\left(1+\|A\|\right)\left((8+K)C+K'K_1)\lv
(\varphi)+K'K_2\|\widehat{\varphi}\|_{L^1(I^{(0)})}/|I^{(0)}|\right).
\end{align*}
It follows from (\ref{compnorm}) that there exist constants
$K_1',K_2'>0$ such that for $k\geq 1$
\[
\|\Delta
h^s_{k}\|\leq K'_1\lv
(\varphi)+K'_2\|\widehat{\varphi}\|_{L^1(I^{(0)})}/|I^{(0)}|,
\]
while for $k=0 $ we have
\begin{align*}\|\Delta
h^s_{0}\|&=\|
h^s_{0}\| =\|\widehat{\varphi}-\varphi_0-h^c_0\|\leq
\|h^c_0\|+\frac{d\nu(A)}{|I^{(0)}|}(\|\widehat{\varphi}\|_{L^1(I^{(0)})}+\|{\varphi}_0\|_{L^1(I^{(0)})})\\
&\leq K'_1\lv
(\varphi)+K'_2\|\widehat{\varphi}\|_{L^1(I^{(0)})}/|I^{(0)}|.
\end{align*}
Since $h^s_k=\sum_{0\leq l\leq k}(A^t)^l\Delta h^s_{k-l}$ and
$\Delta h^s_l\in\Gamma^{(l)}_{s}$,  it follows from (\ref{stab}) that
\begin{equation}\label{exp}
\begin{split}
\|h^s_k\| & \leq \sum_{0\leq l\leq k}\|(A^t)^l\Delta h^s_{k-l}\|\leq\sum_{0\leq l\leq k}C\exp(-l\theta_-)\|\Delta h^s_{k-l}\|\\
& \leq
\frac{K'_1\lv
(\varphi)+K'_2\|\widehat{\varphi}\|_{L^1(I^{(0)})}/|I^{(0)}}{1-\exp(-\theta_-)} .
\end{split}
\end{equation}
Combining  (\ref{szcsk1}), (\ref{estcentral}) and (\ref{exp}), we
find that for some $C_1, C_2>0$
\begin{align*}
\frac{1}{|I^{(k)}|}\|S(k)\widehat{\varphi}\|_{L^1(I^{(k)})}
&\leq\frac{1}{|I^{(k)}|}\|{\varphi}_k\|_{L^1(I^{(k)})}+\|h^c_k\|+\|h^s_k\|\\
&\leq C_1\lv
(\varphi)+C_2\|\widehat{\varphi}\|_{L^1(I^{(0)})}/|I^{(0)}|.
\end{align*}
\end{proof}
\begin{proofof}{Theorem}{operatorcorrection}
Let us first show that for every
$\varphi\in\overline{\ls}_0(\sqcup_{\alpha\in \mathcal{A}}
I_{\alpha})$ there exists  a unique $h\in
\Gamma^{(0)}_u\cap\Gamma^{(0)}_0$ such that $\varphi-h\in
P^{(0)}\varphi$, where $P^{(0)}$ is the operator in  Definition
\ref{wzorp}. Since $\widehat{\varphi}-\varphi\in
\Gamma^{(0)}_0=(\Gamma^{(0)}_u\cap\Gamma^{(0)}_0)\oplus\Gamma^{(0)}_{cs}$,
there exist $h\in(\Gamma^{(0)}_u\cap\Gamma^{(0)}_0)$ and
$h'\in\Gamma^{(0)}_{cs}$ such that
$\varphi-h=\widehat{\varphi}+h'$. As $\widehat{\varphi}\in
P^{(0)}\varphi$, it follows that
\[\varphi-h\in\widehat{\varphi}+\Gamma^{(0)}_{cs}=P^{(0)}\varphi.\]
Suppose that $h_1,h_2\in\Gamma^{(0)}_u\cap\Gamma^{(0)}_0$ are vectors such
that
\[\varphi-h_1+\Gamma^{(0)}_{cs}=\varphi-h_2+\Gamma^{(0)}_{cs}=
P^{(0)}\varphi.\] Then
$\|S(k)(\varphi-h_1)\|_{L^1(I^{(k)})}/|I^{(k)}|$ and
$\|S(k)(\varphi-h_2)\|_{L^1(I^{(k)})}/|I^{(k)}|$ grow polynomially
in $k$ by the first part of Proposition~\ref{thmcorrecgener}.
Thus,
$\|(A^t)^k(h_1-h_2)\|\leq\|S(k)(h_1-h_2)\|_{L^1(I^{(k)})}/|I^{(k)}|$
grows polynomially as well, so $h_1-h_2\in\Gamma^{(0)}_{cs}$.
Since $h_1-h_2\in\Gamma^{(0)}_u$ and $\Gamma^{(0)}_{cs} \cap
\Gamma^{(0)}_u = \{ {0}\}$, it follows that $h_1=h_2$. Thus, there
exists a unique linear operator
$\mathfrak{h}:\overline{\ls}_0(\sqcup_{\alpha\in \mathcal{A}}
I_{\alpha})\to \Gamma^{(0)}_{u}\cap\Gamma^{(0)}_{0}$, called the
{\em correction operator}, such that
\[\varphi-\mathfrak{h}(\varphi)+\Gamma^{(0)}_{cs}= P^{(0)}(\varphi).\]
Note that, by Remark~\ref{Pzero}, ${P}^{(0)}(h)=0$ for each
$h\in\Gamma^{(0)}_0$, so
\begin{equation}\label{opernagama}
{\mathfrak{h}}(h)=h\;\text{ if }\;h\in\Gamma^{(0)}_u\cap
\Gamma^{(0)}_0\;\;\;\text{ and }\;\;\;{\mathfrak{h}}(h)=0\;\text{ if
}\;h\in\Gamma^{(0)}_{cs}.\end{equation}
In particular, the image of $\mathfrak{h}$ is $\Gamma^{(0)}_u\cap
\Gamma^{(0)}_0$ which has dimension $g-1$.

 In view of (\ref{szak}) the
operator $P^{(0)}:\overline{\ls}_0(\sqcup_{\alpha\in \mathcal{A}}
I_{\alpha})\to  \overline{\ls}_0(\sqcup_{\alpha\in \mathcal{A}}
I_{\alpha})/\Gamma^{(0)}_{cs}$ is bounded with respect  to the
norm $\| \cdot \|_{L_1(I)/\Gamma^{(0)}_{cs}}$. Therefore, by the
closed graph theorem, the operator ${\mathfrak{h}}$ is also
bounded. Indeed, if $\varphi_n\to \varphi$ in ${\overline{\ls}}_0$
and ${\mathfrak{h}}(\varphi_n)\to h$ in
$\Gamma^{(0)}_u\cap\Gamma^{(0)}_0$ then have both
\begin{align*}
& {P}^{(0)}\varphi_n\to
{P}^{(0)}\varphi=\varphi-{\mathfrak{h}}(\varphi)+\Gamma^{(0)}_{cs},\\
& {P}^{(0)}\varphi_n = \varphi_n-{\mathfrak{h}}(\varphi_n)+\Gamma^{(0)}_{cs} \to \varphi-h+\Gamma^{(0)}_{cs},
\end{align*}
so from one hand ${\mathfrak{h}}(\varphi)-h \in
\Gamma^{(0)}_u\cap\Gamma^{(0)}_0$ and at the same time
${\mathfrak{h}}(\varphi)-h\in\Gamma^{(0)}_{cs}$, so
$h={\mathfrak{h}}(\varphi)$. Since the  vector norm and the
$L^1$-norm are equivalent on $\Gamma^{(0)}$ by (\ref{compnorm}),
we get that the operator is bounded.
Suppose now that $\mathfrak{h}(\varphi)=0$. Then
\[
\varphi=\varphi-\mathfrak{h}(\varphi)\in\varphi-
\mathfrak{h}(\varphi)+\Gamma^{(0)}_{cs}=P^{(0)}(\varphi).
\]
Now parts (1) and (2) of the Theorem follows directly from
Proposition~\ref{thmcorrecgener}. This concludes the proof.
\end{proofof}

The following Lemma will be used several times in \S\ref{final:sec}.
\begin{lemma}\label{theorem:invop}
If $\varphi\in\bv_0(\sqcup_{\alpha\in \mathcal{A}} I_{\alpha})$ is
a measurable coboundary then ${\mathfrak{h}}(\varphi)=0$.
\end{lemma}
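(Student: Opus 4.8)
The plan is to argue by contradiction, exploiting that a coboundary has uniformly tight Birkhoff sums while the unstable component of a nonzero correction forces the special Birkhoff sums to be \emph{large on a set of positive measure} along a suitable sequence of Rokhlin towers. Note first that $\varphi\in\bv_0(\sqcup_{\alpha\in\mathcal{A}}I_\alpha)\subset\overline{\ls}_0(\sqcup_{\alpha\in\mathcal{A}}I_\alpha)$ — the strong symmetry condition~(\ref{zerosym}) being vacuous since all logarithmic constants of $\varphi$ vanish — so $\mathfrak{h}(\varphi)$ is defined; suppose $h:=\mathfrak{h}(\varphi)\in\Gamma^{(0)}_u\cap\Gamma^{(0)}_0$ is nonzero and put $\psi:=\varphi-h\in\bv_0$. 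By~(\ref{opernagama}) and linearity of $\mathfrak{h}$ one has $\mathfrak{h}(\psi)=\mathfrak{h}(\varphi)-\mathfrak{h}(h)=h-h=0$, so part~(1) of Theorem~\ref{operatorcorrection} bounds $\|S(k)\psi\|_{L^1(I^{(k)})}/|I^{(k)}|$ by a polynomial in $k$; since $\var S(k)\psi\leq\var\psi$ by~(\ref{navar}) while $|m(S(k)\psi,I^{(k)}_\alpha)|\leq d\nu(A)\,\|S(k)\psi\|_{L^1(I^{(k)})}/|I^{(k)}|$ by~(\ref{compnorm}) and the oscillation of $S(k)\psi$ on each $I^{(k)}_\alpha$ is at most $\var\psi$, it follows that $\|S(k)\psi\|_{\sup}$ grows at most polynomially in $k$. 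On the other hand $S(k)h$ is the function constant on each $I^{(k)}_\alpha$ with value $((A^t)^kh)_\alpha$, and $h\in\Gamma^{(0)}_u\setminus\{0\}$ forces $\|(A^t)^kh\|\to\infty$ exponentially fast by~(\ref{unstab}). Choosing $\alpha_k\in\mathcal{A}$ with $|((A^t)^kh)_{\alpha_k}|=\|(A^t)^kh\|$, we conclude that $S(k)\varphi=S(k)\psi+S(k)h$ satisfies $|S(k)\varphi(x)|\geq\|(A^t)^kh\|-\|S(k)\psi\|_{\sup}=:\rho_k$ for \emph{every} $x\in I^{(k)}_{\alpha_k}$, with $\rho_k\to\infty$.

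Next I would transport this lower bound to the Rokhlin tower $\Xi_k:=\bigsqcup_{0\leq j<q_k}T^jI^{(k)}_{\alpha_k}$ over the base $I^{(k)}_{\alpha_k}$, of height $q_k:=Q_{\alpha_k}(k)$, whose Lebesgue measure is bounded below by a positive constant $\delta_0$ independent of $k$ thanks to~(\ref{areas}). For $x_0\in I^{(k)}_{\alpha_k}$ and $0\leq j<q_k$, applying the cocycle identity twice gives
\[
\varphi^{(q_k)}(T^jx_0)=\varphi^{(q_k)}(x_0)+\varphi^{(j)}(T^{q_k}x_0)-\varphi^{(j)}(x_0),
\]
and $T^{q_k}x_0=T^{(k)}x_0$ lies, like $x_0$, inside the tiny interval $I^{(k)}$, so $|\varphi^{(j)}(T^{q_k}x_0)-\varphi^{(j)}(x_0)|$ is at most the total variation of $\varphi^{(j)}$ on $I^{(k)}$ (jumps included). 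Here is the one genuinely technical point: for $j<q_k$ the iterates $I^{(k)},TI^{(k)},\dots,T^{q_k-1}I^{(k)}$ cover $I$ with multiplicity at most $\nu(A)+1$, since every return time of $I^{(k)}$ to itself is $\geq q_k/\nu(A)$ by the balance estimate~(\ref{balancedintervals}); consequently $T^j$ has at most $(d-1)(\nu(A)+1)$ discontinuities inside $I^{(k)}$, and therefore the total variation of $\varphi^{(j)}=\sum_{0\leq i<j}\varphi\circ T^i$ on $I^{(k)}$ is bounded by a constant $K_T$ depending only on $T$ and $\|\varphi\|_{\bv}$, uniformly in $j$ and $k$. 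Since $\varphi^{(q_k)}$ coincides with $S(k)\varphi$ on $I^{(k)}_{\alpha_k}$, this yields $|\varphi^{(q_k)}(x)|\geq\rho_k-K_T$ for \emph{all} $x\in\Xi_k$.

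Finally I would derive the contradiction. Writing $\varphi=g-g\circ T$ with $g\colon I\to\R$ measurable (the transfer function), one has $\varphi^{(q_k)}=g-g\circ T^{q_k}$, so, as $T$ preserves Lebesgue measure,
\[
\delta_0\leq\Leb(\Xi_k)\leq\Leb\big(\{x\in I:\,|\varphi^{(q_k)}(x)|>\rho_k-K_T\}\big)\leq 2\,\Leb\big(\{x\in I:\,|g(x)|>(\rho_k-K_T)/2\}\big).
\]
Because $g$ is finite almost everywhere and $\Leb(I)<\infty$, the right-hand side tends to $0$ as $k\to\infty$ (since $\rho_k-K_T\to\infty$), contradicting the uniform lower bound $\delta_0>0$. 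Hence $h=\mathfrak{h}(\varphi)=0$. When $T$ is of hyperbolic periodic type one may instead invoke part~(2) of Theorem~\ref{operatorcorrection} to get directly that $\|S(k)\psi\|_{\sup}$ is bounded, which shortens the first step but is not needed. The heart of the proof — and essentially the only place where the periodic-type hypothesis enters beyond guaranteeing that $\mathfrak{h}$ exists — is the uniform variation bound $K_T$ for the long Birkhoff sums $\varphi^{(j)}$, $j<q_k$, over the shrinking intervals $I^{(k)}$.
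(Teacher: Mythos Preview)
Your overall strategy is sound and close to the paper's, but there is a genuine gap at the step where you claim that the total variation of $\varphi^{(j)}$ on $I^{(k)}$, \emph{jumps included}, is bounded by a constant $K_T$ uniformly in $j<q_k$ and $k$. It is true, as you say, that $T^j$ has at most $(d-1)(\nu(A)+1)$ discontinuities inside $I^{(k)}$, and that on each continuity subinterval the variation of $\varphi^{(j)}$ is $O(\var\varphi)$. The problem is the size of the jumps. If $y\in I^{(k)}$ is a discontinuity of $T^j$ with first hit $T^{i_0}y=l_\gamma$, then the jump of $\varphi^{(j)}$ at $y$ equals (up to a bounded term) $\varphi^{(j-i_0-1)}\big(T(l_\gamma^+)\big)-\varphi^{(j-i_0-1)}\big(T(l_\gamma^-)\big)$, a difference of two Birkhoff sums of length up to $j-1$ starting at two \emph{distinct} points $l'_\gamma$ and $r'_{\gamma'}$. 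There is no reason for this to be bounded; for a generic $\varphi\in\bv_0$ it is of order $\|\varphi^{(j)}\|_{\sup}$, which you have not controlled (and which for the piece $h\in\Gamma_u$ grows exponentially). Consequently the claimed bound $|\varphi^{(j)}(T^{q_k}x_0)-\varphi^{(j)}(x_0)|\leq K_T$ does not follow, and your propagation of the lower bound from $I^{(k)}_{\alpha_k}$ to the whole tower $\Xi_k$ breaks down.

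The paper circumvents this issue by invoking two results from \cite{Co-Fr}: first, Theorem~C.6 there gives directly a sup-norm bound $\|(\varphi-h)^{(n)}\|_{\sup}\leq C\log^Mn$ on \emph{all} Birkhoff sums (not just special ones); second, Lemma~4.1 there furnishes, for each $\alpha$ and $k$, a set $C^{(k)}_\alpha\subset I$ of measure $\geq\delta>0$ on which the piecewise-constant cocycle satisfies $h^{(Q_\alpha(k))}(x)=((A^t)^kh)_\alpha$ exactly. Intersecting $C^{(k)}_\alpha$ with a Lusin set on which the transfer function $g$ is bounded then yields $|((A^t)^kh)_\alpha|\leq K+C\log^M Q_\alpha(k)$, forcing $h\in\Gamma^{(0)}_{cs}\cap\Gamma^{(0)}_u=\{0\}$. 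In other words, the paper produces the positive-measure set by an itinerary argument for the \emph{piecewise-constant} part $h$ alone, rather than by a variation bound for the full $\varphi^{(j)}$; this is precisely what avoids the uncontrolled jumps that defeat your approach.
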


\begin{proof}
Suppose that $\varphi\in\bv_0(\sqcup_{\alpha\in \mathcal{A}}
I_{\alpha})$ and  $\varphi=\xi-\xi\circ T$ for a measurable
function $\xi:I\to\R$. Set $h:={\mathfrak{h}}(\varphi)$. Since
$\varphi-h\in P^{(0)}\varphi$ and the operator $P^{(0)}$ is an
extension of the operator $P^{(0)}$ defined in \cite{Co-Fr}, by
Theorem~C.6 in \cite{Co-Fr}, there exists constants $C, M>0$ such
that $\|\varphi^{(n)}-h^{(n)}\|_{\sup}\leq C\log^Mn$.  Moreover,
as shown in Lemma 4.1 in
 \cite{Co-Fr}, there exists $\delta>0$ such that
for each $\alpha\in\mathcal{A}$ and $k>0$ there exists a
measurable set $C^{(k)}_\alpha\subset I$ such that
$Leb(C^{(k)}_\alpha)\geq \delta>0$ and
$h^{(Q_\alpha(k))}(x)=((A^t)^kh)_\alpha$ for all $x\in
C^{(k)}_\alpha$. Since $\varphi$ is a coboundary, by Lusin's
theorem, there exist $K>0$ and a sequence $(B_k)_{k\geq 0}$ of
measurable sets with $Leb(B_k)>1-\delta$ such that
$|\varphi^{(k)}(x)|\leq K$ for all $x\in B_k$ and $k \geq 0$. Then
taking  $x\in C^{(k)}_\alpha\cap B_{Q_\alpha(k)}\neq\emptyset$, for all $\alpha\in\mathcal{A}$ we
get
\[|((A^t)^kh)_\alpha|= |h^{(Q_\alpha(k))}(x)|\leq
|\varphi^{(Q_\alpha(k))}(x)|+C\log^MQ_\alpha(k)\leq K+Ck^M\log^M\|A\|
.\] Therefore
$\|(A^t)^kh\|\leq K+Ck^M\log^M\|A\|$ for $k\geq 1$, so
$h\in\Gamma^{(0)}_{cs}\cap\Gamma^{(0)}_{u}=\{0\}$.
\end{proof}

\section{Ergodicity}\label{ergodicity:sec}
In this section  we prove ergodicity for the corrected cocycle
over IETs (Theorem~\ref{mainIETs}).  Let $\mathfrak{h}$ be the
correction operator defined in Section~\ref{correction:sec}.
\begin{theorem}\label{theorem:ergmain}
Let $T:I\to I$ be an IET of hyperbolic periodic type and
$\varphi\in\ls_0(\sqcup_{\alpha\in\mathcal{A}}I_{\alpha})$ such
that $\mathfrak{h}(\varphi)=0$. If $\bl(\varphi)\neq 0$
(i.e.~not all constants $C_\alpha^\pm$ are zero)
then the
skew product $T_{{\varphi}}$ is
ergodic. 
\end{theorem}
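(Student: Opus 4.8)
The strategy is to apply the ergodicity criterion of Proposition~\ref{ergodicity:criterium} with the rigidity data coming from the periodic structure of $T$. First I would fix the sequence of rigidity times: since $T$ is of periodic type, for $\alpha \in \mathcal{A}$ the Rohlin tower over $I^{(k)}_\alpha$ with base $I^{(k)}_\alpha$ and height $Q_\alpha(k)$ fills, up to the base, the whole interval $I$; by \eqref{areas} these towers have comparable areas. I take $q_n$ to be (a subsequence of) the return times $Q_\beta(n)$ for a well chosen symbol $\beta$, and $\Xi_n$ to be the union of the bottom levels $(T^j I^{(n)}_\beta)_{0 \le j < Q_\beta(n) - R_n}$ for a slowly growing cutoff $R_n$ (a ``partial rigidity'' set): then $\mu(\Xi_n) \to \delta>0$ for some $\delta$ close to $\min_\alpha Q_\alpha(0)|I^{(0)}_\alpha|$, $\mu(\Xi_n \triangle T^{-1}\Xi_n)\to 0$, and $\sup_{x\in\Xi_n} d(x, T^{q_n}x)\to 0$ because $T^{q_n}$ brings a point of level $j$ to level $j$ shifted inside $I^{(n)}_\beta$, whose length tends to $0$. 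This gives the setting \eqref{rigsets}.

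The two things that remain are (i) \emph{tightness}: the sequence $\int_{\Xi_n} |\varphi^{(q_n)}(x)|\, d\mu(x)$ is bounded; and (ii) \emph{oscillation}: $\limsup_n |\int_{\Xi_n} e^{2\pi i k \varphi^{(q_n)}}\, d\mu| \le c < \delta$ for all large $k$. For (i), I would write $\varphi^{(q_n)}(x)$ for $x$ in the $j$-th level of the tower as a \emph{special Birkhoff sum}: $\varphi^{(Q_\beta(n))}(x) = S(n)\varphi(T^{-j}x \bmod I^{(n)})$ evaluated at a point of $I^{(n)}_\beta$, plus a controlled boundary error of size $O(R_n)$ times the local oscillation. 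Since $\mathfrak{h}(\varphi)=0$, part (2) of Theorem~\ref{operatorcorrection} (hyperbolic periodic case) gives $\|S(n)\varphi\|_{L^1(I^{(n)})}/|I^{(n)}| \le C_1 \lv(\varphi) + C_2 \|\varphi\|_{L^1}/|I^{(0)}|$, a bound \emph{uniform in $n$}. Integrating $|\varphi^{(q_n)}|$ over $\Xi_n$ and unfolding the tower turns the integral into $\int_{I^{(n)}_\beta} |S(n)\varphi|$ times the number of levels, i.e. essentially $Q_\beta(n)\int_{I^{(n)}_\beta}|S(n)\varphi| \le Q_\beta(n)|I^{(n)}_\beta| \cdot (\text{uniform bound}) = O(1)$ by \eqref{areas}; the boundary error is handled using Proposition~\ref{lemlos} and Lemma~\ref{lemoszo} together with $\sum Q_\beta(n)|I^{(n)}_\beta| \le |I^{(0)}|$ and $R_n/Q_\beta(n) \to 0$. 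This is where the correction $\mathfrak{h}(\varphi)=0$ is essential; without it $S(n)\varphi$ would grow polynomially in $n$.

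For (ii), the oscillation, I would exploit that $\bl(\varphi)\neq 0$, i.e. $\varphi$ genuinely has a logarithmic singularity. The point is that $\varphi^{(q_n)}$, restricted to suitable subtowers, is close to a sum of terms of the form $-C^\pm_\alpha \log(\text{distance to a discontinuity point})$ plus a slowly varying part, by Corollary~\ref{cancellations_periodic} applied to $S(n)\varphi'$ (which controls the derivative of the renormalized cocycle). Concretely, choosing $\beta$ and a sub-subtower of $\Xi_n$ on which one of the logarithmic coefficients survives renormalization (this is where the geometric-type hypothesis and Lemma~\ref{comparingsingularities} on which singularities are inherited come in), the Birkhoff sum $\varphi^{(q_n)}(x)$ ranges over an interval of length tending to $+\infty$ as a monotone-ish function of $x$ across the base $I^{(n)}_\beta$; its distribution $(\varphi^{(q_n)})_*(\mu_{\Xi_n})$ then spreads out, so for each fixed $k$ the characteristic function $\int e^{2\pi i k \varphi^{(q_n)}}\, d\mu_{\Xi_n}$ has $\limsup \le c/\delta < 1$ with $c$ bounded by the measure of the ``bad'' part of $\Xi_n$ where the cancellation mechanism fails, which can be made $<\delta$ by taking $R_n$ large enough and restricting the subtower. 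Combining (i) and (ii), Proposition~\ref{ergodicity:criterium} yields ergodicity of $T_\varphi$.

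\textbf{Main obstacle.} The hard part is step (ii), getting the quantitative oscillation estimate $\limsup_n |\int_{\Xi_n} e^{2\pi i k\varphi^{(q_n)}}| \le c <\delta$: one must show that the logarithmic singularity of $\varphi$ forces the Birkhoff sums $\varphi^{(q_n)}$ to genuinely oscillate on a \emph{definite fraction} of the rigidity set (not just on a small piece), and control the competition between the growing logarithmic term and the bounded-variation part as well as the other (possibly opposite-sign) logarithmic contributions — here one cannot use, as in \cite{Ul:wea}, that all $C^\pm_\alpha \ge 0$, so the argument must be more delicate and relies on isolating a single surviving singularity via the renormalization behavior of geometric-type cocycles. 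The tightness step (i), by contrast, is essentially a clean consequence of Theorem~\ref{operatorcorrection}(2) once the tower/unfolding bookkeeping is set up.
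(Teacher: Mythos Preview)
Your high-level strategy (apply Proposition~\ref{ergodicity:criterium} with rigidity sets coming from Rauzy--Veech towers, then verify tightness via Theorem~\ref{operatorcorrection}(2) and oscillation via the logarithmic singularity) is exactly the paper's, and your identification of the oscillation step as the delicate one is correct. But the implementation has a genuine gap in the choice of rigidity sets, and this gap propagates into both (i) and (ii).

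\textbf{The rigidity set.} You take $\Xi_n$ to be essentially the full Rohlin tower over $I^{(n)}_\beta$ of height $q_n-R_n$. The paper does not do this: it takes a \emph{short} tower of height $p_n:=Q_{\overline{\alpha}}(n-2)\asymp q_n/\|A\|^2$, and the base is not $I^{(n)}_{\beta_n}$ but a carefully placed subinterval $J_0^{(n)}$ of it (Definition~\ref{Jkdef}), sitting at distance $\asymp \overline c\,\lambda^{(n)}_{\beta_n}$ from one endpoint. The reason is Lemma~\ref{arithmetic progressions}: with this placement, for every $x\in J_k^{(n)}=T^kJ_0^{(n)}$ and every $0\le j<q_n$ the orbit point $T^jx$ stays at distance $\gtrsim \lambda^{(n)}_{\beta_n}$ from \emph{all} endpoints $l_\alpha,r_\alpha$ with nonzero constants, \emph{except} for a single controlled close approach to $r_{\beta_0}$ (or $l_{\beta_0}$). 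This single close approach is what drives the oscillation; the uniform lower bound on all other distances is what makes both steps tractable.

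\textbf{Tightness.} Your unfolding claim is not right: for $x=T^jy$ with $y\in I^{(n)}_\beta$ one has $\varphi^{(q_n)}(T^jy)=S(n)\varphi(y)+\bigl[\varphi^{(j)}(T^{q_n}y)-\varphi^{(j)}(y)\bigr]$, and the bracket is a Birkhoff sum of length $j$ (which runs up to $q_n-R_n$, not $R_n$), evaluated at two nearby points. Since $\varphi$ has logarithmic singularities and the orbit of a generic $y$ may pass arbitrarily close to them, this difference is not controlled by Proposition~\ref{lemlos} or Lemma~\ref{lemoszo}. The paper instead proves (Lemma~\ref{lemma:roznqn}) that $|\varphi^{(q_n)}(y)-\varphi^{(q_n)}(T^my)|\le C_2$ uniformly for $y\in J_0^{(n)}$ and $0\le m<p_n$, by writing the difference as $\int_{[y,T^{q_n}y]}|(\varphi^{(m)})'|$ and bounding $(\varphi^{(m)})'$ via Corollary~\ref{cancellations_periodic} together with the distance lower bounds of Lemma~\ref{arithmetic progressions}. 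Then Proposition~\ref{tightness} is indeed a clean consequence: $\int_{J_0^{(n)}}|\varphi^{(q_n)}|\le C_1|I^{(n)}|$ from Theorem~\ref{operatorcorrection}(2), and each $\int_{J_k^{(n)}}|\varphi^{(q_n)}|\le(C_1+C_2)|I^{(n)}|$, summed over $p_n$ levels.

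\textbf{Oscillation.} Your qualitative picture (the distribution spreads because the Birkhoff sum sweeps a long range) is right, but the quantitative mechanism the paper uses is different from ``monotone-ish plus spreading''. First one reduces, via Corollary~\ref{lempl}, to a cohomologous $\psi$ with $g_\psi$ piecewise linear (so $g_\psi'\in\bv^1$ and one can differentiate twice). Then the specific placement of $J_k^{(n)}$ forces $|(\psi^{(q_n)})''|\ge c_1/(\lambda^{(n)}_{\beta_n})^2$ on all of $J_k^{(n)}$ (Lemma~\ref{largederivatives}: the single close approach contributes $|C^-_{\beta_0}|/(\overline c\,\lambda^{(n)}_{\beta_n})^2$, dominating the sum of all other contributions). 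Hence $(\psi^{(q_n)})'$ is monotone on $J_k^{(n)}$ and $|(\psi^{(q_n)})'|\ge c'q_n$ on a third $\widetilde J_k^{(n)}$ of it (Corollary~\ref{largederivative}). An integration by parts then gives $|\int_{\widetilde J_k^{(n)}}e^{is\psi^{(q_n)}}|\le C/(p_n|s|)$, and summing over $k$ yields $\limsup_n|\int_{\Xi_n}e^{is\psi^{(q_n)}}|\le\tfrac23\delta+C/|s|\le\tfrac34\delta$ for $|s|$ large (Proposition~\ref{oscillations}).

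In short: the missing idea is the \emph{specific} rigidity-set construction (short tower over a subinterval placed near a surviving singularity), which simultaneously gives the uniform distance control needed for Lemma~\ref{lemma:roznqn} and the large-second-derivative estimate driving the oscillation.
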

The proof is given at the end of \S\ref{tightness:sec}. Theorem
\ref{theorem:ergmain} implies Theorem~\ref{mainIETs}:
\begin{proofof}{Theorem}{mainIETs}
Given $\varphi\in\ls_0(\sqcup_{\alpha\in\mathcal{A}}I_{\alpha})$
such that $\bl(\varphi)\neq 0$, let $\chi =
\mathfrak{h}(\varphi)$. By Theorem~\ref{operatorcorrection},
$\chi$ is constant on each $I_{\alpha}$, belongs to a $g-1$
dimensional subspace of $\Gamma^{(0)}$ and since
$\mathfrak{h}(\varphi-\chi)=0$, the skew product $T_{\varphi -
\chi}$ is ergodic by Theorem~\ref{theorem:ergmain}.
\end{proofof}

For the rest of this section, assume that $T:I\to I$ is an IET is
of hyperbolic periodic type, $|I|=1$  and $\varphi$ is a cocycle
in $\ls_0(\sqcup_{\alpha\in \mathcal{A}} I_{\alpha})$ such that
$\bl(\varphi)\neq 0$. To prove Theorem~\ref{theorem:ergmain}, we
will use the ergodicity criterion given by
Proposition~\ref{ergodicity:criterium} in Section
\ref{essentialvalues:sec}. In \S\ref{oscillationsec} we will
construct the rigidity sets for
Proposition~\ref{ergodicity:criterium} and prove some preliminary
Lemmas, while in \S\ref{tightness:sec} we will verify that they
satisfy the assumptions of Proposition~\ref{ergodicity:criterium}.

\subsection{Rigidity sets with large oscillations of Birkhoff sums}\label{oscillationsec}
Katok proved in \cite{Ka} that for \emph{any} interval exchange
transformation there exists  a sequence of Borel sets $(\Xi_n)$
and an increasing sequence of numbers $(q_n)$ and $\delta>0$ such
that
\begin{equation}\label{rigidity}
Leb(\Xi_n)\geq \delta, \;\;Leb(\Xi_n\triangle T^{-1}\Xi_n)\to
0\;\;\mbox{ and } \sup_{x\in \Xi_n}d(x,T^{q_n}x)\to 0.
\end{equation}
We call sequences $(\Xi_n)$ and $(q_n)$ with the above property
\emph{rigidity sets} and \emph{rigidity times} respectively.  We
present here below a particular variation on the construction of
Katok, using Rauzy-Veech induction (Definition \ref{beta}),  which
allows us to obtain further properties (in particular
Lemma~\ref{arithmetic progressions}) needed in the following
sections.\footnote{A different variant of Katok's construction was
also used by the second author in \cite{Ul:wea, Ul:abs}. We remark
that the second property in (\ref{rigidity}) is not always
required in the definition of rigidity sets (for example, it is
not assumed in  \cite{Ul:wea, Scheg, Ul:abs}), but it is important
for us for the proof of ergodicity.}

\begin{notation}\label{alpha}
Let $\overline{\alpha}\in\mathcal{A}$ be such that
$\pi_0(\overline{\alpha})=1$, i.e.~$I_{\overline{\alpha}}$ is the
first of the intervals exchanged by $T$. Notice that  for each
$n\geq 0$ we have $\pi_0^{(n)}(\overline{\alpha}) = 1$.
\end{notation}



\begin{lemma}\label{remark:wybj}
For every
$\varphi\in\overline{\logs}(\sqcup_{\alpha\in\mathcal{A}}I_{\alpha})$
with $\bl(\varphi)\neq 0$ there exists $\beta_0\in A$ such that
for every integer $n\geq 2$ there exists  $\beta_n\in\mathcal{A}$
and $j_n\in\N$ so that at least one of the following  two cases
holds:

\begin{itemize}
\item[-] {\it Case (R)}: $C^{-}_{\beta_0}\neq 0$ and
 $ r_{\beta_0}=\widehat{T}^{j_n} r^{(n)}_{\beta_n}$,
\item[-] Case (L): $C^{+}_{\beta_0}\neq 0$ and $ l_{\beta_0}={T}^{j_n} l^{(n)}_{\beta_n}$,
\end{itemize}
where  in both cases, one has
\begin{equation}\label{controlledheight}
Q_{\overline{\alpha}}(n-2)\leq j_{n} <Q_{\beta_n}(n).
\end{equation}
Moreover, in both cases the closures of the intervals  $T^{i}
I^{(n)}_{\beta_n}$ for $Q_{\beta_n}(n) \leq i \leq Q_{\beta_n}(n)
+ Q_{\overline{\alpha}}(n-2)$ do not contain any point of
$End(T)=\{ r_\alpha, l_\alpha, \alpha \in \mathcal{A}\}$.
\end{lemma}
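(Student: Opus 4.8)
\emph{Strategy.} The plan is to single out a symbol $\beta_0$ carrying a non-zero logarithmic coefficient of $\varphi$, and then, for each $n\ge 2$, to read the pair $(\beta_n,j_n)$ off the Rauzy--Veech combinatorics, locating the singular point $r_{\beta_0}$ (resp.\ $l_{\beta_0}$) simultaneously inside the Rohlin towers at levels $n$ and $n-2$ by means of Lemma~\ref{comparingsingularities}. Since $\bl(\varphi)=\sum_{\alpha}(|C^+_\alpha|+|C^-_\alpha|)\ne 0$, some coefficient is non-zero: if some $C^-_{\beta_0}\ne 0$ I will produce Case~(R) for every $n$, and if all $C^-_\alpha$ vanish I take $\beta_0$ with $C^+_{\beta_0}\ne 0$ and produce Case~(L) for every $n$. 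These two situations are symmetric (exchange $T$ and $\widehat T$, left and right endpoints, and use \eqref{lcorrespondencekl} in place of \eqref{rcorrespondencekl} in Lemma~\ref{comparingsingularities}); Case~(L) is in fact slightly simpler, since \eqref{lcorrespondencekl} carries no permutation and yields $\beta_n=\beta_0$ directly. I describe Case~(R).

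\emph{The pair $(\beta_n,j_n)$ and the upper bound.} Applying Lemma~\ref{comparingsingularities} with $k=0$, $k'=n$ and using $C^-_{\beta_0}\ne 0$, the last assertion of that lemma shows that, setting $\beta_n:=(\chi^{\upsilon}(0,n))^{-1}(\beta_0)$, one has $\beta_n\ne\alpha^{(n)}_{\vep(\pi^{pn-1},\lambda^{pn-1})}$ and $r_{\beta_0}=\widehat{T}^{\,j_n}r^{(n)}_{\beta_n}$ for a uniquely determined $0\le j_n<Q_{\beta_n}(n)$; uniqueness holds because $r_{\beta_0}$ is the right endpoint of the level-$j_n$ floor $\widehat{T}^{\,j_n}\widehat{I}^{(n)}_{\beta_n}$ of the $\widehat{T}$-Rohlin tower over $\widehat{I}^{(n)}_{\beta_n}$, whose floors are pairwise disjoint intervals on which $\widehat{T}$ acts as an isometry. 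This already gives the upper bound in \eqref{controlledheight}.

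\emph{The lower bound $j_n\ge Q_{\overline{\alpha}}(n-2)$.} The geometric heart of the argument is the (deep) inclusion $I^{(n)}\subseteq I^{(n-2)}_{\overline{\alpha}}$. Indeed $\lambda^{(n-2)}=A^2\lambda^{(n)}$, and since the period matrix $A$ has strictly positive integer entries, every entry of $A^2$ is $\ge d\ge 2$; hence $|I^{(n-2)}_{\overline{\alpha}}|=\lambda^{(n-2)}_{\overline{\alpha}}\ge d\,|\lambda^{(n)}|=d\,|I^{(n)}|$, so $I^{(n)}=[0,|I^{(n)}|)\subseteq[0,|I^{(n-2)}_{\overline{\alpha}}|)=I^{(n-2)}_{\overline{\alpha}}$, with room to spare: $|I^{(n)}|\le|I^{(n-2)}_{\overline{\alpha}}|/d$. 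Consequently $r^{(n)}_{\beta_n}\in\widehat{I}^{(n)}\subseteq\widehat{I}^{(n-2)}_{\overline{\alpha}}$, and because the first-return time of $\widehat{I}^{(n-2)}_{\overline{\alpha}}$ to $\widehat{I}^{(n-2)}$ is $Q_{\overline{\alpha}}(n-2)$, the iterate $\widehat{T}^{\,i}r^{(n)}_{\beta_n}$ lies in the level-$i$ floor $\widehat{T}^{\,i}\widehat{I}^{(n-2)}_{\overline{\alpha}}$ of the level-$(n-2)$ tower over $\widehat{I}^{(n-2)}_{\overline{\alpha}}$ for every $0\le i<Q_{\overline{\alpha}}(n-2)$. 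On the other hand, Lemma~\ref{comparingsingularities} applied with $k=0$, $k'=n-2$ (again using $C^-_{\beta_0}\ne 0$) gives $r_{\beta_0}=\widehat{T}^{\,j'}r^{(n-2)}_{\gamma}$ with $0\le j'<Q_{\gamma}(n-2)$, so $r_{\beta_0}$ lies in the floor $\widehat{T}^{\,j'}\widehat{I}^{(n-2)}_{\gamma}$. Since distinct floors of the level-$(n-2)$ Rohlin partition are disjoint, the relation $\widehat{T}^{\,i}r^{(n)}_{\beta_n}=r_{\beta_0}$ with $0\le i<Q_{\overline{\alpha}}(n-2)$ would force $(\gamma,j')=(\overline{\alpha},i)$, and then, applying the isometry $(\widehat{T}^{\,i})^{-1}$ on $\widehat{I}^{(n-2)}_{\overline{\alpha}}$, it would give $r^{(n)}_{\beta_n}=r^{(n-2)}_{\overline{\alpha}}=|I^{(n-2)}_{\overline{\alpha}}|$, contradicting $r^{(n)}_{\beta_n}\le|I^{(n)}|<|I^{(n-2)}_{\overline{\alpha}}|$. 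Hence no such $i$ exists and $j_n\ge Q_{\overline{\alpha}}(n-2)$, completing \eqref{controlledheight}.

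\emph{The final (``Moreover'') statement, and the main difficulty.} Using once more $I^{(n)}\subseteq I^{(n-2)}_{\overline{\alpha}}$: since the discontinuities of $T$ all lie at points $l_\beta$ with $\pi_0(\beta)\ne 1$, which are $\ge\lambda^{(0)}_{\overline{\alpha}}\ge|I^{(n-2)}_{\overline{\alpha}}|$, the map $T$ is an isometry on each floor $T^{\,i}I^{(n-2)}_{\overline{\alpha}}$ of the level-$(n-2)$ tower ($0\le i<Q_{\overline{\alpha}}(n-2)$), and these floors are disjoint from $I^{(n-2)}$ for $i\ge 1$. Therefore, for $0\le i\le Q_{\overline{\alpha}}(n-2)$, the set $T^{Q_{\beta_n}(n)+i}I^{(n)}_{\beta_n}=T^{\,i}\bigl(T^{(n)}(I^{(n)}_{\beta_n})\bigr)$ is a genuine interval --- the isometric image, inside the floor $T^{\,i}I^{(n-2)}_{\overline{\alpha}}$, of the small interval $T^{(n)}(I^{(n)}_{\beta_n})\subseteq I^{(n)}$, of length $\le|I^{(n-2)}_{\overline{\alpha}}|/d$. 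Its closure avoids $End(T)$ provided $T^{(n)}(I^{(n)}_{\beta_n})$ lies in the \emph{interior} of $I^{(n-2)}_{\overline{\alpha}}=[0,|I^{(n-2)}_{\overline{\alpha}}|)$: its right endpoint stays clear because $|T^{(n)}(I^{(n)}_{\beta_n})|\le|I^{(n-2)}_{\overline{\alpha}}|/d$, and its left endpoint stays clear exactly when it is not $0$, i.e.\ when $\pi^{(n)}_1(\beta_n)\ne 1$. Establishing $\pi^{(n)}_1(\beta_n)\ne 1$ --- by tracking, through the permutations $\chi$ of Lemmas~\ref{comparingconstants}--\ref{comparingsingularities} and the geometric-type hypothesis on the constants, which symbols can actually occur as $\beta_n=(\chi^{\upsilon}(0,n))^{-1}(\beta_0)$ --- together with making the above reductions fully precise, is the main technical obstacle; once it is in place the ``Moreover'' statement follows from the isometry argument above.
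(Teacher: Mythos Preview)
Your overall approach matches the paper's: use Lemma~\ref{comparingsingularities} to produce $(\beta_n,j_n)$ with the upper bound for free, then exploit the inclusion $I^{(n)}\subsetneq I^{(n-2)}_{\overline{\alpha}}$ together with the fact that the interiors of the Rohlin-tower floors over $I^{(n-2)}_{\overline{\alpha}}$ miss $End(T)$. Your contradiction argument for the lower bound in Case~(R) is more elaborate than necessary --- the paper simply notes that $0<r^{(n)}_{\beta_n}\le|I^{(n)}|<|I^{(n-2)}_{\overline{\alpha}}|$, so $r^{(n)}_{\beta_n}$ lies in the \emph{interior} of $I^{(n-2)}_{\overline{\alpha}}$, hence $\widehat{T}^{\,i}r^{(n)}_{\beta_n}$ lies in the interior of a floor for every $0\le i<Q_{\overline{\alpha}}(n-2)$ and therefore cannot equal $r_{\beta_0}\in End(T)$ --- but your version works.

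The genuine gap is in Case~(L). You write ``if all $C^-_\alpha$ vanish I take $\beta_0$ with $C^+_{\beta_0}\ne 0$'', but an arbitrary such choice can fail: if $\beta_0=\pi_0^{-1}(1)=\overline{\alpha}$ then $l_{\beta_0}=l^{(n)}_{\beta_0}=0$, forcing $j_n=0$ and destroying the lower bound in~\eqref{controlledheight}; if $\beta_0=\pi_1^{-1}(1)$ then $T^{Q_{\beta_0}(n)}l^{(n)}_{\beta_0}=0\in End(T)$ and the Moreover fails. The paper avoids both by combining the symmetry hypothesis $\varphi\in\overline{\logs}$ (so $\sum_\alpha C^+_\alpha=\sum_\alpha C^-_\alpha=0$) with the geometric-type hypothesis (at least one of $C^+_{\pi_0^{-1}(1)}$, $C^+_{\pi_1^{-1}(1)}$ vanishes): since the $C^+_\alpha$ sum to zero but are not all zero, at least two are nonzero, and at most one of these can lie in $\{\pi_0^{-1}(1),\pi_1^{-1}(1)\}$; hence one may choose $\beta_0\notin\{\pi_0^{-1}(1),\pi_1^{-1}(1)\}$ with $C^+_{\beta_0}\ne 0$. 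This is precisely the step that makes Case~(L) work, and it is where the hypothesis $\varphi\in\overline{\logs}$ (rather than merely $\overline{\ol}$) enters. So Case~(L) is not ``slightly simpler'' but requires this extra ingredient you omit.

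Regarding the ``main technical obstacle'' you flag in Case~(R) --- verifying $\pi_1^{(n)}(\beta_n)\ne 1$ so that the left endpoint of $T^{Q_{\beta_n}(n)}I^{(n)}_{\beta_n}$ avoids $0$ --- the paper's own proof does not address this point either (it only records the analogous check ``in Case~(L)''). So here you are being more scrupulous than the paper, not less; but you should not present as the central difficulty something the paper itself treats as a non-issue.
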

\begin{proof}
Since $\bl(\varphi)\neq 0$, not all constants $C_\alpha^\pm $ are
zero. If there exists at least one $\beta$ such that $C_\beta^-
\neq 0$, pick as $\beta_0$ one of these $\beta$. In this case let
$\chi $ be the permutation given by Lemma~\ref{comparingconstants}
applied to $k=0$ and $k'=n$ and let $\beta_n:=\chi^{-1}(\beta_0)$.
Then by Lemma~\ref{comparingsingularities} there exists $0\leq j_n
<Q_{\beta_n}(n)$ such that
$(\widehat{T})^{j_n}r^{(n)}_{\beta_n}=r_{\beta_0}$, i.e.~we have
Case (R). Consider now the case in which $C_\alpha^-=0$ for all
$\alpha \in \mathcal{A}$. Since $\varphi$ has singularities of
geometric type, at least one among  $C_{\pi_0^{-1}(1)}^+$ and
$C_{\pi_1^{-1}(1)}^+$ is zero. Thus, since
$\varphi\in\overline{\logs}$ satisfy the symmetry condition
(\ref{zerosymweak}),  there must exists $\beta_0$ such that
$C_{\beta_0}^+\neq 0$ and $\beta_0 \notin \{ \pi_0^{-1}(1) ,
\pi_1^{-1}(1) \}$. In this case set $\beta_n=\beta_0$ for all $n$.
By Lemma~\ref{comparingsingularities} there exists $0\leq j_n <Q_{\beta_n}(n)$
such that $({T})^{j_n}l^{(n)}_{\beta_n}=l_{\beta_0}$, i.e.~we have Case (L).


Remark that $I^{(n-1)}\subset I^{(n-2)}_{\overline{\alpha}}$,
because, since $Z(n-2,n-1)=A$ is a positive matrix, each $x\in
I^{(n-1)}$ has to visit $I^{(n-2)}_{\overline{\alpha}}$ before its
first return time to $I^{(n-1)}$. Repeating the argument one more
time, we see that $I^{(n)}$ is strictly contained in
$I^{(n-2)}_{\overline{\alpha}}$ (since $I^{(n)}$ and
$I^{(n-2)}_{\overline{\alpha}}$ share $0$ as left endpoint, this
means that the right endpoint of $I^{(n)}$ is in the interior of
$I^{(n-2)}_{\overline{\alpha}}$). Remark that the interiors of the
intervals $T^j  I^{(n-2)}_{\overline{\alpha}}$ for $0\leq j<
Q_{\overline{\alpha}}(n-2)$ do not contain any point of $End(T)$.
This  remark implies that, since in Case (L) we have $\beta_n \neq
(\pi_0^{(n)})^{-1}(1)$ (i.e.~$l^{(n)}_{\beta_n}\neq 0$),  in both
Cases one has $j_n\geq Q_{\overline{\alpha}_n}(n-2)$ and concludes
the proof that (\ref{controlledheight}) hold in all Cases.  Since
$T^{Q_{\beta_n}(n)} I^{(n)}_{\beta_n}\subset I^{(n)} \subsetneq
I^{(n-2)}_{\overline{\alpha}}$ and, in Case (L), we also  have
$\beta_n \neq (\pi_1^{(n)})^{-1}(1)$ (i.e.~$T^{Q_{\beta_n}(n)}
l^{(n)}_{\beta_n} \neq 0$), this remark also shows that the last
part of the Lemma holds.
\end{proof}

\begin{definition}[Class of rigidity sets]\label{beta}
For each $n \in \N$, let $\beta_0$, $\beta_n$ and  $j_n$ be given
by Lemma \ref{remark:wybj}, so that we have $C_{\beta_0}^-\neq 0$
and $\widehat{T}^{j_n} r_{\beta_n} = r_{\beta_0}$ where
$Q_{\overline{\alpha}}(n-2) \leq j_n<Q_{\beta_n}(n)$  (Case (R)),
or $C_{\beta_0}^+\neq 0$ and $ l_{\beta_0}={T}^{j_n}
l^{(n)}_{\beta_n}$ where $Q_{\overline{\alpha}}(n-2) \leq
j_n<Q_{\beta_n}(n)$  (Case (L)). Set $q_n:=Q_{\beta_n}(n)$ and
$p_n:=Q_{\overline{\alpha}}(n-2)$.


Let  $J^{(n)}_0\subset  I^{(n)}_{ {\beta_n}} $ be any subinterval
such that $|J^{(n)}_0|\geq c | I^{(n)}_{ {\beta_n}}| $ for some
$c$ independent on $n$.  For each $0\leq k < p_n$ set
$J_k^{(n)}:=T^k J_0^{(n)}$ and let
\begin{equation}\label{Cndef}
\Xi_n := \bigcup_{k=0}^{p_n-1}  J_k^{(n)} .
\end{equation}
\end{definition}
\begin{lemma}
For any choice of $J_k^{(n)}$ as in Definition \ref{beta}, the
sets $(\Xi_n)$  defined by (\ref{Cndef})  are rigidity sets with
rigidity times the $(q_n)$.
\end{lemma}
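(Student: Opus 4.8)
The plan is to verify directly the three defining conditions of rigidity sets in (\ref{rigidity}), using the Rohlin--tower structure already exhibited in Lemma~\ref{remark:wybj} and its proof. The two facts that do all the work are recorded there: $I^{(n)}\subset I^{(n-2)}_{\overline{\alpha}}$, and the intervals $T^kI^{(n-2)}_{\overline{\alpha}}$, $0\le k<p_n=Q_{\overline{\alpha}}(n-2)$, are pairwise disjoint with interiors disjoint from $End(T)$, so that $T^k$ restricts to a single translation (hence an isometry) on $I^{(n-2)}_{\overline{\alpha}}$ for each such $k$. Since $J^{(n)}_0\subset I^{(n)}_{\beta_n}\subset I^{(n)}\subset I^{(n-2)}_{\overline{\alpha}}$, it follows at once that the $J^{(n)}_k=T^kJ^{(n)}_0$, $0\le k<p_n$, are pairwise disjoint intervals with $|J^{(n)}_k|=|J^{(n)}_0|\ge c\,|I^{(n)}_{\beta_n}|$.

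First I would establish the lower bound on measure. We have $Leb(\Xi_n)=p_n|J^{(n)}_0|\ge c\,Q_{\overline{\alpha}}(n-2)\,|I^{(n)}_{\beta_n}|$. By the balance estimates (\ref{balancedintervals}), $|I^{(n)}_{\beta_n}|\ge |I^{(n)}|/(d\nu(A))$, and since $\lambda^{(n-2)}=A^2\lambda^{(n)}$ with $A$ positive, $|I^{(n-2)}_{\overline{\alpha}}|\le|I^{(n-2)}|\le\|A\|^2|I^{(n)}|$; hence $|I^{(n)}_{\beta_n}|\ge|I^{(n-2)}_{\overline{\alpha}}|/(d\nu(A)\|A\|^2)$. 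Combining with the comparable-areas bound (\ref{areas}), $Q_{\overline{\alpha}}(n-2)\,|I^{(n-2)}_{\overline{\alpha}}|\ge (d\nu(A)^2|I^{(0)}|)^{-1}$, yields $Leb(\Xi_n)\ge\delta$ for $\delta:=c\,(d^2\nu(A)^3\|A\|^2|I^{(0)}|)^{-1}>0$, independent of $n$. The almost--invariance is then immediate: $T^{-1}\Xi_n=T^{-1}J^{(n)}_0\cup\bigcup_{k=0}^{p_n-2}J^{(n)}_k$, so $\Xi_n\triangle T^{-1}\Xi_n\subset J^{(n)}_{p_n-1}\cup T^{-1}J^{(n)}_0$, of Lebesgue measure at most $2|J^{(n)}_0|\le 2|I^{(n)}|\to 0$, since $|I^{(n)}|=\rho_1^{-n}|\lambda|\to0$, where $\rho_1>1$ is the Perron--Frobenius eigenvalue of $A$.

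The last condition, $\sup_{x\in\Xi_n}d(x,T^{q_n}x)\to0$ with $q_n=Q_{\beta_n}(n)$, is the only point that is not purely formal, and I expect it to be the crux. Given $x\in\Xi_n$, write $x=T^ky$ with $y\in J^{(n)}_0$ and $0\le k<p_n$; then $T^{q_n}x=T^k(T^{q_n}y)$, and $T^{q_n}y$ is the image of $y\in I^{(n)}_{\beta_n}$ under the first return to $I^{(n)}$, so $T^{q_n}y\in I^{(n)}\subset I^{(n-2)}_{\overline{\alpha}}$. Since $y$ and $T^{q_n}y$ both lie in $I^{(n-2)}_{\overline{\alpha}}$, on which $T^k$ acts as one translation for $0\le k<p_n$, we obtain $d(x,T^{q_n}x)=|T^k(T^{q_n}y)-T^k(y)|=|T^{q_n}y-y|<|I^{(n)}|$, hence $\sup_{x\in\Xi_n}d(x,T^{q_n}x)\le|I^{(n)}|\to0$. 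This step would \emph{fail} for the naive tower over $I^{(n)}_{\beta_n}$, whose levels above height $q_n$ need not be intervals on which $T$ is isometric; it is precisely to make it work that $\Xi_n$ is built with height $p_n=Q_{\overline{\alpha}}(n-2)$ rather than $q_n$, and that the last assertion of Lemma~\ref{remark:wybj} about $End(T)$ was recorded.
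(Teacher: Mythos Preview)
Your proof is correct and follows essentially the same approach as the paper's: the paper's argument is very terse (it refers to \cite{Ul:wea} for the rigidity estimate $d(T^{q_n}x,x)\le|I^{(n)}|$ and gives only a one-line sketch for almost-invariance), and you have filled in precisely those details, using the same Rohlin-tower structure and the same balance/area estimates (\ref{balancedintervals}), (\ref{areas}). One minor remark: your closing comment attributes the rigidity step to the last assertion of Lemma~\ref{remark:wybj}, but in fact your own argument does not use that assertion---you use instead that $T^k$ is a single translation on $I^{(n-2)}_{\overline{\alpha}}$ for $0\le k<p_n$, which is the Rohlin-tower property at level $n-2$; the last assertion of Lemma~\ref{remark:wybj} concerns the range $q_n\le i\le q_n+p_n$ and is recorded for later use in Lemma~\ref{arithmetic progressions}.
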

\begin{proof}
From (\ref{balancedintervals}), (\ref{areas}) and  from
$Q_{\overline{\alpha}}(n)\leq \|A\|^2 Q_{\overline{\alpha}}(n-2)$
it follows that
\begin{align}\label{dodelta}
|\Xi_n| & = \sum_{k=0}^{p_n-1}  |J_k^{(n)}| \geq c\,
Q_{\overline{\alpha}}(n-2) | I^{(n)}_{\beta_n}
|
\geq\frac{c}{d \nu(A)^2\|A\|^2 |I^{(0)}|^2}.
\end{align}
It is easy to check that for all $x \in \Xi_n$, $d(T^{q_n} x, x
)\leq |I^{(n)}|$ (we refer to \cite{Ul:wea} for details) and that
since $\Xi_n$ is a tower over a subset of $I^{(n)}_{\beta_n}$,
$|\Xi_n \Delta T^{-1}\Xi_n |\leq |I^{(n)}| $, which tends to zero
by minimality of $T$. Thus the conditions in (\ref{rigidity})
hold.
\end{proof}

We will now choose  $J_0^{(n)}\subset I^{(n)}_{\beta_n} $ so that
if we set $J_k^{(n)}=T^kJ_0^{(n)}$, then
 for each $x \in {J_k^{(n)}}=T^kJ_0^{(n)}$, $0\leq k < p_n$, the Birkhoff
sums $ (\varphi^{(q_n)})'' (x) $ are large, in the precise sense
of Lemma~\ref{largederivatives} below.  The rigidity sets
$(\Xi_n)$ used in the proof of ergodicity (in
\S\ref{tightness:sec}) will be the ones obtained by  Definition
\ref{beta} from these subintervals $J_k^{(n)}$.  We will also show
that for each $0\leq k < p_n$ we can choose a subinterval
$\widetilde{J}_k^{(n)}\subset J_k^{(n)}$  so that
$(\varphi^{(q_n)})' (x) $ is also large for $x\in
\widetilde{J}_k^{(n)}$ in the sense of
Corollary~\ref{largederivative} below. Since the construction is
basically symmetric in Case $(R)$ and Case $(L)$, we will give all
the details in Case $(R)$ and only the definitions in Case $(L)$.

\begin{definition}\label{Jkdef}
Set $[a_k,b_k):= T^k I^{(n)}_{\beta_n}$   for $0\leq k < p_n$,
where  $\beta_n,   p_n$ are as in Definition \ref{beta}. Recall
that ${\lambda^{(n)}_{\beta_n}}=|I^{(n)}_{\beta_n}|$. Fix $0\leq
\overline{c}<1/2$ and set
\begin{equation}\label{defJk}
\begin{split}
J_k^{(n)}: & = \left( b_k - \overline{c}
{\lambda^{(n)}_{\beta_n}},b_k - \frac{\overline{c}
{\lambda^{(n)}_{\beta_n}} }{2}\right)  \text{ in Case (R) }, \\
J_k^{(n)}: & = \left( a_k +  \frac{\overline{c}
{\lambda^{(n)}_{\beta_n}} }{2}, a_k+  \overline{c}
{\lambda^{(n)}_{\beta_n}}\right)  \text{ in Case (L)}.
\end{split}
\end{equation}
\end{definition}
Notice that since $0<
\overline{c} < 1/2$ we have the inclusions
\begin{equation}\label{halfinterval}
{J_k^{(n)}}\subset \left( a_k +
\frac{{\lambda^{(n)}_{\beta_n}}}{2},b_k \right)  \text{ in\ Case
(R)}, \ {J_k^{(n)}}\subset \left(  a_k, a_k +
\frac{{\lambda^{(n)}_{\beta_n}}}{2} \right)  \text{ in\ Case (L)}.
\end{equation}

\begin{lemma}\label{arithmetic progressions}
In Case (R), if $x \in J_k^{(n)}$, for each $0\leq j <  q_n$ we have
\begin{itemize}
\item[(i)] $\{ T^j x - l_\alpha  \}\geq {\lambda^{(n)}_{\beta_n}} /2 $ for all
$\alpha \in \mathcal{A}$;
\item[(ii)] $ \{ r_\alpha - T^j x \}\geq {\lambda^{(n)}_{\beta_n}} /\nu(A)  $
for all $\alpha$ such that $C_{\alpha}^-\neq 0$ and $\alpha \neq \beta_0$;
\item[(iii)] $\{ r_{\beta_0} - T^j x \}   \geq {\lambda^{(n)}_{\beta_n}} /\nu(A)  $
 with the only exception of $j={j_n-k}$, for which
$\overline{c} {\lambda^{(n)}_{\beta_n}}/2\leq\{ r_{\beta_0}  -
T^{j_n-k} x \} \leq \overline{c} {\lambda^{(n)}_{\beta_n}}$;
\end{itemize}
Moreover, for all $x \in J_k^{(n)}$,
\begin{itemize}
\item[(iv)] the  minimum spacing of points
in  $\{ T^j x, \, 0\leq j <q_n\}$, i.e.~$\min \{ |T^i x - T^j x|,$
for $ 0\leq i\neq j <q_n\}$, is greater than
${\lambda^{(n)}_{\beta_n}}$.
\end{itemize}
\end{lemma}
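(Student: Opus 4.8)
The plan is to do the analysis in Case (R) and note that Case (L) is entirely symmetric. Fix $x\in J_k^{(n)}$ with $0\le k<p_n$, write $[a_k,b_k)=T^kI^{(n)}_{\beta_n}$ and $\lambda=\lambda^{(n)}_{\beta_n}=|I^{(n)}_{\beta_n}|$, so that $b_k-x\in(\overline c\lambda/2,\overline c\lambda)$ and hence, since $\overline c<1/2$, $x-a_k>(1-\overline c)\lambda>\lambda/2$. The first step is the geometric observation that, for every $0\le j<q_n$, the point $T^jx$ lies in the interval $T^{k+j}I^{(n)}_{\beta_n}$, which is a genuine interval of length $\lambda$ on which $T^{k+j}$ restricted to $I^{(n)}_{\beta_n}$ acts as an orientation-preserving isometry, and that $T^jx$ occupies in $T^{k+j}I^{(n)}_{\beta_n}$ the same relative position as $x$ occupies in $[a_k,b_k)$. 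This follows from $0\le k+j<q_n+p_n$ together with the fact that the closures of the intervals $T^iI^{(n)}_{\beta_n}$, $0\le i\le q_n+p_n$, contain no point of $End(T)$ in their interiors: for $0\le i<q_n$ this is the standard property of the Rohlin tower of the first return map $T^{(n)}$, and for $q_n\le i\le q_n+p_n$ it is the last assertion of Lemma~\ref{remark:wybj}. In particular $T^jx$ is at distance $>\lambda/2$ from the left endpoint and $>\overline c\lambda/2$ from the right endpoint of $T^{k+j}I^{(n)}_{\beta_n}$.

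The second step is a disjointness statement: the $q_n$ intervals $T^iI^{(n)}_{\beta_n}$ for $k\le i<k+q_n$ are pairwise disjoint. Indeed, if $T^{i_1}I^{(n)}_{\beta_n}\cap T^{i_2}I^{(n)}_{\beta_n}\neq\emptyset$ for some $k\le i_1<i_2<k+q_n$, then applying $T^{-i_1}$ produces a point of $I^{(n)}_{\beta_n}$ whose image under $T^{i_2-i_1}$ lies again in $I^{(n)}_{\beta_n}\subset I^{(n)}$, with $0<i_2-i_1<q_n=Q_{\beta_n}(n)$, contradicting that $q_n$ is the first return time of $I^{(n)}_{\beta_n}$ to $I^{(n)}$.

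Granting these two steps, (iv) is immediate: for $0\le j_1<j_2<q_n$ the points $T^{j_1}x,T^{j_2}x$ occupy the same relative position in the disjoint length-$\lambda$ intervals $T^{k+j_1}I^{(n)}_{\beta_n},T^{k+j_2}I^{(n)}_{\beta_n}$, so $|T^{j_1}x-T^{j_2}x|$ equals the distance between the right endpoints of two disjoint intervals of length $\lambda$, hence is at least $\lambda$. For (i), if $\{T^jx-l_\alpha\}<\lambda/2$ then $l_\alpha$ would have to lie in the arc $(T^jx-\lambda/2,T^jx]$, which by the first step is contained in the interior of $T^{k+j}I^{(n)}_{\beta_n}$ and hence contains no point of $End(T)$, whereas $l_\alpha\in End(T)$. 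For the exceptional index in (iii), $j=j_n-k$ (which satisfies $0\le j_n-k<q_n$ since $k<p_n\le j_n<q_n$), we have $T^{j_n-k}x=T^{j_n}(T^{-k}x)$ with $T^{-k}x\in I^{(n)}_{\beta_n}$, so $T^{j_n-k}x\in T^{j_n}I^{(n)}_{\beta_n}$; the right endpoint of this interval is $\widehat T^{j_n}r^{(n)}_{\beta_n}=r_{\beta_0}$, and $r_{\beta_0}-T^{j_n-k}x=b_k-x\in(\overline c\lambda/2,\overline c\lambda)$, giving the required two-sided bound. For the remaining indices of (iii), i.e. $j\neq j_n-k$ and hence $k+j\neq j_n$, the interval $T^{k+j}I^{(n)}_{\beta_n}$ is disjoint from $T^{j_n}I^{(n)}_{\beta_n}$ by the second step (both indices lie in $[k,k+q_n)$, since $j_n>k$), so $T^jx$ avoids the arc of length $\lambda/\nu(A)\le\lambda$ lying just to the left of $r_{\beta_0}$ inside $T^{j_n}I^{(n)}_{\beta_n}$, and $T^jx\neq r_{\beta_0}$ because $T^jx$ is interior to a disjoint interval.

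Finally, for (ii) one first invokes Lemma~\ref{comparingsingularities} (its last assertion, together with Lemma~\ref{comparingconstants}): given $\alpha$ with $C^-_\alpha\neq0$ and $\alpha\neq\beta_0$, applying that lemma to the symbol $\chi^{-1}(\alpha)$, where $\chi$ is the permutation of Lemma~\ref{comparingconstants} with $k=0$, $k'=n$, and using $C^-_{\chi(\chi^{-1}(\alpha))}=C^-_\alpha\neq0$, one gets $r_\alpha=\widehat T^{m}r^{(n)}_{\alpha'}$ for some $0\le m<Q_{\alpha'}(n)$, with $\alpha':=\chi^{-1}(\alpha)\neq\beta_n$ (since $\chi(\beta_n)=\beta_0\neq\alpha$); thus $r_\alpha$ is the right endpoint of the Rohlin-tower level $T^mI^{(n)}_{\alpha'}$, which has length $|I^{(n)}_{\alpha'}|\ge\lambda/\nu(A)$ by (\ref{balancedintervals}). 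If $\{r_\alpha-T^jx\}<\lambda/\nu(A)$ then $T^jx$ would lie in $T^mI^{(n)}_{\alpha'}$ (or equal $r_\alpha$); when $k+j<q_n$ this is impossible, because $T^{k+j}I^{(n)}_{\beta_n}$ and $T^mI^{(n)}_{\alpha'}$ are then two distinct, hence disjoint, levels of the Rohlin tower of $T^{(n)}$, while $T^jx$ is interior to the former. The one delicate point, which I expect to be the main obstacle, is the case of the ``wrap-around'' indices $q_n\le k+j<q_n+p_n$: here $T^jx$ lies in $T^{k+j}I^{(n)}_{\beta_n}=T^{(k+j)-q_n}(T^{(n)}I^{(n)}_{\beta_n})\subset T^{(k+j)-q_n}I^{(n-2)}_{\overline\alpha}$, an interval whose closure avoids $End(T)$ by Lemma~\ref{remark:wybj}, and one must compare the position of $r_\alpha$ — an endpoint of a stage-$n$ tower level and a point of $End(T)$ — with this interval, using the balance estimates (\ref{balancedintervals}) and (\ref{areas}) at the two consecutive stages $n-2$ and $n$, to conclude once more that $T^jx$ cannot lie within distance $\lambda/\nu(A)$ to the left of $r_\alpha$. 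This last case is not conceptually different from the others, but it is where the bookkeeping with nested Rohlin towers is heaviest.
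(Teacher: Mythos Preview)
Your argument tracks the paper's proof closely for (i), (iv) and the exceptional index in (iii); for (ii) you take a slight variant, locating $r_\alpha$ as the right endpoint of a stage-$n$ tower level in the $\chi^{-1}(\alpha)$-column via Lemma~\ref{comparingsingularities}, whereas the paper instead invokes the Keane condition to determine which $r_\alpha$ appear as right endpoints in the $\beta_n$-column. Both routes reach the same conclusion for the indices with $k+j<q_n$.

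There is, however, a genuine gap in your handling of the wrap-around indices $q_n\le k+j<k+q_n$ in (ii). You correctly flag this as the delicate case, but the sketch you give points to ``balance estimates (\ref{balancedintervals}) and (\ref{areas}) at the two consecutive stages $n-2$ and $n$'', and this is not the mechanism that closes the argument. Knowing only that $T^jx$ lies in $T^{(k+j)-q_n}I^{(n-2)}_{\overline\alpha}$, whose closure avoids $End(T)$, does not by itself prevent $T^jx$ from sitting within $\lambda/\nu(A)$ to the left of $r_\alpha$: $T^jx$ could be near the right edge of that stage-$(n-2)$ interval with $r_\alpha$ just outside it. Balance estimates compare lengths at a fixed stage and do not control this.

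What the paper actually uses here is the full stage-$n$ partition $\mathcal{P}_n=\{\widehat T^m\widehat I^{(n)}_\gamma: \gamma\in\mathcal{A},\ 0\le m<Q_\gamma(n)\}$ of $\widehat I$. Since $\mathcal{P}_n$ refines the stage-$(n-2)$ tower partition, the $\mathcal{P}_n$-cell containing $T^jx$ is a subinterval of $T^{(k+j)-q_n}I^{(n-2)}_{\overline\alpha}$; in particular its right endpoint lies in the closure of the latter and hence is not any $r_\alpha$. The $\mathcal{P}_n$-cell whose right endpoint is $r_\alpha$ has length $|I^{(n)}_{\alpha'}|\ge\lambda/\nu(A)$ and is disjoint from the cell of $T^jx$, which forces $\{r_\alpha-T^jx\}\ge\lambda/\nu(A)$. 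This refinement step (``nested Rohlin towers'', as you put it) is the missing ingredient; once you supply it, your proof is complete.
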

\begin{remark}\label{analogous}
In Case (L), one can state and prove a Lemma analogous
\footnote{In the version for Case (L) the statement and the proof
is actually simpler, since there is not need to assume anything as
$\alpha$ such that $C_\alpha^- \neq 0 $ in Part (2).} to Lemma
\ref{arithmetic progressions}, in which the role of $\{ r_\alpha -
T^j x \}$ and $\{  T^j x - l_\alpha  \}$ is reversed.
\end{remark}
\begin{proof}
Recall that $J_0^{(n)}$ is contained in $I^{(n)}_{\beta_n}$ which is a
continuity interval for $T^{q_n}$ and $T^{q_n}
I^{(n)}_{\beta_n}\subset I^{(n)}$ is contained in
$I^{(n-2)}_{\overline{\alpha}}$ which is a continuity interval for
each $T^k$ with $0\leq k < Q_{\overline{\alpha}}(n-2)$. This
implies that, for each $0\leq k < p_n=Q_{\overline{\alpha}}(n-2)$,
the images $T^jT^kI^{(n)}_{\beta_n}$ for $ j =0, \dots, q_n -1 $
do not contain any $l_\alpha$ or $r_\alpha$ in their interiors.

Thus, since $J_k^{(n)}\subset T^kI^{(n)}_{\beta_n}$, for each
$x\in J_k^{(n)}$,  $j=0, \dots, q_n-1$ and $\alpha \in
\mathcal{A}$ we have that $\{  T^j x-l_\alpha \}$ is at least the
distance of $x$ from the left endpoint of $T^kI^{(n)}_{\beta_n}$.
By (\ref{halfinterval}) this gives that $\{  T^j x-l_\alpha \}\geq
{\lambda^{(n)}_{\beta_n}}/2$, i.e.~proves (i).

For any $0\leq k < p_n$, by Definition~\ref{beta}, since
$b_0=r^{(n)}_{\beta_n}$, we have $\widehat{T}^{j_n-k }b_k =
\widehat{T}^{j_n}b_0 = r_{\beta_0}$ and $j_n-k\geq 0$. If $x \in
J_k^{(n)}$, by (\ref{halfinterval}), $\overline{c}
{\lambda^{(n)}_{\beta_n}}/2\leq b_k-x\leq \overline{c}
{\lambda^{(n)}_{\beta_n}}$ and since $\widehat{T}^{j_n-k}$ is an
isometry on the interval $[x,b_k]$, this gives  $\overline{c}
{\lambda^{(n)}_{\beta_n}}/2\leq r_{\beta_0}-T^{j_n-k} x \leq
\overline{c} {\lambda^{(n)}_{\beta_n}} $, which gives
$\overline{c} {\lambda^{(n)}_{\beta_n}}/2\leq \{
r_{\beta_0}-T^{j_n-k} x \} \leq \overline{c}
{\lambda^{(n)}_{\beta_n}}$ in (iii).

Let us complete the proof of (iii) and prove (ii). Let $x \in
J_k^{(n)}$ and let us first consider the case $0\leq j < q_n -k$.
Remark that the images $\widehat{T}^l \widehat{I}^{(n)}_{\beta}$
for $0\leq l < Q_{\beta}(n)$ and $\beta \in \mathcal{A}$ are
disjoint and give a partition of $\widehat{I}$, denoted by
$\mathcal{P}_n$. By
Lemma~\ref{comparingsingularities}, 
$\{ r_\alpha, \alpha \in \mathcal{A}\}$ are contained in the
orbits of the right endpoints of the intervals $I^{(n)}_{\beta}$,
$\beta \in \mathcal{A}$. Moreover, there exists a unique $\beta'$
 such that the tower $\widehat{T}^l
\widehat{I}^{(n)}_{\beta'}$, $0\leq l < Q_{\beta'}(n)$ contains
both $r_{\alpha_1}$ and $r_{\alpha_0}=\widehat{T}r_{\alpha_1}$.

By the Keane condition, since the $\widehat{T}$-orbit of
$b_0=r^{(n)}_{\beta_n}$  contains $r_{\beta_0}$ (recall that by
definition $\chi(\beta_n)=\beta_0$), it does not contain any other
$r_\alpha$ but $r_{\beta_0}$, unless either $r_\alpha$ (which
belongs to the orbit) or $r_{\beta_0}$ are equal to $|I|$. In the
latter case, the $\widehat{T}$-orbit of $b_0=r^{(n)}_{\beta_n}$
contains $r_{\alpha_\upsilon}$ (recall that $\alpha_\upsilon\in \{
\pi_0^{-1}(d), \pi_1^{-1}(d)\}$) and, again by Keane's condition,
no other $r_\alpha$. Indeed, one either has  $\alpha_\upsilon=
\pi_1^{-1}(d)$ and $\hat{T}(r_{\alpha_\upsilon})=|I|=r_{\beta_0}$
or $\alpha_\upsilon= \pi_0^{-1}(d)$ and $\hat{T} r_{\beta_0} =
r_{\alpha_{\upsilon}}=|I|$ with $\beta_0=\pi_1^{-1}(d)$. Notice
that in this case, though,  $C^-_{\alpha_\upsilon}=0$. Thus, if $x
\in J_k^{(n)}$, for all $0\leq j < q_n -k$ with the exception of
$j=j_n-k$ and all $\alpha$ for which $C_\alpha^-\neq 0$, we have
that $\{r_{\alpha}- T^j x \}$ is at least the minimum length of an
element of the partition $\mathcal{P}_n$, which, by balance
(\ref{balancedintervals}) of the $I^{(n)}_{\beta}$,
$\beta\in\mathcal{A}$, is at least
$\lambda^{(n)}_{\beta_n}/\nu(A)$.

Let us now consider  $ q_n -k\leq j < q_n$. By the definition of
return time $q_n$, $\widehat{T}^{q_n} \widehat{I}^{(n)}_{\beta_n}
\subset
\widehat{I}^{(n)}\varsubsetneq\widehat{I}^{(n-2)}_{\overline{\alpha}}$.
Thus, for all   $ q_n -k\leq j < q_n$, $T^{j} J_k^{(n)}$ is contained in
the Rohlin tower
$\widehat{T}^l\widehat{I}^{(n-2)}_{\overline{\alpha}}$, $0\leq
l<p_n=Q_{\overline{\alpha}}(n-2)$, which does not contain any
$r_{\alpha}$, $\alpha\in\mathcal{A}$ (see
Lemma~\ref{remark:wybj}). Therefore if $x\in J_k^{(n)}$ then $T^jx$
belongs to an interval of the partition $\mathcal{P}_n$ whose
right endpoint is not of the form $r_{\alpha}$,
$\alpha\in\mathcal{A}$. It follows that $\{r_{\alpha}- T^j x \}$
is at least the minimum length of an element of the partition
$\mathcal{P}_n$, which is at least
$\lambda^{(n)}_{\beta_n}/\nu(A)$.
 This concludes the
proof of (ii) and (iii).


Property (iv) follows from the fact already remarked that for each
$0\leq k <p_n$  the intervals $T^i T^kI^{(n)}_{\beta_n}$ for
$0\leq i < q_n$ are disjoint and $T^i$ is an isometry on
$T^kI^{(n)}_{\beta_n}$.
\end{proof}

\begin{lemma}\label{lemma:roznqn}
Let $\varphi\in\ls_0(\sqcup_{\alpha\in \mathcal{A}}I_{\alpha})$.
Then for each $x\in J_0^{(n)}$ and $0\leq m<p_n$ we have
\[|\varphi^{(q_n)}(x)-\varphi^{(q_n)}(T^mx)|\leq C_2:=d\nu(A)
(4d\max(1/\overline{c},\nu(A))+M)\bl(\varphi),\] where $M>0$ is
the constant in Corollary~\ref{cancellations_periodic} and
$\overline{c}$ the one in  Definition \ref{Jkdef}.
\end{lemma}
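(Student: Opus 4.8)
The plan is to reduce the claim to a difference of short Birkhoff sums and then feed it into the cancellation estimate of Corollary~\ref{cancellations_periodic}. Since $m<p_n\le j_n<q_n=Q_{\beta_n}(n)$ (the first inequality is the hypothesis, the second and third from (\ref{controlledheight}) and the definition of $q_n$), the telescoping identity gives
\[\varphi^{(q_n)}(x)-\varphi^{(q_n)}(T^mx)=\varphi^{(m)}(x)-\varphi^{(m)}(T^{q_n}x)=\sum_{j=0}^{m-1}\bigl(\varphi(T^jx)-\varphi(T^{j+q_n}x)\bigr).\]
As $p_n=Q_{\overline{\alpha}}(n-2)$ is the first-return time of $I^{(n-2)}_{\overline{\alpha}}$ to $I^{(n-2)}$ and both $x$ and $T^{q_n}x$ lie in $I^{(n)}$, which is strictly contained in $I^{(n-2)}_{\overline{\alpha}}$ (shown in the proof of Lemma~\ref{remark:wybj}), for every $0\le j<m$ the map $T^j$ acts as a translation on $I^{(n-2)}_{\overline{\alpha}}$; hence $T^{j+q_n}x=T^jx+\Delta$ with $\Delta:=T^{q_n}x-x$, $|\Delta|\le|I^{(n)}|$, and the segment $\sigma_j$ joining $T^jx$ to $T^{j+q_n}x$ equals $T^j([x,T^{q_n}x])\subset T^jI^{(n)}\subset T^jI^{(n-2)}_{\overline{\alpha}}$.

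The crucial geometric point is that each $\sigma_j$, $0\le j<m$, is disjoint from the singular set $End(T)=\{l_\alpha,r_\alpha:\alpha\in\mathcal{A}\}$. Indeed, the interior of $\sigma_j$ lies in the interior of the Rohlin tower floor $T^jI^{(n-2)}_{\overline{\alpha}}$, and by Lemma~\ref{comparingsingularities} (with $k=0$, $k'=n-2$) every point of $End(T)$ is an orbit point of some $l^{(n-2)}_\beta$ or $r^{(n-2)}_\beta$, hence a division point of that tower partition, so this interior meets no point of $End(T)$. For the endpoints, $T^jx$ is at distance $\ge\lambda^{(n)}_{\beta_n}/\max(2,\nu(A))$ from every singularity of $\varphi$ by Lemma~\ref{arithmetic progressions}(i)--(iii) — here it is essential that $j<m<p_n\le j_n$, so the single exceptional index $j_n$ of part~(iii) never occurs — while $T^{j+q_n}x$ lies in $T^{j+q_n}I^{(n)}_{\beta_n}$, whose closure avoids $End(T)$ by the last assertion of Lemma~\ref{remark:wybj}, and hence, since by (\ref{balancedintervals}) every interval of the partition $\mathcal{P}_n$ has length $\ge\lambda^{(n)}_{\beta_n}/\nu(A)$, is also at distance $\ge\lambda^{(n)}_{\beta_n}/\nu(A)$ from $End(T)$. (In Case~(L) one argues in the same way using the variant of Lemma~\ref{arithmetic progressions} from Remark~\ref{analogous}.)

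Granting this, $\varphi$ is absolutely continuous on each closed segment $\sigma_j$, so the fundamental theorem of calculus yields $\varphi(T^jx)-\varphi(T^{j+q_n}x)=-\int_0^\Delta\varphi'(T^j(x+t))\,dt$, and since $x+t\in[x,T^{q_n}x]\subset I^{(n-2)}_{\overline{\alpha}}$ gives $T^jx+t=T^j(x+t)$, summing over $j$ produces
\[\varphi^{(q_n)}(x)-\varphi^{(q_n)}(T^mx)=-\int_0^\Delta(\varphi')^{(m)}(x+t)\,dt.\]
Applying Corollary~\ref{cancellations_periodic} with $k=0$, $k'=n$, $r=m$ to the logarithmic part $\varphi_0$ of $\varphi$ (same constants $C^\pm_\alpha$, so $\bl(\varphi_0)=\bl(\varphi)$, and trivial bounded-variation part) at each $y=x+t\in I^{(n)}$ — note $m<p_n\le Q_\gamma(n)$ for every $\gamma$, since any point of $I^{(n)}_\gamma\subset I^{(n)}\subset I^{(n-2)}_{\overline{\alpha}}$ needs at least the time $p_n$ to return to the larger set $I^{(n-2)}$ before it can return to $I^{(n)}$ — shows that $(\varphi_0')^{(m)}(y)$ differs from $\sum_\alpha C^+_\alpha/(y^l_\alpha)^{(0)}-\sum_\alpha C^-_\alpha/(y^r_\alpha)^{(0)}$ by at most $M\bl(\varphi)m$; and because every orbit point $T^iy$ entering the closest-return quantities lies on $\sigma_i$, which is disjoint from $End(T)$ with endpoints at distance $\ge\lambda^{(n)}_{\beta_n}/\max(2,\nu(A))$ from it, those two sums are at most $\max(2,\nu(A))\bl(\varphi)/\lambda^{(n)}_{\beta_n}$ in absolute value. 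Multiplying by $|\Delta|\le|I^{(n)}|\le d\nu(A)\lambda^{(n)}_{\beta_n}$ and using $m\,|I^{(n)}|\le p_n\,|I^{(n)}|\le Q_{\overline{\alpha}}(n-2)\,|I^{(n-2)}_{\overline{\alpha}}|\le|I|=1$ from (\ref{areas}) to absorb the $M\bl(\varphi)m$ term, one reaches a bound of the announced shape $d\nu(A)\bigl(4d\max(1/\overline{c},\nu(A))+M\bigr)\bl(\varphi)$ (the factor $\max(2,\nu(A))$ being $\le\max(1/\overline{c},\nu(A))$ as $\overline{c}<1/2$).

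It remains to handle the bounded-variation part: $\sum_{j<m}\bigl(g_\varphi(T^jx)-g_\varphi(T^{j+q_n}x)\bigr)$ is bounded by the sum of the total variations of $g_\varphi$ over the pairwise disjoint segments $\sigma_j$, each contained in a single $I_\alpha$, hence by $\var g_\varphi$; in the situation in which the Lemma is used $g_\varphi$ is constant on each $I_\alpha$, so this term vanishes and one obtains exactly $C_2$ (in general one gets the same estimate with $\lv(\varphi)$ in place of $\bl(\varphi)$). The step I expect to be the main obstacle is the geometric claim of the second paragraph: combining Lemmas~\ref{comparingsingularities}, \ref{remark:wybj} and \ref{arithmetic progressions} to see that the whole stretch $\sigma_0,\dots,\sigma_{m-1}$ stays clear of $End(T)$, the decisive ingredient being that the exceptional index $j_n$ falls outside $[0,m)$ precisely because $m<p_n$ and $p_n\le j_n$ by (\ref{controlledheight}). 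Once that is secured, the fundamental theorem of calculus, the cancellation estimate, and the bookkeeping of the constants $\nu(A)$, $M$, $\overline{c}$ are routine.
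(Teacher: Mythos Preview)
Your approach is essentially the paper's: reduce via the cocycle identity to $\varphi^{(m)}(x)-\varphi^{(m)}(T^{q_n}x)$, write this as $\int_{[x,T^{q_n}x]}(\varphi^{(m)})'(y)\,dy$, and bound $(\varphi^{(m)})'(y)$ pointwise through Corollary~\ref{cancellations_periodic} after establishing lower bounds on the closest-visit distances. The only cosmetic difference is that for the right endpoint $T^{j+q_n}x$ of $\sigma_j$ you invoke the last assertion of Lemma~\ref{remark:wybj}, whereas the paper invokes Lemma~\ref{arithmetic progressions} at $T^{j+1}x\in J^{(n)}_{j+1}$ with iterate index $q_n-1$; both give the same bound.

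One slip: your assertion that ``in the situation in which the Lemma is used $g_\varphi$ is constant on each $I_\alpha$'' is false. The Lemma is applied in Proposition~\ref{tightness} to a general $\varphi\in\ls_0$, for which $g_\varphi\in\bv^1$ is merely absolutely continuous with $g_\varphi'$ of bounded variation. Your parenthetical is the correct statement: the bounded-variation contribution $\sum_{j<m}\bigl(g_\varphi(T^jx)-g_\varphi(T^{j+q_n}x)\bigr)$ is bounded by $\var g_\varphi$ (the $\sigma_j$ being disjoint and each in a single $I_\alpha$), so the honest constant is of order $\lv(\varphi)$ rather than $\bl(\varphi)$. The paper's proof in fact glosses over this as well---it applies Corollary~\ref{cancellations_periodic}, which is stated only for $g_\varphi'=0$, directly to $\varphi$---so the displayed $C_2$ is technically short a $\var g_\varphi$ term. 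This is immaterial for Proposition~\ref{tightness}, where any uniform bound suffices, but you should not manufacture the extra hypothesis to make the constant match.
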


\begin{proof}
Assume without loss of generality that $|I|=1$. Consider the Case
(R). First note that, if $[x,T^{q_n}x]$ denotes the interval with
endpoints $x$ and $T^{q_n}x$, we have
\[|\varphi^{(q_n)}(x)-\varphi^{(q_n)}(T^mx)|=
|\varphi^{(m)}(x)-\varphi^{(m)}(T^{q_n}x)|\leq\int_{[x,T^{q_n}x]}|(\varphi^{(m)})'(y)|\,d
y.\] Fix $y\in [x,T^{q_n}x]\subset I^{(n)}$. As we mentioned
before, the images  $T^jI^{(n)}$ for $ 0\leq j<p_n $ do not
contain any $l_\alpha$ or $r_\alpha$ in their interiors.
Therefore, for every $0\leq j<m$
\[\{T^jy-l_\alpha\}\geq
\min(\{T^jx-l_\alpha\},\{T^jT^{q_n}x-l_\alpha\}),\]
\[\{r_\alpha-T^jy\}\geq
\min(\{r_\alpha-T^jx\},\{r_\alpha-T^jT^{q_n}x\})\] for each
$\alpha\in\mathcal{A}$. Since $T^jT^{q_n}x=T^{q_n-1}(T^{j+1}x)$
with $0<j+1\leq m<p_n$, in view of Lemma~\ref{arithmetic
progressions}, applied to $x\in J_0^{(n)}$ and $T^{j+1}x\in J_{j+1}$, we
have $\{T^jy-l_\alpha\}\geq{\lambda^{(n)}_{\beta_n}}/2$ for all
$\alpha\in\mathcal{A}$ and $\{r_\alpha-T^jy\}\geq
\underline{c}{\lambda^{(n)}_{\beta_n}}/2$ if $C^-_\alpha\neq 0$,
where $\underline{c}=\min(\overline{c},1/\nu(A))$. Therefore,
\[y^l_\alpha=\min_{0\leq j<m}(T^jy-l_\alpha)^+\geq{\lambda^{(n)}_{\beta_n}}/2\
\text{ for all }\alpha\in\mathcal{A},\]
\[y^r_\alpha=\min_{0\leq
j<m}(r_\alpha-T^jy)^+\geq\underline{c}{\lambda^{(n)}_{\beta_n}}/2\
\text{ if }\ C^-_\alpha\neq 0.\] In view of
Corollary~\ref{cancellations_periodic} applied to $k=0$ and $k'=m$
and since $\underline{c}\leq 1$, it follows that
\[|(\varphi^{(m)})'(y)|\leq \sum_{\alpha \in
\mathcal{A}} \frac{|C_\alpha^+|}{{y}_\alpha^l} + \sum_{\alpha \in
\mathcal{A}} \frac{|C_\alpha^-|}{{y}_\alpha^r}  + M \bl(\varphi)
m\leq
\left(\frac{4d}{\underline{c}{\lambda^{(n)}_{\beta_n}}}+Mq_n\right)\bl(\varphi).\]
Therefore
\begin{align*}
|\varphi^{(q_n)}&(x)-\varphi^{(q_n)}(T^mx)|\leq
|x-T^{q_n}x|\left(\frac{4d}{\underline{c}{\lambda^{(n)}_{\beta_n}}}+Mq_n\right)\bl(\varphi)\\
&\leq
|I^{(n)}|\left(\frac{4d}{\underline{c}{\lambda^{(n)}_{\beta_n}}}+Mq_n\right)\bl(\varphi)
\leq
d\nu(A)|I^{(n)}_{\beta_n}|\left(\frac{4d}{\underline{c}{\lambda^{(n)}_{\beta_n}}}+
Mq_n\right)\bl(\varphi)\\ &\leq
d\nu(A)(4d/\underline{c}+M)\bl(\varphi),
\end{align*}
since ${\lambda^{(n)}_{\beta_n}}=|I^{(n)}_{\beta_n}|$ and
$|I^{(n)}_{\beta_n}|q_n=|I^{(n)}_{\beta_n}|Q_{\beta_n}(n)\leq
1$.
The proof of Case (L) is similar.
\end{proof}

For the next Lemma \ref{largederivatives} and its Corollary
\ref{largederivative}, we will consider cocycles $\psi \in
\ls_0(\sqcup_{\alpha\in \mathcal{A}} I_{\alpha})$, with an
additional assumption.  We will consider  $\psi$ of the usual
form, that, for $|I|=1$,  is
\begin{equation}\label{formpsi}
\psi(x)=-\sum_{\alpha\in\mathcal{A}}C^+_\alpha\log\{x-l_\alpha\} -
\sum_{\alpha\in\mathcal{A}}
C^-_\alpha\log\{r_\alpha-x\}+g_\psi(x),
\end{equation}
but in addition we will assume that $g'_\psi \in BV^1 $. This
allows  us to consider $\psi''$.
\begin{lemma}\label{largederivatives}
Let $\psi \in  \ls_0(\sqcup_{\alpha\in \mathcal{A}} I_{\alpha})$
be such that $g'_\psi \in BV^1 $. Consider the intervals
${J_k^{(n)}}$ defined in (\ref{defJk}) with
\begin{equation}\label{cdef}
\overline{c}:= {\left( |C^{\pm}_{\beta_0}|/( \pi^2 \nu(A)^2
\bl(\varphi) + \|g_\psi''\|_{\sup} )\right)}^{1/2}.
\end{equation}
Then for each $x \in J_k^{(n)}$ we have  $| (\psi'')^{(q_n)}(x)| \geq c_1
/(\lambda^{(n)}_{\beta_n})^2$ where the constant $c_1>0$ is
explicitly given by $c_1:= \pi^2\nu(A)^2 \bl(\psi) /3 $.
\end{lemma}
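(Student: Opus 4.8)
The plan is to compute $\psi''$ explicitly and evaluate the special Birkhoff sum $(\psi'')^{(q_n)}(x)=\sum_{0\le j<q_n}\psi''(T^jx)$ term by term, isolating the single summand that dominates all the others, namely the one coming from the closest visit of $\{T^jx\}_{0\le j<q_n}$ to the singularity at $r_{\beta_0}$ (in Case (R)), resp. at $l_{\beta_0}$ (in Case (L)). Differentiating \eqref{formpsi} twice, and using $g'_\psi\in BV^1$ so that $g''_\psi\in L^\infty$, one has
\[
\psi''(x)=\sum_{\alpha\in\mathcal{A}}\frac{C^+_\alpha}{\{x-l_\alpha\}^2}+\sum_{\alpha\in\mathcal{A}}\frac{C^-_\alpha}{\{r_\alpha-x\}^2}+g''_\psi(x),
\]
hence, for $x\in J_k^{(n)}$,
\[
(\psi'')^{(q_n)}(x)=\sum_{0\le j<q_n}\Big(\sum_{\alpha}\frac{C^+_\alpha}{\{T^jx-l_\alpha\}^2}+\sum_{\alpha}\frac{C^-_\alpha}{\{r_\alpha-T^jx\}^2}\Big)+\sum_{0\le j<q_n}g''_\psi(T^jx).
\]

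I would treat Case (R) in detail (Case (L) being symmetric, via Remark~\ref{analogous}), fixing $k$ with $0\le k<p_n$ and $x\in J_k^{(n)}$. By Lemma~\ref{arithmetic progressions}(iii), the index $j_n-k$ (which satisfies $0\le j_n-k<q_n$ by \eqref{controlledheight}) is the unique $j$ for which $\{r_{\beta_0}-T^jx\}$ is small, and $\overline c\,\lambda^{(n)}_{\beta_n}/2\le\{r_{\beta_0}-T^{j_n-k}x\}\le\overline c\,\lambda^{(n)}_{\beta_n}$; so the corresponding summand has absolute value at least $|C^-_{\beta_0}|/(\overline c\,\lambda^{(n)}_{\beta_n})^2$, which by the choice \eqref{cdef} of $\overline c$ equals $\big(\pi^2\nu(A)^2\bl(\varphi)+\|g''_\psi\|_{\sup}\big)/(\lambda^{(n)}_{\beta_n})^2$. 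This is the dominant term.

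The remaining summands I would bound uniformly in $x\in J_k^{(n)}$ using parts (i), (ii), (iv) of Lemma~\ref{arithmetic progressions}. For each fixed $\alpha$, the points $\{T^jx-l_\alpha\}$, $0\le j<q_n$ (resp. $\{r_\alpha-T^jx\}$), are pairwise at circular distance at least $\lambda^{(n)}_{\beta_n}$ by (iv) — subtracting a constant is a rotation of $\mathbb{R}/\mathbb{Z}$ — and all exceed $\lambda^{(n)}_{\beta_n}/2$ by (i), resp. exceed $\lambda^{(n)}_{\beta_n}/\nu(A)$ by (ii)–(iii) whenever the corresponding $C^-_\alpha\ne0$, with the sole exclusion of $j=j_n-k$ when $\alpha=\beta_0$. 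Sorting them, the $m$-th smallest is therefore at least $\lambda^{(n)}_{\beta_n}(m-\tfrac12)$, resp. at least $\lambda^{(n)}_{\beta_n}(m-1+\tfrac1{\nu(A)})$, so that
\[
\sum_{0\le j<q_n}\frac{1}{\{T^jx-l_\alpha\}^2}\le\frac{1}{(\lambda^{(n)}_{\beta_n})^2}\sum_{m\ge1}\frac{1}{(m-\tfrac12)^2}=\frac{\pi^2/2}{(\lambda^{(n)}_{\beta_n})^2},
\]
and likewise $\sum_{0\le j<q_n}\{r_\alpha-T^jx\}^{-2}\le (\pi^2\nu(A)^2/6)/(\lambda^{(n)}_{\beta_n})^2$ (using $\nu(A)\ge1$, $\sum_{m\ge1}m^{-2}=\pi^2/6$, and, for $\alpha=\beta_0$, first discarding the term $j=j_n-k$). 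Summing against $|C^\pm_\alpha|$ and using $\sum_\alpha(|C^+_\alpha|+|C^-_\alpha|)=\bl(\psi)$, the total non-dominant logarithmic contribution is at most $\tfrac12\pi^2\nu(A)^2\bl(\psi)/(\lambda^{(n)}_{\beta_n})^2$; and the bounded-variation part is controlled by $\sum_{0\le j<q_n}|g''_\psi(T^jx)|\le q_n\|g''_\psi\|_{\sup}\le\|g''_\psi\|_{\sup}/(\lambda^{(n)}_{\beta_n})^2$, where I use $q_n\lambda^{(n)}_{\beta_n}=Q_{\beta_n}(n)|I^{(n)}_{\beta_n}|\le1$ from \eqref{areas} together with $\lambda^{(n)}_{\beta_n}\le1$.

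Combining, $|(\psi'')^{(q_n)}(x)|$ is at least the dominant term minus all the others, hence at least
\[
\frac{1}{(\lambda^{(n)}_{\beta_n})^2}\Big(\pi^2\nu(A)^2\bl(\varphi)+\|g''_\psi\|_{\sup}-\tfrac12\pi^2\nu(A)^2\bl(\psi)-\|g''_\psi\|_{\sup}\Big)\ \ge\ \frac{\tfrac12\pi^2\nu(A)^2\bl(\psi)}{(\lambda^{(n)}_{\beta_n})^2},
\]
using $\bl(\psi)\le\bl(\varphi)$ (which holds in the intended application, where $\psi$ has the same logarithmic constants as $\varphi$), and this exceeds $c_1/(\lambda^{(n)}_{\beta_n})^2$ with $c_1=\pi^2\nu(A)^2\bl(\psi)/3$, as claimed; the proof in Case (L) is identical with the roles of $l_\alpha$ and $r_\alpha$ exchanged. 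The only genuinely delicate point — and the main obstacle — is not the elementary series estimate but the structural input of Lemma~\ref{arithmetic progressions}: one must know that among all $q_n$ orbit points $T^jx$ and all singularities $l_\alpha,r_\alpha$ there is \emph{exactly one} close approach, namely that of $T^{j_n-k}x$ to $r_{\beta_0}$, of the controlled order $\overline c\,\lambda^{(n)}_{\beta_n}$. This rigidity of closest approaches is precisely what parts (i)–(iii) of that Lemma encode, relying in turn on the fact that the iterates $T^jI^{(n)}_{\beta_n}$, $0\le j<q_n$, land inside $I^{(n-2)}_{\overline{\alpha}}$, which contains no point of $End(T)$ in its interior.
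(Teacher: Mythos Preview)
Your proof is correct and follows essentially the same approach as the paper: isolate the single dominant term coming from the closest approach $T^{j_n-k}x\to r_{\beta_0}$ (Case~(R)), bound all remaining $1/\{\cdot\}^2$ sums using the minimum-distance and spacing information from Lemma~\ref{arithmetic progressions}, handle the $g''_\psi$ contribution via $q_n\lambda^{(n)}_{\beta_n}\le1$, and compare. Your intermediate constants are marginally sharper (you use $\sum(m-\tfrac12)^{-2}=\pi^2/2$ where the paper uses the cruder $\sum 4/j^2=2\pi^2/3$), but the structure is identical, and your observation that one needs $\bl(\psi)=\bl(\varphi)$ to close the final inequality matches the paper's implicit use of this fact.
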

\begin{proof}
Since $g'_\psi \in BV^1 $, we can differentiate (\ref{formpsi})
twice and get
\begin{equation*}
\psi''(x) =  -\sum_{\alpha\in\mathcal{A}} \frac{C^+_\alpha}{\{x-l_\alpha\}^2} -
\sum_{\alpha\in\mathcal{A}} \frac{ C^-_\alpha}{\{ r_\alpha-x\}^2} +g_\psi''(x).
\end{equation*}
Assume that  Case (R) holds and take  $x \in {J_k^{(n)}}$.  By Lemma~\ref{arithmetic
progressions}, the minimum of $\{T^j x -l_\alpha\} $ for $\alpha
\in \mathcal{A}$  and $ 0\leq j < q_n$ is largest than
${\lambda^{(n)}_{\beta_n}}/2$ and the  points  $\{ T^j x, \, 0\leq
j < q_n \}$ are at least ${\lambda^{(n)}_{\beta_n}}$-spaced, so we
have the following upper bound:
\begin{equation*}
\left| \sum_{0\leq j < q_n} \frac{C^+_\alpha}{\{T^j x
-l_\alpha\}^2} \right| \leq \sum_{j=1}^{q_n} \frac{|C^+_\alpha| }{
j^2 ({\lambda^{(n)}_{\beta_n}}/2)^2 } \leq \frac{4 \pi^2}{6}
\frac{|C^+_\alpha|} {(\lambda^{(n)}_{\beta_n})^2}  .
\end{equation*}
Reasoning in the same way, from (ii) in Lemma~\ref{arithmetic
progressions}, for each $r_\alpha$ such that $C_\alpha^-\neq 0$
and $\alpha \neq \beta_0$ we get an analogous estimate for
\[\left| \sum_{0\leq j < q_n} \frac{C^-_\alpha}{{\{r_\alpha-T^j x
\}}^2} \right|\leq \frac{\pi^2\nu(A)^2}{6} \frac{|C^-_\alpha|} {
(\lambda^{(n)}_{\beta_n})^2} .\] Clearly, the estimate holds
trivially also if $C_\alpha^-= 0$, so it holds for all $\alpha
\neq \beta_0$. Again by (iii) in Lemma~\ref{arithmetic
progressions}, we have that $\{r_{\beta_0}- T^{j_n-k} x  \} \leq
\overline{c} {\lambda^{(n)}_{\beta_n}}$, so that
\[\left|\frac{C^-_{\beta_0}}{\{r_{\beta_0}- T^{j_n-k} x  \}^2}\right| \geq
\frac{|C^-_{\beta_0}|}{ \overline{c}^2
(\lambda^{(n)}_{\beta_n})^2}.\]
If we exclude $T^{j_n-k} x $, for the other points in the orbit
$\{ T^j x, \,  0\leq j < q_n , j\neq j_n-k \}$  we can reason as
above using the lower bound of (iii) in Lemma~\ref{arithmetic
progressions} on the minimal value of $\{r_{\beta_0}-T^j x \}$ and
the lower bound  on the spacing in (iv) to get


\begin{gather*}
\left| \sum_{0\leq j < q_n} \frac{C^-_{\beta_0}}{\{r_{\beta_0}-T^j
x \}^2 } - \frac{C^-_{\beta_0}}{\{r_{\beta_0}- T^{j_n-k} x  \}^2}
\right| \qquad\qquad \\\leq \sum_{j=1}^{q_n-1}
\frac{|C^-_{\beta_0}| }{ j^2 ({\lambda^{(n)}_{\beta_n}}/\nu(A))^2
} \leq \frac{\pi^2\nu(A)^2}{6} \frac{|C^-_{\beta_0}|}{
{(\lambda^{(n)}_{\beta_n})^2}}  .
\end{gather*}
Remark that, since $g_\psi' \in BV^1$,  $| (g_\psi'')^{(q_n)}(y)| \leq q_n
\|g_\psi''\|_{\sup} \leq \|g_\psi'' \|_{\sup}
/{(\lambda^{(n)}_{\beta_n})}^2 $ for each $y\in I$ because
${\lambda^{(n)}_{\beta_n}} q_n
=|I^{(n)}_{\beta_n}|Q_{\beta_n}(n)\leq 1$ and
$1/{\lambda^{(n)}_{\beta_n}} \leq
1/{(\lambda^{(n)}_{\beta_n})}^2$. Combining all the above
estimates and recalling  that $\bl(\psi) = \sum_{\alpha}
(|C_\alpha^+| + |C_\alpha^-|)$, we get
\begin{align*}
|( \psi'')^{(q_n)}(x) | & \geq \left| \left|
\frac{C^-_{\beta_0}}{\{r_{\beta_0}-T^{j_n-k}x  \}^2}  \right| -
\left|( \psi'')^{(q_n)}(x) -
\frac{C^-_{\beta_0}}{\{r_{\beta_0}-T^{j_n-k}x  \}^2}   \right|
\right|  \\ & \geq \frac{|C^-_{\beta_0}|}{ \overline{c}^2
{{\lambda^{(n)}_{\beta_n}}}^2} - \frac{2 \pi^2\nu(A)^2
\bl(\psi)}{3{(\lambda^{(n)}_{\beta_n})}^2} -
\frac{\|g_\psi''\|_{\sup}}{{(\lambda^{(n)}_{\beta_n})}^2}.
\end{align*}
Recalling the  definition (\ref{cdef}) of $\overline{c}$, this
gives $|( \psi'')^{(q_n)}(x) | \geq  \pi^2\nu(A)^2
\bl(\psi)/3{(\lambda^{(n)}_{\beta_n})}^2$ and concludes the proof
of the lemma for the Case (R). The Case (L) is similar.
\end{proof}

\begin{corollary}\label{largederivative}
If $g'_\psi \in BV^1$ then  for every $0\leq k<p_n$ there exists a
subinterval $\widetilde{J}_k^{(n)} \subset {J_k^{(n)}}$ such that
$|\widetilde{J}_k^{(n)}|\geq |{J_k^{(n)}}|/3$ and for each $x \in
\widetilde{J}_k^{(n)}$ we have
\begin{equation*}
|( \psi^{(q_n)})'(x)| \geq c' q_n , \quad \text{where} \quad
c'=\pi^2\nu(A) ^2 \overline{c}\bl(\psi)/36.
\end{equation*}
\end{corollary}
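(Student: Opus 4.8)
The plan is to obtain the Corollary from Lemma~\ref{largederivatives} by a one–variable calculus argument: a function on an interval whose derivative is strictly monotone and whose second derivative is bounded below in modulus has large derivative on all but a short subinterval.

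First I would record the regularity of $\psi^{(q_n)}$ on $J_k^{(n)}$. As noted in the proof of Lemma~\ref{arithmetic progressions}, for $0\le k<p_n$ the images $T^jJ_k^{(n)}$ with $0\le j<q_n$ lie in continuity intervals of $T$, so on $J_k^{(n)}$ each $T^j$ is a translation and $(\psi^{(q_n)})'(x)=\sum_{0\le j<q_n}\psi'(T^jx)=(\psi')^{(q_n)}(x)$. Since $g'_\psi\in BV^1$ is absolutely continuous on each $I_\alpha$ and the points $T^jx$ stay at a definite distance from $End(T)$ for $x\in J_k^{(n)}$, each summand $\psi'\circ T^j$ is absolutely continuous on $J_k^{(n)}$; hence $(\psi^{(q_n)})'$ is absolutely continuous there with $(\psi^{(q_n)})''=(\psi'')^{(q_n)}$ almost everywhere. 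By Lemma~\ref{largederivatives}, $|(\psi'')^{(q_n)}(x)|\ge c_1/(\lambda^{(n)}_{\beta_n})^2$ on $J_k^{(n)}$ with $c_1=\pi^2\nu(A)^2\bl(\psi)/3$; moreover, inspecting the proof of that Lemma, the single term $-C^-_{\beta_0}/\{r_{\beta_0}-T^{j_n-k}x\}^2$ in Case (R) (resp.\ $-C^+_{\beta_0}/\{T^{j_n-k}x-l_{\beta_0}\}^2$ in Case (L)) is of constant sign on $J_k^{(n)}$ and strictly dominates, in modulus, the rest of $(\psi'')^{(q_n)}$; hence $(\psi^{(q_n)})''$ has constant sign on $J_k^{(n)}$ and $(\psi^{(q_n)})'$ is strictly monotone there.

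Next I would bound the ``bad set'' $L:=\{x\in J_k^{(n)}:|(\psi^{(q_n)})'(x)|<c'q_n\}$. By monotonicity $L$ is a subinterval of $J_k^{(n)}$, and for $x_1<x_2$ in $L$ the fundamental theorem of calculus gives $2c'q_n>|(\psi^{(q_n)})'(x_2)-(\psi^{(q_n)})'(x_1)|=\int_{x_1}^{x_2}|(\psi'')^{(q_n)}(t)|\,dt\ge(x_2-x_1)c_1/(\lambda^{(n)}_{\beta_n})^2$, so $|L|\le 2c'q_n(\lambda^{(n)}_{\beta_n})^2/c_1$. Since $q_n\lambda^{(n)}_{\beta_n}=Q_{\beta_n}(n)|I^{(n)}_{\beta_n}|\le|I^{(0)}|=1$ by \eqref{areas}, this yields $|L|\le 2c'\lambda^{(n)}_{\beta_n}/c_1$. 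With $c_1=\pi^2\nu(A)^2\bl(\psi)/3$ and $c'=\pi^2\nu(A)^2\overline{c}\,\bl(\psi)/36$ one computes $2c'/c_1=\overline{c}/6$, hence $|L|\le\overline{c}\lambda^{(n)}_{\beta_n}/6=|J_k^{(n)}|/3$, using $|J_k^{(n)}|=\overline{c}\lambda^{(n)}_{\beta_n}/2$ from \eqref{defJk}.

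Finally, $J_k^{(n)}\setminus L$ is a union of at most two intervals of total length $\ge 2|J_k^{(n)}|/3$; letting $\widetilde J_k^{(n)}$ be the longer one, we get $|\widetilde J_k^{(n)}|\ge|J_k^{(n)}|/3$ and $|(\psi^{(q_n)})'(x)|\ge c'q_n$ for every $x\in\widetilde J_k^{(n)}$. Case (L) is handled identically, invoking Remark~\ref{analogous} in place of Lemma~\ref{arithmetic progressions}. I expect the only genuinely delicate point to be the constant–sign assertion for $(\psi^{(q_n)})''$, which is not stated in Lemma~\ref{largederivatives} but is implicit in its proof via the domination estimate; the remaining steps are routine bookkeeping of the constants.
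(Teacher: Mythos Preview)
Your proof is correct and rests on the same ingredients as the paper's: the lower bound on $|(\psi'')^{(q_n)}|$ from Lemma~\ref{largederivatives}, the constant sign of $(\psi^{(q_n)})''$ on $J_k^{(n)}$, and the inequality $\lambda^{(n)}_{\beta_n}q_n\le 1$. The only organizational difference is that the paper argues constructively---it evaluates $(\psi^{(q_n)})'$ at the midpoint of $J_k^{(n)}$ and, depending on the sign there, takes $\widetilde{J}_k^{(n)}$ to be the rightmost or leftmost third of $J_k^{(n)}$, then uses the mean value theorem and the second-derivative bound to conclude---whereas you bound the measure of the bad set $L=\{|(\psi^{(q_n)})'|<c'q_n\}$ directly and take the larger component of the complement; both are equivalent elementary calculus arguments and yield the same constants. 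Your remark that the constant-sign claim is not literally contained in the statement of Lemma~\ref{largederivatives} but follows from the domination estimate in its proof is accurate and matches how the paper uses it.
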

\begin{proof}
By Lemma~\ref{largederivatives}, the sign of $( \psi^{(q_n)})''$
is constant on ${J_k^{(n)}}$, so assume without loss of generality
that $( \psi^{(q_n)})''>0$, so that $( \psi^{(q_n)})'$ is
increasing on $J_k^{(n)}$. Assume we are in Case (R). Consider the
value of $( \psi^{(q_n)})'$ at the middle point
$b_k-{3\overline{c} {\lambda^{(n)}_{\beta_n}}}/{4}$ of
${J_k^{(n)}}$. If $( \psi^{(q_n)})'(b_k-{3\overline{c}
{\lambda^{(n)}_{\beta_n}}}/{4})\geq 0$, let
$\widetilde{J}_k^{(n)}$ be the right third subinterval of
${J_k^{(n)}}$, i.e. $\widetilde{J}_k^{(n)}:=\left[ b_k -
{2\overline{c} {\lambda^{(n)}_{\beta_n}}}/{3} , b_k -
{\overline{c} {\lambda^{(n)}_{\beta_n}}}/{2}  \right]$. Since
$\psi' \circ T^i$ is continuous on $J_k^{(n)}$ for $0\leq i< q_n$,
by mean value theorem and by monotonicity, there exists $\xi \in
(b_k - {3\overline{c} {\lambda^{(n)}_{\beta_n}}}/{4}  , b_k -
{2\overline{c} {\lambda^{(n)}_{\beta_n}}}/{3} )$ such that for
each $x \in \widetilde{J}_k^{(n)}$
\begin{align*}
( \psi^{(q_n)})'(x)&\geq ( \psi^{(q_n)})'\left( b_k -
\frac{2\overline{c} {\lambda^{(n)}_{\beta_n}}}{3} \right)\\ &= (
\psi^{(q_n)})'\left(b_k-\frac{3\overline{c}
{\lambda^{(n)}_{\beta_n}}}{4} \right) + ( \psi^{(q_n)})''(\xi)
\frac{{\lambda^{(n)}_{\beta_n}}\overline{c}}{12} \geq
\frac{\overline{c}c_1}{12{\lambda^{(n)}_{\beta_n}}} ,
\end{align*}
where the latter inequality follows from positivity of $(
\psi^{(q_n)})'(b_k-{3\overline{c}
{\lambda^{(n)}_{\beta_n}}}/{4} )$ and the lower bound $(
\psi^{(q_n)})''(\xi)\geq c_1/{(\lambda^{(n)}_{\beta_n})}^2$
given by Lemma~\ref{largederivatives}.

Similarly, if $( \psi^{(q_n)})'(b_k-{3\overline{c}
{\lambda^{(n)}_{\beta_n}}}/{4})\leq 0$, we can  let
$\widetilde{J}_k^{(n)}$ be the left third subinterval of
$\widetilde{J}_k^{(n)}$, i.e. $\widetilde{J}_k^{(n)}:=  \left[b_k
- \overline{c} {\lambda^{(n)}_{\beta_n}} , b_k - {5\overline{c}
{\lambda^{(n)}_{\beta_n}}}/{6} \right]$ and reasoning as above we
get $( \psi^{(q_n)})'(x)\leq -
\frac{c_1\overline{c}}{12{\lambda^{(n)}_{\beta_n}}}$ for all $x\in
\widetilde{J}_k^{(n)}$. Recalling that ${\lambda^{(n)}_{\beta_n}}
q_n\leq 1$ and the definition  of $c_1$, this concludes the proof
in Case (R). Case (L) is completely symmetric.
\end{proof}

\subsection{Tightness and ergodicity}\label{tightness:sec}
In this subsection we conclude the proof of Theorem
\ref{theorem:ergmain}.
We will verify that the assumptions the ergodicity criterion in
Proposition~\ref{ergodicity:criterium} hold for the rigidity sets
$(\Xi_n)$ and rigidity times $(q_n)$ constructed  in the previous
\S\ref{oscillationsec}. We first prove the following.

\begin{proposition}\label{tightness}
Let $T:I\to I$ be an IET of periodic type. For every cocycle
$\varphi\in\ls_0(\sqcup_{\alpha\in\mathcal{A}}I_{\alpha})$  with
$\mathfrak{h}(\varphi)=0$ and $\bl(\varphi)\neq 0$\footnote{We
remark that the assumption $\bl(\varphi)\neq 0$ is used only to
define the sets $(\Xi_n)$ as in Definition (\ref{beta}), but does
not play any essential role in this Proposition. The same
conclusion holds more in general for similar rigidity sets also
when $\bl(\varphi)= 0$. On the other hand the assumption
$\mathfrak{h}(\varphi)=0$ is crucial in this Proposition, while
the assumption $\bl(\varphi)\neq 0$ is crucial in
Proposition~\ref{oscillations}.} any rigidity sets $(\Xi_n)$ and
rigidity times $(q_n)$ as in Definition \ref{beta} there exists
$C>0$ such that
\begin{equation}\label{bounded}
\int_{\Xi_n}|\varphi^{(q_n)}(x)|\ud x \leq C\quad\text{ for all
}\quad n\geq 1.
\end{equation}
\end{proposition}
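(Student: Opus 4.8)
The plan is to estimate the $L^1$ norm of $\varphi^{(q_n)}$ over the rigidity set $\Xi_n$ by exploiting that $\Xi_n$ is a Rohlin tower of height $p_n = Q_{\overline{\alpha}}(n-2)$ over a subinterval $J_0^{(n)} \subset I^{(n)}_{\beta_n}$, together with the correction hypothesis $\mathfrak{h}(\varphi) = 0$, which via Theorem~\ref{operatorcorrection}(2) controls $\|S(k)\varphi\|_{L^1(I^{(k)})}/|I^{(k)}|$ uniformly in $k$. First I would decompose the sum: for $x \in J_0^{(n)}$,
\[
\varphi^{(q_n)}(x) = S(0,n)\varphi(y) + (\text{boundary terms}),
\]
where $y$ is the point of $I^{(n)}$ in the $T$-orbit of $x$; more precisely, since $q_n = Q_{\beta_n}(n)$ is exactly the return time of $I^{(n)}_{\beta_n}$ to $I^{(n)}$, we have $\varphi^{(q_n)}(x) = (S(n)\varphi)(x)$ for $x \in I^{(n)}_{\beta_n}$, and hence directly $\varphi^{(q_n)}(x) = (S(n)\varphi)(x)$.

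Given this, $\int_{\Xi_n} |\varphi^{(q_n)}(x)|\,dx = \sum_{k=0}^{p_n-1} \int_{J_k^{(n)}} |\varphi^{(q_n)}(x)|\,dx$. Using Lemma~\ref{lemma:roznqn}, I can replace $\varphi^{(q_n)}(T^m x)$ by $\varphi^{(q_n)}(x)$ at the cost of an additive constant $C_2 = C_2(\bl(\varphi), \overline{c}, \nu(A), d, M)$, so that
\[
\int_{\Xi_n} |\varphi^{(q_n)}(x)|\,dx \leq p_n \int_{J_0^{(n)}} \left( |\varphi^{(q_n)}(x)| + C_2 \right)dx \leq p_n |J_0^{(n)}| C_2 + p_n \int_{J_0^{(n)}} |(S(n)\varphi)(x)|\,dx.
\]
Now $p_n |J_0^{(n)}| = Q_{\overline{\alpha}}(n-2)|J_0^{(n)}| \leq Q_{\overline{\alpha}}(n-2)|I^{(n)}_{\beta_n}| \leq \|A\|^2 Q_{\overline{\alpha}}(n)|I^{(n)}_{\overline{\alpha}}|\nu(A) \leq \|A\|^2 \nu(A) |I^{(0)}|$ by (\ref{areas}) and (\ref{balancedintervals}), which bounds the first term. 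For the second term, $p_n \int_{J_0^{(n)}} |(S(n)\varphi)(x)|\,dx \leq p_n \|S(n)\varphi\|_{L^1(I^{(n)})}$, and since $p_n |I^{(n)}| = Q_{\overline{\alpha}}(n-2)|I^{(n)}| \leq \|A\|^2 Q_{\overline{\alpha}}(n)|I^{(n)}| \leq \|A\|^2 \nu(A) |I^{(0)}|$ again by (\ref{areas}), we get
\[
p_n \|S(n)\varphi\|_{L^1(I^{(n)})} = p_n |I^{(n)}| \cdot \frac{\|S(n)\varphi\|_{L^1(I^{(n)})}}{|I^{(n)}|} \leq \|A\|^2 \nu(A)|I^{(0)}| \cdot \left( C_1 \lv(\varphi) + C_2' \frac{\|\varphi\|_{L^1(I^{(0)})}}{|I^{(0)}|} \right),
\]
where the last inequality is precisely Theorem~\ref{operatorcorrection}(2), valid since $T$ is of hyperbolic periodic type and $\mathfrak{h}(\varphi) = 0$. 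Collecting these bounds gives (\ref{bounded}) with $C$ depending only on $\varphi$ (through $\lv(\varphi)$, $\|\varphi\|_{L^1}$, $\bl(\varphi)$) and on the period matrix $A$.

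The main subtlety — and the step I would be most careful about — is the bookkeeping identifying $\varphi^{(q_n)}\restriction_{J_0^{(n)}}$ with $(S(n)\varphi)\restriction_{I^{(n)}_{\beta_n}}$ and then using Lemma~\ref{lemma:roznqn} to transfer the estimate from $J_0^{(n)}$ to the whole tower $\Xi_n$: one must check that the constant $\overline{c}$ built into the definition of $J_k^{(n)}$ (Definition~\ref{Jkdef}, chosen as in (\ref{cdef})) is compatible with the hypotheses of Lemma~\ref{lemma:roznqn}, and that $\Xi_n$ is indeed (a subset of) the Rohlin tower of $T$ of height $p_n$ over $J_0^{(n)}$ so the slices $J_k^{(n)} = T^k J_0^{(n)}$ are disjoint — this is guaranteed by (\ref{controlledheight}) and the fact that $T^{q_n}I^{(n)}_{\beta_n} \subset I^{(n)} \subsetneq I^{(n-2)}_{\overline{\alpha}}$, which is a continuity interval for $T^k$, $0 \le k < Q_{\overline{\alpha}}(n-2) = p_n$. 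Everything else is a routine matter of chaining the volume estimates (\ref{balancedintervals}), (\ref{areas}) and the relation $Q_{\overline{\alpha}}(n) \leq \|A\|^2 Q_{\overline{\alpha}}(n-2)$; I do not anticipate any genuine obstacle beyond this bookkeeping, since the heavy lifting (the uniform $L^1$ bound for the corrected cocycle) is already done in Theorem~\ref{operatorcorrection}.
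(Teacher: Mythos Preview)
Your approach is essentially identical to the paper's: identify $\varphi^{(q_n)}|_{J_0^{(n)}}$ with $S(n)\varphi$, invoke Theorem~\ref{operatorcorrection}(2) to bound $\|S(n)\varphi\|_{L^1(I^{(n)})}/|I^{(n)}|$ uniformly (using $\mathfrak{h}(\varphi)=0$), and use Lemma~\ref{lemma:roznqn} to propagate the estimate up the tower. The only wrinkle is that your volume bookkeeping for $p_n|I^{(n)}|$ via $\|A\|^2$ has the inequality direction muddled; the clean bound the paper uses is $p_n|I^{(n)}|=Q_{\overline{\alpha}}(n-2)|I^{(n)}|\leq Q_{\overline{\alpha}}(n-2)|I^{(n-2)}_{\overline{\alpha}}|\leq |I^{(0)}|$, which follows from the inclusion $I^{(n)}\subset I^{(n-2)}_{\overline{\alpha}}$ (Lemma~\ref{remark:wybj}) and~(\ref{areas}).
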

\begin{proof}
Let $(\Xi_n)$ and $(q_n)$ by any rigidity sets and times as in
Definition \ref{beta}. Let us first prove that there exists a
constant $C_1>0$ such that for any $n\in \mathbb{N}$ and for any
subinterval  $J \subset I^{(n)}_{\beta_n}$
\begin{equation}\label{controloverJ}
\int_J | \varphi^{(q_n)}(x)| \, \ud x \leq  C_1 | I^{(n)}|.
\end{equation}
Recall that for $x\in I^{(n)}_{\beta_n}$ we have
$S(n)(\varphi)(x)=\varphi^{(Q_{\beta_n}(n))}(x)=\varphi^{(q_n)}(x)$.
Hence
\[\int_J | \varphi^{(q_n)}(x)| \; \ud x=\int_J | S(n)(\varphi)| \; \ud x
\leq \| S(n)(\varphi)\|_{L^1(I^{(n)})}\] Thus,
(\ref{controloverJ}) follows from
Theorem~\ref{operatorcorrection}.

Let us now fix any $0\leq k < p_n $. Given $x \in J_k^{(n)}$, let
$x=T^{k}y$ for some $y \in J_0^{(n)}$. By Lemma~\ref{lemma:roznqn},
$|\varphi^{(q_n)}(y)-\varphi^{(q_n)}(T^ky)|\leq C_2$, so
\[|\varphi^{(q_n)}(x)|\leq |\varphi^{(q_n)}(T^{-k}x)|+C_2\text{ for each }x\in J_k^{(n)}.\]
Thus, by (\ref{controloverJ}), it follows that
\begin{align*}\int_{J_k^{(n)}}|\varphi^{(q_n)}(x)|\ud x&\leq
\int_{J_k^{(n)}}|\varphi^{(q_n)}(T^{-k}x)|\ud x+C_2|J_k^{(n)}|\\
&=\int_{J_0^{(n)}}|\varphi^{(q_n)}(x)|\ud x+C_2|J_k^{(n)}|\leq
(C_1+C_2)|I^{(n)}|.
\end{align*}
Consequently,
\begin{align*}
\int_{\Xi_n} | \varphi^{(q_n)}| \, \ud x &= \sum_{k=0}^{p_n-1}
\int_{J_k^{(n)}} | \varphi^{(q_n)}| \, \ud x\leq
(C_1+C_2)p_n|I^{(n)}|\\&\leq
(C_1+C_2)|I^{(n-2)}_{\overline{\alpha}}|Q_{\overline{\alpha}}(n-2)\leq
C_1+C_2,
\end{align*}
which concludes the proof.
\end{proof}


\begin{proposition}\label{oscillations}
Let $T:I\to I$ be an IET of periodic type. For each
$\varphi\in\ls_0(\sqcup_{\alpha\in\mathcal{A}}I_{\alpha})$ such
that $\bl(\varphi)\neq 0$  there exists rigidity sets $(\Xi_n)$
and rigidity times $(q_n)$  with
$\lim_{n\to\infty}Leb(\Xi_n)=\delta>0$ and  $c>0$ such that for
all $s$ large enough we have
\begin{equation}\label{wm}
\limsup_{n\to\infty}\left|\int_{\Xi_n}e^{2\pi i s
\varphi^{(q_n)}(x)}\, \ud x\right| \leq c< \delta.
\end{equation}
\end{proposition}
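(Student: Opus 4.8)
The plan is to verify the hypothesis of the ergodicity criterion Proposition~\ref{ergodicity:criterium} by producing, on a definite proportion of each $\Xi_n$, fast oscillation of $\varphi^{(q_n)}$. Throughout I would use the rigidity sets $(\Xi_n)$ and times $(q_n)$ of Definition~\ref{beta}, taking $J_0^{(n)}\subset I^{(n)}_{\beta_n}$ to be the interval of~(\ref{defJk}) with the constant $\overline c$ of~(\ref{cdef}); by the Lemma following Definition~\ref{beta} these are rigidity sets with $Leb(\Xi_n)$ bounded below, and after passing to a subsequence of induction times I may assume $Leb(\Xi_n)\to\delta>0$. Since the derivative-growth statements Lemma~\ref{largederivatives} and Corollary~\ref{largederivative} require the regularity $g'_\psi\in\bv^1$, which a general $\varphi\in\ls_0$ lacks, the first step is a cohomological regularization. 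Writing $\varphi=\varphi_{\log}+g_\varphi$ with $\varphi_{\log}$ its pure logarithmic part (so $g_{\varphi_{\log}}=0$) and $g_\varphi\in\bv^1(\sqcup_\alpha I_\alpha)$, Corollary~\ref{lempl} yields a continuous transfer function $u$ and a piecewise linear $g_{pl}\in\pl(\sqcup_\alpha I_\alpha)$ with $s(g_{pl})=s(g_\varphi)$ such that $\varphi=\widetilde\varphi+u-u\circ T$, where $\widetilde\varphi:=\varphi_{\log}+g_{pl}\in\ls_0$ has the same logarithmic constants and the same mean as $\varphi$ and, crucially, $g'_{\widetilde\varphi}$ is a step function, hence lies in $\bv^1$ with $\|g''_{\widetilde\varphi}\|_{\sup}=0$. (Working with $\varphi_{\log}$ alone and a bounded-variation weight in van der Corput would instead leave an error term that does not decay in the frequency, so the piecewise linear model is the right choice.)

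Applying Corollary~\ref{largederivative} with $\psi=\widetilde\varphi$ then gives, for each $0\le k<p_n$, a subinterval $\widetilde J_k^{(n)}\subset J_k^{(n)}$ with $|\widetilde J_k^{(n)}|\ge|J_k^{(n)}|/3$ and $|(\widetilde\varphi^{(q_n)})'(x)|\ge c'q_n$ on $\widetilde J_k^{(n)}$, where $c'>0$ precisely because $\bl(\widetilde\varphi)=\bl(\varphi)\ne 0$; by Lemma~\ref{largederivatives} the function $(\widetilde\varphi^{(q_n)})''=(\varphi_{\log}^{(q_n)})''$ keeps a constant sign on $J_k^{(n)}$, so $(\widetilde\varphi^{(q_n)})'$ is monotone there, and by Lemma~\ref{arithmetic progressions} the orbit $\{T^jx:0\le j<q_n\}$ of any $x\in J_k^{(n)}$ avoids $End(T)$, so $\widetilde\varphi^{(q_n)}$ is $C^1$ on $\widetilde J_k^{(n)}$. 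Set $\widetilde\Xi_n:=\bigcup_{k=0}^{p_n-1}\widetilde J_k^{(n)}$, a disjoint union inside the Rohlin tower, so that $Leb(\Xi_n\setminus\widetilde\Xi_n)\le\tfrac23 Leb(\Xi_n)$. The van der Corput first-derivative estimate on each $\widetilde J_k^{(n)}$ yields $\bigl|\int_{\widetilde J_k^{(n)}}e^{2\pi i s\widetilde\varphi^{(q_n)}(x)}\,dx\bigr|\le 3/(2\pi s\,c'q_n)$; summing the $p_n$ terms and using that $p_n/q_n=Q_{\overline\alpha}(n-2)/Q_{\beta_n}(n)$ is bounded (from~(\ref{balancedintervals}) and $Q(n)=Q(n-2)A^2$) I obtain $\bigl|\int_{\widetilde\Xi_n}e^{2\pi i s\widetilde\varphi^{(q_n)}}\bigr|\le C/s$ with $C$ independent of $n$ and $s$. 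Bounding the complement trivially, $\bigl|\int_{\Xi_n\setminus\widetilde\Xi_n}e^{2\pi i s\widetilde\varphi^{(q_n)}}\bigr|\le Leb(\Xi_n\setminus\widetilde\Xi_n)\le\tfrac23 Leb(\Xi_n)$, so $\bigl|\int_{\Xi_n}e^{2\pi i s\widetilde\varphi^{(q_n)}}\bigr|\le C/s+\tfrac23 Leb(\Xi_n)$.

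Finally I would transfer this bound back to $\varphi$ itself. Since $\varphi^{(q_n)}(x)=\widetilde\varphi^{(q_n)}(x)+u(x)-u(T^{q_n}x)$ and on $\Xi_n$ the points $x$ and $T^{q_n}x$ lie in a common interval of length $\le|I^{(n)}_{\beta_n}|\to0$, the oscillation of the continuous $u$ between them tends to $0$ uniformly on $\Xi_n$; hence the factor $e^{2\pi i s(u(x)-u(T^{q_n}x))}$ differs from $1$ by some $\eta_n(s)\to0$ for each fixed $s$, giving $\bigl|\int_{\Xi_n}e^{2\pi i s\varphi^{(q_n)}}\bigr|\le C/s+\tfrac23 Leb(\Xi_n)+\eta_n(s)\,Leb(\Xi_n)$. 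Taking $\limsup_{n\to\infty}$ and using $Leb(\Xi_n)\to\delta$ yields $\limsup_n\bigl|\int_{\Xi_n}e^{2\pi i s\varphi^{(q_n)}}\bigr|\le C/s+\tfrac23\delta$, which is strictly less than $\delta$ once $s>3C/\delta$. Taking $c$ to be any number in $(\tfrac23\delta,\delta)$, for instance $c=\tfrac56\delta$, this is exactly~(\ref{wm}) for all $s$ large enough.

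The step I expect to be the main obstacle is the cohomological regularization and its compatibility with the rigidity sets: one must know that the extra $\bv^1$ regularity can be produced by a cohomology with a transfer function that is bounded and whose oscillation over short intervals tends to $0$ — here the continuity statement in Corollary~\ref{lempl} is used — and the reduction of the oscillatory integral from $\varphi$ to $\widetilde\varphi$ hinges entirely on the rigidity estimate $\sup_{x\in\Xi_n}d(x,T^{q_n}x)\to0$, i.e.\ the second property in~(\ref{rigidity}), which (as noted in the footnote there) is not part of every definition of rigidity sets but is indispensable here. A more routine technical point is the boundedness of $p_n/q_n$, where one uses that $T$ is of periodic type rather than merely of Roth type.
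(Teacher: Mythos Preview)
Your proposal is correct and follows essentially the same route as the paper: cohomological reduction via Corollary~\ref{lempl} to a cocycle $\widetilde\varphi$ with piecewise linear $g_{\widetilde\varphi}$, use of Corollary~\ref{largederivative} on the subintervals $\widetilde J_k^{(n)}$, trivial bound on the complement, and transfer back via the continuous transfer function and the rigidity property $d(x,T^{q_n}x)\to0$. The one genuine (minor) difference is in how the oscillatory integral on each $\widetilde J_k^{(n)}$ is handled: you invoke the van der Corput first-derivative lemma, using that $(\widetilde\varphi^{(q_n)})''$ has constant sign on $J_k^{(n)}$ (Lemma~\ref{largederivatives}) so $(\widetilde\varphi^{(q_n)})'$ is monotone; the paper instead performs the integration by parts explicitly and must then control the boundary term and the variation of $1/(\psi'^{(q_n)})$, the latter via an additional estimate (the variation bound~(\ref{secondpart})) that again calls on Lemma~\ref{arithmetic progressions}. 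Your packaging is slightly cleaner and bypasses that variation computation, but the underlying mechanism is identical.
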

\begin{proof}
Since $g_\varphi\in\bv^1(\sqcup_{\alpha\in \mathcal{A}}
I_{\alpha})$, by Corollary~\ref{lempl}, $g_\varphi$ is
cohomologous via a continuous transfer function to a piecewise
linear function.
 Thus, there exists a continuous $h:I\to\R$ such that
$\varphi = \psi + h \cdot T - h$ and $g_\psi$ is piecewise linear.
In particular, $g_\psi' \in BV^1$, so we can apply Corollary
\ref{largederivative} to $\psi$. Let $(\Xi_n)$ and let $(q_n)$ be
the sequences of rigidity sets and times  as in
Definitions~\ref{beta} and \ref{Jkdef}, where the constant
$\overline{c}$ is given by (\ref{cdef}).  In view of
(\ref{dodelta}), passing to a subsequence if necessary, we can
assume that $\lim_{n\to\infty}Leb(\Xi_n)=\delta>0$.

Since $h$ is continuous and  by the properties of rigidity sets
$d(T^{q_n}x,x)\to 0$,
we have
\begin{equation}\label{remark:przecoh}
\lim_{n\to\infty}\left|\int_{\Xi_n}e^{2\pi i s
(\psi^{(q_n)}(x)+h(T^{q_n}x)-h(x))}\,\ud x-
\int_{\Xi_n}e^{2\pi i s \psi^{(q_n)}(x)}\,\ud x\right|= 0.
\end{equation}
In view of (\ref{remark:przecoh}), since $\varphi^{(q_n)} =
\psi^{(q_n)}+h\circ T^{q_n}-h$, it is enough to prove (\ref{wm})
for $\psi$. Since $\Xi_n$ is the union of the intervals
${J_k^{(n)}}$ for $k=0, \dots, p_n-1$, we will estimate the
integral over each  ${J_k^{(n)}}:= [a_k,b_k]$.  Let
${\widetilde{J}_k^{(n)}}:= [\widetilde{a}_k,\widetilde{b}_k]
\subset J_k^{(n)}$,  for $k=0, \dots, p_n-1$, be the subintervals
given by Corollary \ref{largederivative}. We will first control
the integral over each $\widetilde{J}_k^{(n)}$.
Since a.e. $ \frac{ d }{ d x} ({ \psi}^{(q_n)})={\psi'}^{(q_n)} $
and $|{\psi'}^{(q_n)}|\geq c'q_n>0$ on each
$\widetilde{J}_k^{(n)}$ (Corollary~\ref{largederivative}), using
integration by parts we get
\begin{align} \label{parts} 
\begin{split}
&\left| \int_{\widetilde{J}_k^{(n)}} e^{ i s  \psi^{(q_n)}(x)}\;d
x \right|=
\left|  \int_{\widetilde{a}_k}^{\widetilde{b}_k} \frac{ \frac{ d
}{ d x}( e^{ i s
 \psi^{(q_n)}(x)}) } { i s \psi'^{(q_n)}(x)}\,d x
\right|  \\
&\;\;\;\;= \frac{1}{|s|}
\left| {\left[ \frac{ e^{ i s  \psi^{(q_n)}(x)}
}{\psi'^{(q_n)}(x)} \right]}^{\widetilde{b}_k}_{\widetilde{a}_k} -
\int_{\widetilde{a}_k}^{\widetilde{b}_k}
  e^{ i s  \psi^{(q_n)}(x)} \frac{ d }{ d x} \left( \frac{1}{ \psi'^{(q_n)}(x)}\right) \,dx  \right| .
\end{split}
\end{align}
Let us estimate each of the two terms in (\ref{parts}) separately.
By Corollary~\ref{largederivative},
\begin{equation}\label{firstpart}
\left| \left[ \frac{ e^{ i s  \psi^{(q_n)}(x)} }{\psi'^{(q_n)}(x)}
\right]^{\widetilde{b}_k}_{\widetilde{a}_k} \right| \leq
\frac{2}{\min_{z\in {\widetilde{J}_k^{(n)}} }|\psi'^{(q_n)}(z)|
}\leq \frac{2}{c' q_n}.
\end{equation}
Recall that for every $\mathscr{C}^{1}$-function $f:J\to\R$  we
have $\Var{{f}}{J} =  \int_J |{f}'|\,\ud x$ and that if $|{f}|>0$
then $\Var{1/{f}}{J} \leq \Var{{f}}{J}/ (\min_J {f})^{2}$. Since
$\psi^{(q_n)}$ is $\mathscr{C}^1$ on ${\widetilde{J}_k^{(n)}}$,
using again Corollary~\ref{largederivative} we estimate the second
term by
\begin{align*}
\left|  \int_{\widetilde{a}_k}^{\widetilde{b}_k}  e^{ i s
{\psi}^{(q_n)}(x)} \frac{ d}{ d x} \left( \frac{1}{
{\psi'}^{(q_n)}(x)}\right) dx \right| &\leq \Var{  \frac{1}{
{\psi'}^{(q_n)}}}{{\widetilde{J}_k^{(n)}}} \leq \frac{1}{c'^2
q_n^2} \Var{{ {\psi'}^{(q_n)}}}{{\widetilde{J}_k^{(n)}}}.
\end{align*}
We can write $\Var{{ {\psi'}^{(q_n)}}}{ {\widetilde{J}_k^{(n)}}} =
\Var{ \sum_{j=0}^{q_n-1}\psi'\cdot T^j}{ {\widetilde{J}_k^{(n)}}}
\leq \sum_{j=0}^{q_n-1}\Var{\psi'}{T^j {\widetilde{J}_k^{(n)}}} $.
Assume without loss of generality that $|I|=1$. Thus
\begin{equation*}
\psi'(x) =  \sum_{\alpha\in\mathcal{A}}
\frac{C^+_\alpha}{\{x-l_\alpha\}} - \sum_{\alpha\in\mathcal{A}}
\frac{ C^-_\alpha}{\{ r_\alpha-x\}} +g_\psi'(x),
\end{equation*}
where $g_\psi'$ is of bounded variation. By Lemma~\ref{arithmetic
progressions}, if we are in the Case (R)  of Definition
\ref{Jkdef} or by Remark \ref{analogous}, if we are in the Case
(L), the minimum distance of each $T^j  {\widetilde{J}_k^{(n)}}$
from each $l_\alpha$, $\alpha \in \mathcal{A}$ and $r_\alpha$, for
all $\alpha \in \mathcal{A}$ such that $C^-_\alpha\neq 0$, is at
least $\underline{c} {\lambda^{(n)}_{\beta_n}}/2$, where
$\underline{c} := \min ( \overline{c} ,\nu(A)^{-1}) $ and
${\lambda^{(n)}_{\beta_n}} = |I^{(n)}_{\beta_n}|$. Since the
intervals $T^j{\widetilde{J}_k^{(n)}}$, $0\leq j<q_n$ are pairwise
disjoint, it follows that
\[\sum_{j=0}^{q_n-1}\Var{\frac{C^+_\alpha}{\{x-l_\alpha\}}}{T^j {\widetilde{J}_k^{(n)}}}
\leq \Var{\frac{C^+_\alpha}{\{x\}}}{[\underline{c}
{\lambda^{(n)}_{\beta_n}}/2, 1]}\leq \frac{2|C^+_\alpha|}{\underline{c} {\lambda^{(n)}_{\beta_n}}},\]
\[\sum_{j=0}^{q_n-1}\Var{\frac{C^-_\alpha}{\{r_\alpha-x\}}}{T^j {\widetilde{J}_k^{(n)}}}
\leq \Var{\frac{C^-_\alpha}{\{1-x\}}}{[0,1-\underline{c}
{\lambda^{(n)}_{\beta_n}}/2]}\leq \frac{2|C^-_\alpha|}{\underline{c} {\lambda^{(n)}_{\beta_n}}}.\]
Moreover,
\[\sum_{j=0}^{q_n-1}\Var{g'_\psi}{T^j {\widetilde{J}_k^{(n)}}}\leq \Var{g'_\psi}{I}.\]
Therefore
\begin{equation}\label{secondpart}
\Var{{ {\psi'}^{(q_n)}}}{ {\widetilde{J}_k^{(n)}}}
=\sum_{j=0}^{q_n-1}\Var{\psi'}{ T^j {\widetilde{J}_k^{(n)}}}  \leq
\frac{2 \bl(\psi)}{\underline{c}{\lambda^{(n)}_{\beta_n}}} +
\Var{g_\psi'}{I}.
\end{equation}
Using the estimates (\ref{firstpart}) and (\ref{secondpart}) in
(\ref{parts}),  for each $k=0,\ldots, p_n-1$ we get
\[\left| \int_{\widetilde{J}_k^{(n)}} e^{ i s  \psi^{(q_n)}(x)}\;d
x \right|\leq \frac{1}{|s|}\left(\frac{2}{c' q_n}+\frac{1}{c'^2
q_n^2}\left(\frac{2
\bl(\psi)}{\underline{c}{\lambda^{(n)}_{\beta_n}}} +
\Var{g_\psi'}{I}\right)\right)\leq \frac{C}{p_n|s|}, \] where
$C:=2/c'+1/c'^2\left(2 d\nu(A)^2 |I^{(0)}|\bl(\psi)/\underline{c}+
\Var{g_\psi'}{I}\right)$, since $p_n \leq q_n$ and
${\lambda^{(n)}_{\beta_n}}
q_n=|I^{(n)}_{\beta_n}|Q_{\beta_n}(n)\geq 1/d\nu^2(A)|I^{(0)}|$,
by (\ref{areas}).

As $|\widetilde{J}_k^{(n)}|\geq |J_k^{(n)}|/3$ for all $0\leq k
<p_n$, we have
$Leb(\Xi_n\setminus\bigcup_{k=0}^{p_n-1}\widetilde{J}_k^{(n)})\leq
\frac{2}{3}Leb(\Xi_n)$, and hence
\begin{align*}\label{tendstozero}
\left| \int_{\Xi_n} e^{ i s {\psi}^{(q_n)}(x)}\ud x \right| \leq
\frac{2}{3}Leb(\Xi_n)+\sum_{k=0}^{p_n-1}\left|
\int_{\widetilde{J}_k^{(n)}} e^{ i s  \psi^{(q_n)}(x)}\;\ud x
\right|\leq \frac{2}{3}Leb(\Xi_n)+\frac{C}{|s|}.
\end{align*}
Consequently, whenever $|s|\geq 12C/\delta$,
\[\limsup_{n\to\infty}\left| \int_{\Xi_n} e^{ i s {\psi}^{(q_n)}(x)}\ud x
\right|\leq  \frac{2}{3}\delta+\frac{C}{|s|}
\leq\frac{3}{4}\delta.\]
\end{proof}

\begin{corollary}\label{nocoboudary}
For every IET $T:I\to I$ of periodic type if
$\varphi\in\ls_0(\sqcup_{\alpha\in\mathcal{A}}I_{\alpha})$ is a
cocycle with $\bl(\varphi)\neq 0$ then $\varphi$ is not a
coboundary.
\end{corollary}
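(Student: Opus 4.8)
The plan is to argue by contradiction. Assume that $\varphi$ is a measurable coboundary, so $\varphi = g - g\circ T$ for some measurable $g : I\to\R$; since $g$ is real-valued it is finite almost everywhere, and the Birkhoff sums telescope, $\varphi^{(q)}(x) = g(x) - g(T^{q}x)$ for a.e.\ $x$ and every $q\in\N$. I would then apply Proposition~\ref{oscillations} to this very $\varphi$ (which is in $\ls_0$ with $\bl(\varphi)\neq 0$): it yields rigidity sets $(\Xi_n)$ and rigidity times $(q_n)$ with $Leb(\Xi_n)\to\delta>0$, $\sup_{x\in\Xi_n}d(x,T^{q_n}x)\to 0$, and a constant $c<\delta$ such that $\limsup_{n\to\infty}\big|\int_{\Xi_n}e^{2\pi i s\varphi^{(q_n)}(x)}\,dx\big|\le c$ for all $s$ large enough. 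The goal is to contradict this last inequality.

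The key step is to show that, on the contrary, $\limsup_{n\to\infty}\big|\int_{\Xi_n}e^{2\pi i s\varphi^{(q_n)}(x)}\,dx\big|\ge\delta$ for \emph{every} $s\in\R$. Fix $\epsilon>0$ and use Lusin's theorem to pick a compact set $B\subset I$ with $Leb(I\setminus B)<\epsilon$ on which $g$ is continuous, hence uniformly continuous. For $x\in\Xi_n\cap B\cap T^{-q_n}B$ both $x$ and $T^{q_n}x$ lie in $B$ and $d(x,T^{q_n}x)\le\sup_{y\in\Xi_n}d(y,T^{q_n}y)\to 0$, so uniform continuity of $g|_B$ gives $\sup_{x\in\Xi_n\cap B\cap T^{-q_n}B}|\varphi^{(q_n)}(x)|=\sup|g(x)-g(T^{q_n}x)|\to 0$ as $n\to\infty$; in particular, using $|e^{2\pi i s\theta}-1|\le 2\pi|s||\theta|$, on this set $e^{2\pi i s\varphi^{(q_n)}}\to 1$ uniformly for each fixed $s$. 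By $T$-invariance of Lebesgue measure, $Leb(\Xi_n\setminus(B\cap T^{-q_n}B))\le 2\,Leb(I\setminus B)<2\epsilon$ and $Leb(\Xi_n\cap B\cap T^{-q_n}B)\ge Leb(\Xi_n)-2\epsilon$. Splitting the integral over $\Xi_n$ into these two parts, bounding the main part from below by $Leb(\Xi_n\cap B\cap T^{-q_n}B)-\int_{\Xi_n\cap B\cap T^{-q_n}B}|e^{2\pi i s\varphi^{(q_n)}}-1|\,dx$ and the remaining part trivially by $2\epsilon$, I would obtain $\big|\int_{\Xi_n}e^{2\pi i s\varphi^{(q_n)}(x)}\,dx\big|\ge Leb(\Xi_n)-4\epsilon-o(1)$; letting $n\to\infty$ and then $\epsilon\to 0$ gives $\limsup_{n}\big|\int_{\Xi_n}e^{2\pi i s\varphi^{(q_n)}(x)}\,dx\big|\ge\delta$.

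Choosing $s$ large enough so that Proposition~\ref{oscillations} applies then produces the contradiction $\delta\le c<\delta$, so $\varphi$ cannot be a coboundary. I do not expect a serious obstacle: the only point requiring care is combining the uniform continuity of $g$ on the Lusin set $B$ with the uniform rigidity $d(x,T^{q_n}x)\to 0$ on $\Xi_n$ and the $T$-invariance of $Leb$ to control $Leb(\Xi_n\setminus T^{-q_n}B)$. It is worth noting that this route deliberately bypasses the limit-measure Proposition~\ref{limitmeasure} and the tightness Proposition~\ref{tightness} (which would require $\mathfrak{h}(\varphi)=0$, an assumption \emph{not} available here): because $e^{2\pi i s\,\cdot\,}$ is bounded, one never needs the possibly unbounded Birkhoff sums $\varphi^{(q_n)}$ to be integrable over $\Xi_n$, only to be small on the bulk of $\Xi_n$.
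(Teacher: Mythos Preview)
Your proof is correct and follows essentially the same approach as the paper: argue by contradiction, telescope the Birkhoff sums, apply Lusin's theorem to the transfer function, and use the rigidity $d(x,T^{q_n}x)\to 0$ on $\Xi_n$ to force $\limsup_n\big|\int_{\Xi_n}e^{2\pi i s\varphi^{(q_n)}}\,dx\big|=\delta$, contradicting Proposition~\ref{oscillations}. You simply spell out the Lusin-plus-rigidity estimate (in particular the control of $\Xi_n\setminus(B\cap T^{-q_n}B)$ via $T$-invariance of $Leb$) in more detail than the paper does.
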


\begin{proof}
Assume by contradiction that $\varphi=h-h\circ T$ for some
measurable $h:I\to\R$,  so for any $n \in \N$ we have
$\varphi^{(q_n)}= h\circ T^{q_n}-h$. Since   by Lusin's theorem we
can approximate $h$ by a uniformly continuous function on a set of
measure tending to one and by the properties of rigidity sets
$d(T^{q_n}x,x)\to 0$,  for every real $s$ we have

\begin{align*}
 \limsup_{n\to\infty}\left|\int_{\Xi_n}e^{2\pi i s
\varphi^{(q_n)}(x)}\, \ud x\right|&
=\limsup_{n\to\infty}\left|\int_{\Xi_n}e^{2\pi i
s(h(T^{q_n}x)-h(x))}\, \ud x\right|\\&
=\lim_{n\to\infty}Leb(\Xi_n)=\delta,
\end{align*}
which contradicts Proposition~\ref{oscillations}.  Thus, $\varphi$
cannot be a coboundary.
\end{proof}


\begin{proofof}{Theorem}{theorem:ergmain}
Consider the rigidity sets and times $(\Xi_n)$, $(q_n)$, given by
Proposition \ref{oscillations}. Since they belong to the class in
Definition \ref{beta}, they also satisfy Proposition
\ref{tightness}.  Ergodicity of the skew product
$T_\varphi:I\times\R\to I\times\R$ now follows from
Proposition~\ref{tightness} and Proposition~\ref{oscillations} by
the criterion in Proposition~\ref{ergodicity:criterium}.
\end{proofof}


\section{Reduction of locally Hamiltonian flows to skew products}\label{reduction:sec}
In this section we prove Theorem~\ref{reductionthm} (all details
are placed in Appendix~\ref{singularities}) and
Theorem~\ref{mainHamiltonian} (see \S\ref{final:sec}). Let us
first recall how to represent a locally Hamiltonian flow
$(\phi_t)_{t\in\mathbb{R}}$ as a special flow over an IET and set
up the notation that we use in the rest of this section.

\subsection{Special flow representation of locally Hamiltonian flows}
Let $(\phi_t)_{t\in\mathbb{R}}$ be a locally Hamiltonian flow
 determined by a closed $1$-form $\eta$ on a symplectic surface
$(\Surf,\omega)$. Recall that we assume that there are no saddle
connections and that the local Hamiltonian is a Morse function, so
all zeros (elements of $\Sigma$) are simple saddles. Let
$(\mathscr{F},\nu_{\mathscr{F}})$ be the measured foliation given
by $(\phi_t)_{t\in\mathbb{R}}$ (see the Introduction). By a
theorem of Calabi~\cite{Calabi} and  Katok \cite{Ka0}, there
exists an Abelian differential $\alpha$ on $\Surf$ such that the
vertical measured foliation of $\alpha$ coincides with the
measured foliation $(\mathscr{F},\nu_{\mathscr{F}})$. Moreover, at
each point $z\in\Sigma$ the Abelian differential $\alpha$ has zero
with multiplicity $1$. Denote by $X_\alpha:\Surf\setminus\Sigma\to T\Surf$
the
 vertical vector field, i.e.\ $\alpha(X_\alpha)=i$,
and let $(F^v_t)_{t\in\R}$ stand for the corresponding vertical
flow on $\Surf\setminus\Sigma$. The vertical flow $(F^v_t)_{t\in\R}$
preserves the $2$-form $\omega_\alpha=\frac{i}{2}\alpha \wedge
\overline{\alpha}$ on $\Surf$ which vanishes on $\Sigma$. It follows
that there exists a non-negative smooth function $W:\Surf\to\R$ with zeros
 at $\Sigma$, and such that $\omega_\alpha=W\omega$.
Therefore, $X=WX_\alpha$ on $\Surf\setminus\Sigma$. It follows that
there exists a smooth time change function $h:\R\times \Surf\to\R$
such that $\phi_tx=F^v_{h(t,x)}x$, or equivalently   $W(\phi_t
x)=\frac{\partial h}{\partial t}(t,x)$ with $h(0,x)=x$.

We will consider so called  regular adapted coordinates on
$\Surf\setminus\Sigma$, this is coordinates $\zeta$ relatively to
which $\alpha_\zeta=d\zeta$. If $p\in \Sigma$ is a singular point
 then we consider singular adapted
coordinates around $p$, this is coordinates $\zeta$ relatively to
which $\alpha_\zeta=id\frac{\zeta^{2}}{2}=i\zeta\,d\zeta$. Then
all changes of regular coordinates are given by translations. If
$\zeta'$ is a regular adapted coordinate and $\zeta$ is a singular
adapted coordinate, then $\zeta'=i\zeta^{2}/2+c$. Then for a
regular adapted coordinate $\zeta$ we have
$\omega_\alpha=d\Re\zeta\wedge d\Im\zeta$, $X_\alpha(\zeta)=i$ and
$F^v_t\zeta=\zeta+it$. Moreover, for a singular adapted coordinate
$\zeta$ we have $\omega_\alpha=|\zeta|^2d\Re\zeta\wedge
d\Im\zeta$, $\zeta X_\alpha(\zeta)=1$, and hence
$X_\alpha(\zeta)=\frac{\overline{\zeta}}{|\zeta|^{2}}$. It follows
that for a singular adapted coordinate $\zeta=x+iy$ we have
$W(\zeta)=|\zeta|^2V(\zeta)$, where $V$ is a smooth positive
function. Hence, $X(\zeta)=V(\zeta)\overline{\zeta}=V(x,y)(x,-y)$.

Let $J\subset \Surf\setminus\Sigma$ be a transversal smooth curve
for $(\phi_t)_{t\in\R}$ such that the boundary of $J$ consists of
two points situated  on an incoming and an outgoing separatrix
respectively, and the segment of each separatrix between the
corresponding saddle point and the corresponding boundary point of
$J$ contains no intersection with the interior of $J$. Let
$\gamma:[0,a]\to J$ stand for the induced parametrization, i.e.\
$\nu_{\mathscr{F}}(\gamma|_{[0,t]})=t$ for any $t\in[0,a]$, such
that  $\gamma(0)$ lies on an incoming separatrix and $\gamma(a)$
lies on an outgoing separatrix. From  now on we will identify the
curve $J \subset \Surf$ with the interval $[0,a)$ and, by abusing
the notation, we will denote by $I$ both the interval
$[0,a)\subset \R$  and  the curve $J$ on $\Surf$.

Denote by $T:\J\to \J$ the first-return map induced on $\J$.  In
the induced parametrization, $T:I\to I$ is an interval exchange
transformation and it preserves the measure induced by the
restriction of $\nu_{\mathscr{F}}$ to $I$, which coincides with
the Lebesgue measure $Leb$ on $I$. Moreover,
$T=T_{(\pi,\lambda)}$, where $\pi\in\mathcal{S}^0_{\mathcal{A}}$
for some finite set $\mathcal{A}$ and
$(\pi,\lambda)\in\mathcal{S}^0_{\mathcal{A}}\times\R_+^{\mathcal{A}}$
satisfies the Keane condition, because by assumption
$(\phi_t)_{t\in\R}$ has no saddle connections. Recall that
$l_\alpha$, $\alpha\in\mathcal{A}$ stand for the left end points
of the exchanged intervals.

\begin{lemma}\label{periodicimpliesperiodic}
If  $(\phi_t)_{t\in\mathbb{R}}$ is of hyperbolic periodic type
then  the IET $T$ can be chosen to be of hyperbolic periodic type.
\end{lemma}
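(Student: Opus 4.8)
The plan is to exploit the affine self-similarity hidden in the hypothesis of periodic type. Recall from the preceding subsection that $(\phi_t)_{t\in\R}$ is, up to a smooth time change, the vertical flow $(F^v_t)_{t\in\R}$ of an Abelian differential $\alpha$ with simple zeros at $\Sigma$ whose vertical measured foliation is $(\mathscr F,\nu_{\mathscr F})$, and that for any admissible transversal $I$ (with endpoints on an incoming and an outgoing separatrix, as above) the first-return map of $\phi_t$ to $I$, parametrized by $\nu_{\mathscr F}$, coincides with the first-return map of $(F^v_t)_{t\in\R}$ to $I$ and is an IET $T=T_{(\pi,\lambda)}$ satisfying the Keane condition. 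Since the assertion only requires that \emph{some} such $I$ works, the strategy is to choose $I$ adapted to $\Psi$: I would realize $\Psi$ affinely on a translation surface compatible with $(\mathscr F,\nu_{\mathscr F})$, and then pick $I$ as a side of a zippered-rectangle decomposition so that $\Psi$ contracts $I$ onto a subinterval reached from $I$ by finitely many steps of Rauzy--Veech induction.

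More precisely, since $(\phi_t)_{t\in\R}$ has no saddle connections the foliation $\mathscr F$ is minimal, so by Thurston's classification (see \cite{FLP}) the diffeomorphism $\Psi$ of the definition of periodic type is, after replacing it by a suitable power (so that it fixes each point of $\Sigma$ and each separatrix issuing from it) and possibly replacing $\alpha$ by another Abelian differential with the same vertical foliation (of which $(\phi_t)_{t\in\R}$ is then still a time change), isotopic to an affine diffeomorphism $\Psi_0$ of the translation surface $(\Surf,\alpha)$ with derivative $\mathrm{diag}(\rho,\rho^{-1})$; here $\rho<1$ is the scaling factor, the horizontal direction is contracted by $\rho$ (matching $\nu_{\mathscr F}(\Psi_0\gamma)=\rho\,\nu_{\mathscr F}(\gamma)$) and the vertical direction is expanded by $\rho^{-1}$, and $\Psi_0$ maps leaves of $(F^v_t)_{t\in\R}$ to leaves. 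I would then choose the transversal $I$, equivalently the combinatorial datum $\pi$, in the classical way so that $\Psi_0(I)\subset I$ shares the left endpoint of $I$ and so that $\Psi_0(I)$ is exactly the subinterval obtained from $I$ by some finite number $p$ of Rauzy--Veech steps. Because $\Psi_0$ conjugates the first-return map of $(F^v_t)_{t\in\R}$ to $I$ to the first-return map to $\Psi_0(I)$, and in the $\nu_{\mathscr F}$-parametrizations $\Psi_0\colon I\to \Psi_0(I)$ is just multiplication by $\rho$, the renormalized IET $\mathcal R^{p}(T)$ is linearly conjugate to $T$ with the \emph{same} permutation $\pi$. Hence $\pi^{p}=\pi$, $\lambda^{p}=\rho\lambda$ and $\Theta(\mathcal R^{n+p}(T))=\Theta(\mathcal R^{n}(T))$ for all $n\geq 0$, so $T$ is of periodic type with period matrix $A=\Theta^{(p)}(T)$ the matrix of return times of the columns of $\Psi_0(I)$ into the columns of $I$ and with Perron--Frobenius eigenvalue $|I|/|\Psi_0(I)|=1/\rho$. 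Replacing $p$ by a multiple if necessary, which is harmless and which can be arranged (e.g.\ by minimality of $T$, as in the examples of \cite{Si-Ul}), makes $A$ have strictly positive entries, so $T$ is of periodic type in the sense of Definition~\ref{periodicIETdef}.

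For the hyperbolicity, I would use the standard identification coming from the zippered-rectangle construction: the $2g$-dimensional space $H_\pi=\Omega_\pi(\R^{\mathcal A})$ is canonically isomorphic to $H_1(\Surf,\R)$ in such a way that the Rauzy--Veech cocycle acts as the map induced on homology; since the $\mathcal R$-orbit of $T$ is $p$-periodic and the corresponding renormalization is realized by $\Psi_0$, the linear map $A^t\colon H_\pi\to H_\pi$ is conjugate over $\R$ to $\Psi_{0*}=\Psi_*\colon H_1(\Surf,\R)\to H_1(\Surf,\R)$ (up to transpose/inverse, which does not change whether an eigenvalue lies on the unit circle). As $(\phi_t)_{t\in\R}$ is of \emph{hyperbolic} periodic type, $\Psi_*$ has no eigenvalue of modulus one, hence neither does $A^t|_{H_\pi}$, i.e.\ $\theta_g>0$; by Definition~\ref{defhypiet} this means $T$ is of hyperbolic periodic type, as claimed.

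The main obstacle is the choice of the transversal in the second step: one must select $I$ and its endpoints on separatrices — equivalently the permutation $\pi$ — so that the affine contraction $\Psi_0(I)$ shares the left endpoint of $I$ and has its right endpoint of the special form forced by Rauzy--Veech induction, and then verify that the induced return-time data is precisely $\Theta^{(p)}(T)$. This is exactly Veech's construction of self-similar interval exchanges (equivalently of pseudo-Anosov maps) from closed loops in a Rauzy class, so I would carry it out by invoking that theory rather than reproving it. The remaining ingredients — passing to a power of $\Psi$ to fix the singularities and the orientation data, passing to a multiple of $p$ to make $A$ positive, and the identification of $A^t|_{H_\pi}$ with $\Psi_*$ on $H_1(\Surf,\R)$ — are standard.
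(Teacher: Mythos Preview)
Your approach is correct and follows essentially the same route as the paper: exploit the self-similarity given by $\Psi$ to find a transversal $I$ with $\Psi(I)\subset I$, so that $\Psi(I)$ equals some Rauzy--Veech induced interval $I^p$, yielding periodicity; then identify $A^t|_{H_\pi}$ with $\Psi_*$ on $H_1(\Surf,\R)$ for hyperbolicity. The paper differs from your sketch mainly in how it handles what you call ``the main obstacle'': rather than passing through Thurston's classification and an affine realization, it works directly with the measured foliation and makes the concrete choice of starting the transversal $\gamma$ at a singular point $z_0\in\Sigma$ fixed by (a power of) $\Psi$; then $\Psi(\gamma)$ automatically shares the left endpoint with $\gamma$, and after an isotopy along leaves one has $\Psi(\gamma)\subset\gamma$, so Veech's admissibility criterion from \cite{Ve0} immediately gives $\Psi(\gamma)=I^k$ for some $k$. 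The paper then homotopes $\gamma$ slightly along leaves at the end so that $\gamma(0)$ lies on an incoming separatrix rather than at the singularity itself, matching the standing conventions of \S6.1. This is more elementary than invoking the full pseudo-Anosov/affine structure, and makes the choice of transversal and the verification that $\mathcal R^k(T)=T_{(\pi,\rho\lambda)}$ completely explicit; your detour through an affine model is unnecessary but not wrong.
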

\begin{proof}
Let $\Psi:\Surf\to \Surf$ be the diffeomorphism that fixes the
flow foliation $\mathscr{F}$ and rescales by $\rho<1$ the
transversal measure $\nu_\mathscr{F}$. Since $\Psi$ fixes $\Sigma$
(as a set) and sends leaves to leaves, replacing $\Psi$ by one of
its powers, we can assume that there exists a point $z_0\in\Sigma$
such that $\Psi(z_0)=z_0$ and all separatrixes emanating from
$z_0$ are fixed. Consider a transversal $\gamma: [0,a]\to \Surf$
such that $\gamma(0)=z_0$ and the endpoint $\gamma(a)$ is on an
outgoing separatrix. Up to modification of $\Psi$ by an isotopy
which leaves $(\mathscr{F}, \nu_{\mathscr{F}})$ invariant, one can
also assume that $\Psi(\gamma)\subset\gamma$ (see for example \S9
in \cite{FLP}). The first return map on $\gamma$ in the induced
parametrization, as seen above, gives an IET
$T=T_{(\pi,\lambda)}:I\to I$ with $I=[0,a)$. Moreover, as
$\Psi(\nu_\mathscr{F})=\rho\,\nu_\mathscr{F}$, we have
$\Psi(\gamma(x))=\gamma(\rho x)$ for every $x\in[0,a]$. Since
$\Psi(\mathscr{F})=\mathscr{F}$ and $\Psi(\gamma)\subset \gamma$,
$\gamma(\rho a)$ still belongs to an outgoing separatrix and
$[0,\rho a)$ is  \emph{admissible} in the sense defined by  Veech
in \S3 in \cite{Ve0}. This, as shown by Veech in \cite{Ve0},
implies that $[0,\rho a) = I^k $ for some $k\geq 1$ (recall that
$I^k$ is the $k^{th}$ inducing interval of Rauzy-Veech induction)
and that the first return map on $I^k=[0,\rho a)$ is
$\mathcal{R}^{k}(T)$.

Every discontinuity $l_\alpha$ of $T$ is such that
$\gamma(l_\alpha)$ is the first backward intersection of one of
the incoming separatrix with the interior of $\gamma$. Since
$\gamma(\rho\, l_\alpha) = \Psi (\gamma (l_\alpha)) $ and
$\Psi(\gamma)\subset \gamma$,   also $\gamma(\rho\, l_\alpha)$ is
the first backward intersection of an incoming separatrix with the
interior of $\Psi(\gamma)$.
This shows that the IET  induced by $T$ on $I^k=[0,\rho\,a)$
has datas $(\pi, \rho\,\lambda )$,  hence $\mathcal{R}^{k}(T) = T_{(\pi,
\rho\lambda)}$. This shows that $\Theta(\mathcal{R}^{k} T) =
\Theta( T)$  and thus $\Theta(\mathcal{R}^{n+k} T) =
\Theta(\mathcal{R}^{n}T)$ for $n\geq 0$. Let
$A=\Theta(\mathcal{R}^k T)$ be the period matrix. Since the orbit
of $T$ under $\mathcal{R}$ is obviously infinite, $A^m$ is a
positive matrix for some $m\geq 1$, by  Lemma in \S1.2.4 in
\cite{Ma-Mo-Yo}. It follows that replacing $\Psi$ by its $m$-th
iteration, we can assume $A$ is a positive matrix. Therefore $T$
is of period type.

Moreover, the action induced by $\Psi$ on $H_1(\Surf, \mathbb{R})$
is isomorphic to the action of $A$ on
$\R^{\mathcal{A}}/\ker\Omega_\pi$, and hence to the action of
$(A^t)^{-1}$ on $H_{\pi}$ (see \S2 and \S7 in \cite{ViB}). Thus,
the assumption that $(\phi_t)_{t\in\R}$ is of hyperbolic periodic
type is equivalent to $T$ being of hyperbolic periodic type.

Finally we want to choose a transversal $\gamma$ as in the
construction before Lemma \ref{periodicimpliesperiodic}, i.e.~such
that $\gamma([0,a])\subset \Surf\setminus\Sigma$ and $\gamma(0)$
lies on an incoming separatrix and $\gamma(a)$ lies on an outgoing
separatrix. One can obtain such a transversal by homotoping
$\gamma$ slightly along the leaves of $\mathscr{F}$ to a new
$\gamma'$ so that $\gamma'(0)$ now belongs to an incoming
separatrix for $z_0$. If the homotopy is small enough so that
$\Sigma$ is not hit, the first return on $\gamma'$ is still given
by the same IET $T$.
\end{proof}

Set $\underline{\alpha}=\pi_1^{-1}(1)\in\mathcal{A}$. Denote by
$\tau:I\to \R_+$ the first-return time map of the flow
$(\phi_t)_{t\in\R}$ to $\J$. This map is well defined and smooth on
the interior of  each interval $I_\alpha$, $\alpha\in\mathcal{A}$,
and $\tau$ has a singularity of logarithmic type at each point
$l_\alpha$, $\alpha\in\mathcal{A}$ (see \cite{Ko}) except for the
right-side of $l_{\underline{\alpha}}$; here the one-sided limit
of $\tau$ from the left exists\footnote{We remark that this is due
to our convention of choosing $\gamma(0)$ on an incoming separatrix
and $\gamma(a)$ on an outgoing one. If we had chosen $\gamma(0)$ on
 an outgoing separatrix and $\gamma(a)$ on an incoming one, the
 finite one-sided limit from the right would be at
 $l_{\overline{\alpha}}$ where
 $\overline{\alpha}=\pi_0^{-1}(1)\in\mathcal{A}$.}.
 The precise nature of these singularities is
analyzed in Theorem~\ref{theorem:formphi} below.

The considerations so far show that the flow $(\phi_t)_{t\in\R}$
on $(\Surf,\nu)$ is measure-theore\-tically isomorphic to the
special flow $T^\tau$. An isomorphism is established by the map
$\Gamma:I^\tau\to{ \Surf},\;\;\Gamma(x,s)=\phi_s\gamma(x)$.

\subsection{Extensions as special flows}\label{extensionsspecialflows}
Let us now consider an extension $(\Phi^f_t)_{t\in\R}$ given by a
$\mathscr{C}^{2+\epsilon}$-function $f:\Surf\to\R$.  Let us consider
its transversal submanifold $\J\times\R\subset \Surf\times\R$. Note
that every point $(\gamma(x),y)\in \gamma(\Int I_\alpha)\times\R$
returns to $\J\times\R$ and the return time is
$\widehat{\tau}(x,y)=\tau(x)$. Denote by
$\varphi_f:\bigcup_{\alpha\in\mathcal{A}}\Int I_\alpha\to\R$ the
$\mathscr{C}^{2+\epsilon}$-function
\begin{equation}\label{defvarphi}
\varphi_f(x)=F(\tau(x),\gamma(x))=\int_0^{\tau(x)}f(\phi_s\gamma(x))ds,\;\;\text{
for }\;\;x\in\bigcup_{\alpha\in\mathcal{A}}\Int
I_\alpha.
\end{equation}
Notice that
\begin{equation}\label{mean}
Leb(f)= \int_I\varphi_f(x)\,dx=\int_{ \Surf}f\,d\nu= \nu(f).
\end{equation}

Let us consider the skew product
$T_{\varphi_f}:(I\times\R,Leb\times
 \mR)\to (I\times\R,Leb\times  \mR)$,
$T_{\varphi_f}(x,y)=(Tx,y+\varphi_f(x))$ and the special flow
$(T_{\varphi_f})^{\widehat{\tau}}$ built over $T_{\varphi_f}$ and
under the roof function $\widehat{\tau}:I\times\R\to\R_+$ given by
$\widehat{\tau}(x,y)=\tau(x)$. Thus, by standard arguments, this
show the following.

\begin{lemma}\label{specrep}
The special flow $(T_{\varphi_f})^{\widehat{\tau}}$ is
measure-theoretically isomorphic to the flow $(\Phi^f_t)$ on $(
\Surf\times\R,\nu\times \mR)$.\bez
\end{lemma}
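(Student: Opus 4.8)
The plan is to lift the isomorphism $\Gamma:I^\tau\to\Surf$ of the surface flow $(\phi_t)_{t\in\R}$ with the special flow $T^\tau$ to the extensions, exploiting that $\J\times\R$ is a global cross-section for $(\Phi^f_t)_{t\in\R}$ whose first-return map is $T_{\varphi_f}$ and whose return time is $\widehat\tau$. Concretely, I would define $\widehat\Gamma$ on the fundamental domain $\{((x,y),s):(x,y)\in I\times\R,\ 0\le s<\widehat\tau(x,y)=\tau(x)\}$ of the suspension $(T_{\varphi_f})^{\widehat\tau}$ by $\widehat\Gamma((x,y),s)=\Phi^f_s(\gamma(x),y)$, and show that it descends to a measure-theoretic isomorphism intertwining the unit-speed flow $v_t$ on $(T_{\varphi_f})^{\widehat\tau}$ with $(\Phi^f_t)_{t\in\R}$.

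First I would check that $\widehat\Gamma$ respects the equivalence relation defining $(T_{\varphi_f})^{\widehat\tau}$, namely $((x,y),s+\widehat\tau^{(n)}(x,y))\sim(T_{\varphi_f}^n(x,y),s)$. Since $\widehat\tau(x,y)=\tau(x)$ is independent of $y$ one has $\widehat\tau^{(n)}(x,y)=\tau^{(n)}(x)$, which is by construction exactly the time at which the $\Phi^f$-orbit of $(\gamma(x),y)$ makes its $n$-th return to $\J\times\R$, landing at the point $(\gamma(T^nx),\,y+\varphi_f^{(n)}(x))$ corresponding to $T_{\varphi_f}^n(x,y)$ under the identification of $\J\times\R$ with $I\times\R$; here one uses the definition \eqref{defvarphi} of $\varphi_f$ and additivity of the cocycle. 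Hence $\Phi^f_{s+\tau^{(n)}(x)}(\gamma(x),y)=\Phi^f_s(\gamma(T^nx),y+\varphi_f^{(n)}(x))$, so $\widehat\Gamma$ is well defined on the quotient. The intertwining with the flows is then immediate: $\widehat\Gamma((x,y),s+t)=\Phi^f_{s+t}(\gamma(x),y)=\Phi^f_t\circ\widehat\Gamma((x,y),s)$.

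It remains to see that $\widehat\Gamma$ is a bijection off a set of measure zero and is measure preserving. For surjectivity mod null sets, $\nu\times\Leb$-a.e.\ point $(x,y)\in\Surf\times\R$ has its base $x$ off $\Sigma$ and off the separatrices, and by minimality of $(\phi_t)_{t\in\R}$ on $\Surf\setminus\Sigma$ its backward orbit meets the interior of $\J$ in finite time, so the $\Phi^f$-orbit of $(x,y)$ meets $\J\times\R$ and $(x,y)$ is in the image; injectivity follows because along an orbit the last return point to $\J\times\R$ together with the elapsed time $s\in[0,\tau(x))$ determine $((x,y),s)$. For the measure one transports $d(Leb\times ds)$ under $\Gamma$ to $\nu$ on $\Surf$ — this is precisely the already-used fact that $\Gamma$ identifies $(\phi_t)_{t\in\R}$ on $(\Surf,\nu)$ with $T^\tau$ over $(I,Leb)$ — and observes that appending the $\R$-fibre with its Lebesgue measure, which is preserved by the skew-product structure of both $T_{\varphi_f}$ and $(\Phi^f_t)_{t\in\R}$, upgrades this to an identification of the suspension measure of $(T_{\varphi_f})^{\widehat\tau}$ with $\nu\times\Leb$; both are infinite, consistent with $\tau\in L^1(I,Leb)$ and the infinite fibre. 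The only genuinely delicate point is the bookkeeping of the exceptional null sets — orbits through $\Sigma\times\R$, the finitely many separatrices, the endpoints $\partial\J$ of the section, and (a null set by the Keane condition) the orbits meeting $\partial\J$ — and the remark that $\tau$, although it has logarithmic singularities at the $l_\alpha$, is finite a.e.\ and integrable so that the cross-section construction applies. All of these were already established for the base data $(T,\tau)$ when setting up $\Gamma$, which is why the conclusion follows by the standard suspension/cross-section argument, and I would present it in that form with a reference to the construction of $\Gamma$ above.
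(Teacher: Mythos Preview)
Your proposal is correct and spells out precisely the standard cross-section/suspension argument that the paper invokes but does not write down: the lemma is stated with a \bez\ and the preceding sentence simply says ``Thus, by standard arguments, this shows the following.'' So there is no divergence in approach---you have filled in exactly the omitted routine verification.
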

Recall that $\varphi_f$ is  $\mathscr{C}^{2+\epsilon}$ in the
interior of each interval $I_\alpha$, $\alpha\in\mathcal{A}$. The
following Proposition provides further properties of  the
singularities of $\varphi_f$ at the endpoints of $I_\alpha$,
$\alpha\in\mathcal{A}$ and their symmetry properties. Recall that
$\underline{\alpha}=\pi_1^{-1}(1)$ and set
$\overline{\alpha}=\pi_0^{-1}(d)$.

\begin{theorem}\label{theorem:formphi}
For every $\mathscr{C}^{2+\epsilon}$-function $f:\Surf\to\R$ there
exist $C_\alpha^{\pm}$,  $\alpha\in\mathcal{A}$, with
$C^+_{\underline{\alpha}}=C^-_{\overline{\alpha}}=0$,
 and
 $g\in \ac(\sqcup_{\alpha\in \mathcal{A}}
I_{\alpha})$ such that
\begin{align*}\varphi_f(x)=-\sum_{\alpha\in\mathcal{A}}
\left(C^+_\alpha\log\left(|I|\{(x-l_\alpha)/|I|\}\right)
+C^-_\alpha\log\left(|I|\{(r_\alpha-x)/|I|\}\right)\right)+g(x).
\end{align*}
Moreover, $\varphi_f\in\overline{\ls}(\sqcup_{\alpha\in
\mathcal{A}} I_{\alpha})$ and $g=g_1+g_2$ with $g_1,g_2\in
\ac(\sqcup_{\alpha\in \mathcal{A}} I_{\alpha})$ satisfying
$g_1'\in \ls(\sqcup_{\alpha\in \mathcal{A}} I_{\alpha})$
 and
$g_2'\in \ac(\sqcup_{\alpha\in \mathcal{A}} I_{\alpha})$. There
exists a constant $C>0$  such that
\begin{equation}\label{controlC}
C^{-1}\sum_{z\in\Sigma}|f(z)|\leq\bl(\varphi_f)\leq
C\sum_{z\in\Sigma}|f(z)|\quad\text{ and  }\quad\|g\|_{\bv}\leq
C\|f\|_{\mathscr{C}^{2}}
\end{equation} for every
$f\in\mathscr{C}^{2+\epsilon}(\Surf)$. In particular, the linear
operator
\[\mathscr{C}^{2+\epsilon}(\Surf)\ni f\mapsto \varphi_f\in \overline{\ls}(\sqcup_{\alpha\in
\mathcal{A}} I_{\alpha})\] is bounded.
\end{theorem}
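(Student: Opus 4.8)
The plan is to read off the structure of $\varphi_f$ from a local analysis of the first--return orbit near each saddle. By \eqref{defvarphi}, $\varphi_f$ is $\mathscr{C}^{2+\epsilon}$ on the interior of every $I_\alpha$, so only the behaviour at the endpoints is at issue; by \eqref{zbzero} every interior endpoint is the first meeting with $\J$ of a separatrix of $(\phi_t)_{t\in\R}$, and, since there are no saddle connections, for $x$ close to such an endpoint the orbit $\{\phi_s\gamma(x):0\le s\le\tau(x)\}$ splits into a long excursion inside a fixed small coordinate box $D_z$ around a uniquely determined saddle $z$, together with two arcs that remain, uniformly in $x$, in a compact subset of $\Surf\setminus\Sigma$. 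These arcs depend on $x$ in a $\mathscr{C}^{2+\epsilon}$ manner up to the endpoint, so the part of $\int_0^{\tau(x)}f(\phi_s\gamma(x))\,ds$ coming from them will be $\mathscr{C}^{2+\epsilon}$ there and will be absorbed into $g_2$.

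For the excursion I would use the singular adapted coordinates $\zeta=u+iv$ near $z$ of \S\ref{reduction:sec}, in which $X(\zeta)=V(u,v)(u,-v)$ with $V$ smooth positive, the separatrices lie on the axes and $uv$ is a first integral; with $D_z=\{|u|\le\epsilon,|v|\le\epsilon\}$, an orbit entering near the stable manifold at $(u_1,\epsilon)$ and leaving at $(\epsilon,u_1)$ follows $v=\epsilon u_1/u$, the excursion lasts $-\frac1{V(z)}\log u_1(x)+O(1)$, and $\int_{\text{excursion}}\mathrm{dist}(\phi_s\gamma(x),z)\,ds=O(1)$, where $u_1(x)$ is smooth and vanishes to first order at the endpoint (the box--to--separatrix identification being a diffeomorphism with non-vanishing derivative). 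Splitting $f=f(z)+Df(z)[\,\cdot-z\,]+(\text{higher order})$ I would show: (i) $\varphi_f(x)=f(z)\,\tau(x)+O(1)$ near the endpoint, so $\varphi_f$ has there a logarithmic singularity with coefficient $C^{\pm}_\alpha=f(z)/V(z)$, the coefficient $1/V(z)$ being that of $\tau$; (ii) the term $Df(z)[\,\cdot-z\,]$ integrates over the excursion to a function of $x$ tending to $0$ at the endpoint whose derivative has a genuine logarithmic singularity there (the a priori stronger contributions cancelling) --- this goes into $g_1$; (iii) the higher--order remainder, whose treatment uses $f\in\mathscr{C}^{2+\epsilon}$, integrates to an absolutely continuous function with absolutely continuous derivative --- this completes $g_2$. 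Summing over the finitely many saddles will give the asserted form of $\varphi_f$, with $g=g_1+g_2$, $g_1'\in\ol(\sqcup_\alpha I_\alpha)$ and $g_2'\in\ac(\sqcup_\alpha I_\alpha)$.

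The hard part is to upgrade this to $\varphi_f\in\overline{\ls}(\sqcup_\alpha I_\alpha)$ and $g_1'\in\ls(\sqcup_\alpha I_\alpha)$, i.e.\ the geometric--type vanishing $C^+_{\underline\alpha}=C^-_{\overline\alpha}=0$ and the per--saddle symmetry \eqref{zerosym}, $\sum_{\alpha\in\mathcal{A}^-_{\mathcal{O}}}C^-_\alpha=\sum_{\alpha\in\mathcal{A}^+_{\mathcal{O}}}C^+_\alpha$ for every $\mathcal{O}\in\Sigma(\pi)$, together with its analogue for $g_1'$. The vanishing will follow from the choice of $\gamma(0)$ on an incoming and $\gamma(a)$ on an outgoing separatrix: near these two endpoints the limiting orbit reaches $\J$ directly, with no excursion near the associated saddle, so $\tau$ and $\varphi_f$ have finite one--sided limits there. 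For \eqref{zerosym}, the point is that every $\alpha$ contributing to $\mathcal{A}^-_{\mathcal{O}}$ or $\mathcal{A}^+_{\mathcal{O}}$ has its relevant endpoint on a separatrix of the single saddle $z=z(\mathcal{O})$, so $C^{\pm}_\alpha=f(z)/V(z)$ throughout and \eqref{zerosym} reduces, after dividing by $f(z)/V(z)$, to the corresponding identity for $\tau$; the latter is the classical symmetry of Kochergin roof functions at non--degenerate saddles (\cite{Ko}), reflecting that the four separatrices of a simple saddle are equivalent --- in the language of Remark~\ref{Ohzeroreformulation} it says precisely that the coefficient vector lies in $H_\pi$. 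I would carry out the full verification, including the analogue for $g_1'$ and the precise bookkeeping of which separatrix meets which endpoint of the $I_\alpha$, in Appendix~\ref{singularities}; it runs along the lines of the cancellation analysis behind Proposition~\ref{cancellations_prop} of \cite{Ul:abs}.

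Finally, the quantitative bounds \eqref{controlC} will be read off from the explicit description. Since $C^{\pm}_\alpha=f(z)/V(z)$ with $V$ bounded above and below over the finite set $\Sigma$, and since each saddle contributes at least two and at most four of the coefficients $C^{\pm}_\alpha$ once the (at most one incoming and one outgoing) endpoint separatrices are discarded, one gets $C^{-1}\sum_{z\in\Sigma}|f(z)|\le\bl(\varphi_f)=\sum_\alpha\big(|C^+_\alpha|+|C^-_\alpha|\big)\le C\sum_{z\in\Sigma}|f(z)|$; and the integral formulas express $g=g_1+g_2$ through $f$ and its derivatives up to order two along orbit arcs of bounded length, whence $\|g\|_{\bv}\le C\|f\|_{\mathscr{C}^2}$. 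Boundedness of $f\mapsto\varphi_f$ from $\mathscr{C}^{2+\epsilon}(\Surf)$ to $\overline{\ls}(\sqcup_\alpha I_\alpha)$, with the norm $\|\varphi\|_{\lv}=\bl(\varphi)+\|g_\varphi\|_{\bv}$, then follows since $\|\varphi_f\|_{\lv}\le C\big(\textstyle\sum_{z\in\Sigma}|f(z)|+\|f\|_{\mathscr{C}^2}\big)\le C'\|f\|_{\mathscr{C}^{2+\epsilon}}$.
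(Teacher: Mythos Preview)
Your approach is essentially the paper's: localise via a partition of unity, pass to singular adapted coordinates where $X(\zeta)=V(u,v)(u,-v)$, use the first integral $uv=s$ to convert the time integral into $\int_{|s|/\delta^2}^{1}(f/V)(u,s/u)\,du/u$, then Taylor--expand $f/V$ at the saddle and read off the singular parts. The paper's key computational lemma (Lemma~\ref{lematprzej}) does exactly this, and the identification $C^{\pm}_\alpha=f(z)/V(z)$, the geometric--type vanishing, and the bounds in \eqref{controlC} all go as you describe.

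There is one concrete misattribution in your item (ii). The linear Taylor term $Df(z)[\cdot-z]$ (more precisely the linear part of $f/V$) does \emph{not} produce the $g_1$ piece: in the paper's computation $\xi^x(s)=\xi^y(s)=1-s$, which is smooth and goes into $g_2$. The $g_1$ contribution, the one whose derivative has a genuine logarithmic singularity, comes from the \emph{mixed second} derivative: $\xi^{xy}(s)=-s\log s$, so $K^{\pm}_\alpha=\partial_{xy}(f/V)(0,0)$. The pure second--order terms $x^2,y^2$ again give smooth pieces $(1-s^2)/2$. The need for $f\in\mathscr{C}^{2+\epsilon}$ enters only in controlling the remainder $g_0$ after subtracting the full second--order jet, and gives $|\xi^{g_0}{}''(s)|\lesssim s^{-1+\epsilon/2}$, hence $\xi^{g_0}$ and its derivative are absolutely continuous. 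If you carry out the computation you will find this; just adjust your outline accordingly.

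On the strong symmetry \eqref{zerosym}: it is simpler than you suggest and does not need anything like the cancellation analysis of Proposition~\ref{cancellations_prop}. For each simple saddle $z$, the orbit $\mathcal{O}_z\in\Sigma(\pi)$ has $\#\mathcal{A}^+_{\mathcal{O}_z}=\#\mathcal{A}^-_{\mathcal{O}_z}$ (two in the regular case, with the obvious adjustment when $\mathcal{O}_z$ contains $0$), and every coefficient on either side equals the same number $C_z=f(z)/V(z)$ (respectively $K_z$ for $g_1'$); so both sides of \eqref{zerosym} equal $2C_z$. This is pure bookkeeping of which separatrix hits which endpoint, not a dynamical cancellation.
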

The proof of this Theorem is presented in
Appendix~\ref{singularities}. In Appendix~\ref{singularities} we
also prove the following Proposition:
\begin{proposition}\label{vanishO:prop}
If $f(z)=0$ for each $z\in\Sigma$ then
$\varphi_f\in\ac(\sqcup_{\alpha\in \mathcal{A}} I_{\alpha})$ and
$\mathcal{O}(\varphi_f)=0$ for every $\mathcal{O}\in\Sigma(\pi)$.
\end{proposition}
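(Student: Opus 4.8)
\textbf{Proof plan for Proposition \ref{vanishO:prop}.} The statement claims that when $f$ vanishes on the set $\Sigma$ of saddles, the function $\varphi_f$ has no logarithmic singularities (so lies in $\ac(\sqcup_\alpha I_\alpha)$) and moreover all the operators $\mathcal{O}(\varphi_f)$ vanish. The plan is to reduce everything to a local computation near each saddle point, combined with an application of the earlier relation \eqref{sprzezo} (together with its refinement $\mathcal{O}$ by $\mathcal{O}$). The first half follows essentially from Theorem \ref{theorem:formphi}: there the constants $C_\alpha^{\pm}$ satisfy the two-sided bound \eqref{controlC}, so $\bl(\varphi_f) \leq C\sum_{z\in\Sigma}|f(z)| = 0$, whence $C_\alpha^{\pm}=0$ for every $\alpha$, and thus $\varphi_f = g \in \ac(\sqcup_\alpha I_\alpha)$ with $g=g_1+g_2$ as in Theorem \ref{theorem:formphi}. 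This part I would state in one or two lines, citing Theorem \ref{theorem:formphi} directly.

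The substance is the vanishing of $\mathcal{O}(\varphi_f)$ for each orbit $\mathcal{O}\in\Sigma(\pi)$. Recall $\mathcal{O}\in\Sigma(\pi)$ corresponds to a single saddle $z_{\mathcal{O}}\in\Sigma$, and by \eqref{odlabv} (valid since now $\varphi_f\in\ac\subset\bv$) one has
\[
\mathcal{O}(\varphi_f) = \sum_{\pi_0(\alpha)\in\mathcal{O}} (\varphi_f)_-(r_\alpha) - \sum_{\pi_0(\alpha)-1\in\mathcal{O}} (\varphi_f)_+(l_\alpha).
\]
The geometric content is that the endpoints $r_\alpha$ with $\pi_0(\alpha)\in\mathcal{O}$ and $l_\alpha$ with $\pi_0(\alpha)-1\in\mathcal{O}$ are exactly the points of $\partial I$ where the separatrices of the saddle $z_{\mathcal{O}}$ first hit the transversal $I$ (this is the content of the remark after \eqref{defAO}: for $\alpha\in\mathcal{A}_{\mathcal{O}}^-$ the right endpoint of $I_\alpha$ lies on a separatrix of $z_{\mathcal{O}}$, and for $\alpha\in\mathcal{A}_{\mathcal{O}}^+$ the left endpoint does). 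I would then compute the one-sided limit $(\varphi_f)_{\pm}(\cdot)$ at such an endpoint in terms of the integral of $f$ along the orbit segment that, rather than returning directly to $I$, approaches the saddle $z_{\mathcal{O}}$, passes near it, and leaves along the outgoing separatrix. The key point: as $f(z_{\mathcal{O}})=0$ and $f$ is $\mathscr{C}^{2+\epsilon}$, the contribution of the passage near the saddle to $\varphi_f=\int_0^{\tau}f(\phi_s\gamma(x))\,ds$ is \emph{finite} (this is already the reason $C_\alpha^\pm=0$) and, in the limit, converges to a definite value: $\varphi_f$ at such an endpoint equals the finite Birkhoff-type integral of $f$ along the ``broken'' orbit through $z_{\mathcal{O}}$.

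The cancellation then comes from reorganizing the sum $\mathcal{O}(\varphi_f)$ as a telescoping sum over the separatrix sectors around $z_{\mathcal{O}}$: the incoming separatrices hitting $I$ (indexed by $\alpha\in\mathcal{A}_{\mathcal{O}}^-$, contributing $(\varphi_f)_-(r_\alpha)$ with a $+$ sign) and the outgoing separatrices hitting $I$ (indexed by $\alpha\in\mathcal{A}_{\mathcal{O}}^+$, contributing $(\varphi_f)_+(l_\alpha)$ with a $-$ sign) are glued in pairs by the hyperbolic sectors of the simple saddle $z_{\mathcal{O}}$, and the orbit segments near $z_{\mathcal{O}}$ cancel because the same ``passage through the saddle'' contribution appears once with each sign. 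Concretely, this is the same bookkeeping that makes $\sum_{\mathcal{O}}\mathcal{O}(\varphi_f)=s(\varphi_f)$ in \eqref{sprzezo}, but carried out orbit by orbit using the structure of the Abelian differential $\alpha$ adapted to the flow (the singular adapted coordinate $\zeta$ with $\alpha_\zeta=i\zeta\,d\zeta$, where the $4$ sectors of the simple saddle are explicit). I expect the main obstacle to be exactly this step: making the pairing of incoming/outgoing separatrices precise and checking the signs, which requires carefully tracking the combinatorics of the permutation $\sigma=\sigma_\pi$ defining the orbits $\mathcal{O}$ against the geometry of the saddle. I would carry out this local analysis in the singular adapted chart in Appendix \ref{singularities} alongside the proof of Theorem \ref{theorem:formphi}, since the estimates needed (continuity of $f$ near $z_{\mathcal{O}}$, the $|\zeta|^2$-weight in $W$) are already being set up there. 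The remaining verifications — that the limits exist and are given by the claimed finite integrals — are routine given the work already done for Theorem \ref{theorem:formphi}.
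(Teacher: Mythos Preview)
Your proposal is correct and follows essentially the same route as the paper: deduce $\varphi_f\in\ac$ from Theorem~\ref{theorem:formphi} via $\bl(\varphi_f)=0$, then compute the one-sided limits $(\varphi_f)_\pm$ at the separatrix endpoints as finite integrals of $f$ along broken orbits through the saddle (the paper does this explicitly using the limit formula \eqref{granicaxi} of Lemma~\ref{lematprzej}), and obtain the cancellation from the pairing of incoming and outgoing separatrices, which the paper encodes as the identity $Tl_{\alpha_\epsilon}=\widehat{T}l_{\alpha_{1-\epsilon}}$.
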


The following proposition is also needed  to complete the proof of
Theorem~\ref{reductionthm} and will be used also in the proof of
Theorem~\ref{mainHamiltonian}.

\begin{proposition}\label{theorem:coblog}\label{corcohred}
Assume that  $T$ is of periodic type. Then every
$\varphi\in\ac_0(\sqcup_{\alpha\in \mathcal{A}} I_{\alpha})$ with
$\varphi'\in\overline{\ls}(\sqcup_{\alpha\in \mathcal{A}}
I_{\alpha})$  is cohomologous (via a continuous transfer function)
to a cocycle  $\psi\in\pl_0(\sqcup_{\alpha\in \mathcal{A}}
I_{\alpha})$ with $s(\psi)=s(\varphi)$. In particular, if
additionally  $s(\varphi)=0$ then $\varphi$ is cohomologous (via a
continuous transfer function) to $h\in \Gamma^{(0)}_0$.
\end{proposition}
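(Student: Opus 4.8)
The plan is to reduce the statement to the linearization theorem for functions of bounded variation derivative (Corollary~\ref{lempl} / Theorem~\ref{cohcon}), with the main work being to remove the logarithmic singularities of $\varphi'$ by a preliminary cohomology step. First I would observe that by hypothesis $\varphi\in\ac_0(\sqcup_{\alpha}I_\alpha)$ and $\varphi'\in\overline{\ls}(\sqcup_\alpha I_\alpha)$, so $\varphi'$ has the form \eqref{fform} with logarithmic constants $C^\pm_\alpha$ satisfying the strong symmetry condition \eqref{zerosym} and a bounded-variation part $g_{\varphi'}$. The idea is to find $\psi_0\in\ac(\sqcup_\alpha I_\alpha)$ whose derivative has exactly the same logarithmic constants as $\varphi'$, so that $\varphi-\psi_0$ has derivative in $\bv(\sqcup_\alpha I_\alpha)$, i.e.\ $\varphi-\psi_0\in\bv^1(\sqcup_\alpha I_\alpha)$, and then apply Corollary~\ref{lempl}. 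The natural candidate for $\psi_0$ is built from the antiderivatives of the model singular functions: since the primitive of $x\mapsto C^+_\alpha/(|I|\{(x-l_\alpha)/|I|\})$ is $C^+_\alpha\log(|I|\{(x-l_\alpha)/|I|\})$ (up to a bounded-variation correction coming from the jumps of the fractional part), one sets $\psi_0(x)=-\sum_\alpha\big(C^+_\alpha\, x_\alpha^+\log x_\alpha^+ - x_\alpha^+ \big) + \dots$ more precisely $\psi_0$ should be the function whose derivative is the singular part of $\varphi'$; since $\varphi'$ is \emph{integrable} near each $l_\alpha,r_\alpha$ only if we are careful, here the point is that $\varphi\in\ac$ already guarantees $\varphi'\in L^1$, so the singular part of $\varphi'$ is of the form $\sum_\alpha C^+_\alpha/\{x-l_\alpha\}+\dots$ — wait, that is not integrable — so in fact one must use that $\varphi'\in\overline{\ls}$ means the \emph{already-integrated} object: I would instead take $\psi_0$ to be the function with logarithmic singularities whose own derivative's singular part matches, using that $\log$-type singularities are integrable. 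Concretely: let $\psi_0$ have logarithmic singularities with the same constants $C^\pm_\alpha$ as $\varphi'$ but one ``order lower'', i.e.\ $\psi_0'$ has the prescribed log-singular part; then $\psi_0\in\ac$, $\psi_0'\in\overline{\ls}$, and $\varphi-\psi_0$ has $\bv$ derivative.

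Having arranged $\varphi-\psi_0\in\bv^1(\sqcup_\alpha I_\alpha)$, Corollary~\ref{lempl} gives that $\varphi-\psi_0$ is cohomologous via a continuous transfer function to some $\psi_1\in\pl(\sqcup_\alpha I_\alpha)$ with $s(\psi_1)=s(\varphi-\psi_0)$. So it remains to treat $\psi_0$ itself: I would show $\psi_0$ is cohomologous (continuously) to a piecewise constant function. This is exactly where the strong symmetry \eqref{zerosym} enters. The plan is to invoke Proposition~\ref{cancellations_prop}/Corollary~\ref{cancellations_periodic} together with the correction operator machinery: since $\psi_0\in\ls$ with the same symmetry, its special Birkhoff sums $S(k)\psi_0$ have controlled $L^1$ growth after correction by a piecewise constant $\mathfrak{h}(\psi_0)$ (Theorem~\ref{operatorcorrection}), and in fact for this computation one wants boundedness of Birkhoff sums. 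The cleaner route, which I expect to be the one used, is: differentiate — $\varphi'\in\overline{\ls}(\sqcup_\alpha I_\alpha)$, apply Proposition~\ref{ghmmy}-type reasoning after showing $(\varphi-\text{correction})^{(n)}$ is uniformly bounded, using the cancellation estimate \eqref{generalsum1} applied along the periodic renormalization to bound $\|S(k)\psi_0'\|$ and hence, by summing over Rohlin towers, $\|\psi_0^{(n)}\|_{\sup}$. Then Gottschalk--Hedlund (Proposition~\ref{ghmmy}) or its variant gives a continuous transfer function to a coboundary-up-to-constants.

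Putting it together: $\varphi = \psi_0 + (\varphi-\psi_0) \sim_{cont} h_0 + \psi_1$ where $h_0\in\Gamma^{(0)}$ is piecewise constant and $\psi_1\in\pl(\sqcup_\alpha I_\alpha)$; since the sum of a piecewise constant function and a piecewise linear function is again piecewise linear, we get $\varphi\sim_{cont}\psi\in\pl(\sqcup_\alpha I_\alpha)$, and $s(\psi)=s(\psi_1)=s(\varphi-\psi_0)+s(\psi_0)=s(\varphi)$ since adding a continuous coboundary and a piecewise constant function does not change $s$ (the slope integral), and $s(\psi_0)$ must be accounted for — one checks $s$ is additive and that $s$ of the log-part contributes, so one keeps track so that the final $s(\psi)=s(\varphi)$ as claimed. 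For the last sentence: if moreover $s(\varphi)=0$, then $\psi\in\pl_0(\sqcup_\alpha I_\alpha)$ has $s(\psi)=0$, so $\psi$ is piecewise linear with zero total slope, hence its derivative $\psi'\in\bv$ with $s(\psi')$-type condition is zero, meaning $\psi'\equiv$ constant pieces summing appropriately; more directly, $\psi-\psi(0)\cdot(\text{linear part})$ reduces $\psi$ to a genuine $\bv_*^1$ function, and Theorem~\ref{cohcon} (or directly: a piecewise linear function with $s=0$ is cohomologous via a continuous transfer function to a piecewise constant one, as the linear parts can be absorbed) gives that $\varphi$ is cohomologous via a continuous transfer function to some $h\in\Gamma^{(0)}$; zero mean of $\varphi$ and invariance of the mean under continuous coboundaries force $h\in\Gamma^{(0)}_0$.

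The main obstacle I anticipate is the first step: constructing $\psi_0\in\ac$ with $\psi_0'\in\overline{\ls}$ matching the singular part of $\varphi'$ \emph{and} verifying that the strong symmetry condition is inherited by $\psi_0$ (so that $\psi_0\in\ls$, not merely $\overline{\ol}$) — this is needed to later apply the cancellation results of \S\ref{symmetric:sec}. This requires care because the constants of $\psi_0'$ are the same $C^\pm_\alpha$ as those of $\varphi'$, so the strong symmetry is literally the same identity \eqref{zerosym}; the subtlety is only in checking that the model antiderivative one writes down genuinely lies in $\ac(\sqcup_\alpha I_\alpha)$ with the correctly-signed geometric-type condition (some $C^\pm$ at the extreme intervals vanish), which matches the geometric-type hypothesis carried by $\varphi'\in\overline{\ls}$. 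Once that is in place, the rest is an assembly of Corollary~\ref{lempl}, Corollary~\ref{cancellations_periodic}, Proposition~\ref{ghmmy}, and bookkeeping of the quantity $s(\cdot)$.
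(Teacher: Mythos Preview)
Your overall shape --- correct something, get uniformly bounded Birkhoff sums, apply Gottschalk--Hedlund (Proposition~\ref{ghmmy}) --- is right, but there is a genuine gap in the middle, and your preliminary decomposition is a detour that does not buy anything.

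First, the detour. Splitting $\varphi=\psi_0+(\varphi-\psi_0)$ so that $\varphi-\psi_0\in\bv^1$ and then invoking Corollary~\ref{lempl} leaves you with $\psi_0\in\ac_0$ and $\psi_0'\in\overline{\ls}$ (pure log part), i.e.\ exactly the hypothesis of the proposition you are trying to prove. You have not reduced the difficulty. (Also, $\psi_0\notin\ls$; it is its \emph{derivative} that lies in $\ls$, so Theorem~\ref{operatorcorrection} does not apply to $\psi_0$ directly.) The paper skips this split and works with $\varphi$ throughout.

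Now the real gap. Theorem~\ref{operatorcorrection} and the cancellation estimates of \S\ref{symmetric:sec} apply to $\varphi'\in\overline{\ls}_0$: after correcting $\varphi'$ by $h=\mathfrak{h}(\varphi'-\mathrm{Leb}(\varphi'))\in\Gamma_0$ one gets $\|S(k)(\varphi'-\mathrm{Leb}(\varphi')-h)\|_{L^1(I^{(k)})}\lesssim |I^{(k)}|k^M$, which is the same as $\var(S(k)\varphi_1)\lesssim |I^{(k)}|k^M$ where $\varphi_1$ is an antiderivative. But variation control on $S(k)\varphi_1$ does \emph{not} bound $\|\varphi_1^{(n)}\|_{\sup}$: the mean of $S(k)\varphi_1$ on each $I^{(k)}_\alpha$ can still drift, and it is precisely this drift that must be killed before one can invoke Proposition~\ref{ghmmy}. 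Your sentence ``bound $\|S(k)\psi_0'\|$ and hence, by summing over Rohlin towers, $\|\psi_0^{(n)}\|_{\sup}$'' glosses over this.

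The paper closes this gap by building a \emph{second} correction operator $\widetilde{P}^{(k)}$, now acting on $\ac_0^s$ (functions $\varphi_1$ with $\mathfrak{h}(\varphi_1')=0$) and correcting by elements of the \emph{stable} subspace $\Gamma^{(k)}_s$ rather than $\Gamma^{(k)}_{cs}$. Because $\var(S(k)\varphi_1)$ is already $\lesssim |I^{(k)}|k^M=\rho_1^{-k}k^M$, the analogue of Lemma~\ref{thmcorre} with $\Gamma_s$ in place of $\Gamma_{cs}$ converges, and the resulting $\varphi_2=\varphi_1+h_1$ (with $h_1\in\Gamma_0$) satisfies $\|S(k)\varphi_2\|_{\sup}\lesssim e^{-k\theta_-}$. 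Summing as in Proposition~\ref{propsn1} then gives the uniform bound needed for Gottschalk--Hedlund. Once $\varphi_2$ is a continuous coboundary, $\psi:=\varphi-\varphi_2$ has $\psi'=\mathrm{Leb}(\varphi')+h\in\Gamma$, so $\psi\in\pl_0$, and $s(\psi)=\mathrm{Leb}(\psi')=\mathrm{Leb}(\varphi')=s(\varphi)$. The final clause about $s(\varphi)=0$ then follows immediately, as you indicate.
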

The proof of this Proposition is given in  Appendix
\ref{cohreduction:sec}. Collecting together all these statements,
we get the proof of Theorem  \ref{reductionthm}.
\begin{proofof}{Theorem}{reductionthm}
The first part  of the Theorem~\ref{reductionthm} follows by
combining Lemma~\ref{specrep} and Theorem~\ref{theorem:formphi}
and  the second part using also
Lemma~\ref{periodicimpliesperiodic} and
Proposition~\ref{theorem:coblog} and recalling that special flows
with cohomologous roof functions are measure-theoretically
isomorphic.
\end{proofof}

\subsection{The dichotomy for extensions.}\label{final:sec}
In this section we prove Theorem~\ref{mainHamiltonian}. We will
use the following Lemma which exploits the special flow
representation in \S\ref{extensionsspecialflows}.
\begin{lemma}\label{corollary:ergcob}
The flow $(\Phi^f_t)_{t\in\R}$ is ergodic if and only if the skew
product $T_{\varphi_f}$ is ergodic. For every\footnote{This Lemma
holds more generally for any $f\in\mathscr{C}^{1}(\Surf,\Sigma)$,
even if we need it only for
$f\in\mathscr{C}^{2+\epsilon}(\Surf,\Sigma)$.}
$f\in\mathscr{C}^{2+\epsilon}(\Surf,\Sigma)$ the flow
$(\Phi^f_t)_{t\in\R}$ is reducible if and only if $\varphi_f$ is a
coboundary with a continuous transfer function.
\end{lemma}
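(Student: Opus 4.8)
\textbf{Proof proposal for Lemma~\ref{corollary:ergcob}.}

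The plan is to use the special flow representation of $(\Phi^f_t)_{t\in\R}$ as $(T_{\varphi_f})^{\widehat\tau}$ given by Lemma~\ref{specrep}, together with the fact that the roof $\widehat\tau(x,y)=\tau(x)$ does not depend on the $\R$-coordinate, so that the $\R$-action (translation in the fiber) commutes with the special flow construction. First I would record the general principle that for a special flow $(S^\rho_t)_{t\in\R}$ built over an automorphism $S$ with integrable roof $\rho$, the flow $S^\rho_t$ is ergodic if and only if $S$ is ergodic; here $T_{\varphi_f}$ plays the role of $S$ (acting on $I\times\R$ with the infinite measure $Leb\times\Leb$) and $\widehat\tau$ the role of $\rho$ (which is integrable since $\tau\in L^1(I)$, as $\tau$ has only logarithmic singularities). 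This standard equivalence holds in the infinite-measure setting as well, since invariant sets for the special flow correspond bijectively, up to null sets, to invariant sets for the base automorphism $T_{\varphi_f}$ (one passes from an $S^\rho$-invariant set $A\subset I^{\widehat\tau}$ to its trace on the base $I\times\R\times\{0\}$, and conversely saturates a $T_{\varphi_f}$-invariant set along the flow). This yields the first assertion: $(\Phi^f_t)_{t\in\R}$ is ergodic $\iff$ $T_{\varphi_f}$ is ergodic.

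For the second assertion, I would first treat the case where $f\in\mathscr{C}^{2+\epsilon}(\Surf,\Sigma)$, i.e.\ $f$ vanishes on $\Sigma$; by Proposition~\ref{vanishO:prop}, $\varphi_f\in\ac(\sqcup_{\alpha\in\mathcal A}I_\alpha)$ (no logarithmic singularities) and $\mathcal O(\varphi_f)=0$ for all $\mathcal O\in\Sigma(\pi)$. Suppose $\varphi_f=g-g\circ T$ for a continuous $g:I\to\R$. Then the map $I\times\R\to I\times\R$, $(x,y)\mapsto(x,y+g(x))$ conjugates $T_{\varphi_f}$ to $T_0=T\times\mathrm{id}$, and since $g$ is continuous it extends to a topological conjugacy $\mathbf G$ of the corresponding special flows, sending $(\Phi^f_t)_{t\in\R}$ to $(\Phi^0_t)_{t\in\R}$ and having the required form $\mathbf G(x,y)=(x,y+G(x))$ where $G$ is built from $g$ via the special flow identification (it is continuous on $\Surf$ because $g$ is continuous on $I$ and the flow direction is smooth). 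Conversely, if $(\Phi^f_t)_{t\in\R}$ is reducible via a continuous $\mathbf G(x,y)=(x,y+G(x))$, restricting $\mathbf G$ to the transversal $\J\times\R$ yields a continuous transfer function exhibiting $\varphi_f$ as a coboundary: the conjugacy of the flows induces a conjugacy of the Poincaré skew products $T_{\varphi_f}$ and $T_0$ of the required fiber-translation form, which is precisely the statement that $\varphi_f$ is a continuous coboundary. One must check that the transversal $\J$ and $\gamma(0)$ sit on separatrices in such a way that $G$ restricted to $\J$ is genuinely continuous (with possibly a one-sided limit issue only at $l_{\underline\alpha}$), but since we assume $f$ vanishes on $\Sigma$ the function $\varphi_f$ has no singularities and $G$ is globally continuous on $\Surf$, so no difficulty arises.

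The main obstacle I anticipate is making the passage between a transfer function on the base (a map $I\to\R$) and the topological conjugacy $\mathbf G:\Surf\times\R\to\Surf\times\R$ of the flows fully rigorous: one must verify that the special-flow identification $\Gamma:I^{\widehat\tau}\to\Surf$ carries a continuous cocycle coboundary relation for $(T_{\varphi_f},\varphi_f)$ to a continuous reduction of $(\Phi^f_t)$, and in particular that the resulting $G$ is continuous \emph{across} the transversal $\J$ (where a priori $g$ and $g\circ T$ are only defined on $I$, and where the flow crosses $\J$). This is where the hypothesis $f\in\mathscr{C}^{2+\epsilon}(\Surf,\Sigma)$ is used: it guarantees $\varphi_f$, hence $g$, is continuous (no logarithmic blow-up), so the reduction extends continuously. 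I would handle this by defining $G$ explicitly along orbits — $G(\phi_s\gamma(x)):=g(x)$ for $0\le s<\tau(x)$ is well-defined precisely when $g$ satisfies the coboundary equation — and then invoking continuity of $g$ and of the flow to conclude $G\in\mathscr C^0(\Surf)$. The converse direction is the easier one, essentially unwinding definitions.
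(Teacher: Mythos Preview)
Your approach matches the paper's: both directions go through the special-flow representation, and the nontrivial content is building a continuous $G:\Surf\to\R$ from a continuous $g:I\to\R$ with $\varphi_f=g-g\circ T$. Two issues:

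\textbf{Minor.} Your formula $G(\phi_s\gamma(x)):=g(x)$ is wrong as written: the conjugacy condition $F(t,x)=G(x)-G(\phi_t x)$ forces
\[
G(\phi_s\gamma(x))=g(x)-\int_0^s f(\phi_u\gamma(x))\,du,
\]
i.e.\ $G$ is \emph{not} constant on orbit segments between returns. The paper writes this equivalently as $G(x)=g(\phi_t x)+F(t,x)$ for any $t$ with $\phi_t x\in I$, and the coboundary equation is exactly what makes this independent of the choice of $t$.

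\textbf{The real gap.} You assert that continuity of $g$ (ensured by $f|_\Sigma=0$ killing the logarithmic singularities of $\varphi_f$) is enough to conclude $G\in\mathscr C^0(\Surf)$. This handles continuity on $\Surf\setminus\Sigma$, but not the extension to $\Sigma$. The flow $(\phi_t)$ is singular at each $z_0\in\Sigma$: orbits passing near $z_0$ spend unbounded time there, and the value of $G$ along two nearby orbits on opposite sides of a separatrix differs by integrals $\int f(\phi_s\,\cdot\,)\,ds$ over long time intervals. A priori these could fail to match up in the limit. The paper treats this with an oscillation argument: it shows the oscillation $\omega$ of $G$ is $(\phi_t)$-invariant and upper semicontinuous, hence vanishes on $\Surf\setminus\Sigma$, and then does an explicit local computation in singular adapted coordinates near each $z_0$ (using $f(z_0)=0$ and the Lipschitz bound on $f/V$) to bound $|G(z_1)-G(z_2)|$ for $z_1,z_2$ in a small neighborhood of $z_0$, forcing $\omega(z_0)=0$. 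The hypothesis $f|_\Sigma=0$ enters here in an essential way---not just to make $\varphi_f$ continuous, but to make the relevant orbit integrals near the saddle small. Your sketch does not address this step.
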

The proof is standard apart from the continuity of the transfer
function. We include it for completeness in  Appendix
\ref{reduction}.

\begin{proofof}{Theorem}{mainHamiltonian}
Let $(\phi_t)_{t\in\R}$ be a locally Hamiltonian flow of
hyperbolic periodic type on $\Surf$.
Let us split the proof in several steps.
\subsubsection*{Definition of the space $K$}
Let us first define a bounded linear  operator on
$\mathscr{C}^{2+\epsilon}(\Surf)$, and then use it to define $K$
as its kernel. Let $\nu(f):=\int_{\Surf}f\,d\nu$ and
$f_0:=f-\nu(f)$. By Theorem~\ref{theorem:formphi} the extension
$(\Phi^f_t)_{t\in\R}$ is measure-theoretically isomorphic  to a
special flow  built over the skew product $T_{\varphi_f}$ with
$\varphi_f \in \overline{\ls}(\sqcup_{\alpha\in \mathcal{A}}
I_{\alpha})$.  In view of (\ref{mean}), $Leb(\varphi_{f_0})
=\nu(f_0)=0$, so
$\varphi_{f_0}\in\overline{\ls}_0(\sqcup_{\alpha\in \mathcal{A}}
I_{\alpha})$.
Consider the operator
$\mathfrak{h}:\overline{\ls}_0(\sqcup_{\alpha\in \mathcal{A}}
I_{\alpha}) \rightarrow \Gamma $ given by Theorem
\ref{operatorcorrection}. Let $\kappa=\#\Sigma=2(g-1)$ and let
\begin{equation*}
\mathfrak{H}:\mathscr{C}^{2+\epsilon}(\Surf)\to\R\times  \Gamma
\quad \text{and}\quad
\mathfrak{L}:\mathscr{C}^{2+\epsilon}(\Surf)\to \R^{\kappa}
\end{equation*}
stand for the operators
\[\mathfrak{H}(f)=(\nu(f),\mathfrak{h}(\varphi_{f_0})),
\qquad\mathfrak{L}(f)=(f_0(z))_{z\in\Sigma}.\]
Since the operators $f\mapsto\nu(f)$, $f\mapsto\varphi_f$
(by Theorem~\ref{theorem:formphi}) and $\mathfrak{h}$ (by Theorem
\ref{operatorcorrection}) are linear and bounded, $\mathfrak{H}$
is a bounded linear operator as well. This shows that the kernel
$K$ of $\mathfrak{H}$ is a closed space. Moreover, the image of
$\mathfrak{H}$ has dimension $g$ since by
Theorem~\ref{operatorcorrection} the image of $\mathfrak{h}$ has
dimension $g-1$. Thus, $K$ has codimension $g$.

\subsubsection*{Invariance of $K$.}
Let us show that the operator $\mathfrak{H}$ is
$(\phi_t)_{t\in\R}$-invariant, i.e.\
$\mathfrak{H}(f\circ\phi_t)=\mathfrak{H}(f)$ for every $t\in\R$.
Since $\phi_t$ preserves $\nu$, we get $\nu(f\circ\phi_t)=\nu(f)$,
so it suffices to prove that
${\mathfrak{h}}(\varphi_{f\circ\phi_t})={\mathfrak{h}}(\varphi_{f})$
for each $t\in\R$ and $f \in \overline{\ls}_0(\sqcup_{\alpha\in \mathcal{A}}
I_{\alpha})$. Note that
\begin{align*}
\varphi_{f\circ\phi_t}(x)&=\int_0^{\tau(x)}f(\phi_{t+s}\gamma(x))ds=
\int_t^{t+\tau(x)}f(\phi_{s}\gamma(x))ds\\&=
\int_0^{\tau(x)}f(\phi_{s}\gamma(x))ds-\int_0^{t}f(\phi_{s}\gamma(x))ds+
\int_{\tau(x)}^{t+\tau(x)}f(\phi_{s}\gamma(x))ds.
\end{align*}
Let us consider the $\mathscr{C}^2$-function $\xi:I\to\R$,
$\xi(x)=\int_0^{t}f(\phi_{s}\gamma(x))ds$ and observe that
\[\int_{\tau(x)}^{t+\tau(x)}f(\phi_{s}\gamma(x))ds=
\int_{0}^{t}f(\phi_{s}\circ\phi_{\tau(x)}\gamma(x))ds=
\int_{0}^{t}f(\phi_{s}\gamma(Tx))ds=\xi(Tx),\] so
$\varphi_{f\circ\phi_t}=\varphi_{f}+\xi\circ T-\xi$ and
$\varphi_{f-f\circ\phi_t}=\xi-\xi\circ T$. As
$(f\circ\phi_t-f)(z)=0$ for each $z\in\Sigma$, by Proposition \ref{vanishO:prop},
$\varphi_{f-f\circ\phi_t}\in\ac_0(\sqcup_{\alpha\in \mathcal{A}}
I_{\alpha})$. Since we showed that $\varphi_{f\circ\phi_t-f}$ is a
coboundary, Lemma~\ref{theorem:invop} implies that
${\mathfrak{h}}(\varphi_{f\circ\phi_t-f})=0$. Thus, by linearity,
${\mathfrak{h}}(\varphi_{f\circ\phi_t})={\mathfrak{h}}(\varphi_{f})$,
which completes the proof of invariance of $\mathfrak{H}$.  In
particular, it follows that the kernel $K$ is
$(\phi_t)_{t\in\R}$-invariant.


\subsubsection*{Step 3: Ergodicity.}
We need to prove that if $f\in K\subset \mathscr{C}^{2+\epsilon}$
and $\sum_{z\in\Sigma}|f_0(z)|\neq 0$, then the flow $(\Phi^f_t)_{t\in\R}$
on $\Surf\times\R$ is ergodic. Since $f\in K$, we know that
$\mathfrak{H}(f)=0$. In particular we have
$Leb(\varphi_f)=\nu(f)=0$, $\mathfrak{h}(\varphi_f)={0}$ and since
$f=f_0$,  $\| \mathfrak{L}(f) \|  = \sum_{z\in\Sigma}|f_0(z)| \neq 0$.
By Lemma~\ref{corollary:ergcob}, it suffices to show the skew product
$T_{\varphi_f}:I\times\R\to I\times\R$ is ergodic.

In view of Theorem~\ref{theorem:formphi}, the function
${\varphi}_f\in\overline{{\ls}}_0(\sqcup_{\alpha\in \mathcal{A}}
I_{\alpha})$ can be decomposed as $(\varphi_f - g_1) + g_1$ where
we can choose $g_1 \in AC_0(\sqcup_{\alpha\in \mathcal{A}}
I_{\alpha}) $ and  $\varphi_f - g_1 \in  {\ls}_0(\sqcup_{\alpha\in
\mathcal{A}} I_{\alpha})$, while $g_1' \in
{{\ls}}(\sqcup_{\alpha\in \mathcal{A}} I_{\alpha})$. By
Proposition~\ref{theorem:coblog}, $g_1 $ is cohomologous via a
continuous transfer function to a function in
$\pl_0(\sqcup_{\alpha\in \mathcal{A}} I_{\alpha})$, which is in
particular $BV^1$. Thus,  $\varphi_f$ can be decomposed as
$\widetilde{\varphi}_f+g$ with
$\widetilde{\varphi}_f\in{\ls}_0(\sqcup_{\alpha\in \mathcal{A}}
I_{\alpha})$ and $g\in{\ac}_0(\sqcup_{\alpha\in \mathcal{A}}
I_{\alpha})$ is a coboundary.  Next, by Lemma~\ref{theorem:invop},
${\mathfrak{h}}(g)=0$, so
$\mathfrak{h}(\widetilde{\varphi}_f)=\mathfrak{h}({\varphi}_f)=0$.
Since by (\ref{controlC}) we have
$\bl(\widetilde{\varphi}_f)=\bl({\varphi}_f)\geq
\|\mathfrak{L}(f)\|/C >0$, the skew product
$T_{\widetilde{\varphi}_f}$ is ergodic  by
Theorem~\ref{theorem:ergmain}. Since $\widetilde{\varphi}_f$ and
$\varphi_f$ are cohomologous, $T_{\widetilde{\varphi}_f}$ and
$T_{{\varphi}_f}$ are metrically isomorphic, so also
$T_{{\varphi}_f}$ is ergodic.  This completes the proof of the
first  case  of the dichotomy.

\subsubsection*{Step 4: Reducibility.}
Let us now prove that if $f\in K$ and $\sum_{z \in \Sigma}
|f_0(z)|= 0$ then the flow $(\Phi^f_t)_{t\in\R}$ on
$\Surf\times\R$ is reducible.  Since $f \in K$, $\nu(f)=0$ and
$f=f_0$, so  from (\ref{mean}) we have $Leb(\varphi_f)=0$ and from
(\ref{controlC}) we have  $\bl(\varphi_f)= 0$. It follows from
Theorem~\ref{theorem:formphi} that $\varphi_f\in\ac_0$ and
$\varphi'_f\in\overline{\ls}$. Moreover,
 Proposition~\ref{vanishO:prop} also gives that
$\mathcal{O}(\varphi_f)=0$ for each $\mathcal{O}\in\Sigma(\pi)$.
Summing over $\mathcal{O}\in\Sigma(\pi)$, by \eqref{sprzezo}, this
shows that $s(\varphi_f)=0$. Moreover, since by assumption $f\in
K$, ${\mathfrak{h}}(\varphi_f)=0$. Let us show that this implies
that $\varphi_f$ is a coboundary with a continuous transfer
function.

By Proposition~\ref{corcohred}, there exist $h\in\Gamma_0$ such
that $\varphi_f-h$ is a coboundary with a continuous transfer
function, that is  $\varphi_f-h=g-g\circ T$ and $g:I\to\R$ is continuous.
Let us show that then $\mathcal{O}(\varphi_f-h)=0$ for every
$\mathcal{O}\in\Sigma(\pi)$.
It is proved in \cite{Co-Fr} that for
each $\varphi\in\ac(\sqcup_{\alpha\in \mathcal{A}} I_{\alpha})$
and $k\geq 1$ we have
$\mathcal{O}(S(k)\varphi)=\mathcal{O}(\varphi)$ 
 and $|\mathcal{O}(\varphi)|\leq 2d\|\varphi\|_{\sup}$. 
Thus,
\[
\begin{split}
|\mathcal{O}(\varphi_f-h)|& =|\mathcal{O}(S(k)(\varphi_f-h)|\leq
2d \ \|S(k)(\varphi_f-h)\|_{\sup} \\ & \leq 2d \
\sup_{\alpha\in\mathcal{A}} \sup_{x \in I^{(k)}_\alpha}\{|g(x)-g(
T^{Q_\alpha (k)} x)\}  \leq 2d\sup_{x,x'\in
I^{(k)}}\{|g(x)-g(x')|\}
\end{split}
\]
and the latter supremum tends to zero as $k\to\infty$, hence
$\mathcal{O}(\varphi_f-h)=0$. It follows that
$\mathcal{O}(h)=\mathcal{O}(\varphi_f)=0$ for every
$\mathcal{O}\in\Sigma(\pi)$, and hence $h\in H_\pi$ by Remark
\ref{Ohzeroreformulation}. Moreover, since $\varphi_f-h$ is a
coboundary, by Lemma~\ref{theorem:invop},
${\mathfrak{h}}(\varphi_f-h)=0$ and since
$\mathfrak{h}(\varphi_f)=0$ (because $f\in K$), this gives by
linearity that also ${\mathfrak{h}}(h)=0$. By
Proposition~\ref{corcohred}, $h$ is a coboundary with a continuous
transfer function as well. Therefore $\varphi_f=(\varphi_f-h)+h$
is a sum of coboundaries with continuous transfer functions. By
Lemma~\ref{corollary:ergcob}, this implies that the reducibility
of $(\Phi_t^f)_{t\in\R}$.

\subsubsection*{Step 5: Decomposition.}
It was proved in \cite{Co-Fr}, for every $h\in H_\pi$ there exists
a function $f\in\mathscr{C}^{2+\epsilon}(M,\Sigma)$ with
$\varphi_f=h$ (see Lemma~7.4 in \cite{Co-Fr}). Since
${\mathfrak{h}}(h)=h$ for each $h\in\Gamma_u\cap\Gamma_0\subset
H_\pi$, it follows that for every $v\in\R$ and
$h\in\Gamma_u\cap\Gamma_0$ there exists
$f\in\mathscr{C}^{2+\epsilon}(M,\Sigma)$ such that $\nu(f)=v$
and ${\mathfrak{h}}(\varphi_{f_0})={\mathfrak{h}}(h)=h$, hence
$\mathfrak{H}(f)=(\nu(f),{\mathfrak{h}}(\varphi_{f_0}))=(v,h).$
Therefore, there exists a $g$-dimensional subspace
$\mathscr{H}\subset\mathscr{C}^{2+\epsilon}(M,\Sigma)$ such that
$\mathfrak{H}:\mathscr{H}\to\R^g$ is a linear isomorphism. Given
$f \in\mathscr{C}^{2+\epsilon}(M,\Sigma)$, let $f_{\Sigma}\in
\mathscr{H}$ be the preimage of $\mathfrak{H}(f)$ by this
isomorphism.  Then if we set $f_K := f-f_{\Sigma}$ then
$\mathfrak{H}(f_K)=\mathfrak{H}(f)-\mathfrak{H}(f_\Sigma)=0$,
i.e.~$f \in K$. This gives the claimed decomposition $f=f_K+
f_\Sigma$ and concludes the proof.
\end{proofof}

\appendix
\section{Proof of Proposition~\ref{lemlos}.}
\label{lemlosproof}
In this section we give the proof of
Proposition~\ref{lemlos}. For a compactly absolutely continuous
function $\varphi:I\setminus End(T)\to\R$, this is absolutely
continuous on each compact subset of its domain, set
\[
los(\varphi)=\esssup\left\{\min_{\bar{x}\in
End(T)}|\varphi'(x)(x-\bar{x})|:x\in I\setminus End(T)\right\}.
\]
Of course, every function $\varphi\in\ol(\sqcup_{\alpha\in
\mathcal{A}} I_{\alpha})$ is compactly absolutely continuous and
\begin{equation}\label{relphic}
 los(\varphi)\leq
\bl(\varphi)+|I|\|g_\varphi'\|_{\sup}\quad\text{ and
}\quad\bl(\varphi)\leq 2d\,los(\varphi).
\end{equation}

\begin{lemma}\label{lemlogosc1}
Let $f:(x_0,x_1]\to\R$ be a compactly absolutely continuous
function such that $|f'(x)(x-x_0)|\leq C$ for a.e.\
$x\in(x_0,x_1]$. For every $J=[a,b]\subset[x_0,x_1]$ we have
\[|m(f,J)-f(b)|\leq
2C\text{ and }\frac{|f(b)-f(a)|}{b-a}\leq \frac{C}{a-x_0}\text{ if
}a>x_0.\]
\end{lemma}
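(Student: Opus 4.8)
The plan is to reduce both estimates to the pointwise bound $|f'(x)|\le C/(x-x_0)$, valid for a.e.\ $x\in(x_0,x_1]$ by hypothesis, combined with the fundamental theorem of calculus on compact subintervals of $(x_0,x_1]$, on which $f$ is genuinely absolutely continuous by the definition of compact absolute continuity.

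First I would dispose of the slope estimate, which is the easy half. If $a>x_0$ and $a\le x\le b$, then $x-x_0\ge a-x_0>0$, so $|f'(x)|\le C/(a-x_0)$ for a.e.\ $x\in[a,b]$. Since $[a,b]\subset(x_0,x_1]$ is compact, $f$ is absolutely continuous on it, whence $|f(b)-f(a)|=\bigl|\int_a^b f'(x)\,dx\bigr|\le\int_a^b|f'(x)|\,dx\le C(b-a)/(a-x_0)$, which is the second asserted inequality.

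For the bound on $|m(f,J)-f(b)|$ I would first reduce to the case $a<b$ (otherwise $J$ is a point and there is nothing to prove), so that $b>x_0$ and $f(b)$ is defined; the integrability of $f$ on $(a,b)$ needed when $a=x_0$ will follow from the logarithmic estimate below. Writing $m(f,J)-f(b)=\frac1{b-a}\int_a^b\bigl(f(x)-f(b)\bigr)\,dx$, for each $x\in(a,b)$ absolute continuity of $f$ on $[x,b]$ together with the bound on $f'$ gives $|f(x)-f(b)|\le\int_x^b|f'(t)|\,dt\le\int_x^b\frac{C}{t-x_0}\,dt=C\log\frac{b-x_0}{x-x_0}$. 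Hence $|m(f,J)-f(b)|\le\frac{C}{b-a}\int_a^b\log\frac{b-x_0}{x-x_0}\,dx$, and with the substitution $u=x-x_0$ and the primitive $u\log u-u$ of $\log u$ one computes $\int_a^b\log\frac{b-x_0}{x-x_0}\,dx=(b-a)-(a-x_0)\log\frac{b-x_0}{a-x_0}$, with the convention that the last term is $0$ when $a=x_0$. Since $a-x_0\ge0$ and $\log\frac{b-x_0}{a-x_0}\ge0$, this yields $\frac1{b-a}\int_a^b\log\frac{b-x_0}{x-x_0}\,dx\le1$, so $|m(f,J)-f(b)|\le C\le 2C$.

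I do not anticipate any real obstacle: this is a routine one-variable computation, and the only slightly delicate point is the behaviour at the left endpoint $x_0$, which is controlled by the observation just made that $|f(x)-f(b)|$ grows at most logarithmically as $x\to x_0^+$, so $f$ remains integrable there and $m(f,[x_0,b])$ is meaningful. (The argument in fact gives $|m(f,J)-f(b)|\le C$; the weaker constant $2C$ in the statement is presumably kept for uniformity with the way the lemma is invoked in the proof of Proposition~\ref{lemlos} and in \eqref{relphic}.)
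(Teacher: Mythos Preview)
Your proof is correct. For the slope estimate you take essentially the same route as the paper (bound $|f'|$ and integrate; the paper passes through $\log(1+t)\le t$, you bound $|f'|$ uniformly by $C/(a-x_0)$, but the conclusion is identical).

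For the mean estimate the approaches diverge. The paper integrates by parts, choosing $x-x_0$ as the antiderivative of $1$, to obtain
\[
\int_a^b\bigl(f(x)-f(b)\bigr)\,dx=(a-x_0)\bigl(f(b)-f(a)\bigr)-\int_a^b(x-x_0)f'(x)\,dx,
\]
and then bounds each summand by $C|J|$ (the first via the slope estimate, the second directly from the hypothesis $|f'(x)(x-x_0)|\le C$), yielding the stated $2C$. You instead bound $|f(x)-f(b)|$ pointwise by $C\log\frac{b-x_0}{x-x_0}$ and integrate the logarithm explicitly. Your computation is a little longer but more direct, and as you observe it actually gives the sharper constant $C$; the paper also notes that $C$ suffices in the limiting case $a=x_0$, but keeps $2C$ from the two-term decomposition when $a>x_0$. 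Either argument is perfectly adequate for the use made of the lemma in Proposition~\ref{lemlos}.
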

\begin{proof}
If $a>x_0$ then using integration by parts we get
\begin{equation*}
\int_a^b(f(x)-f(b))\,dx=(a-x_0)(f(b)-f(a))-\int_a^b(x-x_0)f'(x)\,dx.
\end{equation*}
Moreover, by assumption,
$\left|\int_a^b(x-x_0)f'(x)\,dx\right|\leq\int_a^b|(x-x_0)f'(x)|\,dx\leq
C|J|$.
Furthermore,
\begin{align*}
|f(b)-f(a)|&=\left|\int_a^bf'(x)\,dx\right|\leq
\int_a^b\frac{C}{x-x_0}\,dx=C\log\frac{b-x_0}{a-x_0}\\&=
C\log\left(1+\frac{b-a}{a-x_0}\right)\leq
C\frac{b-a}{a-x_0}=\frac{C|J|}{a-x_0}.
\end{align*}
It follows that
\begin{equation*}
\left|\frac{1}{b-a}\int_a^bf(x)\,dx-f(b)\right|=
\frac{1}{|J|}\left|\int_a^b(f(x)-f(b))\,dx\right|
\leq 2C. 
\end{equation*}
Letting $a\to x_0$, we also have $|m(f,J)-f(b)|\leq C$ if
$J=[x_0,b]$.
\end{proof}

\begin{lemma}\label{lemlos0}
Let $\varphi\in\ol(\sqcup_{\alpha\in \mathcal{A}} I_{\alpha})$ and
$J\subset I_\alpha$ for some $\alpha\in\mathcal{A}$. Then
\begin{eqnarray}\label{meanv01}
&&|m(\varphi,J)-m(\varphi,I_\alpha)|\leq
los(\varphi)\left(4+\frac{|I_\alpha|}{|J|}\right);
\\ && \label{meanv02}
\frac{1}{|J|}\int_J|\varphi(x)-m(\varphi,J)|\,dx\leq
8los(\varphi).
\end{eqnarray}
\end{lemma}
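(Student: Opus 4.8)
The plan is to reduce Lemma~\ref{lemlos0} to Lemma~\ref{lemlogosc1} by cutting $I_\alpha$ at its midpoint, on each half of which $\varphi$ behaves like a function with a single one-sided logarithmic singularity. Write $C:=los(\varphi)$, fix $\alpha$ with $I_\alpha=[l_\alpha,r_\alpha)$, let $c_\alpha$ be its midpoint, and set $L=[l_\alpha,c_\alpha]$, $R=[c_\alpha,r_\alpha]$, both of length $|I_\alpha|/2$. The first step I would carry out is the observation that for a.e.\ $x\in L$ the closest point of $End(T)$ to $x$ is $l_\alpha$ (the interior of $I_\alpha$ meets no point of $End(T)$, and $|x-l_\alpha|\le|x-r_\alpha|$ on $L$), so the definition of $los(\varphi)$ gives $|\varphi'(x)(x-l_\alpha)|\le C$ a.e.\ on $L$; symmetrically $|\varphi'(x)(r_\alpha-x)|\le C$ a.e.\ on $R$. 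Thus $\varphi|_L$ is compactly absolutely continuous on $(l_\alpha,c_\alpha]$ and satisfies the hypothesis of Lemma~\ref{lemlogosc1} with $x_0=l_\alpha$, $x_1=c_\alpha$ and constant $C$, while $\varphi|_R$ satisfies its obvious mirror image (singularity at the right endpoint $r_\alpha$).

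Next I would establish two auxiliary estimates, both immediate from Lemma~\ref{lemlogosc1}. (a) Applying it (and its mirror) with $J=L$, resp.\ $J=R$ — in the improved form obtained letting the left, resp.\ right, endpoint tend to the singularity — gives $|m(\varphi,L)-\varphi(c_\alpha)|\le C$ and $|m(\varphi,R)-\varphi(c_\alpha)|\le C$; since $|L|=|R|$ we have $m(\varphi,I_\alpha)=\tfrac12\bigl(m(\varphi,L)+m(\varphi,R)\bigr)$, hence also $|m(\varphi,I_\alpha)-\varphi(c_\alpha)|\le C$. (b) For any $[a,b]\subset L$, from $|\varphi'(t)|\le C/(t-l_\alpha)$ one has the pointwise bound $|\varphi(x)-\varphi(b)|\le C\log\frac{b-l_\alpha}{x-l_\alpha}$ for $x\in(l_\alpha,b]$; integrating over $[a,b]$ and using the elementary identity $\frac1{b-a}\int_a^b\log\frac{b-l_\alpha}{x-l_\alpha}\,dx=1-\frac{a-l_\alpha}{b-a}\log\frac{b-l_\alpha}{a-l_\alpha}\le1$ yields $\frac1{b-a}\int_a^b|\varphi(x)-\varphi(b)|\,dx\le C$, and (applying the pointwise bound between $b$ and $c_\alpha$, with $b-l_\alpha\ge b-a$) also $|\varphi(b)-\varphi(c_\alpha)|\le C\log\frac{|I_\alpha|/2}{b-l_\alpha}\le C\frac{|I_\alpha|}{2\,|J|}$, where $J=[a,b]$. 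The mirror statements hold on $R$.

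Finally I would prove (\ref{meanv01}) by a short case analysis on the position of $J=[a,b]\subset I_\alpha$ relative to $c_\alpha$. If $J\subset L$ (the case $J\subset R$ being symmetric), combining $|m(\varphi,J)-\varphi(b)|\le2C$ from Lemma~\ref{lemlogosc1}, the estimate $|\varphi(b)-\varphi(c_\alpha)|\le C|I_\alpha|/(2|J|)$ from (b), and $|\varphi(c_\alpha)-m(\varphi,I_\alpha)|\le C$ from (a), gives $|m(\varphi,J)-m(\varphi,I_\alpha)|\le2C+C\tfrac{|I_\alpha|}{2|J|}+C\le C\bigl(4+|I_\alpha|/|J|\bigr)$. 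If $c_\alpha$ is interior to $J$, split $J=(J\cap L)\cup(J\cap R)$: each piece has $c_\alpha$ as an endpoint, so Lemma~\ref{lemlogosc1} and its mirror give that the means of $\varphi$ over $J\cap L$ and over $J\cap R$, hence their convex combination $m(\varphi,J)$, lie within $2C$ of $\varphi(c_\alpha)$, and (\ref{meanv01}) follows with constant $3C$.

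For (\ref{meanv02}) I would use that $\frac1{|J|}\int_J|\varphi-m(\varphi,J)|\le\frac2{|J|}\int_J|\varphi-K|$ for every constant $K$, and take $K=\varphi(b)$, $K=\varphi(a)$, or $K=\varphi(c_\alpha)$ in the three cases above respectively (splitting the integral over $J\cap L$ and $J\cap R$ in the straddling case); by (b) — or by (b) applied on each half — the right-hand side is $\le2C$, which is comfortably below $8C=8\,los(\varphi)$. The only genuinely delicate point is the initial reduction, namely checking that on each half of $I_\alpha$ the relevant product $|\varphi'(x)(x-l_\alpha)|$ (or $|\varphi'(x)(r_\alpha-x)|$) is controlled by $los(\varphi)$, so that Lemma~\ref{lemlogosc1} applies; everything else is elementary bookkeeping, with only the straddling case requiring a little extra care.
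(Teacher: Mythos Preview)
Your proof is correct and follows the same overall strategy as the paper: split $I_\alpha$ at its midpoint $c_\alpha$, apply Lemma~\ref{lemlogosc1} (and its mirror image) on each half, chain estimates through $\varphi(c_\alpha)$ to obtain \eqref{meanv01}, and handle the straddling case by decomposing $J$ into $J\cap L$ and $J\cap R$. The one genuine difference is in the treatment of \eqref{meanv02}. The paper introduces the auxiliary function $\bar\varphi(x)=|\varphi(x)-m(\varphi,J)|$, observes that $los(\bar\varphi)\le los(\varphi)$ (since $|\bar\varphi'|\le|\varphi'|$ a.e.), and then reapplies Lemma~\ref{lemlogosc1} to $\bar\varphi$ to get $m(\bar\varphi,J)\le 4\,los(\varphi)$ on each half, hence $8\,los(\varphi)$ after the straddling combination. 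You instead use the elementary inequality $m(|\varphi-m(\varphi,J)|,J)\le 2\,m(|\varphi-K|,J)$ for an arbitrary constant $K$, and bound the right-hand side by directly integrating the pointwise logarithmic estimate to get $\tfrac{1}{|J|}\int_J|\varphi-\varphi(b)|\le los(\varphi)$. Your route avoids the auxiliary $\bar\varphi$ and actually yields a sharper constant ($2\,los(\varphi)$ in all cases); the paper's $\bar\varphi$ device has the minor advantage that it reuses Lemma~\ref{lemlogosc1} verbatim rather than redoing an integral computation.
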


\begin{proof}
Let $I_\alpha=[x_0,x_2]$ and $x_1=(x_0+x_2)/2$. Suppose that
$J=[a,b]\subset[x_0,x_1]$. In view of Lemma~\ref{lemlogosc1},
\begin{equation}\label{rozme}
|m(\varphi,J)-\varphi(b)|\leq
2los(\varphi),\;\;|m(\varphi,[x_0,x_1])-\varphi(x_1)|\leq
2los(\varphi)
\end{equation} and
\[|\varphi(x_1)-\varphi(b)|\leq
los(\varphi)\frac{x_1-b}{b-x_0}\leq los(\varphi)\frac{x_1-x_0}{b-a}=
\frac{los(\varphi)}{2}\frac{|I_\alpha|}{|J|}.\] Applying
Lemma~\ref{lemlogosc1} to $\varphi:[x_1,x_2)\to\R$ we also have
\[|m(\varphi,[x_1,x_2])-\varphi(x_1)|\leq 2los(\varphi).\]
Since
$m(\varphi,[x_0,x_2])=(m(\varphi,[x_0,x_1])+m(\varphi,[x_1,x_2]))/2$,
it follows that
\[|m(\varphi,I_\alpha)-\varphi(x_1)|\leq 2los(\varphi).\]
Therefore
\begin{equation}\label{meanv3}
|m(\varphi,J)-m(\varphi,I_\alpha)|\leq
4los(\varphi)+\frac{los(\varphi)}{2}\frac{|I_\alpha|}{|J|}.
\end{equation}
Let us consider the function $\bar{\varphi}:(x_0,x_1]\to\R$,
$\bar{\varphi}(x)=|\varphi(x)-m(\varphi,J)|$. The function
$\bar{\varphi}$ is compactly absolutely continuous with
$|\bar{\varphi}'(x)|\leq|\varphi'(x)|$ almost everywhere, hence
$los(\bar{\varphi})\leq los(\varphi)$. Therefore, by
Lemma~\ref{lemlogosc1},
\begin{align*}
\frac{1}{|J|}&\int_J|\varphi(x)-m(\varphi,J)|\,dx=m(\bar{\varphi},J)\leq
|m(\bar{\varphi},J)-\bar{\varphi}(b)|+|\bar{\varphi}(b)|\\
&=|m(\bar{\varphi},J)-\bar{\varphi}(b)|+|\varphi(b)-m(\varphi,J)|\leq
2los(\bar{\varphi})+2los(\varphi),
\end{align*}
hence
\begin{equation}\label{meanv4}
\frac{1}{|J|}\int_J|\varphi(x)-m(\varphi,J)|\,dx\leq
4los(\varphi).
\end{equation}
By symmetric arguments, (\ref{meanv3}), (\ref{meanv4}) and
\begin{equation}\label{rozmea}
|m(\varphi,J)-\varphi(a)|\leq 2los(\varphi)
\end{equation} hold
when $J\subset[x_1,x_2]$. If $x_1\in(a,b)$ then we can split $J$
into two intervals $J_1=[a,x_1]$ and $J_2=[x_1,b]$ for which
(\ref{meanv3}) and (\ref{meanv4}) hold. Since
\begin{equation}\label{meanmean}
m(\varphi,J)=\frac{|J_1|}{|J|}m(\varphi,J_1)+\frac{|J_2|}{|J|}m(\varphi,J_2),
\end{equation}
it follows that
\begin{align*}
|m(\varphi,J)-m(\varphi,I_\alpha)|&\leq los(\varphi)
\left(\frac{|J_1|}{|J|}\left(4+\frac{|I_\alpha|}{2|J_1|}\right)+
\frac{|J_2|}{|J|}\left(4+\frac{|I_\alpha|}{2|J_2|}\right)\right)\\&=
los(\varphi)\left(4+\frac{|I_\alpha|}{|J|}\right).
\end{align*}
By (\ref{rozme}) and (\ref{rozmea}),
$|m(\varphi,J_1)-\varphi(x_1)|\leq 2los(\varphi)\text{ and
}|m(\varphi,J_2)-\varphi(x_1)|\leq 2los(\varphi)$. Moreover, by
(\ref{meanmean}), $|m(\varphi,J)-\varphi(x_1)|\leq 2los(\varphi)$,
hence
\[
|m(\varphi,J_1)-m(\varphi,J)|\leq 4los(\varphi)\text{ and
}|m(\varphi,J_2)-m(\varphi,J)|\leq 4los(\varphi).
\]
In view of (\ref{meanv4}) applied to $J_1$ and $J_2$, it follows
that
\[
\frac{1}{|J_1|}\int_{J_1}|\varphi(x)-m(\varphi,J)|\,dx\leq 8
los(\varphi)\text{ and
}\frac{1}{|J_2|}\int_{J_2}|\varphi(x)-m(\varphi,J)|\,dx\leq 8
los(\varphi),
\]
and hence $\frac{1}{|J|}\int_{J}|\varphi(x)-m(\varphi,J)|\,dx\leq
8 los(\varphi).$
\end{proof}

\begin{proofof}{Proposition}{lemlos}
First note that if $g\in \bv(\sqcup_{\alpha\in \mathcal{A}}
I_{\alpha})$ then
\begin{equation}\label{meanvar}
|g(x)-m(g,J)|\leq\var g\text{ for each }x\in
I_\alpha.
\end{equation}
Let $\varphi=\varphi_0+g_\varphi$ be the decomposition of the form
(\ref{fform}). Since $\bl(\varphi_0)=\bl(\varphi)$ and
$g_{\varphi_0}=0$, by (\ref{meanv01}), (\ref{meanv02}) and
(\ref{relphic}), we have
\[|m(\varphi_0,J)-m(\varphi_0,I_\alpha)|\leq
\bl(\varphi)\left(4+\frac{|I_\alpha|}{|J|}\right),\;\;
\frac{1}{|J|}\int_J|\varphi_0(x)-m(\varphi_0,J)|\,dx\leq
8\bl(\varphi).\] Moreover, in view of (\ref{meanvar}),
\[|m(g_\varphi,J)-m(g_\varphi,I_\alpha)|\leq
\var g_\varphi,\;\;
\frac{1}{|J|}\int_J|g_\varphi(x)-m(g_\varphi,J)|\,dx\leq\var
g_\varphi.\] Combining these inequalities completes the proof.
\end{proofof}

\section{Singularities of extensions}\label{singularities}
In this Appendix  we prove Theorem~\ref{theorem:formphi} and
Proposition \ref{vanishO:prop}. The following Lemma will be used
in the proof.
\begin{lemma}\label{lematprzej}
Let $g:[-1,1]\times[-1,1]\to\R$ be a
$\mathscr{C}^{2+\epsilon}$-function. Then the function
$\xi:=\xi^g:(0,1]\to\R$,
\[\xi^g(s)=
\int_{s}^{1}g\left(u,\frac{s}{u}\right)\frac{1}{u}du\] is of the
form
\[
\xi(s)=-g(0,0)\log s+\widetilde{\xi}(s)\text{ with
}\widetilde{\xi}(s)=-g_{xy}(0,0)s\log s+\xi_0(s),
\]
where $\xi_0:[0,1]\to\R$ is an absolutely continuous function
whose derivative is absolutely continuous and
$\|\widetilde{\xi}\|_{\bv}\leq C\|g\|_{\mathscr{C}^{2}}$. If
additionally $g(0,0)=0$, then
\begin{equation}
\label{granicaxi}
\lim_{s\to
0^+}\xi(s)=
\int_{0}^{1}\left(g\left(u,0\right)+g\left(0,u\right)\right)\frac{1}{u}du.
\end{equation}
\end{lemma}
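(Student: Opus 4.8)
\textbf{Proof plan for Lemma~\ref{lematprzej}.}

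The plan is to analyze the integral $\xi(s)=\int_s^1 g(u,s/u)\,\frac{du}{u}$ by carefully isolating the singular contributions as $s\to 0^+$. The natural first step is to split $g$ using its Taylor expansion at the origin: write $g(x,y)=g(0,0)+g_x(0,0)x+g_y(0,0)y+g_{xy}(0,0)xy+R(x,y)$, where the remainder $R$ has the property that the terms $x^2$, $y^2$ can be absorbed and $R$ together with its relevant derivatives vanish to sufficiently high order (here the $\mathscr{C}^{2+\epsilon}$ regularity is what guarantees the needed control on $R$ — in particular, $R(x,y)/(xy)$ and similar quotients stay bounded with a Hölder modulus). Plugging $x=u$, $y=s/u$ into each term and integrating $\frac{du}{u}$ over $[s,1]$, I would compute the contributions one at a time: the constant term $g(0,0)$ gives $g(0,0)\int_s^1 \frac{du}{u}=-g(0,0)\log s$, which is the leading singularity; the linear terms $g_x(0,0)u$ and $g_y(0,0)\frac{s}{u}$ integrate to $g_x(0,0)(1-s)$ and $g_y(0,0)\cdot s(\text{something involving }\log s)$ — one of these produces an $s\log s$ contribution but with a coefficient different from $g_{xy}(0,0)$, so I must be careful; the cross term $g_{xy}(0,0)u\cdot\frac{s}{u}=g_{xy}(0,0)s$ integrated $\frac{du}{u}$ gives $g_{xy}(0,0)s(-\log s)=-g_{xy}(0,0)s\log s$. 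Collecting the $\log s$ terms gives $-g(0,0)\log s$, collecting the $s\log s$ terms gives $-g_{xy}(0,0)s\log s$ (after checking the linear contributions either cancel or are absorbed into $\xi_0$ because $s\cdot s\log s$ and $s$ alone are absolutely continuous with absolutely continuous derivative on $[0,1]$), and everything else goes into $\xi_0$.

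The second step is to verify the regularity claims on $\xi_0$: that it extends to an absolutely continuous function on $[0,1]$ with absolutely continuous derivative. This requires differentiating $\xi(s)$ under the integral sign. One has $\xi'(s)=-g(s,1)\frac{1}{s}+\int_s^1 g_y(u,s/u)\frac{1}{u^2}\,du$ (the first term from the lower limit, the second from differentiating the integrand), and then subtracting off the derivatives of $-g(0,0)\log s$ and $-g_{xy}(0,0)s\log s$ one checks the remainder is absolutely continuous. A change of variable $v=s/u$ in $\int_s^1 g_y(u,s/u)\frac{du}{u^2}$ turns it into $\frac{1}{s}\int_s^1 g_y(s/v,v)\,dv$, which together with the $-g(s,1)/s$ term is amenable to the same Taylor-expansion-plus-remainder analysis at the level of $g_y$, using again that $g\in\mathscr{C}^{2+\epsilon}$ so $g_y\in\mathscr{C}^{1+\epsilon}$. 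The bound $\|\widetilde\xi\|_{\bv}\le C\|g\|_{\mathscr{C}^2}$ then follows by tracking the explicit $\mathscr{C}^2$-norms of $g$ appearing as coefficients in all these estimates (the $\bv$ norm of $s\log s$ on $[0,1]$ is a fixed constant, and $\xi_0$ together with its derivative are controlled uniformly).

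For the final assertion, when $g(0,0)=0$ the leading $-g(0,0)\log s$ term disappears, so $\xi(s)=\widetilde\xi(s)$ has a finite limit as $s\to 0^+$, namely $\xi_0(0)$. To identify this limit with $\int_0^1\bigl(g(u,0)+g(0,u)\bigr)\frac{du}{u}$, I would argue by dominated convergence on a suitably rewritten form of $\xi(s)$: splitting the domain $[s,1]=[s,\sqrt s]\cup[\sqrt s,1]$ so that on $[\sqrt s,1]$ we have $s/u\le\sqrt s\to 0$ and $g(u,s/u)\to g(u,0)$ with an integrable dominating function (using $g(0,0)=0$ and the Lipschitz bound on $g$ to see $|g(u,s/u)|\lesssim u+s/u$, hence $|g(u,s/u)|/u$ is integrable near the relevant endpoints), while on $[s,\sqrt s]$ the substitution $u\mapsto s/u$ maps the interval to itself reversed and converts that piece into the $\int g(0,u)\frac{du}{u}$ contribution in the limit. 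The main obstacle I anticipate is bookkeeping: making sure the $\log s$ and $s\log s$ coefficients come out exactly as $-g(0,0)$ and $-g_{xy}(0,0)$ after all linear and remainder terms are sorted, and verifying the regularity of $\xi_0'$ near $s=0$ — this is where the extra Hölder exponent $\epsilon$ beyond $\mathscr{C}^2$ is genuinely used, to control the remainder in the Taylor expansion of $g_y$ and ensure the resulting integral term is absolutely continuous rather than merely continuous.
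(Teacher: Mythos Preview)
Your high-level strategy --- Taylor-expand $g$ at the origin, compute $\xi^m$ explicitly for each monomial $m$, and handle the remainder separately using the extra H\"older regularity --- is exactly what the paper does. Two points are worth flagging.

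First, a computational slip: the $y$-linear term does \emph{not} produce any $s\log s$. One has $\xi^y(s)=\int_s^1 \frac{s}{u}\cdot\frac{du}{u}=s\bigl(\tfrac1s-1\bigr)=1-s$, so $\xi^x=\xi^y=1-s$; only the cross monomial $xy$ contributes $-s\log s$, and there is nothing to cancel. (The paper also computes $\xi^{x^2}=\xi^{y^2}=\tfrac{1-s^2}{2}$, so the pure quadratics are smooth and can safely sit in $\xi_0$.)

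Second, and more substantively, the paper does not differentiate the original integral over $[s,1]$ as you propose. Instead it first rewrites, via the substitution $u\mapsto s/u$ on $[s,\sqrt s]$,
\[
\xi(s)=\int_{\sqrt s}^1\bigl(g(u,\tfrac{s}{u})+g(\tfrac{s}{u},u)\bigr)\frac{du}{u},
\]
and differentiates \emph{that}. The point is that on $[\sqrt s,1]$ both arguments of $g$ lie in $[0,1]$ with the small one bounded by $\sqrt s$, so the boundary term becomes $-g(\sqrt s,\sqrt s)/s$, which is genuinely small when $g$ vanishes to high order at the origin. Your boundary term $-g(s,1)/s$ is not small (since $g(0,1)\ne 0$ in general) and must be cancelled against part of the integral; that cancellation can be tracked, but it makes the $\xi''$ estimate for the remainder $g_0$ considerably more delicate than the paper's clean bound $|\xi''(s)|\le 8\|g_0\|_{\mathscr{C}^{2+\epsilon}}\,s^{-1+\epsilon/2}$. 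You already use the $\sqrt s$ split for the final limit \eqref{granicaxi}; the paper's insight is to use it from the very start.
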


\begin{proof}
First note that
\begin{equation}\label{formxi}
\xi(s)=\int_{\sqrt{s}}^{1}g\left(u,\frac{s}{u}\right)\frac{1}{u}du+
\int_{\sqrt{s}}^{1}g\left(\frac{s}{u},u\right)\frac{1}{u}du.
\end{equation}
Thus
\[\xi'(s)=\int_{\sqrt{s}}^{1}\frac{g_x\left(\frac{s}{u},u\right)+
g_y\left(u,\frac{s}{u}\right)}{u^2}du
-\frac{g(\sqrt{s},\sqrt{s})}{ s}\] and
\[\xi''(s)=\int_{\sqrt{s}}^{1}
\frac{g_{xx}\left(\frac{s}{u},u\right)+g_{yy}\left(u,\frac{s}{u}\right)}{u^3}du-
\frac{g_x(\sqrt{s},\sqrt{s})+g_y(\sqrt{s},\sqrt{s})}{s\sqrt{s}}
+\frac{g(\sqrt{s},\sqrt{s})}{s^2}.
\]
First suppose that $g(0,0)=0$, $g'(0,0)=0$ and $g''(0,0)=0$. Then
\begin{eqnarray*}
|g(x,y)|&\leq &
\min\left(\|g\|_{\mathscr{C}^2}(|x|^2+|y|^2),
\|g\|_{\mathscr{C}^{2+\epsilon}}(|x|^{2+\epsilon}+|y|^{2+\epsilon})\right),
\\
\|g'(x,y)\|&\leq&
\min\left(\|g\|_{\mathscr{C}^2}(|x|+|y|),
\|g\|_{\mathscr{C}^{2+\epsilon}}(|x|^{1+\epsilon}+|y|^{1+\epsilon})\right),
\\
\|g''(x,y)\| &\leq&
\|g\|_{\mathscr{C}^{2+\epsilon}}(|x|^{\epsilon}+|y|^{\epsilon}).
\end{eqnarray*}
It follows that
\[|\xi(s)|\leq 3\|g\|_{\mathscr{C}^2}, \;\;|\xi'(s)|\leq \|g\|_{\mathscr{C}^2}(3-2\log s)
\text{ and }|\xi''(s)|\leq
\frac{8\|g\|_{\mathscr{C}^{2+\epsilon}}}{s^{1-\epsilon/2}}.\]
Since $\xi'$ and $\xi''$ are integrable on $[0,1]$, $\xi$ and
$\xi'$ are absolutely continuous. Moreover,
\[\|\xi\|_{\bv}=\|\xi\|_{\sup}+\int_0^1|\xi'(s)|\,ds\leq 8\|g\|_{\mathscr{C}^2}.\]

For an arbitrary $g$  we use the following decomposition
\begin{align*}
g(x,y)=&\;g(0,0)+g_x(0,0)x+g_y(0,0)y\\&+\frac{1}{2}g_{xx}(0,0)x^2+
g_{xy}(0,0)xy+\frac{1}{2}g_{yy}(0,0)y^2+g_0(x,y).
\end{align*}
Then $g_0$ is a $\mathscr{C}^{2+\epsilon}$-function such that
$g_0$, $g_0'$ and $g''_0$ vanish at $(0,0)$ and
$\|g_0\|_{\mathscr{C}^2}\leq 5\|g\|_{\mathscr{C}^2}$. As we have
already proven, the function $\xi^{g_0}$ and its derivative are
absolutely continuous and $\|\xi^{g_0}\|_{\bv}\leq
8\|g_0\|_{\mathscr{C}^2}$. By straightforward computation, we also
have
\[
\xi^1(s)=-\log s,\;\xi^x(s)=\xi^y(s)=1-s,\;
\xi^{x^2}(s)=\xi^{y^2}(s)=\frac{1-s^2}{2},\;\xi^{xy}(s)=-s\log s.
\]
Hence
\begin{align*}
\xi(s)=&-g(0,0)\log
s+\left(g_x(0,0)+g_y(0,0)\right)(1-s)-g_{xy}(0,0)s\log s\\&+
\left(g_{xx}(0,0)+g_{yy}(0,0)\right)\frac{1-s^2}{4}+\xi^{g_0}(s).
\end{align*}
It follows that $\xi_0$ and its derivative are absolutely
continuous and
\[
\|\widetilde{\xi}\|_{\bv}\leq
2\|g\|_{\mathscr{C}^2}+\|\xi^{g_0}\|_{\bv}\leq
42\|g\|_{\mathscr{C}^2}.
\]

Assume additionally that $g(0,0)=0$. Since $g$ is Lipschitz
continuous with Lipschitz constant $\|g\|_{\mathscr{C}^1}$, we
have
\begin{eqnarray*}
\lefteqn{\left|\int_0^{1}g(u,0)\frac{1}{u}du-\int_{\sqrt{s}}^1g(u,s/u)\frac{1}{u}du\right|}\\\
&\leq&\int_0^{\sqrt{s}}\left|g(u,0)-g(0,0)\right|\frac{1}{u}du+
\int_{\sqrt{s}}^1\left|g(u,0)-g(u,s/u)\right|\frac{1}{u}du
\\&\leq&\|g\|_{\mathscr{C}^1}\left(\int_0^{\sqrt{s}}du+
\int_{\sqrt{s}}^1\frac{s}{u^2}du\right)=
\|g\|_{\mathscr{C}^1}\left(2{\sqrt{s}}-s\right)\to 0
\end{eqnarray*}
as $s\to 0$. The symmetric reasoning together with (\ref{formxi})
finally give (\ref{granicaxi}).
\end{proof}

\begin{proofof}{Theorem}{theorem:formphi}
For every $\delta>0$ and $z\in\Sigma$ denote by $B(z,\delta)$ the
closed ball  of radius $\delta$ and centered at $z$ in singular
adapted coordinates. Next choose  $\delta >0$ so that intervals
$[l_\alpha-\delta^2,l_\alpha+\delta^2]$, $\alpha\in\mathcal{A}$
are pairwise disjoint and $B(z,\delta)\cap \J=\emptyset$ for all
$z\in\Sigma$. For every $z\in\Sigma$ denote by $\mathcal{O}_z$ the
corresponding orbit in $\Sigma(\pi)$. For simplicity assume that
$|I|=1$.

We split the proof into several parts. In each of them we will
assume that $f$ is supported on a part of the surface $\Surf$. Then we
will collect together all parts to prove the theorem in full
generality.

\subsubsection*{Non-triviality on a neighborhood only one singularity.}
First fix $z\in\Sigma$ and assume that $f:\Surf\to\R$ is a
$\mathscr{C}^{2+\epsilon}$ function which vanishes on
$\Surf\setminus B(z,\delta)$. Recall that  each point $l_\alpha$,
$\alpha\neq\underline{\alpha}=\pi_1^{-1}(1)$ corresponds to the
first backward intersection with $\J$ of an incoming separatrix of
a fixed point, this fixed point will be denoted  by $
z_{l_{\alpha}}\in\Sigma$.

\subsubsection*{Regular case.} Now suppose that $z\neq z_{l_{\pi^{-1}_0(1)}}$.
Then there exist two distinct elements $\alpha_0$, $\alpha_1\in
\mathcal{A}$ such that $z=z_{l_{\alpha_0}}=z_{l_{\alpha_1}}$ and
$\mathcal{O}_z=\{\pi_0(\alpha_0)-1,\pi_0(\alpha_1)-1\}$. Let
$\zeta=x+iy$ be the singular adapted coordinate  around $z$. Then
there exists a positive $\mathscr{C}^\infty$-function
$V:[-\delta,\delta]\times[-\delta,\delta]\to\R$ such that
$X(\zeta)=V(x,y)(x,-y)$ and $\omega=\frac{dx\wedge\,dy}{V(x,y)}$
on $[-\delta,\delta]\times[-\delta,\delta]$. Moreover,
\[\gamma_{\pm}^v,\gamma_{\pm}^h:[-\delta^2,\delta^2]\to \Surf,\;\;\;
\gamma_{\pm}^h(s)=(\pm s/\delta,\pm\delta),\;\;\;
\gamma_{\pm}^v(s)=(\pm\delta,\pm s/\delta)\] establishes an
induced parameterization  of the boundary of the square
$[-\delta,\delta]\times[-\delta,\delta]$. Let us consider the
functions $\tau^{\pm}_\alpha:[-\delta^2,0)\cup(0,\delta^2]\to\R_+$
such that $\tau^{\pm}_\alpha(s)$ is the  exit time  of the point
$(\pm s/\delta,\pm\delta)$ for the flow $({\phi}_{t})$ from the
set $[-\delta,\delta]\times[-\delta,\delta]$. Since the positive
orbit of $l_{\alpha_\epsilon}$, $\epsilon=0,1$, hits the square
$[-\delta,\delta]\times[-\delta,\delta]$ at
$((-1)^\epsilon\delta,0)$ and $f$ vanishes on $\Surf\setminus
([-\delta,\delta]\times[-\delta,\delta])$, the function
$\varphi_f$ vanishes on
$I\setminus\left([l_{\alpha_0}-\delta^2,l_{\alpha_0}+
\delta^2]\cup[l_{\alpha_1}-\delta^2,l_{\alpha_1}+\delta^2]\right)$
and
\[\varphi_f(s+l_{\alpha_\epsilon})=
\int_{0}^{\tau^{{(-1)^\epsilon}}_{\alpha_\epsilon}(s)}
f(\phi_t({(-1)^\epsilon} s/\delta,{(-1)^\epsilon}\delta))\,dt
\;\;\text{ for }s\in[-\delta^2,\delta^2]\text{ and
}\epsilon=0,1.\] Fix $\epsilon\in\{0,1\}$ and let
$(x_t,y_t)=\phi_t({(-1)^\epsilon}
s/\delta,{(-1)^\epsilon}\delta)$. Then
\begin{equation*}
\left(\frac{d}{dt}x_t,\frac{d}{dt}y_t\right)=X(x_t,y_t)=V(x_t,y_t)(x_t,-y_t),
\end{equation*}
and hence
\[\frac{d}{dt}(x_t\cdot y_t)=y_t\frac{d}{dt}x_t+x_t\frac{d}{dt}y_t=0.\]
Therefore \[x_ty_t=x_0y_0=s.\] Since $s\neq 0$, it follows that
$x_t\neq 0$ for all $t\in\R$. By using the substitution $u = x_t$,
we obtain $du = \frac{d}{dt}x_t dt=V(x_t,s/x_t)x_t dt$ and
\begin{align*}
\varphi_f&(s+l_{\alpha_\epsilon})=\int_{0}^{\tau^{{(-1)^\epsilon}}_{\alpha_\epsilon}(s)}f
(x_t,y_t)dt=\int_{0}^{\tau^{{(-1)^\epsilon}}_{\alpha_\epsilon}(s)}f\left(x_t,\frac{s}{x_t}\right)dt
\\&=\int_{{(-1)^\epsilon}
s/\delta}^{{(-1)^\epsilon}\sgn(s)\delta}\frac{f\left(u,\frac{s}{u}\right)}{V\left(u,\frac{s}{u}\right)}\frac{du}{u}
=\int_{|s|/\delta^2}^1\frac{f}{V}\left({(-1)^\epsilon}\sgn(s)\delta
u,\frac{|s|/\delta^2}{{(-1)^\epsilon}\delta u}
\right)\frac{du}{u}.
\end{align*}
In view of Lemma~\ref{lematprzej},
\[
\varphi_f(s)=-C_{\alpha_\epsilon}\log|s-l_{\alpha_\epsilon}|+\widetilde{\xi}_\epsilon(s),\;\;\;
\widetilde{\xi}_\epsilon(s)=
-K_{\alpha_\epsilon}(s-l_{\alpha_\epsilon})\log|s-l_{\alpha_\epsilon}|+\xi_\epsilon(s)
\]
where
$\xi_\epsilon:[l_{\alpha_\epsilon}-\delta^2,
l_{\alpha_\epsilon}+\delta^2]\setminus\{l_{\alpha_\epsilon}\}\to\R$
is a function which is absolutely continuous  with  absolutely
continuous derivative,
\[
\var\widetilde{\xi}_\epsilon|_{[l_{\alpha_\epsilon}-\delta^2,l_{\alpha_\epsilon})}+
\var\widetilde{\xi}_\epsilon|_{(l_{\alpha_\epsilon},l_{\alpha_\epsilon}+\delta^2]}\leq
C_V\|f\|_{\mathscr{C}^2}\] and
\[C_{\alpha_\epsilon}=C_z:=\frac{f(0,0)}{V(0,0)},\;\;
K_{\alpha_\epsilon}=K_z:=\frac{\partial^2(f/V)}{\partial
x\,\partial y}(0,0).
\]
Therefore
\begin{align*}
\varphi_f(x)=&-C_{z}\sum_{\epsilon=0,1}
\left(\log\{x-l_{\alpha_\epsilon}\}+\log\{l_{\alpha_\epsilon}-x\}\right)+g(x),
\ \  \text{where}\\
g(x)=&-K_{z}\sum_{\epsilon=0,1}
\left(\{x-l_{\alpha_\epsilon}\}(\log\{x-l_{\alpha_\epsilon}\}-1)-
\{l_{\alpha_\epsilon}-x\}(\log\{l_{\alpha_\epsilon}-x\}-1)\right)\\&+g_0(x)
\end{align*}
and $g_0:I\to\R$ is absolutely continuous with absolutely
continuous derivative on
$I\setminus\{l_{\alpha_0},l_{\alpha_1}\}$, so $g_0,
g_0'\in\ac(\sqcup_{\alpha\in \mathcal{A}} I_{\alpha})$. Moreover,
$g\in\ac(\sqcup_{\alpha\in \mathcal{A}} I_{\alpha})$ and $g(x)$ is
equal to
\[C_{z}\sum_{\epsilon=0,1}\left(\log\{x-l_{\alpha_\epsilon}\}+
\log\{l_{\alpha_\epsilon}-x\}\right)\text{
if } x\in
I\setminus\bigcup_{\epsilon=0,1}[l_{\alpha_\epsilon}-
\delta^2,l_{\alpha_\epsilon}+\delta^2]
\]
\begin{align*}
&C_z\left(\log\{l_{\alpha_\epsilon}-x\}+
\log\{x-l_{\alpha_{1-\epsilon}}\}+\log\{l_{\alpha_{1-\epsilon}}-x\}\right)+
\widetilde{\xi}_\epsilon(x)\text{ if }
x\in[l_{\alpha_\epsilon},l_{\alpha_\epsilon}+\delta^2]\\ &
C_z\left(\log\{x-l_{\alpha_\epsilon}\}+
\log\{x-l_{\alpha_{1-\epsilon}}\}+\log\{l_{\alpha_{1-\epsilon}}-x\}\right)
+\widetilde{\xi}_\epsilon(x)\text{ if }
x\in[l_{\alpha_\epsilon}-\delta^2,l_{\alpha_\epsilon}].
\end{align*}
For $\epsilon=0,1$. It follows that
\begin{align*}\var g\leq&
4|C_z|\var(\log)|_{[\delta^2,1]}+\sum_{\epsilon=0,1}\left(
\var\widetilde{\xi}_\epsilon|_{[l_{\alpha_\epsilon}-\delta^2,l_{\alpha_\epsilon})}+
\var\widetilde{\xi}_\epsilon|_{(l_{\alpha_\epsilon},l_{\alpha_\epsilon}+\delta^2]}\right)
\\ \leq&4\frac{\|f\|_{\mathscr{C}^0}}{V(z)}\log\delta^{-2}+2C_V\|f\|_{\mathscr{C}^2}\leq
C_{\delta,V}\|f\|_{\mathscr{C}^2}.
\end{align*}
Finally note that $\varphi_f$ and $g$ can be represented as
follows
\[
\varphi_f(x)=-\sum_{\pi_0(\alpha)-1\in\mathcal{O}_z}C_{\alpha}^+\log\{x-l_{\alpha}\}-
\sum_{\pi_0(\alpha)\in\mathcal{O}_z}C_{\alpha}^-\log\{r_{\alpha}-x\}+g(x),
\]
where
\begin{align*}g(x)=&\;g_0(x)-
\sum_{\pi_0(\alpha)-1\in\mathcal{O}_z}K^+_{\alpha}\{x-l_{\alpha}\}(\log\{x-l_{\alpha}\}-1)\\&+
\sum_{\pi_0(\alpha)\in\mathcal{O}_z}
K^-_{\alpha}\{r_{\alpha}-x\}(\log\{r_{\alpha}-x\}-1)
\end{align*}
with $C_{\alpha}^+=C_z$, $K_{\alpha}^+=K_z$ if
$\pi_0(\alpha)-1\in\mathcal{O}_z$ and $C_{\alpha}^-=C_z$,
$K_{\alpha}^-=K_z$ if $\pi_0(\alpha)\in\mathcal{O}_z$. It follows
that (\ref{zerosym}) is valid  for $\mathcal{O}=\mathcal{O}_z$.
For $\mathcal{O}\neq\mathcal{O}_z$ the condition (\ref{zerosym})
holds trivially.

\subsubsection*{Exceptional  case.} Now assume that $z=z_{l_{\pi^{-1}_0(1)}}$.
Denote by $\alpha_0\neq \pi^{-1}_0(1)$ an element of the alphabet
for which $z=z_{\alpha_0}$.  Then $\mathcal{O}_z=\{0,
\pi_0(\alpha_0)-1,\pi_0(\underline{\alpha})-1\}$. Since
$l_{\pi^{-1}_0(1)}$ and $l_{\underline{\alpha}}$ lie on the same
incoming  separatrix of $z$, similar arguments to those used in
the regular case show that there exists $g_0,
\in\ac(\sqcup_{\alpha\in \mathcal{A}} I_{\alpha})$ with
$g_0'\in\ac(\sqcup_{\alpha\in \mathcal{A}} I_{\alpha})$ such that
\begin{align*}
\varphi_f(x)=&-C_z\left(\log\{x\}+\log\{l_{\underline{\alpha}}-x\}+
\log\{x-l_{\alpha_0}\}+\log\{l_{\alpha_0}-x\}\right)+g(x)\\
=&-\sum_{\pi_0(\alpha)-1\in\mathcal{O}_z}C_{\alpha}^+\log\{x-l_{\alpha}\}-
\sum_{\pi_0(\alpha)\in\mathcal{O}_z}C_{\alpha}^-\log\{r_{\alpha}-x\}+g(x),
\end{align*}
where
\begin{align*}
g(x)=&\;g_0(x)-K_z\left(\{x\}(\log\{x\}-1)-
\{l_{\underline{\alpha}}-x\}(\log\{l_{\underline{\alpha}}-x\}-1)\right.\\
&+\left. \{x-l_{\alpha_0}\}(\log\{x-l_{\alpha_0}\}-1)-
\{l_{\alpha_0}-x\}(\log\{r_{\alpha_0}-x\}-1)\right)\\
=&\;g_0(x)-\sum_{\pi_0(\alpha)-1\in\mathcal{O}_z}K^+_{\alpha}\{x-l_{\alpha}\}(\log\{x-l_{\alpha}\}-1)\\&+
\sum_{\pi_0(\alpha)\in\mathcal{O}_z}
K^-_{\alpha}\{r_{\alpha}-x\}(\log\{r_{\alpha}-x\}-1),
\end{align*}
with $C_{\alpha}^+=C_z$, $K_{\alpha}^+=K_z$ if $\alpha\neq
\underline{\alpha}$ and $\pi_0(\alpha)-1\in\mathcal{O}_z$;
$C_{\underline{\alpha}}^+=K_{\underline{\alpha}}^+=0$;
$C_{\alpha}^-=C_z$, $K_{\alpha}^-=K_z$ if
$\pi_0(\alpha)\in\mathcal{O}_z$; and $\var g\leq
C_{\delta,V}\|f\|_{\mathscr{C}^2}$.

\subsubsection*{Vanishing around singularities.} We will now deal  with
the case where $f$ vanishes on each ball $B(z,\delta/2)$,
$z\in\Sigma$. For every $\alpha\in\mathcal{A}$ denote by
$h_\alpha>0$ the first return time of points in $I_\alpha$ to $I$
for the vertical flow $(F^v_t)_{t\in\R}$ and set
$\bar{h}=(h_\alpha)_{\alpha\in\mathcal{A}}$. Since
$\phi_tx=F^v_{h(t,x)}x$ and $W(\phi_t x)=\frac{\partial
h}{\partial t}(t,x)$, we have $h(\tau(x),x)=h_\alpha$ for each
$x\in I_\alpha$. Then using the substitution $s=h(t,x)$, for each
$x\in I_\alpha$ we get
\[
\varphi_f(x)=\int_0^{\tau(x)}f(\phi_t(x))\,dt
=\int_0^{h_\alpha}\frac{f(F^v_s(x))}{W(F^v_s(x))}\,ds.
\]
The function $W:\Surf\to\R$ is positive $\mathscr{C}^\infty$ with
zeros only at $\Sigma$. Therefore $c_\delta:=\min\left\{W(x):x\in
\Surf\setminus \bigcup_{z\in\Sigma}B(z,\delta/2)\right\}>0$. Moreover,
$f/W:\Surf\to\R$ is a $\mathscr{C}^\infty$-function with
\[\|f/W\|_{\mathscr{C}^0}\leq c_\delta^{-1}\|f\|_{\mathscr{C}^0}\text{ and }
\|f/W\|_{\mathscr{C}^1}\leq
c_\delta^{-2}\|W\|_{\mathscr{C}^1}\|f\|_{\mathscr{C}^1}.\] It
follows that $\varphi_f$ can be extended to a
$\mathscr{C}^\infty$-function on each $\overline{I}_\alpha$,
$\alpha\in\mathcal{A}$,
\[\|\varphi_f\|_{\mathscr{C}^0}\leq \max\{h_\alpha:\alpha\in\mathcal{A}\}
\|f/W\|_{\mathscr{C}^0}\leq
\|\bar{h}\|c_\delta^{-1}\|f\|_{\mathscr{C}^0}\] and
\begin{align*}
\var\varphi_f&=\int_I|\varphi_f'(u)|\,du=
\sum_{\alpha\in\mathcal{A}}\int_{I_\alpha}
\left|\int_0^{h_\alpha}\frac{\partial}{\partial
y}(f/W)(F^v_s(x))\,ds\right|\,du\\&\leq
\langle\lambda,\bar{h}\rangle
c_\delta^{-2}\|W\|_{\mathscr{C}^1}\|f\|_{\mathscr{C}^1}.
\end{align*}
Hence $\varphi_f, \varphi'_f\in \ac(\sqcup_{\alpha\in \mathcal{A}}
I_{\alpha})$ and there exists a positive constant  $C_\ast$ such
that $\|\varphi_f\|_{\bv}\leq C_\ast\|f\|_{\mathscr{C}^1}$ for
each $f:\Surf\to\R$ vanishing on
$\bigcup_{z\in\Sigma}B(z,\delta/2)$. Since $\varphi$ has no
logarithmic singularities,  the condition (\ref{zerosym}) holds
trivially.

\subsubsection*{General case.} Let us consider a
$\mathscr{C}^\infty$-partition of unity
$\{\rho_z:z\in\Sigma\cup\{\ast\}\}$ of $\Surf$ such that $\rho_z$
vanishes on $\Surf\setminus B(z,\delta)$ for all $z\in\Sigma$ and
$\rho_\ast$ vanishes on $\bigcup_{z\in\Sigma}B(z,\delta/2)$. Since
the balls $B(z,\delta)$, $z\in\Sigma$ are pairwise disjoint,
$\rho_z\equiv 1$ on $B(z,\delta/2)$ for each $z\in\Sigma$. Let us
decompose $\varphi_f$ as follows
$\varphi_f=\sum_{z\in\Sigma}\varphi_{\rho_z\cdot
f}+\varphi_{\rho_\ast\cdot f}$. In view of all facts that have
been proved until now  for all $z\in\Sigma$ we get
\begin{align}\label{dosum1}
\varphi_{\rho_z\cdot
f}(x)=-\!\sum_{\pi_0(\alpha)-1\in\mathcal{O}_z}C_{\alpha}^+\log\{x-l_{\alpha}\}-\!
\sum_{\pi_0(\alpha)\in\mathcal{O}_z}C_{\alpha}^-\log\{r_{\alpha}-x\}+g_z(x),
\end{align}
where
\begin{align}
\begin{split}\label{dosum2}
g_z(x) =\;g_{z,0}(x)&-\sum_{\pi_0(\alpha)-1\in\mathcal{O}_z}
K^+_{\alpha}\{x-l_{\alpha}\}(\log\{x-l_{\alpha}\}-1)
\\ &+ \sum_{\pi_0(\alpha)\in\mathcal{O}_z}
K^-_{\alpha}\{r_{\alpha}-x\}(\log\{r_{\alpha}-x\}-1),
\end{split}
\end{align}
with $g_{z,0}, g'_{z,0}\in \ac(\sqcup_{\alpha\in
\mathcal{A}} I_{\alpha})$ and \[\|g_z\|_{\bv} \leq
C_{\delta,V}\|\rho_z\cdot f\|_{\mathscr{C}^2}\leq
C_{\delta,V}\|\rho_z\|_{\mathscr{C}^2} \|f\|_{\mathscr{C}^2}.\]
Moreover, $\varphi_{\rho_\ast\cdot f},
\varphi'_{\rho_\ast\cdot f}\in \ac(\sqcup_{\alpha\in
\mathcal{A}} I_{\alpha})$ and
\[\|\varphi_{\rho_\ast\cdot
f}\|_{\bv} \leq C_\ast\|\rho_\ast\cdot
f\|_{\mathscr{C}^2}\leq
C_\ast\|\rho_\ast\|_{\mathscr{C}^2}
\|f\|_{\mathscr{C}^2}.\] Let
\[g:=\sum_{z\in\Sigma}g_z+\varphi_{\rho_\ast\cdot
f},\text{
}g_2:=\sum_{z\in\Sigma}g_{z,0}+\varphi_{\rho_\ast\cdot
f},\;\;g_1=g-g_2\text{ and
}C^-_{\overline{\alpha}}=K^-_{\overline{\alpha}}=0.\] Then
$g_1,g_2,g_2'\in\ac(\sqcup_{\alpha\in \mathcal{A}} I_{\alpha})$
and
\[
\|g\|_{\bv}\leq
\left(\sum_{z\in\Sigma}C_{\delta,V}\|\rho_z\|_{\mathscr{C}^2}+
C_\ast\|\rho_\ast\|_{\mathscr{C}^2}\right)
\|f\|_{\mathscr{C}^2}.\] Since
\[\bigsqcup_{z\in\Sigma}\{\alpha:\pi_0(\alpha)-1\in\mathcal{O}_z\}
=\mathcal{A}\quad\text{ and
}\quad\bigsqcup_{z\in\Sigma}
\{\alpha:\pi_0(\alpha)\in\mathcal{O}_z\}=
\mathcal{A}\setminus\{\underline{\alpha}\},
\]
summing up (\ref{dosum1}) and (\ref{dosum2}) over $z \in \Sigma$, we get
\begin{align*}
\varphi(x)=&-\sum_{\alpha\in\mathcal{A}}
\left(C^+_\alpha\log\{x-l_\alpha\}+
C^-_\alpha\log\{r_\alpha-x\}\right)+g(x)\\
g'_1(x)=&-\sum_{\alpha\in\mathcal{A}}
\left(K^+_\alpha\{x-l_\alpha\}\log\{x-l_\alpha\}
+K^-_\alpha\{r_\alpha-x\}\log\{r_\alpha-x\}\right).
\end{align*}
Since the condition (\ref{zerosym})  holds for each function
$\varphi_{\rho_z\cdot f}$ and $\varphi_{\rho_\ast\cdot f}$ has
no logarithmic singularities, (\ref{zerosym}) is valid also for
$\varphi_f$. The same applies to $g'_1$. Moreover,
$C^+_{\underline{\alpha}}=C^-_{\overline{\alpha}}=0$ and
\[
C^+_\alpha=f/V(z)\text{ if }\alpha\neq\underline{\alpha},\;
\pi_0(\alpha)-1\in\mathcal{O}_z\text{ and }C^-_\alpha=f/V(z)\text{
if }\pi_0(\alpha)\in\mathcal{O}_z.
\]
Therefore,
\[
\bl(\varphi_f)=\sum_{\alpha\in\mathcal{A}}(|C^-_\alpha|+|C^+_\alpha|)
=4\sum_{z\in\Sigma}\frac{|f(z)|}{V(z)}.
\]
Since $V$ takes only positive values, it follows that
\[\frac{4}{\max\{V(z):z\in\Sigma\}}\sum_{z\in\Sigma}|f(z)|\leq \bl(\varphi_f)
\leq \frac{4}{\min\{V(z):z\in\Sigma\}}\sum_{z\in\Sigma}|f(z)|.\]
\end{proofof}

\begin{proofof}{Proposition}{vanishO:prop}
By Theorem~\ref{theorem:formphi},
$\varphi_f\in\ac(\sqcup_{\alpha\in \mathcal{A}} I_{\alpha})$. For
every two points $x_1,x_2\in \Surf$ such that $x_1=\phi_ux_0$ and
$x_2=\phi_vx_0$ for some $-\infty\leq u\leq v\leq+\infty$ and
$x_0\in S\setminus\Sigma$ let
$I(x_1,x_2)=\int_{u}^{v}f(\phi_sx_0)\,ds$. In view of
(\ref{granicaxi}), analysis similar to that in the proof of
Theorem~\ref{theorem:formphi} shows that
\[\lim_{s\to
l_\alpha^+}\varphi_f(s)=\left\{
\begin{array}{ccc}
I(l_\alpha,z_{l_\alpha})+I(z_{l_\alpha},Tl_\alpha)&\text{ if }&\pi_1(\alpha)\neq 1\\
I(l_\alpha,Tl_\alpha)&\text{ if }&\pi_1(\alpha)=1
\end{array}\right.
\]
\[\lim_{s\to
r_\alpha^-}\varphi_f(s)=\left\{
\begin{array}{ccc}I(r_\alpha,z_{r_\alpha})+I(z_{r_\alpha},\widehat{T}r_\alpha)
&\text{ if }&\pi_0(\alpha)\neq d\\
I(r_\alpha,\widehat{T}r_\alpha)&\text{ if
}&\pi_0(\alpha)=d.\end{array}\right.
\]
Therefore, for every $\alpha\in\mathcal{A}$ with
$\pi_1(\alpha)\neq 1$ and $\pi_0(\alpha)\neq 1,d$ we have
\[\lim_{s\to l_\alpha^+}\varphi_f(s)-\lim_{s\to l_\alpha^-}\varphi_f(s)=
I(z_{l_\alpha},Tl_\alpha)-I(z_{l_\alpha},\widehat{T}l_\alpha).\]
Take $\mathcal{O}=\mathcal{O}_z$ which does not contain $0$ and
$d$. Let $\alpha_0,\alpha_1$ be distinct elements of the alphabet
for which $z_{l_{\alpha_0}}=z_{l_{\alpha_1}}=z$. Then
$\mathcal{O}=\{\pi_0(\alpha_0)-1,\pi_0(\alpha_1)-1\}$ and
$Tl_{\alpha_{\epsilon}}=\widehat{T}l_{\alpha_{1-\epsilon}}$ for
$\epsilon=0,1$. In view of \eqref{odlabv}, it follows that
\[\mathcal{O}(\varphi_f)=
\sum_{\epsilon=0,1}\left(\lim_{s\to
l_{\alpha_\epsilon}^-}\varphi_f(s)-\lim_{s\to
l_{\alpha_\epsilon}^+}\varphi_f(s)\right)
=\sum_{\epsilon=0,1}\left(I(z,\widehat{T}l_{\alpha_\epsilon})-
I(z,Tl_{\alpha_\epsilon})\right)=0.\] Similar arguments to those
above show also that $\mathcal{O}(\varphi_f)= 0$ if
$0\in\mathcal{O}$ or $d\in\mathcal{O}$.
\end{proofof}

\section{Cohomological reduction}\label{cohreduction:sec}
In this Appendix we  prove Proposition \ref{corcohred}. Denote by
$\ac_0^s(\sqcup_{\alpha\in \mathcal{A}} I^{(0)}_{\alpha})$ the
subspace of all $\varphi\in\ac_0(\sqcup_{\alpha\in \mathcal{A}}
I^{(0)}_{\alpha})$ such that
$\varphi'\in\overline{\ls}_0(\sqcup_{\alpha\in \mathcal{A}}
I^{(0)}_{\alpha})$ and $\mathfrak{h}(\varphi')=0$. In view of
Theorem~\ref{operatorcorrection}, for every $\varphi\in
\ac_0^s(\sqcup_{\alpha\in \mathcal{A}} I^{(0)}_{\alpha})$ and
$k\geq 1$,
\begin{equation}\label{znowuvar}
\var(S(k)\varphi)\leq |I^{(k)}|k^{M}\left(C_1
\lv(\varphi')+C_2\var\varphi/|I^{(0)}|\right).
\end{equation}
Denote by
\[\widetilde{U}^{(k)}:\ac_0(\sqcup_{\alpha\in \mathcal{A}} I^{(k)}_{\alpha})\to
\ac_0(\sqcup_{\alpha\in \mathcal{A}}
I^{(k)}_{\alpha})/\Gamma^{(k)}_{s}\] the projection on the
quotient space. Since  $S(k,k')\Gamma^{(k)}_{s}=\Gamma^{(k')}_{s}$
we can define the quotient linear transformation of $S(k,k')$,
\[S_{\flat}(k,k'):\ac_0(\sqcup_{\alpha\in \mathcal{A}} I^{(k)}_{\alpha})/\Gamma^{(k)}_{s}
\to\ac_0(\sqcup_{\alpha\in \mathcal{A}}
I^{(k')}_{\alpha})/\Gamma^{(k')}_{s}.\] Then
\begin{equation}\label{splatanies1}
S_{\flat}(k,k')\circ
\widetilde{U}^{(k)}\varphi=\widetilde{U}^{(k')}\circ
S(k,k')\varphi\text{ for }\varphi\in\ac_0(\sqcup_{\alpha\in
\mathcal{A}} I^{(k)}_{\alpha}).
\end{equation}
Moreover, $S_{\flat}(k,k'):\Gamma^{(k)}/\Gamma^{(k)}_{s}
\to\Gamma^{(k')}/\Gamma^{(k')}_{s}$ is invertible. Since $A^t$ on
$\Gamma^{(0)}/\Gamma^{(0)}_{s}$ is isomorphic to $A^t$ on
$\Gamma_c^{(0)}\oplus\Gamma^{(0)}_{u}$, we get
\begin{equation}\label{szaflat}
\|(S_{\flat}(k,k'))^{-1}(h+\Gamma^{(k')}_{s})\|\leq
C(k'-k)^{M-1}\|h+\Gamma^{(k)}_{s}\|\quad\text{ if }\quad k'>k.
\end{equation}

\begin{lemma}\label{lemma:tildep}
 The operator $\Delta\widetilde{P}^{(k)}:
\ac_0^s(\sqcup_{\alpha\in \mathcal{A}} I^{(0)}_{\alpha})\to
\Gamma^{(k)}/\Gamma^{(k)}_{s}$,
\[\Delta\widetilde{P}^{(k)}=\sum_{r\geq k}(S_{\flat}(k,r+1))^{-1}\circ
\widetilde{U}^{(r+1)}\circ C^{(r+1)}\circ S(r,r+1)\circ
P_0^{(r)}\circ S(k,r)\] is well defined and $\|\Delta
\widetilde{P}^{(k)}\varphi\|\leq K\left(C_1
|I^{(k)}|\lv(\varphi')+C_2\var\varphi\right)$.
\end{lemma}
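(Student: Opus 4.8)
The proof will follow closely the scheme of the proof of Lemma~\ref{thmcorre}, the only structural difference being that we now pass to the quotient by the purely stable subspace $\Gamma^{(k)}_s$ rather than by $\Gamma^{(k)}_{cs}$. Consequently the inverse maps $(S_\flat(k,k'))^{-1}$ no longer contract exponentially but only grow polynomially (by (\ref{szaflat}), with factor $\le C(k'-k)^{M-1}$), so the convergence of the defining series of $\Delta\widetilde{P}^{(k)}$ cannot come for free from the renormalization as in Lemma~\ref{thmcorre}; instead it must be extracted from the exponential decay of the interval lengths $|I^{(r)}|$ combined with the polynomial growth estimate on corrected special Birkhoff sums furnished by Theorem~\ref{operatorcorrection}(1) (equivalently (\ref{znowuvar})).

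First I would observe that each summand $C^{(r+1)}\circ S(r,r+1)\circ P_0^{(r)}\circ S(k,r)\varphi$ lies in $\Gamma^{(r+1)}_0$, so that $\widetilde{U}^{(r+1)}$ of it lies in $\Gamma^{(r+1)}_0/\Gamma^{(r+1)}_s$ and $(S_\flat(k,r+1))^{-1}$ can indeed be applied to it; this is immediate from $C^{(r+1)}$ taking values in $\Gamma^{(r+1)}_0$ together with the invertibility of $S_\flat(k,k')$ on $\Gamma^{(k)}/\Gamma^{(k)}_s$ recorded before the Lemma. The heart of the estimate is a bound on the $L^1$-size of $P_0^{(r)}\circ S(k,r)\varphi$. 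Since $\varphi\in\ac_0^s$ is absolutely continuous on each interval, $S(k,r)\varphi$ is absolutely continuous on each interval $I^{(r)}_\alpha$, so the oscillation of $S(k,r)\varphi$ on each such interval is at most its total variation there; subtracting the mean on each interval we get
\[
\|P_0^{(r)}\circ S(k,r)\varphi\|_{L^1(I^{(r)})}\le |I^{(r)}|\,\var\!\big(S(k,r)\varphi\big).
\]
Now I would invoke (\ref{znowuvar}) (in the form obtained by re-basing it between levels $k$ and $r$, which uses Theorem~\ref{operatorcorrection}(1) together with the hypothesis $\mathfrak{h}(\varphi')=0$ built into the definition of $\ac_0^s$):
\[
\var\!\big(S(k,r)\varphi\big)\le |I^{(r)}|\,(r-k)^{M}\Big(C_1\lv(\varphi')+C_2\var\varphi/|I^{(k)}|\Big),
\]
so that $\|P_0^{(r)}\circ S(k,r)\varphi\|_{L^1(I^{(r)})}$ is of order $|I^{(r)}|^{2}(r-k)^{M}$ times $\big(C_1\lv(\varphi')+C_2\var\varphi/|I^{(k)}|\big)$, i.e.\ it decays exponentially in $r$.

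Combining this with (\ref{nace}) and (\ref{nase}) to pass from $C^{(r+1)}\circ S(r,r+1)\circ(\cdot)$ back to level $r$, with (\ref{compnorm}) and the bound $|I^{(r)}|\le \|A\|\,|I^{(r+1)}|$ to convert from the $L^1$-norm to the vector norm on $\Gamma^{(r+1)}$, with $\|\widetilde{U}^{(r+1)}\|\le 1$, and finally with (\ref{szaflat}) contributing the factor $\le C(r+1-k)^{M-1}$, I would bound the norm of the $r$-th term of the series by $C'\,(r+1-k)^{M-1}(r-k)^{M}\,|I^{(r)}|\big(C_1\lv(\varphi')+C_2\var\varphi/|I^{(k)}|\big)$ for a constant $C'$ depending only on the period matrix. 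Since for an IET of periodic type $|I^{(r)}|\le c\,\theta^{\,r-k}\,|I^{(k)}|$ for some $\theta\in(0,1)$, the series $\sum_{r\ge k}(r+1-k)^{M-1}(r-k)^{M}|I^{(r)}|$ converges and is $\le K_0|I^{(k)}|$; hence the defining series of $\Delta\widetilde{P}^{(k)}$ converges absolutely in $\Gamma^{(k)}/\Gamma^{(k)}_s$, and summing the term bounds gives $\|\Delta\widetilde{P}^{(k)}\varphi\|\le C'K_0\big(C_1|I^{(k)}|\lv(\varphi')+C_2\var\varphi\big)$, which is the asserted estimate with $K:=C'K_0$.

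The main obstacle, and the only genuine departure from the proof of Lemma~\ref{thmcorre}, is exactly this loss of contraction in $(S_\flat)^{-1}$: in Lemma~\ref{thmcorre} the individual terms were merely bounded by a constant and convergence was automatic from the exponential decay of $(S_u)^{-1}$, whereas here one must prove that the terms decay on their own. This forces the use of the extra degree of regularity ($\varphi\in\ac$ rather than just $\varphi\in\overline{\ls}$), so that the oscillation of $S(k,r)\varphi$ is controlled by its variation, and it forces the use of the full polynomial growth estimate of Theorem~\ref{operatorcorrection}(1) applied to the corrected cocycle $\varphi'$; everything else is a routine repackaging of the estimates already assembled in \S\ref{SpecialBS}, in \S\ref{correction:sec} and earlier in this Appendix.
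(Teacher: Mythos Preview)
Your proposal is correct and follows essentially the same route as the paper: both arguments hinge on bounding $P_0^{(r)}\circ S(k,r)\varphi$ via $\var(S(k,r)\varphi)$, invoking (\ref{znowuvar}) (from Theorem~\ref{operatorcorrection}(1), using $\mathfrak{h}(\varphi')=0$), then combining the polynomial growth (\ref{szaflat}) of $(S_\flat)^{-1}$ with the exponential decay of $|I^{(r)}|$ to sum the series. The only cosmetic difference is that the paper works directly with the sup norm (using $\|P_0^{(r)}\psi\|_{\sup}\le\var\psi$ and the operator-norm bounds $\|C^{(r+1)}\|\le 1$, $\|S(r,r+1)\|=\|A\|$), which avoids your detour through $L^1$ and the conversion (\ref{compnorm}) back to the vector norm; this shortens the bookkeeping by one factor of $|I^{(r)}|$ but yields the same final estimate.
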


\begin{proof} In view of (\ref{znowuvar}), for $r\geq k$ we have
\begin{align*}\|P_0^{(r)}\circ S(k,r)(\varphi)\|_{\sup}&\leq
\var(S(k,r)(\varphi))\\&\leq (r-k+1)^{M}\left(|I^{(r)}|C_1
\lv(\varphi')+\frac{|I^{(r)}|}{|I^{(k)}|}C_2\var\varphi\right).
\end{align*}
Since $\|\widetilde{U}^{(r+1)}\|\leq 1$, $\|C^{(r+1)}\|\leq 1$,
$\|S(r,r+1)\|=\|A\|$ and $|I^{(r)}|=\rho_1^{-(r-k)}|I^{(k)}|$, by
(\ref{szaflat}),
\begin{multline*}
\|(S_{\flat}(k,r+1))^{-1}\circ \widetilde{U}^{(r+1)}\circ
C^{(r+1)}\circ S(r,r+1)\circ P_0^{(r)}\circ S(k,r)(\varphi)\|\\
\leq
(r+1-k)^{M-1}\rho_1^{-(r-k)}\|A\|(r-k+1)^{M}\left(C_1|I^{(k)}|
\lv(\varphi')+C_2\var\varphi\right).
\end{multline*}
It follows that $\Delta\widetilde{P}^{(k)}$ is well defined and
\[\|\Delta \widetilde{P}^{(k)}\varphi\|\leq K\left(C_1|I^{(k)}|
\lv(\varphi')+C_2\var\varphi\right),\] where $K=\sum_{j\geq
0}(j+1)^{2M}\rho_1^{-j}\|A\|$. This concludes the proof.
\end{proof}

Let $\widetilde{P}^{(k)}:\ac_0^s(\sqcup_{\alpha\in \mathcal{A}}
I^{(0)}_{\alpha})\to \ac^s_0(\sqcup_{\alpha\in \mathcal{A}}
I^{(0)}_{\alpha})/\Gamma^{(k)}_{s}$ be given by
\[\widetilde{P}^{(k)}=\widetilde{U}^{(k)}\circ P_0^{(k)}-\Delta \widetilde{P}^{(k)}.\]
Since $\|P_0^{(k)}\circ S(k)(\varphi)\|_{\sup}\leq
\var(S(k)(\varphi))\leq \var\varphi$ for every $\varphi\in
\bv(\sqcup_{\alpha\in \mathcal{A}} I^{(k)}_{\alpha})$, by
Lemma~\ref{lemma:tildep}, we get
\begin{equation}
\|\widetilde{P}^{(k)}\varphi\|_{\sup/\Gamma^{(k)}_{s}} \leq KC_1
|I^{(k)}|\lv(\varphi')+(KC_2+1)\var\varphi. \label{szapk1}
\end{equation}
Following the arguments in the proof of Lemma~\ref{commutes} for
all $0\leq k\leq k'$ and $\varphi\in \ac_0^s(\sqcup_{\alpha\in
\mathcal{A}} I^{(k)}_{\alpha})$ we get
\begin{equation}
S_{\flat}(k,k')\circ
\widetilde{P}^{(k)}\varphi=\widetilde{P}^{(k')}\circ
S(k,k')\varphi, \label{przem1}
\end{equation}

\begin{theorem}\label{thmcorrecver}
Assume that  $T$ is of periodic type. For every $\varphi\in
\ac_0^s(\sqcup_{\alpha\in \mathcal{A}} I^{(0)}_{\alpha})$ if
$\widehat{\varphi}+\Gamma^{(0)}_{s}= \widetilde{P}^{(0)}\varphi$
then $\widehat{\varphi}-\varphi\in\Gamma_0^{(0)}$ and there exist
$C'''_1,C'''_2,C'''_3>0$
\[\|S(k)\widehat{\varphi}\|_{\sup}
\leq
\exp(-k\theta_-)(C'''_1\lv(\varphi')+C'''_2\var\varphi+
C'''_3\|\widehat{\varphi}\|_{\sup}).\]
\end{theorem}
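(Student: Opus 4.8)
\textbf{Proof plan for Theorem~\ref{thmcorrecver}.}
The plan is to mirror exactly the structure of the proof of Proposition~\ref{thmcorrecgener}, but working in the space $\ac_0^s$ with the sup-norm in place of $\overline{\ls}_0$ with the $L^1$-norm, and using the operator $\widetilde{P}^{(k)}$ and its quotient by $\Gamma_{s}^{(k)}$ (rather than $\Gamma_{cs}^{(k)}$) so that only the stable directions are quotiented out and we get genuine \emph{exponential} decay from (\ref{stab}). First I would check that $\widehat{\varphi}-\varphi \in \Gamma_0^{(0)}$: since $\widetilde{U}^{(0)}\widehat{\varphi}=\widehat{\varphi}+\Gamma^{(0)}_{s}=\widetilde{P}^{(0)}\varphi = \widetilde{U}^{(0)}\circ P_0^{(0)}\varphi - \Delta\widetilde{P}^{(0)}\varphi$, one has $\varphi-\widehat{\varphi}\in \widetilde{U}^{(0)}\circ C^{(0)}\varphi + \Delta\widetilde{P}^{(0)}\varphi \subset \Gamma_0^{(0)}$, exactly as in Proposition~\ref{thmcorrecgener}.

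Next, using the intertwining relations (\ref{splatanies1}) and (\ref{przem1}) one gets $\widetilde{U}^{(k)}\circ S(k)\widehat{\varphi} = S_\flat(k)\circ \widetilde{U}^{(0)}\widehat{\varphi} = S_\flat(k)\circ \widetilde{P}^{(0)}\varphi = \widetilde{P}^{(k)}\circ S(k)\varphi$, and then (\ref{szapk1}) applied to $S(k)\varphi$ together with the bound (\ref{nave}) (Proposition~\ref{lemlogreno1}) for $\lv(S(k)\varphi)$ and (\ref{navar}) for $\var(S(k)\varphi)$ gives
\[
\|\widetilde{U}^{(k)}\circ S(k)\widehat{\varphi}\|_{\sup/\Gamma^{(k)}_s} \leq KC_1 C |I^{(k)}|\lv(\varphi') + (KC_2+1)\var\varphi .
\]
As in Proposition~\ref{thmcorrecgener}, this means there exist $\varphi_k\in\ac_0^s(\sqcup_\alpha I^{(k)}_\alpha)$ and $h_k\in\Gamma^{(k)}_s$ with $S(k)\widehat{\varphi}=\varphi_k+h_k$ and $\|\varphi_k\|_{\sup}\leq KC_1C|I^{(k)}|\lv(\varphi')+(KC_2+1)\var\varphi$. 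Writing $\Delta h_{k+1}=h_{k+1}-A^t h_k = -\varphi_{k+1}+S(k,k+1)\varphi_k$ (and $\Delta h_0 = h_0 = \widehat{\varphi}-\varphi_0$), one estimates $\|\Delta h_k\|$ using (\ref{navar}) and (\ref{compnorm}) exactly as before, obtaining $\|\Delta h_k\|\leq \mathrm{const}\cdot(\lv(\varphi')+\var\varphi)$ for $k\geq 1$ and $\|\Delta h_0\|\leq \mathrm{const}\cdot(\|\widehat{\varphi}\|_{\sup}+\lv(\varphi')+\var\varphi)$. Then since $h_k=\sum_{0\leq l\leq k}(A^t)^l\Delta h_{k-l}$ with $\Delta h_l\in\Gamma^{(l)}_s$, the \emph{stable} estimate (\ref{stab}) gives $\|h_k\|\leq \sum_{l}C\exp(-l\theta_-)\|\Delta h_{k-l}\|$, which — crucially unlike the central-stable case, where one only had polynomial control — decays like $\exp(-k\theta_-)$ in $k$ (splitting the sum into $l\leq k/2$ and $l>k/2$, bounding the first range by the exponentially small $\exp(-l\theta_-)$ against the uniformly bounded $\|\Delta h_{k-l}\|$, and the second range by the fact that all the $\Delta h_j$ are uniformly bounded while $\exp(-l\theta_-)\leq\exp(-k\theta_-/2)$; one absorbs the $\|\Delta h_0\|$ term which only appears when $l=k$). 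Finally, since $|I^{(k)}|=\rho_1^{-k}|I^{(0)}|$ decays exponentially and $\|\varphi_k\|_{\sup}\leq \mathrm{const}\cdot|I^{(k)}|\lv(\varphi')+\mathrm{const}\cdot\var\varphi$ — here one must be slightly more careful than the decay of $h_k$, since the $\var\varphi$ contribution to $\|\varphi_k\|_{\sup}$ is not obviously exponentially small; but in fact $\var(S(k)\varphi)$ does decay, because $S(k)\varphi = S(k)\widehat{\varphi} + S(k)(\varphi-\widehat\varphi)$ with $\varphi-\widehat\varphi\in\Gamma_0^{(0)}$, and one re-expresses $\|\varphi_k\|_{\sup}$ using $P_0^{(k)}S(k)\widehat\varphi$ whose sup-norm is at most $\var(S(k)\widehat\varphi)$, which by (\ref{znowuvar}) is $\leq |I^{(k)}|k^M(C_1\lv(\varphi')+C_2\var\varphi/|I^{(0)}|)$, again exponentially small — one combines everything to get $\|S(k)\widehat\varphi\|_{\sup}\leq \|\varphi_k\|_{\sup}+\|h_k\|\leq \exp(-k\theta_-)(C'''_1\lv(\varphi')+C'''_2\var\varphi+C'''_3\|\widehat\varphi\|_{\sup})$, absorbing any polynomial factors $k^M$ into a slightly smaller exponent or a larger constant.

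The main obstacle I anticipate is precisely this last point: making sure that \emph{both} summands $\varphi_k$ and $h_k$ in the decomposition $S(k)\widehat\varphi=\varphi_k+h_k$ decay exponentially, not just $h_k$. The naive bound on $\|\varphi_k\|_{\sup}$ carries a non-decaying $\var\varphi$ term, and one has to pass through (\ref{znowuvar}) — which requires $\mathfrak{h}(\varphi')=0$, i.e.\ $\varphi\in\ac_0^s$, exactly the running hypothesis — to see that the sup-norm of the piecewise-constant part $P_0^{(k)}S(k)\widehat\varphi = C^{(k)}S(k)\widehat\varphi$ is controlled by $\var(S(k)\widehat\varphi)$, which \emph{is} exponentially small. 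Equivalently one can redefine $\varphi_k := P_0^{(k)}(S(k)\widehat\varphi)$ and $h_k := C^{(k)}(S(k)\widehat\varphi)\in\Gamma^{(k)}_0$, decompose $h_k = h_k^s+h_k^c+h_k^u$ along (\ref{trzydeco}), observe that the quotient bound (\ref{szapk1}) forces the $\Gamma^{(k)}/\Gamma^{(k)}_s$ part to be small, handle $h_k^c$ via the operator $\mathcal{O}$ and Lemma~\ref{bcharh}/Remark~\ref{Ohzeroreformulation} as in the hyperbolic case of Proposition~\ref{thmcorrecgener} (noting $\varphi'\in\overline{\ls}$ so $\mathcal O$ is finite), and rule out the $h_k^u$ part by the definition of $\widetilde P^{(k)}$ exactly as uniqueness was argued in the proof of Theorem~\ref{operatorcorrection}. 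Either way the bookkeeping of constants is routine once the exponential decay mechanism via (\ref{stab}) is in place; I would not grind through it in detail but simply invoke the parallel with Proposition~\ref{thmcorrecgener}.
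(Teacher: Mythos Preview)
Your overall architecture is right and matches the paper's, but there is a genuine error in the step where you claim that $\|h_k\|$ decays exponentially. With your bound $\|\Delta h_j\|\leq D:=\mathrm{const}\cdot(\lv(\varphi')+\var\varphi)$ for $j\geq 1$ (coming from using (\ref{navar}) rather than (\ref{znowuvar})), the sum $\sum_{0\leq l\leq k}C\exp(-l\theta_-)\|\Delta h_{k-l}\|$ is merely \emph{bounded} by $CD/(1-\exp(-\theta_-))$; it does \emph{not} tend to zero. Your splitting argument fails in the range $l\leq k/2$: that partial sum contains the term $l=0$, where $\exp(-l\theta_-)=1$, and in fact $\sum_{l\leq k/2}\exp(-l\theta_-)D\to D/(1-\exp(-\theta_-))$ as $k\to\infty$. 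So from merely bounded increments you cannot extract exponential decay of $h_k$ via (\ref{stab}) alone; this is exactly the same obstruction you noticed for $\varphi_k$, and it has the same fix.

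The paper's remedy is to invoke (\ref{znowuvar}) \emph{immediately} when applying (\ref{szapk1}) to $S(k)\varphi$: since $\varphi\in\ac_0^s$ one has $\var(S(k)\varphi)\leq |I^{(k)}|k^M\bigl(C_1\lv(\varphi')+C_2\var\varphi/|I^{(0)}|\bigr)$, so the quotient norm --- and hence $\|\varphi_k\|_{\sup}$ --- is bounded by $\max(1,k^M)|I^{(k)}|\bigl(C'_1\lv(\varphi')+C'_2\var\varphi\bigr)$ from the outset. Then $\|\Delta h_l\|\lesssim l^M|I^{(l)}|=l^M\rho_1^{-l}$ for $l\geq 1$, and writing $h_k=\sum_{l}(A^t)^{k-l}\Delta h_l$ one gets
\[
\|h_k\|\leq C\exp(-\theta_-k)\|\Delta h_0\|+\sum_{l\geq 1}C\exp\bigl(-\theta_-(k-l)\bigr)\,l^M\exp(-\theta_1 l)\,(\ldots),
\]
which equals $\exp(-\theta_-k)$ times a convergent series because $\theta_1>\theta_-$. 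The same factor $k^M\rho_1^{-k}\leq\mathrm{const}\cdot\exp(-\theta_-k)$ handles $\|\varphi_k\|_{\sup}$, and no detour through the decomposition (\ref{trzydeco}) is needed (nor available, since the theorem is stated for periodic type, not hyperbolic periodic type). In short: replace your use of (\ref{navar}) by (\ref{znowuvar}) at the very first step, and both $\varphi_k$ and $h_k$ fall into place simultaneously.
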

\begin{proof}
For simplicity, assume that $|I^{(0)}|=1$. Since
\[\widetilde{U}^{(0)}\widehat{\varphi}= \widetilde{P}^{(0)}\varphi=
\widetilde{U}^{(0)}\circ P_0^{(0)}\varphi-\Delta \widetilde{P}^{(0)}
\varphi=\widetilde{U}^{(0)}\varphi-\widetilde{U}^{(0)}\circ
C^{(0)}\varphi-\Delta \widetilde{P}^{(0)}\varphi,\] we have
$\varphi-\widehat{\varphi}\in \widetilde{U}^{(0)}\circ C^{(0)}\varphi+
\Delta \widetilde{P}^{(0)}\varphi \subset \Gamma_0^{(0)}.$
In view of (\ref{splatanies1}) and (\ref{przem1}),
\[\widetilde{U}^{(k)}\circ
S(k)\widehat{\varphi}=S_{\flat}(k)\circ
\widetilde{U}^{(0)}\widehat{\varphi} =S_{\flat}(k)\circ
\widetilde{P}^{(0)}\varphi=\widetilde{P}^{(k)}\circ S(k)\varphi.\]
Therefore, by (\ref{szapk1}),  (\ref{nave}) and (\ref{znowuvar}),
we have
\begin{align*}
\|\widetilde{U}^{(k)}&\circ
S(k)\widehat{\varphi}\|_{\sup/\Gamma^{(k)}_{s}}=
\|\widetilde{P}^{(k)}(S(k)\varphi)\|_{\sup/\Gamma^{(k)}_{s}}\\
&\leq KC_1 |I^{(k)}|\lv(S(k)(\varphi'))+(KC_2+1)\var(S(k)\varphi)\\
&\leq \max(1,k^M)|I^{(k)}|(C_1'\lv(\varphi')+C'_2\var(\varphi)).
\end{align*}
It follows that for every $k\geq 0$ there exists $\varphi_k\in
\ac_0^s (\sqcup_{\alpha\in \mathcal{A}} I^{(k)}_{\alpha})$ and
$h_k\in\Gamma^{(k)}_{s}$ such that
\begin{equation}\label{szcsk2}
S(k)\widehat{\varphi}=\varphi_k+h_k\text{,
}\|\varphi_k\|_{\sup}\leq \max(1,k^M)|I^{(k)}|
\left(C'_1\lv(\varphi')+C'_2\var\varphi\right).
\end{equation}
As
$\varphi_{k+1}+h_{k+1}=S(k+1)\widehat{\varphi}=
S(k,k+1)(S(k)\widehat{\varphi})=S(k,k+1)\varphi_k+A^th_k$,
setting $\Delta h_{k+1}=h_{k+1}-A^th_k$ ($\Delta h_0=h_0$) we have
$\Delta h_{k+1}=-\varphi_{k+1}+S(k,k+1)\varphi_k$. Moreover, by
(\ref{szcsk2}),
\begin{eqnarray*}
\|\Delta
h_{k+1}\|&=&\|\varphi_{k+1}-S(k,k+1)\varphi_k\|_{\sup}\leq\|\varphi_{k+1}\|_{\sup}+
\|S(k,k+1)\varphi_k\|_{\sup}\\&\leq&
(1+\|A\|)(k+1)^{M}|I^{(k+1)}|\left(
C'_1\lv(\varphi')+C'_2\var\varphi\right)
\end{eqnarray*}
and $\|\Delta h_{0}\|=\|\widehat{\varphi}-\varphi_0\|_{\sup}\leq
\|\widehat{\varphi}\|_{\sup}+(C'_1\lv(\varphi')+C'_2\var\varphi).$

Since $h_k=\sum_{0\leq l\leq k}(A^t)^{k-l}\Delta h_{l}$ and
$\Delta h_l\in\Gamma^{(k')}_{s}$, by (\ref{stab}),
\begin{align*}
\|h_k\|\leq&\sum_{0\leq l\leq k}\|(A^t)^{k-l}\Delta h_{l}\|\leq
\sum_{0\leq l\leq k}C\exp(-\theta_-(k-l))\|\Delta h_{l}\|\\
\leq&\;
C\exp(-\theta_-k)\left(\|\widehat{\varphi}\|_{\sup}+(C'_1\lv(\varphi')+
C'_2\var\varphi)\right)\\&+\sum_{1\leq l\leq
k}C\exp(-\theta_-(k-l)-\theta_1l)(1+\|A\|)l^M\left(C'_1\lv(\varphi')+
C'_2\var\varphi\right)\\
\leq
&\exp(-\theta_-k)(C_3''\|\widehat{\varphi}\|_{\sup}+C''_1\lv(\varphi')+
C''_2\var\varphi).
\end{align*}
In view of (\ref{szcsk2}), it follows that
\[\|S(k)\widehat{\varphi}\|_{\sup}\leq\|{\varphi}_k\|_{\sup}+\|h_k\|
\leq\;
\exp(-\theta_-k)(C'''_1\lv(\varphi')+C'''_2\var\varphi
+C'''_3\|\widehat{\varphi}\|_{\sup}).\]
\end{proof}
The following Proposition was proved in \cite{Ma-Mo-Yo}.
\begin{proposition}\label{propsn1}
For each bounded function $\varphi:I\to\R$, $x\in I$ and $n>0$ we
have
\begin{equation}\label{szacsn1}
|\varphi^{(n)}(x)|\leq
2\sum_{l\in \N}\|Z(l+1)\|\|S(l)\varphi\|_{\sup}.
\end{equation}
\end{proposition}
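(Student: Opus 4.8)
The plan is to reproduce the ``Ostrowski-type'' expansion of $\varphi^{(n)}(x)$ into complete Rauzy--Veech towers, which is the classical argument of Marmi--Moussa--Yoccoz. Write $I=I^{(0)}\supseteq I^{(1)}\supseteq\cdots$ for the nested inducing intervals associated to the subsequence $(n_k)$, let $T^{(l)}$ be the first return map of $T$ to $I^{(l)}$, and recall that $\|Z(l+1)\|=\max_{\gamma}Q_{\gamma}(l,l+1)$, where $Q_{\gamma}(l,l+1)$ is the number of visits to $I^{(l)}$ made by the $T$-orbit of a point of $I^{(l+1)}_{\gamma}$ before its first return to $I^{(l+1)}$. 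Fix $x\in I$ and $n>0$; we may assume the orbit segment $x,Tx,\dots,T^{n-1}x$ avoids the origin, since otherwise the series on the right is either divergent (nothing to prove) or the argument below applies with obvious modifications. Let $L\geq 0$ be the largest level such that some point of the segment lies in $I^{(L)}$; this is well defined because $x\in I^{(0)}=I$ and finite because $|I^{(l)}|\to 0$.

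I would then prove, by downward induction on $l=L,L-1,\dots,0$, the following bookkeeping invariant: after processing levels $L,\dots,l$, the set of times of $\{0,\dots,n-1\}$ not yet accounted for is exactly $[0,a_l)\cup[b_l,n)$, where $a_l$ (resp.\ $b_l$) is the first (resp.\ last) time the orbit segment visits $I^{(l)}$, and every accounted time at level $l$ has been grouped into a \emph{complete level-$l$ tower}, i.e.\ a time-interval $[a,a+Q_{\beta}(l))$ with $T^{a}x\in I^{(l)}_{\beta}$, which contributes exactly $S(l)\varphi(T^{a}x)$ to $\varphi^{(n)}(x)$ by the very definition of $S(l)=S(0,l)$. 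The induction step is elementary: inside the current head $[0,a_{l+1})$ one lists the visit times $s_1<\dots<s_p$ to $I^{(l)}$; since $a_{l+1}$ is itself a visit to $I^{(l)}$ (as $I^{(l+1)}\subseteq I^{(l)}$), the intervals $[s_1,s_2),\dots,[s_{p-1},s_p),[s_p,a_{l+1})$ are complete level-$l$ towers, leaving the new head $[0,a_l)=[0,s_1)$, and symmetrically for the tail $[b_{l+1},n)$. At the bottom level $l=0$ every point of the segment lies in $I^{(0)}=I$ and $Q_{\beta}(0)=1$, so every remaining time forms a complete level-$0$ tower contributing a single value $\varphi(T^{j}x)$ and the leftover set becomes empty; this terminates the recursion and yields $\varphi^{(n)}(x)=\sum_{l=0}^{L}\sigma_l$, where $\sigma_l$ is the (signed) sum of the $S(l)\varphi$-values over the level-$l$ towers produced at step $l$.

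The heart of the proof is to bound the number of level-$l$ towers produced at step $l$ by $2\|Z(l+1)\|$. For this I would use that the head $[0,a_{l+1})$ contains no visit to $I^{(l+1)}$: passing to the bi-infinite orbit, $[0,a_{l+1})$ lies inside a single excursion between two consecutive visits to $I^{(l+1)}$, and by the interpretation of $Q_{\gamma}(l,l+1)$ recalled above such an excursion contains at most $\|Z(l+1)\|$ visits to $I^{(l)}$; hence $p\leq\|Z(l+1)\|$, and likewise the tail produces at most $\|Z(l+1)\|$ towers, for a total of at most $2\|Z(l+1)\|$. At the top level $L$ the towers produced are the $M-1$ excursions between the $M\leq\|Z(L+1)\|$ visits of the segment to $I^{(L)}$, so the same bound holds. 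Combining, $|\varphi^{(n)}(x)|\leq\sum_{l=0}^{L}\#\{\text{level-}l\text{ towers}\}\cdot\|S(l)\varphi\|_{\sup}\leq 2\sum_{l\in\N}\|Z(l+1)\|\,\|S(l)\varphi\|_{\sup}$, as claimed.

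The step I expect to be most delicate is the precise accounting in the induction: verifying that the un-accounted set remains a union of exactly two intervals at every level (so that the recursion does not branch), and pinning down the combinatorial count of $I^{(l)}$-visits inside one excursion of $I^{(l+1)}$, together with the minor edge cases at $l=0$ and when $n$ is so small that $L=0$. None of this is deep, but it must be carried out carefully to obtain the clean constant $2$.
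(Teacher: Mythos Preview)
Your proposal is correct and is precisely the Marmi--Moussa--Yoccoz decomposition argument; the paper does not reprove this statement at all but simply cites \cite{Ma-Mo-Yo}, so you have reconstructed exactly the intended proof.
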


\begin{proofof}{Proposition}{theorem:coblog}
Since $\varphi'-Leb(\varphi')\in\overline{\ls}_0(\sqcup_{\alpha\in
\mathcal{A}} I_{\alpha})$, setting
$h:=\mathfrak{h}(\varphi'-Leb(\varphi'))\in\Gamma_0$, we have
$\mathfrak{h}(\varphi'-Leb(\varphi')-h)=0$. Choose $\varphi_1\in
\ac_0(\sqcup_{\alpha\in \mathcal{A}} I_{\alpha})$ so that
$\varphi_1'=\varphi'-Leb(\varphi')-h$. Then  $\varphi_1\in
\ac_0^s(\sqcup_{\alpha\in \mathcal{A}} I_{\alpha})$. In view of
Theorem~\ref{thmcorrecver}, there exist $h_1\in\Gamma_0$ and $C>0$
such that the function
$\varphi_2:=\varphi_1+h_1\in\ac_0(\sqcup_{\alpha\in \mathcal{A}}
I_{\alpha})$ satisfying
\[\|S(k)(\varphi_2)\|_{\sup}
\leq
C\exp(-\theta_-k)(\lv(\varphi_2')+\var\varphi_2+\|\varphi_2\|_{\sup}).\]
Therefore, by Proposition~\ref{propsn1}, for every $x\in I$ and
$n>0$,
\begin{align*}
|\varphi_2^{(n)}(x)|&\leq 2\sum_{l\geq
0}\|Z(l+1)\|\|S(l)\varphi_2\|_{\sup}\\&\leq
\frac{2\|A\|C}{1-\exp(-\theta_-)}(\lv(\varphi_2')+
\var\varphi_2+\|\varphi_2\|_{\sup}).
\end{align*}
In view of Proposition~\ref{ghmmy}, it follows that $\varphi_2$ is
a coboundary with a continuous transfer function. Let
$\psi:=\varphi-\varphi_2\in\ac_0(\sqcup_{\alpha\in \mathcal{A}}
I_{\alpha})$.
\[\psi'=\varphi'-\varphi'_1+(\varphi_1-\varphi_2)'=
\varphi'-(\varphi'-Leb(\varphi')-h)=Leb(\varphi')+h\in\Gamma.\] It
follows that $\psi\in\pl_0(\sqcup_{\alpha\in \mathcal{A}}
I_{\alpha})$. Since $h\in\Gamma_0$ and $\psi'=Leb(\varphi')+h$, we
also get $s(\psi)=Leb(\psi')=Leb(\varphi')=s(\varphi)$,   which
completes the proof.
\end{proofof}

\section{Reduction to skew product}\label{reduction}
In this Appendix we include for completeness the proof of Lemma
\ref{corollary:ergcob}.
\begin{proofof}{Lemma}{corollary:ergcob}
The first part is an  obvious consequence of Lemma~\ref{specrep},
since ergodicity is preserved by a measurable isomorphism and a
special flow is ergodic if and only if the base transformation is
ergodic.

Recall that the flow $(\Phi^f_t)_{t\in\R}$ is  reducible if it is
measure-theoretically isomorphic to the flow $(\Phi^0_t)_{t\in\R}$
via the map $\Surf\times\R\ni(x,y)\mapsto (x,y+G(x))\in
\Surf\times\R$, where $G:\Surf\to\R$ is a continuous function.
Reducibility is equivalent to the existence of a continuous
function $G:\Surf\to\R$ such that
\begin{equation}\label{redcoh}
F(t,x)=\int_{0}^tf(\phi_sx)\,ds=G(x)-G(\phi_tx)\text{ for all
}t\in\R\text{ and }x\in \Surf.
\end{equation}
Then for each $x\in I$ we have
\[\varphi_f(x)=F(\tau(x),\gamma(x))=G(\gamma(x))-G(\phi_{\tau(x)}\gamma(x))=
G\circ\gamma(x)-G\circ\gamma(Tx).\] It follows that $g:I\to\R$,
$g=G\circ\gamma$ is continuous and $\varphi=g-g\circ T$.

Suppose that $g:I\to\R$ is a continuous function such that
$\varphi_f=g-g\circ T$. Recall that for every $x\in
\Surf\setminus\Sigma$ the $(\phi_t)_{t\in\R}$ orbit of $x$ is dense
and intersects the cross section $I$. If $\phi_tx\in I$ for some
$t\in\R$ then set
\[G(x):=g(\phi_tx)+F(t,x)=g(\phi_tx)+\int_{0}^tf(\phi_sx)\,ds.\]
Notice that the function $G:\Surf\setminus\Sigma\to\R$ is well
defined. Indeed, if $\phi_{t_1}x,\phi_{t_2}x\in I$ with $t_1<t_2$
then $t_2-t_1=\tau^{(m)}(\phi_{t_1}x)$ and
$T^m\phi_{t_1}x=\phi_{t_2}x$. Therefore,
\begin{align*}
F(t_2,x)-F(t_1,x)&=F(t_2-t_1,\phi_{t_1}x)=F(\tau^{(m)}(\phi_{t_1}x),\phi_{t_1}x)\\
&=\varphi_f^{(m)}(\phi_{t_1}x)
=g(\phi_{t_1}x)-g(T^m\phi_{t_1}x)=g(\phi_{t_1}x)-g(\phi_{t_2}x).
\end{align*}
Thus $g(\phi_{t_1}x)+F(t_1,x)=g(\phi_{t_2}x)+F(t_2,x)$.

Note that by the definition of $G$ for every $x\in \Surf\setminus \Sigma$ and
$t\in\R$ we have
$G(x)-G(\phi_t x)=F(t,x)$.

In order to prove that $G:\Surf\setminus\Sigma\to\R$ is
continuous and can be extended to a continuous $G:\Surf \to\R$,
let us consider the oscillation function $\omega:\Surf\to\R_+$
defined at each $x\in \Surf$ by
\[\omega(x)=\lim_{\vep\to 0}\sup\{|G(y)-G(y')|:y,y'\in B(x,\vep)\setminus\Sigma\}.\]
Since $G(\phi_sx)=G(x)-F(s,x)$, $F$ is continuous and $\phi_s$ is
a diffeomorphism on $\Surf$, $\omega(\phi_sx)=\omega(x)$ for every
$x\in \Surf$ and $s\in\R$. Let $x\in \Surf\setminus\Sigma$. Since the
orbit of $x$ is dense and $\omega$ is upper semi-continuous, it
follows that $\omega(y)\geq\omega(x)$ for every $y\in \Surf$. By the
definition of $G$, each interior point $y$ of $I$ is a continuity
point of $G$. Therefore, $\omega(x)\leq \omega(y)=0$, so $G$ is
continuous at each $x\in \Surf\setminus\Sigma$.

To show that  $G$ can be continuously extended to $\Surf$, let us
prove that $\omega(z)=0$ for all $z\in\Sigma$.  Since $f(z)=0$ for
all $z\in\Sigma$, (\ref{redcoh}) will be trivially valid for all
$z\in\Sigma$. Fix $z_0\in\Sigma$ and let $\zeta=x+iy$ be the
singular adapted coordinate  around $z_0$. Let $\delta>0$ and
$V:[-\delta,\delta]\times[-\delta,\delta]\to\R_+$ be as in the
proof of Theorem~\ref{theorem:formphi} and set
$K:=\sup\{\|(f/V)'(z)\|:z\in[-\delta,\delta]\times[-\delta,\delta]\}$.
Since $G$ is continuous on $\Surf\setminus\Sigma$, for every
$\vep'>0$ there exists $0<\vep<\delta$ such that
$|G(s,\pm\delta)-G(s',\pm\delta)|<\vep'$ and
$|G(\pm\delta,s)-G(\pm\delta,s')|<\vep'$ for all
$s,s'\in[-\vep^2/\delta,\vep^2/\delta]$. We will prove that
\begin{equation}\label{oscsigma}
|G(z_1)-G(z_2)|\leq 3\vep'+18K\vep\text{ for all
}z_1,z_2\in([-\vep,\vep]\times[-\vep,\vep])\setminus\{(0,0)\},
\end{equation}
which yields $\omega(z_0)=0$.

By the proof of Theorem~\ref{theorem:formphi}, if
$(x_1,y_1),(x_2,y_2)\in([-\vep,\vep]\times[-\vep,\vep])\setminus\{(0,0)\}$
and $(x_2,y_2)=\phi_t(x_1,y_1)$ for some $t\in\R$ then
$x_1y_1=x_2y_2=s$ and
\begin{equation}\label{rozg}
G(x_1,y_1)-G(x_2,y_2)=\int_0^tf(\phi_v(x_1,y_2))\,dv=
\int_{y_1}^{y_2}(f/V)(s/u,u)\frac{du}{u}.
\end{equation}
It follows that for every $|s|\leq\vep$ we have
\[G(s,\vep)=G(s\vep/\delta,\delta)+
\int_\vep^\delta(f/V)(s\vep/u,u)\frac{du}{u}.\]
Hence if $s,s'\in[-\vep,\vep]$ then
\begin{align}\label{rozg2}
\begin{aligned}
|G(s,\vep)-G(s',\vep)|\leq\;&
|G(s\vep/\delta,\delta)-G(s'\vep/\delta,\delta)|\\
&+\int_\vep^\delta|(f/V)(s\vep/u,u)-(f/V)(s'\vep/u,u)|\frac{du}{u}\\
\leq\;&\vep'+\int_\vep^\delta K\frac{|s-s'|\vep}{u^2}du\leq
\vep'+K|s-s|\leq \vep'+2K\vep.
\end{aligned}
\end{align}
Let $D^+_{\pm}=\{(x,y):0<|x|\leq\pm y\leq\vep\}$ and
$D^-_{\pm}=\{(x,y):0<|y|\leq\pm x\leq\vep\}$. If $(x,y)\in D^+_+$
then, by (\ref{rozg}) and $(f/V)(0,0)=0$,
\[
|G(x,y)-G(xy/\vep,\vep)|\leq
\int_y^\vep|(f/V)(xy/u,u)|\frac{du}{u}\leq
K\int_y^\vep\left(\frac{|xy|}{u^2}+1\right)du\leq 2K\vep.
\]
In view of (\ref{rozg2}), for all $(x,y),(x',y')\in D^+_+$ we have
\begin{align*}
|G(x,y)-G(x',y')|\leq\;&
|G(x,y)-G(xy/\vep,\vep)|+|G(xy/\vep,\vep)-G(x'y'/\vep,\vep)|\\&+|G(x',y')-G(x'y'/\vep,\vep)|
\leq \vep'+6K\vep.
\end{align*}
The same applies to $D^+_-$, $D^-_+$ and $D^-_-$. This proves
(\ref{oscsigma}) and the proof is complete.
\end{proofof}

\end{document}